\numberwithin{equation}{section}
\newcommand{\ov}[1]{\overline{\raisebox{0pt}[0.9\height]{$#1$}}}
\newcommand{\bgg}{\mathbf{v}}
\newcommand{\vertiii}[1]{{\left\vert\kern-0.25ex\left\vert\kern-0.25ex\left\vert #1 
    \right\vert\kern-0.25ex\right\vert\kern-0.25ex\right\vert}}
\newcommand{\esp}{\ensuremath{\mathbb{E}}}
\newcommand{\R}{\mathbb R}
\newcommand{\N}{\mathbb N}
\newcommand{\Z}{\mathbb Z}
\newcommand{\bu}{\mathbf{u}}
\newcommand{\bv}{\mathbf{v}}
\newcommand{\bff}{\mathbf{F}}
\newcommand{\be}{\beta}
\newcommand{\bg}{\mathbf{g}}
\newcommand{\bp}{\mathbf{P}}
\newcommand{\bxi}{\mathbf{\xi}}
\newcommand{\1}{{\bf 1}}
\newcommand{\2}{{\bf 2}}
\newcommand{\cb}{\mathcal B}
\newcommand{\cac}{\mathcal C}
\newcommand{\cd}{\mathcal D}
\newcommand{\cf}{\mathcal F}
\newcommand{\ch}{\mathcal H}
\newcommand{\ci}{\mathcal I}
\newcommand{\cj}{\mathcal J}
\newcommand{\cl}{\mathcal L}
\newcommand{\cq}{\mathcal Q}
\newcommand{\cs}{\mathcal S}
\newcommand{\crr}{\mathcal R}
\newcommand{\struc}{\mathscr T}
\newcommand{\scal}{\mathfrak{s}}
\newcommand{\compac}{\mathfrak{K}}
\newcommand{\al}{\alpha}
\newcommand{\ga}{\gamma}
\newcommand{\gga}{\Gamma}
\newcommand{\ep}{\varepsilon}
\newcommand{\ka}{\kappa}
\newcommand{\la}{\lambda}
\newcommand{\si}{\sigma}
\newcommand{\vp}{\varphi}
\newcommand{\lcl}{\left\{}
\newcommand{\rcl}{\right\}}
\newcommand{\lln}{\left|}
\newcommand{\rrn}{\right|}
\newcommand{\uxi}{\pmb{\xi}}
\newcommand{\ueta}{\pmb{\eta}}
\newtheorem{theorem}{Theorem}[section]
\newtheorem{corollary}[theorem]{Corollary}
\newtheorem{definition}[theorem]{Definition}
\newtheorem{lemma}[theorem]{Lemma}
\newtheorem{proposition}[theorem]{Proposition}
\theoremstyle{remark}
\newtheorem{remark}[theorem]{Remark}
\theoremstyle{remark}
\newcommand{\bean}{\begin{eqnarray*}}
\newcommand{\eean}{\end{eqnarray*}}
\newcommand{\ben}{\begin{enumerate}}
\newcommand{\een}{\end{enumerate}}
\newcommand{\beq}{\begin{equation}}
\newcommand{\eeq}{\end{equation}}
\begin{document}

\date{\today}

\title{On a modelled rough heat equation}

\author{AUR{\'E}LIEN DEYA}

\keywords{Stochastic PDEs; Regularity structures theory; space-time Fractional Brownian motion; Rough paths theory.}

\subjclass[2000]{60H15, 60G15, 60G22}


\begin{abstract}
We use the formalism of Hairer's regularity structures theory \cite{hai-14} to study a heat equation with non-linear perturbation driven by a space-time fractional noise. Different regimes are observed, depending on the global pathwise roughness of the noise.

\smallskip

To this end, and following the procedure exhibited in \cite{hai-14}, the equation is first "lifted" into some abstract \emph{regularity structure} and therein solved through a fixed-point argument. Then we construct a consistent \emph{model} above the fractional noise, by relying on a smooth Fourier-type approximation of the process.
\end{abstract}

\maketitle

\

\

\section{Introduction and main result}

\subsection{Introduction}

Rough paths theory (\cite{friz-victoir-book,lyons-book}) is now robustly anchored in the probabilistic scenery and regarded as one of the most effective approaches to ordinary stochastic calculus, that is to systems of the form
\begin{equation}\label{rough-system}
dY_t=b(Y_t) \, dt+\si(Y_t) \cdot dX_t \ ,
\end{equation}
where $X$ stands for a finite-dimensional stochastic process. Among many other achievements, the theory offers a thorough and unprecedented treatment for a large class of Gaussian systems, beyond the classical semimartingale situation. The most insightful example of such an application is probably given by the fractional Brownian motion case $X=B^H$, which, for a Hurst index $H<1/2$, was essentially out of reach before rough paths came into the picture.

\smallskip

The extension of the rough paths method to SPDE settings is a much sparser field of investigation. If we focus on the standard heat equation model, the objective, in a very general form, would be to exhibit a natural pathwise interpretation of the equation: $u(0,.)=\psi$ and for all $t\geq 0$, $x\in \R^d$ (or $x$ in the $d$-dimensional torus), 
$$(\partial_t u)(t,x)=(\partial^2_x u)(t,x)+F_1(t,x,u(t,x),(\partial_x u)(t,x)) +F_2(t,x,u(t,x),(\partial_x u)(t,x)) \cdot \xi(t,x) \ , $$
equivalently
\begin{equation}\label{eq:intro}
u(t,x)=(G(t,.) \ast \psi)(x)+\big(G\ast \big[ F_1(.,u(.),(\partial_x u)(.))+F_2(.,u(.),(\partial_x u)(.)) \cdot \xi\big]\big)(t,x) \ ,
\end{equation}
when $\xi$ represents a stochastic space-time noise, $\psi$ a given initial condition, and $G$ refers to the heat kernel. Then as a particular spin-off, and motivated by the (one-parameter) rough-path results, we may expect to get new interpretations of the equation when considering a space-time fractional noise. 

\smallskip

Within the last few years, several rough-path-type methods have thus been developed in this direction for various types of $F_1,F_2$ and noise $\xi$. We can quote for instance \cite{caruana-friz-oberhauser,RHE,backward,friz-oberhauser} for extensive results in the particular situation where $\xi$ only depends on time, and more specifically when $\xi(t,x)=d\mathbf{w}_t$ for a given rough path $\mathbf{w}$. It is also in this context that a series of fundamental works by Hairer and a few co-authors has recently arisen \cite{hai-kpz,hai-rough,hai-approx,hai-burgers}, culminating with the so-called \emph{theory of regularity structures} \cite{hai-14} that will serve us as a guide throughout the paper. The aim of our study can indeed be summed up as follows: using the formalism of \cite{hai-14}, we wish to give a reasonable interpretation of (and simultaneously solve) the equation
\begin{equation}\label{intro:eq-frac}
(\partial_t u)(t,x)=(\partial^2_x u)(t,x)+F(x,u(t,x)) \cdot (\partial_t \partial_x X)(t,x) \quad , \quad u(0,.)=\psi \quad , \quad t\geq 0 \ , \ x\in \R \ ,
\end{equation}
where $F:\R \times \R \to \R$ is a smooth vector field and $X$ stands for a \emph{space-time fractional Brownian motion}. Let us be more specific about the latter designation: what we call here the \emph{space-time fractional Brownian motion}, or the \emph{fractional sheet}, with Hurst index $(H_1,H_2)\in (0,1)^2$ is the Gaussian process $\{X(t,x), \ t,x \in \R\}$ whose covariance function is given by the formula
\begin{equation}\label{brownian-sheet-cova}
\esp\big[ X(s,x) X(t,y) \big]=R_{H_1}(s,t) R_{H_2}(x,y) \ , \ \text{where} \ R_H(s,t)=\frac12 \{|s|^{2H}+|t|^{2H}-|t-s|^{2H}\} \ .
\end{equation}
When $H_1=H_2=\frac12$, $X$ is of course nothing but the classical Brownian sheet, so that the above noise $\xi=\partial_t\partial_x X$, understood in the sense of distributions, appears as a very natural fractional extension of the standard space-time white noise.

\smallskip

Before we go further regarding Equation (\ref{intro:eq-frac}), let us try to sketch out, at a very heuristic level, the main steps of the regularity structures procedure to handle (\ref{eq:intro}):

\smallskip

$(1)$ First, one associates the equation with a natural set of elements $Z(\xi)$, the so-called \emph{model}, constructed (when possible!) from the sole noise $\xi$, that is, independently of any potential "solution" $u$, and which extends to this multi-parametric setting the concept of a rough path. Indeed, just as its one-parameter counterpart, the model will prove to encode the whole stochastic dynamics of the equation, even though this phenomenon can only be revealed at the end of the procedure.

\smallskip

$(2)$ With a model in hand, one can "lift" the equation in a larger space $\struc$, the so-called \emph{regularity structure}, for which the successive operations involved in (\ref{eq:intro}), namely composition with $F$, multiplication with $\xi$ and convolution with $G$, all make perfectly sense. This is indeed the major difficulty when trying to give a pathwise interpretation of the equation: although the solution $u$ is expected to be a function, it is not clear at all how to give sense to the product
$$F(.,u(.),(\partial_x u)(.))\cdot \xi \ ,$$
even as a distribution. Lifting the equation in $\struc$ allows us to overcome this difficulty, at the expense of some mise en abyme of the equation. The procedure of turning an $\R$-valued process into a $\struc$-valued process can actually be compared with the controlled-path transformation of Gubinelli's theory (\cite{gubi,gubi-ramif}), where processes have to be artificially boosted with "derivatives" components of some abstract Taylor expansion. As in rough paths/controlled paths theory, a fundamental ingredient here lies in the extension of the standard Hölder topology to $\struc$-valued functions, which gives birth to the spaces of \emph{modelled distributions}.

\smallskip

$(3)$ Once endowed with a sufficiently regular solution $\bu$ for the problem in $\struc$, one can (fortunately) go back to the real world with the help of another central tool of the theory: the \emph{reconstruction operator} $\crr_{Z(\xi)}$, which associates $\bu$ with a real distribution $u=\crr_{Z(\xi)}\bu$ along a natural approximation procedure. The machinery turns out to be continuous with respect to the model $Z(\xi)$ picked at Step $(1)$, which in some way allows us to loop the procedure.

\smallskip

An important point is that this whole 3-step strategy can be made consistent with the rules of standard differential calculus: if $\xi$ happens to be a smooth process, then there exists a canonical model $Z(\xi)$ for which the resulting solution $u=\crr_{Z(\xi)}\bu$ coincides with the classical solution of the equation (understood in the Lebesgue sense). Combining this consistency result with the continuity properties of the procedure gives rise to very readable statements: given a smooth approximation $\xi^n$ of $\xi$, and provided the sequence of canonical models $Z(\xi^n)$ converges to an element $Z(\xi)$, then the sequence of classical solutions $u^n$ associated with $\xi^n$ converges to an element $u$. In turn, $u$ legitimately deserves to be called a \emph{solution} of the equation driven by $\xi$. 

\smallskip

Note that the above ordering $(1)$-$(2)$-$(3)$ is only very schematic and in fact, all the operations involved in this strategy tend to intermingle through highly technical considerations. Let us also insist on a few specificities (among others) of the approach:

\smallskip

$\bullet$ Distributions, and especially Besov distributions of negative order, are really at the core of the machinery, contrary to what one observes in the original rough paths pattern. In order to make these abstract spaces more easy to handle, the theory leans on the construction of sophisticated wavelets bases which very subtly account for the local behaviour of the processes under consideration. We will report an example of such a construction in the proof of Lemma \ref{GRR-gene}.

\smallskip

$\bullet$ The method is based on a multi-parametric formulation of the problem, in contrast with previous infinite-dimensional pathwise approaches of the equation (\cite{RHE,GLT,REE}). Otherwise stated, time and space variables are essentially considered at the same level and merge into a single variable $x\in \R^{d+1}$.

\smallskip

$\bullet$ The flexibility of the regularity structures formalism allows for possible combination with \emph{renormalization} procedures. Indeed, in certain situations where the above sequence $Z(\xi^n)$ of canonical models fails to converge, it may still be possible to renormalize it into some converging model $\hat{Z}(\xi^n)$, which in turn can be related to a specific renormalized equation. Such a scheme was for instance implemented in \cite{hai-kpz} for the KPZ equation, solving the long-standing issue of its well-posedness (see also \cite{hai-14} for two other examples). We will only see a small glimpse of these possibilities through the exhibition of an It{\^o}/Stratanovich-type correction of the equation.

\smallskip

With these (dense) considerations in mind, let us go back to our fractional equation (\ref{intro:eq-frac}). The smooth approximation $\xi^n$ of the noise that will serve us as a starting reference is derived from the following Fourier-type representation of the fractional sheet (see e.g. \cite{samo-taqqu}): for all $t,x\in \R$,
\begin{equation}\label{representation-sheet}
X(t,x)=c_{H_1,H_2}\int_{\R^2} \widehat{W}(d\xi,d\eta) \, \frac{e^{\imath t\xi}-1}{|\xi|^{H_1+\frac12}} \frac{e^{\imath x\eta}-1}{|\eta|^{H_2+\frac12}} \ , 
\end{equation}
for some constant $c_{H_1,H_2} >0$ and where $\widehat{W}$ is the Fourier transform of a space-time white noise in $\R^2$, defined on some filtered probability space $(\Omega,\mathcal{F},\bp)$. Then we introduce $\xi^n$ along the formula: $\xi^n=\partial_t \partial_x X^n$ with
\begin{equation}\label{approx-noise}
X^n(t,x)=c_{H_1,H_2}\int_{D_n} \widehat{W}(d\xi,d\eta) \, \frac{e^{\imath t\xi}-1}{|\xi|^{H_1+\frac12}} \frac{e^{\imath x\eta}-1}{|\eta|^{H_2+\frac12}} \ , 
\end{equation}
where we have set $D_n=[-2^{2n},2^{2n}]\times [-2^n,2^n]$. The reason of this choice mostly lies in the facilities to compute the moments of such a process, as we will see it through the considerations of Section \ref{sec:construction}. It is also worth mentioning that the very same approximation of $X$ has been used by Chouk and Tindel in \cite{chouk-tindel} to study subtle integration properties of the fractional sheet.

\smallskip

At this point, we are almost ready to state our main result. It only remains us to specify the class of vector fields $F$ covered by our analysis. In this context, the following "space-localization" condition can be compared with the assumptions that prevail in \cite{RHE}. It is essentially meant to counterbalance the non-compacity of the space domain under consideration, namely the whole space $\R$ (see also point \textbf{(3)} of Section \ref{subsec:further}).

\begin{definition}\label{def:vector-field}
Given a compact set $\compac\subset \R$, we say that a function $F:\R^2 \to \R$ belongs to the class $\cac_\compac^\infty(\R^2)$ if it admits bounded partial derivatives of any order and if $F(x,y)=0$ for every $(x,y)\in (\R\backslash \compac) \times \R$. 
\end{definition}

We denote by $\cl$ the (one-dimensional) heat operator, that is $\cl=\partial_t-\partial^2_x$. Also, for $\ga\in (0,1)$, we denote by $\cac^\ga(\R)$ the space of $\ga$-Hölder functions on $\R$, and we refer the reader to Section \ref{subsec:spaces} for the definition of the parabolic Hölder spaces $\cac^\ga_\scal(\compac)$, $\compac \subset \R^2$. With these notations in hand, the main findings of the paper can be summed up as follows.


\begin{theorem}\label{main-theo}
For every $H_1,H_2\in (0,1)$, consider the smooth approximation $X^n$ of the fractional sheet with Hurst index $(H_1,H_2)$ given by (\ref{approx-noise}).
Fix the non-linearity $F$ within the class $\cac_{\compac}^\infty(\R^2)$ defined above, for some compact set $\compac\subset \R$, and consider a sequence of bounded deterministic initial conditions $\Psi^n $ that converges in $L^\infty (\R)$ to some element $\Psi$. Then we have the following (non-exhaustive) regimes.

\smallskip

\noindent
$(i)$ Assume that $2H_1+H_2>2$. Then, almost surely, there exists a time $T>0$ such that, as $n$ tends to infinity, the sequence $(Y^n)$ of solutions to the equation
\begin{equation}\label{eq-base-young}
\left\{\begin{array}{ccl}
(\cl Y^n) (t,x) &= &F(x,Y^n(t,x)) \cdot \partial_t\partial_x X^n (t,x)\quad ,\\
Y^n(0,x) &= &\Psi^n(x) \ , 
\end{array}\right.
\end{equation}
converges in $L^\infty([0,T]\times \R)$ to a limit $Y$. Besides, for any $s\in (0,T)$, the latter convergence also holds in the space $\cac^\ga_\scal([s,T] \times \compac)$, for every compact set $\compac \subset \R$ and every $\ga \in (3-2H_1-H_2,1)$. 

\smallskip

\noindent
$(ii)$ Assume that $2\geq 2H_1+H_2 > \frac53$. Then, almost surely, there exists a time $T>0$ and a sequence of positive reals $(C_{H_1,H_2}^n)$
such that, as $n$ tends to infinity, 
\begin{equation}\label{estim-cstt}
C^n_{H_1,H_2}\sim 
\left\lbrace
\begin{array}{ll}
c^1_{H_1,H_2}\cdot  2^{2n(2-2H_1-H_2)}& \quad \text{if} \quad \frac53 < 2H_1+H_2 <2 \ ,\\
c^2_{H_1,H_2}\cdot  n & \quad \text{if} \quad 2H_1+H_2 =2 \ ,
\end{array}
\right.
\end{equation}
for some constants $c^1_{H_1,H_2},c^2_{H_1,H_2}$, and the sequence $Y^n$ of solutions to the (renormalized) equation
\begin{equation}\label{eq-base}
\left\{\begin{array}{ccl}
(\cl Y^n) (t,x) &= &F(x,Y^n(t,x)) \cdot \partial_t\partial_x X^n (t,x)-C^n_{H_1,H_2}\cdot F(x,Y^n(t,x))\cdot \partial_2F(x,Y^n(t,x))\ ,\\
Y^n(0,x) &= &\Psi^n(x) \ , 
\end{array}\right.
\end{equation}
converges in $L^\infty([0,T]\times \R)$ to a limit $Y$. Besides, for any $s\in (0,T)$, the latter convergence also holds in the space $\cac^\ga_\scal([s,T] \times \compac)$, for every compact set $\compac \subset \R$ and every $\ga \in (\frac23,-1+2H_1+H_2)$. Finally, when $H_1=\frac12$ and $H_2 > \frac23$, the limit process $Y$ almost surely coincides with the solution of the equation
\begin{equation}\label{ito-equation}
\partial_t Y=\partial^2_x Y+F(.,Y) \, \partial_t\partial_x X \ , \ Y_0=\psi \ ,
\end{equation}
understood in the classical Itô sense (see Section \ref{sec:identif} for further details).

\end{theorem}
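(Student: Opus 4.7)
The plan is to implement the three-step strategy of \cite{hai-14} already outlined in the introduction, adapted to the heat operator $\cl$ on $\R^{1+1}$ with parabolic scaling $\scal=(2,1)$.

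\textbf{Abstract regularity structure.} First, one builds a regularity structure $\struc$ tailored to Equation (\ref{intro:eq-frac}). The base ingredients are the Taylor polynomial symbols $\mathbf{X}^k$, a noise symbol $\Xi$ of parabolic homogeneity $-2+H_1+H_2/2-\ka$ for small $\ka>0$ (reflecting the regularity of $\partial_t\partial_x X$), and abstract convolution symbols $\ci(\tau)$ representing convolution with the heat kernel $G$. One then grows the symbol set by applying the rule $\tau\mapsto F(\ci(\tau))\Xi$ and Taylor-expanding $F$, stopping once all negative-homogeneity symbols have been enumerated. In the Young regime $(i)$ the only such symbol besides $\Xi$ is $\ci(\Xi)\Xi$; in the rough regime $(ii)$ additional symbols such as $\ci(\ci(\Xi)\Xi)\Xi$ and $\ci(\Xi)\ci(\Xi)\Xi$ have to be included.

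\textbf{Construction and convergence of the model.} The smooth noise $\xi^n=\partial_t\partial_x X^n$ determines a canonical model $Z(\xi^n)$ by the obvious iterated convolution-product interpretation of each symbol. The core probabilistic task is to show that in regime $(i)$, $Z(\xi^n)$ converges almost surely in the natural model topology, while in regime $(ii)$ the Wick-renormalized model $\hat Z(\xi^n)$, obtained by subtracting $C^n_{H_1,H_2}$ from the component attached to $\ci(\Xi)\Xi$, converges as well. For each relevant symbol $\tau$ this reduces to testing the component against a rescaled test function $\vp^\la_z$ and computing a second moment using (\ref{approx-noise}); Wiener-chaos hypercontractivity then upgrades the $L^2$-bound to any $L^p$, and a Kolmogorov-type criterion for models (essentially Lemma \ref{GRR-gene}) converts these into almost-sure convergence. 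The explicit rate (\ref{estim-cstt}) for $C^n_{H_1,H_2}$ emerges from computing the expectation of the diverging second-chaos part associated with $\ci(\Xi)\Xi$, using (\ref{approx-noise}) and the explicit form of $G$.

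\textbf{Fixed point and reconstruction.} With the (renormalized) model in hand, the equation is lifted to the abstract form $\bu=\ci(F(\bu)\Xi)+G\Psi$ in a space $\cd^{\ga,\eta}$ of modelled distributions on $[0,T]\times\R$ with Hölder exponent $\ga$ and weight $\eta$ at $t=0$. Schauder-type estimates for $\ci$ together with composition estimates for $F\in\cac_\compac^\infty(\R^2)$, where the space-localization imposed by Definition \ref{def:vector-field} compensates for the unboundedness of $\R$, turn the right-hand side into a contraction on a small ball of $\cd^{\ga,\eta}$ provided $T$ is sufficiently small. The resulting time $T$ depends only on the model norm and on $\llnn\Psi^n\rrnn_{L^\infty}$, hence is almost surely uniform in $n$. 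Continuity of the fixed point in both the model and the initial datum yields $\bu^n\to\bu$, and the reconstruction $\crr_{\hat Z(\xi^n)}\bu^n$ returns the classical solution of (\ref{eq-base-young}) in regime $(i)$, respectively the renormalized classical solution of (\ref{eq-base}) in regime $(ii)$, thanks to the consistency of the canonical model with smooth calculus. Continuity of $\crr$ then delivers convergence of $Y^n$ in $L^\infty([0,T]\times\R)$ and, away from $t=0$, in $\cac^\ga_\scal([s,T]\times\compac)$.

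\textbf{Itô identification and main obstacle.} When $H_1=1/2$ the noise is white in time, and the identification with (\ref{ito-equation}) amounts to showing that $C^n_{H_1,H_2}$ matches the Itô–Stratonovich correction produced when the mild form of (\ref{ito-equation}) is expanded as a stochastic convolution; this is a direct Wiener-chaos computation at the level of the symbol $\ci(\Xi)\Xi$. The main technical obstacle lies entirely in the moment bounds of Step 2: the kernels in (\ref{approx-noise}) are only conditionally integrable and combine with the anisotropic heat kernel, so the $\R^4$-integrals arising from $\ci(\ci(\Xi)\Xi)\Xi$ and $\ci(\Xi)\ci(\Xi)\Xi$ require delicate bookkeeping, and the threshold $2H_1+H_2>5/3$ appearing in regime $(ii)$ is precisely what guarantees the convergence of these integrals.
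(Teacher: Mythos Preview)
Your overall architecture is right, but you misidentify the size of the regularity structure in both regimes, and with it the meaning of the threshold $2H_1+H_2>5/3$.

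In regime $(ii)$ the noise lies in $\cac^\al_c(\R^2)$ for $\al\in(-\tfrac43,-3+2H_1+H_2)$ (not $-2+H_1+H_2/2$ as you write), and the paper's structure contains exactly the symbols $\Xi$, $\Xi\ci(\Xi)$, $\Xi X_2$, $\ci(\Xi)$, $X_2$, $\1$. Third-order symbols such as $\ci(\ci(\Xi)\Xi)\Xi$ or $\ci(\Xi)\ci(\Xi)\Xi$ are \emph{not} included: the homogeneity of $\Xi\ci(\Xi\ci(\Xi))$ is $3\al+4$, which is strictly positive once $\al>-\tfrac43$, so no further enlargement is needed. This is precisely what the condition $2H_1+H_2>5/3$ buys, and it means the only stochastic object to build beyond $\xi$ itself is the single $K$-L\'evy area $\xi^{\mathbf 2}$ (the $(\al,K)$-rough path of Section~\ref{subsec:prelim}). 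The technical work in Section~\ref{sec:construction} is the second-moment control of $\hat\xi^{\mathbf 2,n}$ via the Fourier representation, a Wick decomposition, and the estimates of Lemmas~\ref{lem:renorm}--\ref{lem:q-psi}; there are no ``$\R^4$-integrals for $\ci(\ci(\Xi)\Xi)\Xi$'' to bound. Likewise, in the Young regime $(i)$ with $\al\in(-1,0)$, the symbol $\ci(\Xi)\Xi$ already has positive homogeneity $2\al+2>0$, so the structure collapses to $\{\1,\Xi\}$ and no L\'evy-area object is required at all (Section~\ref{sec:young}).

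A second, smaller point: the It\^o identification for $H_1=\tfrac12$ is not quite ``a direct Wiener-chaos computation''. The paper (Section~\ref{sec:identif}) reduces it, via \cite{hairer-pardoux}, to identifying $\hat\xi^{\mathbf 2}_{(s,x)}$ tested against $\psi_{(s,x)}$ with an It\^o integral, and this goes through Malliavin calculus: one writes the approximated quantity as a Skorohod integral $\delta^{X^n}$ plus a remainder (Proposition~\ref{1st-prop-ident}), passes to the limit in $L^2(\Omega)$ using \cite{chouk-tindel}, and finally invokes \cite{grorud-pardoux} to recognize $\delta^X$ as the It\^o integral.
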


Thus, as the global pathwise smoothness of the noise decreases (if we consider $(H_1,H_2)\approx (1,1)$ as the "starting" point), a change of regime is to occur at the frontier $2H_1+H_2=2$, with the emergence of some explosion phenomenon to be corrected with a specific drift term. As far as we know, such a behaviour has not been observed in the fractional-SPDEs literature up to now. 

\smallskip

It is worth noting it right now: the difficulty of Theorem \ref{main-theo} is concentrated in point $(ii)$, the proof of which will occupy most of the paper. On the opposite, the arguments toward point $(i)$ will be condensed in a few lines. Let us briefly elaborate on this organization.

\smallskip

In Section \ref{sec:mod-equation}, we review in detail, under the assumptions of point $(ii)$, the successive results toward the "lift" of the equation, following the lines of \cite{hai-14}. Thus, the procedure morally corresponds to the steps $(2)$ and $(3)$ of the above-described machinery: we recall how the equation can be transposed into an appropriate regularity structure (endowed with a suitable model) and therein solved with a basic fixed-point argument. The reasoning will in particular enable us to exhibit the central components of the model in this situation, that we have gathered within the concept of an \emph{$(\al,K)$-rough path}. Note that the theoretical considerations of Section \ref{sec:mod-equation} are independent of the choice of the approximation $\xi^n$ of the noise.

\smallskip

In Section \ref{sec:construction}, and in a somewhat retrospective manner, we will focus on the construction of the model, or equivalently of the $(\al,K)$-rough path, above the fractional noise. We obtain it as the limit of the (renormalized) canonical model associated with the smooth approximation $\xi^n=\partial_t\partial_x X^n$. The argument relies on a distributional variant of the classical Garsia-Rodemich-Rumsey Lemma (Lemma \ref{GRR-gene}), combined with suitable moments estimates. Together with the results of Section \ref{sec:mod-equation}, it will lead us to the proof of the convergence property in Theorem \ref{main-theo}, point $(ii)$ (as summed up in Section \ref{subsec:proof-point-ii}). We will then devote Section \ref{sec:identif} to the proof of the identification statement when $H_1=\frac12$, by relying on arguments borrowed from \cite{hairer-pardoux}.

\smallskip

Section \ref{sec:young} will consist in a survey of the proof of the (much easier) point $(i)$, that we call the \emph{Young case} in analogy with its one-parameter counterpart (morally, if $H_2\approx 1$, the condition indeed reduces to $H_1 >\frac12$). Finally, the appendix contains details regarding the proof of a technical result in Section \ref{sec:mod-equation}.

\

\noindent
\textbf{Acknowledgements.} I am deeply grateful to two anonymous referees: their remarks and stimulating questions have led to numerous significant improvements, both in the content and in the presentation of my results. I also thank David Nualart for his help during my bibliographic searches.

\subsection{Further work}\label{subsec:further}
We are aware that the results in this paper are only a first step toward a thorough understanding of the fractional heat model (\ref{intro:eq-frac}) through the machinery of regularity structures. Let us try to record a few natural questions (amongst many others) that arise from the statement of Theorem \ref{main-theo}, and the study of which we postpone to future works for the sake of conciseness.

\smallskip

\noindent
\textbf{(1)} The two situations described in Theorem \ref{main-theo} only cover the domain $2H_1+H_2 > \frac53$. Our guess is that the subsequent strategy could be extended (through highly technical constructions) up to the frontier $2H_1+H_2=1$. Actually, this extension would certainly require to consider successive slices of $[0,1]^2$ of the form $\frac{k+2}{k}\geq 2H_1+H_2 >\frac{k+3}{k+1}$ ($k\geq 1$), to be compared with the usual rough paths splitting $H\in (\frac{1}{k+1},\frac{1}{k}]$: a slice of higher order appeals to more a priori constructions, that is to a more sophisticated model. In parallel, the equation is of course likely to involve additional renormalization terms.

It is worth mentioning here that the particular case of a space-time white noise in (\ref{intro:eq-frac}) (equivalently, take $X$ a Brownian sheet, i.e., $H_1=H_2=\frac12$) has been recently treated by Hairer and Pardoux \cite{hairer-pardoux}, in a setting which only slightly differs from ours (the equation is therein studied on the torus, with approximation given by a mollifying procedure). As expected, the authors have to consider a much richer mode: with the above splitting of $[0,1]^2$ in mind ($\frac{k+2}{k}\geq 2H_1+H_2 >\frac{k+3}{k+1}$), the situation they focus on corresponds to a "fourth-slice" example, that is to $k=4$.

\smallskip

\noindent
\textbf{(2)} The results of Theorem \ref{main-theo} could certainly be extended to a $(d+1)$-parameter fractional Brownian motion (defined along a natural extension of (\ref{brownian-sheet-cova})), by replacing the condition $2H_1+H_2 >2$ (respectively $2\geq 2H_1+H_2 > \frac53$) with $2H_1+\sum_{i=2}^{d+1} H_i >d+1$ (respectively $d+1\geq 2H_1+\sum_{i=2}^{d+1}H_i > d+\frac23$). We have not checked the numerous technical details behind this assertion though.

\smallskip

\noindent
\textbf{(3)} The equation we study here is defined on a non-compact space domain, which, at first sight, prevents us from using numerous "topological" tools from \cite{hai-14} (observe for instance the compactness assumption in the general convergence result \cite[Theorem 10.7]{hai-14}). As we mentioned it earlier, this is the reason for our compactly-supported assumption on the vector field $F$ (see Definition \ref{def:vector-field}). In other words, and as we will see it in detail throughout the expansions of Section \ref{sec:mod-equation}, the fact that $F\in \cac^\infty_\compac(\R^2)$ will enable us to bring the study of the noise (and the model) back to some compact domain.

A significant improvement would then consist in getting rid of this compactly-supported assumption by allowing for more general $F$, which supposes to handle the (spatial) asymptotic behaviour of the processes throughout the machinery of regularity structures. Given the high technicality of the theory and its constructions, such an asymptotic follow-up happens to be a very hard task, although first results in this direction have recently been obtained by Hairer and Labbé \cite{hairer-labbe} for the parabolic Anderson model. The latter study, together with other works in progress by the same authors, give us hope that a similar strategy could be implemented for our non-linear fractional heat equation.  

\smallskip

\noindent
\textbf{(4)} Through the above formulation of the problem, we have therefore chosen to provide an interpretation of Equation (\ref{intro:eq-frac}) as the limit of ordinary differential calculus, in the spirit of rough-path-type results (see e.g. \cite[Definition 10.34]{friz-victoir-book} or the statements in \cite[Section 1.5]{hai-14}). In fact, as we evoked it earlier, this passing-to-the-limit approach will be seen as a consequence of a (much) more abstract reading of the equation in some "modelled" space. Thus, by anticipating the considerations of Section \ref{sec:mod-equation}, the regularity-structure solution $Y$ in Theorem \ref{main-theo} could equivalently be defined in a more intrinsic (but more abstract) way as the \emph{reconstruction of the unique (local) solution of the equation modelled along an appropriate $(\al,K)$-rough path $\uxi$} (see also Remark \ref{rk:maximal}).

In the case of a white-in-time noise, that is when $H_1 =\frac12$, it turns out that $Y$ also coincides with the classical Itô solution of the equation, as stated in Theorem \ref{main-theo}, point $(ii)$ (note that this property somehow extends the main identification result in \cite{hairer-pardoux} to a colored-in-space noise). Beyond this Itô situation, and based on the considerations of Section \ref{sec:identif}, we suspect that our solution $Y$ could be identified with some mild Skorohod-type solution of the equation, with convolutional integral understood via an appropriate divergence operator of Malliavin calculus (in the spirit of \cite{hu-nualart}). Nevertheless, the comparison procedure might be a complicated task here, simply because "stochastic" approaches to the non-linear model (\ref{intro:eq-frac}) have not been studied much in the literature for $H_1\neq\frac12$. Therefore, we have prefered to defer such a comparative analysis to a future study.

\smallskip

\noindent
\textbf{(5)} A well-identified drawback of this passing-to-the-limit approach is that, just as in \cite{hai-14} (or more generally throughout the rough-path literature), we cannot guarantee that the convergence results in Theorem \ref{main-theo} are independent of our choice of the approximation $\xi^n$ of $\xi=\partial_t\partial_x X$ (with the considerations of Section \ref{sec:mod-equation} in mind, the convergence actually relies on the whole underlying $(\al,K)$-rough path $\uxi$). Another natural approximation procedure, used for instance in \cite{hai-14}, is given by the convolution $\tilde{\xi}^n:=\rho_{2^n} \ast \partial_t\partial_x X$, with $\rho_{2^n}$ a (dyadic) sequence of smooth mollifiers. Our guess is that, using suitable Fourier transforms, such an approximation of the noise would lead to the very same "Skorohod-type" limit, with correction term of the same form and of the same order.

\subsection{Hairer-Besov spaces}\label{subsec:spaces} Let us conclude this first section by introducing the spaces of functions at the core of our study, along the ideas of \cite{hai-14}. From now on, and for the rest of the paper, we fix the parabolic scaling $\scal=(2,1)$ of $\R^2$, and set, for every multi-index $k=(k_1,k_2)\in \N^2$, $|k|_\scal:=2k_1+k_2$, while for $x=(x_1,x_2)\in \R^2$, we will use the scaled norm 
$$\|x\|_\scal:=(|x_1|+|x_2|^2)^{1/2} \ ,$$
with balls $\cb_\scal(x,r)$ centered at $x$ with radius $r$. 

\smallskip

\noindent
For every integer $\ell \geq 0$ and every set $\compac \subset \R^2$, we denote by $\cac^\ell(\compac)$ the space of functions $\vp:\R^2 \to \R$ with compact support in $\compac$, which are $k$-times differentiable for every multiindex $k=(k_1,k_2)$ such that $|k|_\scal \leq \ell$, with continuous bounded derivatives. Set 
$$\|\vp\|_{\cac^\ell(\compac)}:=\sum_{|k|_\scal \leq \ell} \|D^k \vp\|_{L^\infty(\R^2)} \quad \text{and} \quad \cac_c^\ell(\R^2):=\cup_{\compac \ \text{compact}} \cac^\ell(\compac).$$
Then, as usual, we define $\cd_\ell'(\R^2)$ as the set of linear forms on $\cac_c^\ell(\R^2)$ whose restriction to every $\cac^\ell(\compac)$ ($\compac$ compact) is continuous.

\smallskip

We first transpose the classical Hölder spaces in this setting:
\begin{definition}
For every $\al\in (0,1)$ and every set $\compac \subset \R^2$, we say that a function $\theta:\compac \to \R$ belongs to $\cac^\al(\compac)=\cac^\al_\scal(\compac)$ if the following quantity is finite:
\begin{equation}
\|\theta\|_{\al;\compac}:=\sup_{x\in \compac} \, |\theta (x)|+\sup_{\substack{x,y\in \compac\\ \|x-y\|_\scal \leq 1}} \frac{|\theta (x)-\theta (y)|}{\|x-y\|_\scal^\al} \ .
\end{equation}
Then we denote by $\cac^\al_c(\R^2)$ the space of functions $\theta: \R^2\to \R$ whose restriction to every compact set $\compac \subset \R^2$ belongs to $\cac^\al(\compac)$.
\end{definition}

Let us now turn to the definition of the Besov-type spaces of distributions involved in the theory of regularity structures. To this end, we need to recall the following notation for the "scaling" operator: for all $\delta >0$, $x,y\in \R^2$ and $\vp:\R^2 \to \R$, denote
\begin{equation}\label{scaling-operator}
(\cs^\delta_{\scal,x} \vp)(y):=\delta^{-3} \vp\big(\delta^{-2}(y_1-x_1),\delta^{-1}(y_2-x_2)\big) \ .
\end{equation}

\begin{definition}\label{def:besov-space}
For every $\al <0$, every $r_0\geq 1$ and every set $\compac\subset \R^2$, we say that a distribution $\xi\in \cd'(\R^2)$ belongs to $\cac^{\al,r_0}(\compac)$ if it belongs to $\cd'_{\ell}(\R^2)$ with $\ell=- \left\lfloor\al\right\rfloor$ and if the quantity
$$\|\xi\|_{\al,r_0;\compac}:=\sup_{x\in \compac} \sup_{\vp\in \cac^{\ell}(\cb_\scal(0,r_0))} \sup_{\delta\in (0,1]} \frac{|\langle \xi,\cs^\delta_{\scal,x} \vp \rangle |}{\delta^\al \|\vp\|_{\cac^\ell}}$$
is finite. In the sequel, we will essentially deal with $ \cac^{\al,1}(\compac)$, that we denote by $\cac^\al(\compac)$, and we write $\|\xi\|_{\al;\compac}$ for $\|\xi\|_{\al,1;\compac}$. Also, we denote by $\cac^\al_c(\R^2)$ the set of distributions $\xi\in \cd'(\R^2)$ such that $\xi\in \cac^\al(\compac)$ for every compact set $\compac$.
\end{definition}

The radius parameter $r_0$ actually has a limited impact in this definition, due to the following elementary property that we label for further use:

\begin{lemma}\label{lem:topo}
Fix $\al <0$. For every $r_0\geq 1$, there exists a constant $C_{r_0}$ such that, for any distribution $\xi\in \cd'_{\ell}(\R^2)$ (with $\ell=- \left\lfloor\al\right\rfloor$) and any set $\compac\subset \R^2$,
$$\|\xi\|_{\al;\compac} \leq \|\xi\|_{\al,r_0;\compac}\leq C_{r_0}\cdot \|\xi\|_{\al;\compac+\cb_\scal(0,r_0)} \ .$$
\end{lemma}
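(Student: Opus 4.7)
The left-hand inequality is essentially tautological: when $r_0=1$, Definition \ref{def:besov-space} reduces to the definition of $\|\xi\|_{\al;\compac}$, and for $r_0\geq 1$ we have the inclusion $\cac^\ell(\cb_\scal(0,1))\subset \cac^\ell(\cb_\scal(0,r_0))$, so the supremum defining $\|\xi\|_{\al,r_0;\compac}$ is taken over a strictly larger set of test functions. The real content of the lemma is the right-hand inequality, for which my strategy is a rescaling argument based on a fixed finite partition of unity on $\cb_\scal(0,r_0)$.

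First, I would do the elementary geometry: if $\vp\in\cac^\ell(\cb_\scal(0,r_0))$ and $\delta\in (0,1]$, then the rescaled test function $\cs^\delta_{\scal,x}\vp$ is supported in $\cb_\scal(x,r_0\delta)$ (indeed, $\cs^\delta_{\scal,x}\vp(y)\neq 0$ forces $\delta^{-2}|y_1-x_1|+\delta^{-2}|y_2-x_2|^2\leq r_0^2$, i.e.\ $\|y-x\|_\scal\leq r_0\delta$). Next, I would fix once and for all a finite family of points $\{y_j\}_{j=1}^{N(r_0)}\subset \cb_\scal(0,r_0)$ together with smooth functions $\chi_j\in\cac^\infty_c(\R^2)$ satisfying $\supp\chi_j\subset \cb_\scal(y_j,1)$, $\sum_j\chi_j\equiv 1$ on $\cb_\scal(0,r_0)$, with $\cac^\ell$-norms bounded by some constant depending only on $r_0$. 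Then $\vp=\sum_j\vp\chi_j$, and each $\vp_j:=\vp\chi_j$ is supported in $\cb_\scal(y_j,1)$.

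The core computation is to identify $\cs^\delta_{\scal,x}\vp_j$ with a rescaled, translated version of a test function based at the origin. Setting $\tilde\vp_j(z):=\vp_j(z+y_j)$, so that $\tilde\vp_j\in\cac^\ell(\cb_\scal(0,1))$, a direct substitution in (\ref{scaling-operator}) gives
\begin{equation*}
\cs^\delta_{\scal,x}\vp_j \, = \, \cs^\delta_{\scal,x_j'}\tilde\vp_j \qquad \text{with}\qquad x_j':=(x_1+\delta^2 y_{j,1},\, x_2+\delta y_{j,2}),
\end{equation*}
and by definition of the scaled norm, $\|x_j'-x\|_\scal=\delta\|y_j\|_\scal\leq \delta r_0\leq r_0$, so whenever $x\in\compac$ we have $x_j'\in\compac+\cb_\scal(0,r_0)$. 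Applying the definition of $\|\xi\|_{\al;\compac+\cb_\scal(0,r_0)}$ to each of the $N(r_0)$ pieces, we get
\begin{equation*}
|\langle\xi,\cs^\delta_{\scal,x}\vp\rangle| \, \leq \, \sum_{j=1}^{N(r_0)}|\langle\xi,\cs^\delta_{\scal,x_j'}\tilde\vp_j\rangle| \, \leq \, \|\xi\|_{\al;\compac+\cb_\scal(0,r_0)}\,\delta^\al\,\sum_{j=1}^{N(r_0)}\|\tilde\vp_j\|_{\cac^\ell}.
\end{equation*}
A Leibniz-rule bound $\|\tilde\vp_j\|_{\cac^\ell}\leq C\|\vp\|_{\cac^\ell}\|\chi_j\|_{\cac^\ell}$ controls each factor by a constant depending only on $r_0$ times $\|\vp\|_{\cac^\ell}$. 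Taking the supremum over $x\in\compac$, $\vp\in\cac^\ell(\cb_\scal(0,r_0))$ and $\delta\in(0,1]$ yields the required inequality with $C_{r_0}:=C\cdot N(r_0)\cdot\max_j\|\chi_j\|_{\cac^\ell}$.

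There is no serious obstacle here; the only point that requires a little care is to verify that the translation parameter appearing after rescaling stays within $\cb_\scal(0,r_0)$ of the original base point $x$, which is precisely why the enlargement by $\cb_\scal(0,r_0)$ in the right-hand side of the inequality is the correct one.
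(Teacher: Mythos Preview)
Your proof is correct and follows essentially the same approach as the paper: a finite partition of unity on $\cb_\scal(0,r_0)$, translation of each piece to a test function supported in $\cb_\scal(0,1)$, and the identity $\cs^\delta_{\scal,x}\vp_j=\cs^\delta_{\scal,x+(\delta^2 y_{j,1},\delta y_{j,2})}\tilde\vp_j$. Your write-up is in fact more detailed than the paper's, which omits the explicit verification that the shifted base point lands in $\compac+\cb_\scal(0,r_0)$ and the Leibniz bound on $\|\tilde\vp_j\|_{\cac^\ell}$.
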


\begin{proof}
Consider a finite cover $(\cb_\scal(x_i,1))_{i\in I}$ (with $x_i \in \cb_\scal(0,r_0)$) of $\cb_\scal(0,r_0)$, together with a smooth partition of unity $(\rho_i)_{i\in I}$ associated with it (meaning that $\text{supp} \, \rho_i \subset \cb_\scal(x_i,1)$ and $\sum_i \rho_i=1$ on $\cb_\scal(0,r_0)$). For any $\vp \in \cac^\ell(\cb_\scal(0,r_0))$, set $\vp_i=\rho_i\cdot \vp$ and $\bar{\vp}_i(y)=\vp_i(y+x_i)$. Then $\bar{\vp}_i \in \cac^\ell(\cb_\scal(0,1))$ and
$$\langle \xi,\cs^\delta_{\scal,x}\vp\rangle =\sum_{i\in I} \langle \xi,\cs^\delta_{\scal,x}\vp_i\rangle=\sum_{i\in I} \langle \xi,\cs^\delta_{\scal,x+(\delta^2x_i^1,\delta x_i^2)}\bar{\vp}_i\rangle \ .$$
The result immediately follows.
\end{proof}

The subsequent analysis will also appeal to some "lifted" version of these spaces, with "fibers" locally given by $\cac^\al_c(\R^2)$:

\begin{definition}\label{def:besov-space-2}
For every $\al <0$ and every set $\compac\subset \R^2$, we say that a map $\zeta:\R^2 \to  \cd'(\R^2)$ belongs to $\pmb{\cac}^\al(\compac)$ if for every $x\in \R^2$, $\zeta_x$ belongs to $\cd'_{\ell}(\R^2)$ with $\ell=- \left\lfloor\al\right\rfloor$  and if the quantity
$$\|\zeta\|_{\al;\compac}:=\sup_{x\in \compac} \sup_{\vp\in \cac^{\ell}(\cb_\scal(0,1))} \sup_{\delta\in (0,1]} \frac{|\langle \zeta_x,\cs^\delta_{\scal,x} \vp \rangle |}{\delta^\al \|\vp\|_{\cac^\ell}}$$
is finite. We denote by $\pmb{\cac}^\al_c(\R^2)$ the set of maps $\zeta:\R^2 \to  \cd'(\R^2)$ such that $\zeta\in \pmb{\cac}^\al(\compac)$ for every compact set $\compac$.
\end{definition}

\begin{remark}
One can obviously identify $\cac^\al_c(\R^2)$ with the subspace of constant map in $\pmb{\cac}^\al_c(\R^2)$.
\end{remark}

\section{Modelled equation}\label{sec:mod-equation}

In this section, we propose to review in detail the successive steps of the "lift" procedure at the core of the regularity structures machinery, under the assumptions of Theorem \ref{main-theo}, point $(ii)$, that is when $2\geq 2 H_1+H_2 >\frac53$. Thus, the subsequent reasoning is essentially a particular case of the general procedure exhibited in \cite{hai-14}. To be more specific, the assumptions that prevail in our study are not exactly the same as in \cite{hai-14}, since we have chosen to work on the whole space $\R$ (rather than a bounded domain) and rely on a localization-in-space argument based on the vector field $F$.  We will also slightly revisit the localization-in-time argument by using smooth cut-off functions. Therefore, it is our duty to check that these technical changes do not affect the method and the results of \cite{hai-14}.


\smallskip

Beyond the consideration of these minor differences, we have seen the problem as an opportunity to provide a detailled application (with explicit parameters and expansions) of the regularity structures procedure, whose general formulation may look very dense and abstract to a non-initiated reader. In some way, the forthcoming process can actually be considered as a PDE counterpart of the usual rough paths "2-step" situation, that is when the Hölder coefficient of the driving process in (\ref{rough-system}) belongs to $(\frac13,\frac12]$ and one thus needs to involve a Lévy area in the procedure (see e.g. \cite{gubi}). In this regard, observe that if we "smoothen" the noise in space by letting $H_2$ tend to $1$, the condition for the time-regularity parameter $H_1$ turns to $H_1\in (\frac13,\frac12]$. With these preliminary comments in mind, we can now turn to the introduction of our framework.

\smallskip

A first observation regarding the lift procedure is that it only depends on the global pathwise smoothness of the noise $\xi=\partial_t\partial_x X$, measured in terms of some Besov-Hairer space $\cac^\al_c(\R^2)$. Therefore, we need to slightly anticipate the next section, and especially Corollary \ref{cor:converg}: in the situation we are interested in, we know that (almost surely) $\xi\in \cac_c^\al(\R^2)$ for any $\al\in (-\frac43,-3+2H_1+H_2)$. Accordingly, from now on and for the rest of the section, we fix 
\begin{equation}
\boxed{\al\in (-\frac43,-1)}\ .
\end{equation}

\subsection{Setting}\label{subsec:prelim}
Let us first introduce the space (or \emph{regularity structure}) in which the equation will be transposed for better reading. Namely, we define successively
$$A=\{ \al,2\al+2,\al+1,0,\al+2,1 \} \ ,$$
$$\struc_\al=\text{span}\{\Xi\} \quad , \quad \struc_{2\al+2}=\text{span}\{\Xi \ci(\Xi)\} \ ,$$
$$\struc_{\al+1}=\text{span}\{\Xi X_2\} \quad , \quad \struc_{\al+2}=\text{span}\{\ci(\Xi)\} \quad , \quad \struc_1=\text{span}\{X_2\} \ ,$$
and set $\struc=\oplus_{\la\in A} \struc_\la$. For the time being, all these symbols $\Xi$, $\Xi\ci(\Xi)$,... must be considered as abstract words that we turn into independent basis vectors of the space $\struc$. We also endow $\struc$ with a natural commutative product $\star$ by setting
$$\Xi\star \ci(\Xi)=\Xi \ci(\Xi) \quad , \quad \Xi \star X_2=\Xi X_2 \quad , \quad \1 \star \tau=\tau$$
for all basis vector $\tau$, and $\tau \star \tau'=0$ in every other situation. For any $\bu\in \struc$ and $\be\in A$, we will denote by $\struc_\be(\bu)$ the projection of $\bu$ onto $\struc_\be$.

\smallskip

Then, in order to introduce the set of elements (the \emph{model}) that will make a link between $\struc$ and more common spaces of distributions, a few technical preliminaries are in order. To start with, note that the machinery is to behave very differently for the heat kernel around the singularity $0$ and away from it. Along this idea, the strategy will heavily rely on the following splitting.

\begin{lemma}{(\cite[Lemma 5.5]{hai-14})}\label{lem:decompo-noyau}
There exists a smooth function $\rho_0:\R^2 \to \R$ with compact support such that we can decompose the heat kernel $G$ as a sum
\begin{equation}\label{decompo-g}
G=\rho_0\cdot  G+(1-\rho_0)\cdot G=:K+G^\sharp 
\end{equation}
with $K,G^\sharp$ satisfying the following properties:
\smallskip

\noindent
$\bullet$ $G^\sharp$ is smooth in $\R^2$ and supported in $\R^2 \backslash \cb_\scal(0,\frac12)$\ ,

\smallskip

\noindent
$\bullet$ $K$ is supported in $\cb_\scal(0,1)$ and it can be decomposed as a sum $K=\sum_{n\geq 0} K_n$ with $K_n(x)= 2^{-2n} (\cs^{2^{-n}}_{\scal,0} K_0)(x)$ 
for some smooth function $K_0$ with support in $\cb_\scal(0,1)$. In particular, for every distribution $\eta$ and every multiindex $\ell=(\ell_1,\ell_2)\in \N^2$, one has the identity
\begin{equation}\label{decompo-k}
[ D^\ell K_n\ast \eta](x)=2^{(|\ell|_\scal-2)n} \langle \eta,\cs_{\scal,x}^{2^{-n}}(D^\ell K_0)\rangle \ ,
\end{equation}
where we recall that $|\ell|_\scal=2\ell_1+\ell_2$.
\end{lemma}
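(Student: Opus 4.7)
The argument rests on two ingredients which I would set up separately before combining them. First, the heat kernel $G(t,x)=(4\pi t)^{-1/2}e^{-x^2/(4t)}\mathbf{1}_{t>0}$ satisfies the parabolic self-similarity relation
\[
G(\delta^2 t,\delta x)=\delta^{-1}G(t,x),\qquad \delta>0,\ (t,x)\in\R^2.
\]
This identity is precisely what allows a single profile $K_0$ to serve at every dyadic scale $2^{-n}$, and it is the structural reason why the statement can be expected at all. Second, I would construct a parabolic Littlewood--Paley partition of unity: pick a smooth function $\chi:\R^2\to[0,1]$ supported in the parabolic annulus $\{1/4\leq \|y\|_\scal\leq 1\}$, strictly positive on its interior, normalized so that
\[
\sum_{n\in\Z}\chi(T_{2^{-n}}y)=1 \quad \text{for every } y\neq 0,\qquad T_\delta y:=(\delta^{-2}y_1,\delta^{-1}y_2).
\]
Since the scaled support of the $n$-th term is $\{2^{-n-2}\leq \|y\|_\scal\leq 2^{-n}\}$ and the ratio $4$ between inner and outer radii forces overlap between consecutive scales, such a $\chi$ exists by a standard normalization of an arbitrary positive bump. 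This is the only point in the whole proof that requires any care.

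With $\chi$ in hand, I define $\rho_0(y):=\sum_{n\geq 0}\chi(T_{2^{-n}}y)$, set $K:=\rho_0 G$ and $G^\sharp:=(1-\rho_0)G$. The term with index $n=-1$ (and a fortiori those with $n\leq -1$) is supported in $\{1/2\leq \|y\|_\scal\leq 2\}$, hence vanishes on $\cb_\scal(0,1/2)$, so that the partition-of-unity property yields $\rho_0\equiv 1$ on $\cb_\scal(0,1/2)$; conversely every $n\geq 0$ contributes inside $\{\|y\|_\scal\leq 2^{-n}\}\subset \cb_\scal(0,1)$, giving $\supp\rho_0\subset\cb_\scal(0,1)$. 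The smoothness of $G^\sharp$ is immediate since $G\in C^\infty(\R^2\setminus\{0\})$ and $1-\rho_0$ vanishes in a neighbourhood of the origin; the support statements for $K$ and $G^\sharp$ follow from the definitions.

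The dyadic decomposition is now just a rewriting of $\rho_0$: set $K_n(x):=\chi(T_{2^{-n}}x)G(x)$, so that $K=\sum_{n\geq 0}K_n$ by construction. Applying the scaling identity with $\delta=2^n$ gives $G(x)=2^n\,G(T_{2^{-n}}x)$, hence
\[
K_n(x)=\chi(T_{2^{-n}}x)\,G(x)=2^n\,\chi(T_{2^{-n}}x)G(T_{2^{-n}}x)=2^n K_0(T_{2^{-n}}x),
\]
where $K_0:=\chi\cdot G$ is smooth and supported in $\{1/4\leq \|y\|_\scal\leq 1\}\subset\cb_\scal(0,1)$. A direct computation of $\cs^{2^{-n}}_{\scal,0}K_0$ from (\ref{scaling-operator}) produces the factor $2^{3n}$, so that $2^{-2n}\cs^{2^{-n}}_{\scal,0}K_0=2^n K_0(T_{2^{-n}}\cdot)=K_n$, as required.

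Finally, for identity (\ref{decompo-k}) I would differentiate the representation of $K_n$: using the chain rule on $K_n(x)=2^n K_0(2^{2n}x_1,2^n x_2)$ gives
\[
D^\ell K_n(x)=2^{n(1+|\ell|_\scal)}(D^\ell K_0)(T_{2^{-n}}x),
\]
and pairing with a distribution $\eta$ and changing variables converts this prefactor into $2^{(|\ell|_\scal-2)n}\cdot 2^{3n}$, where the $2^{3n}$ is exactly the normalization contained in $\cs^{2^{-n}}_{\scal,x}(D^\ell K_0)$. There is no substantive obstacle in this step, only routine bookkeeping of scaling factors; the whole difficulty of the lemma is concentrated in selecting the correct annulus $\{1/4\leq \|y\|_\scal\leq 1\}$ for $\chi$ so that both $K_0$ lies in $\cb_\scal(0,1)$ and $\rho_0$ equals $1$ on the larger ball $\cb_\scal(0,1/2)$.
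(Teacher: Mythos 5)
Your argument is correct and follows the standard parabolic Littlewood--Paley construction underlying Hairer's Lemma~5.5, which the present paper simply cites without reproducing a proof. The one point worth flagging is the value of $\rho_0$ at the origin: as literally written, $\rho_0(y)=\sum_{n\geq 0}\chi(2^{2n}y_1,2^n y_2)$ gives $\rho_0(0)=0$, since $\chi$ vanishes in a neighbourhood of the origin, whereas $\rho_0\equiv 1$ only on the \emph{punctured} ball $\cb_\scal(0,\tfrac12)\setminus\{0\}$; you should therefore declare $\rho_0(0):=1$ by continuity, which makes $\rho_0$ agree with the constant function $1$ near the origin and hence genuinely smooth, and then $G^\sharp=(1-\rho_0)G$ is smooth through $0$ as well. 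With that one-line amendment, your support computations, the scaling relation $K_n=2^{-2n}\cs^{2^{-n}}_{\scal,0}K_0$, and the exponent bookkeeping in~(\ref{decompo-k}) all go through exactly as you describe.
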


Let us see to what extent convolution with the singular part $K$ allows to regularize our starting noise $\xi$.

\begin{lemma}\label{lem:convo-c-al}
For every $\xi\in \cac_c^\al(\R^2)$, it holds that $K\ast \xi \in \cac_c^{\al+2}(\R^2)$ and for every compact set $\compac \subset \R^2$,
$$\|K\ast \xi\|_{\al+2;\compac} \lesssim \|\xi\|_{\al;\text{rect}(\compac)} \ ,$$
where $\text{rect}(\compac)$ stands for the smallest rectangle $[x_1,x_2]\times [y_1,y_2]$ that contains $\compac$.
\end{lemma}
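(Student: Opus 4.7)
Since $\al \in (-4/3, -1)$, the target regularity $\al + 2 \in (2/3, 1)$ is positive and the space $\cac^{\al+2}(\compac)$ is the usual parabolic Hölder space. Hence it is enough to bound $|(K\ast \xi)(x)|$ and the parabolic increment $|(K\ast \xi)(x) - (K\ast \xi)(y)|$ by $\|x-y\|_\scal^{\al+2}$ for $x,y\in \compac$ with $\|x-y\|_\scal \leq 1$. The whole argument rests on the dyadic decomposition $K=\sum_{n\geq 0}K_n$ of Lemma \ref{lem:decompo-noyau} and its scaling identity (\ref{decompo-k}): for any multiindex $\ell=(\ell_1,\ell_2)$,
\[
(D^\ell K_n \ast \xi)(x) = 2^{(|\ell|_\scal-2)n}\,\langle \xi,\cs^{2^{-n}}_{\scal,x} D^\ell K_0\rangle ,
\]
and since $D^\ell K_0 \in \cac^\ell(\cb_\scal(0,1))$, Definition \ref{def:besov-space} furnishes the fundamental estimate
\[
|(D^\ell K_n \ast \xi)(x)| \lesssim 2^{(|\ell|_\scal-\al-2)n}\,\|\xi\|_{\al;\widetilde\compac},
\]
where $\widetilde\compac$ is any set containing $x$.

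For the pointwise bound, take $\ell=0$: since $\al+2>0$, the series $\sum_n 2^{-(\al+2)n}$ converges, yielding $|(K\ast \xi)(x)|\lesssim \|\xi\|_{\al;\widetilde\compac}$. For the Hölder increment, given $x,y\in \compac$ with $h:=\|x-y\|_\scal \leq 1$, fix $N$ such that $2^{-N-1}< h\leq 2^{-N}$ and split
\[
(K\ast \xi)(x)-(K\ast \xi)(y)=\sum_{n\leq N}\lp\cdot\rp + \sum_{n>N}\lp\cdot\rp.
\]
The high-frequency tail is controlled using the pointwise bound applied separately at $x$ and $y$:
\[
\sum_{n>N} 2\cdot 2^{-(\al+2)n} \lesssim 2^{-(\al+2)N} \lesssim h^{\al+2}.
\]
For the low-frequency part, we invoke the parabolic mean-value inequality
$|f(x)-f(y)|\leq \|\partial_t f\|_\infty \|x-y\|_\scal^2 + \|\partial_x f\|_\infty \|x-y\|_\scal$
applied to $f=K_n\ast \xi$, together with the building-block estimate for $\ell=(1,0)$ and $\ell=(0,1)$:
\[
\|\partial_t (K_n\ast \xi)\|_\infty \lesssim 2^{-\al n},\qquad \|\partial_x (K_n\ast \xi)\|_\infty \lesssim 2^{-(\al+1)n}.
\]
Since $\al<-1$, both exponents are positive, so
\[
\sum_{n\leq N} \bigl(2^{-\al n}\,h^2 + 2^{-(\al+1)n}\,h\bigr) \lesssim 2^{-\al N}h^2 + 2^{-(\al+1)N}h \lesssim h^{\al+2},
\]
which is exactly the sought-after bound.

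The one point requiring care is the support-tracking behind the choice of $\widetilde\compac$: each test function $\cs^{2^{-n}}_{\scal,x} K_0$ lives in $x+\cb_\scal(0,2^{-n})\subset x+\cb_\scal(0,1)$, so the above estimates naturally involve $\|\xi\|_{\al;\compac+\cb_\scal(0,1)}$ rather than $\|\xi\|_{\al;\text{rect}(\compac)}$. One reconciles the two by observing that $\text{rect}(\compac)$ already accommodates this $1$-inflation up to constant factors, and if necessary Lemma \ref{lem:topo} converts the surplus neighbourhood into an enlarged radius parameter $r_0$, absorbing the difference into a multiplicative constant. This is really the only delicate bookkeeping, the rest of the argument being a routine dyadic summation with exponents that balance precisely thanks to $\al+2>0$ and $\al+1<0$.
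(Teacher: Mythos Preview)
Your approach is essentially identical to the paper's: dyadic decomposition of $K$, pointwise bound from $\sum_n 2^{-(\al+2)n}$, splitting the increment at the scale $2^{-N}\sim\|x-y\|_\scal$, geometric tail for high frequencies, and a mean-value argument with $D^{(1,0)}K_n\ast\xi$ and $D^{(0,1)}K_n\ast\xi$ for low frequencies. The exponent bookkeeping is exactly right.

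The one genuine flaw is your last paragraph, where you misidentify why $\text{rect}(\compac)$ enters. In Definition~\ref{def:besov-space} the norm $\|\xi\|_{\al;\compac}$ takes a supremum over \emph{centres} $x\in\compac$, not over the supports of the rescaled test functions; so the inclusion $\text{supp}\,\cs_{\scal,x}^{2^{-n}}K_0\subset x+\cb_\scal(0,1)$ is irrelevant, and the basic estimate $|(D^\ell K_n\ast\xi)(z)|\lesssim 2^{(|\ell|_\scal-\al-2)n}\|\xi\|_{\al;\widetilde\compac}$ only needs $z\in\widetilde\compac$. The real reason $\text{rect}(\compac)$ appears is that in your mean-value step you evaluate $D^\ell K_n\ast\xi$ at intermediate points along an axis-parallel path from $x$ to $y$ (the paper uses $(y_1+r(x_1-y_1),x_2)$ and $(y_1,y_2+r(x_2-y_2))$); these points need not lie in $\compac$ but always lie in the enclosing rectangle. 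Your assertion that ``$\text{rect}(\compac)$ already accommodates this $1$-inflation'' is false (take $\compac$ a singleton), and Lemma~\ref{lem:topo} does not rescue it. Replace your $\|\partial_t(K_n\ast\xi)\|_\infty$, $\|\partial_x(K_n\ast\xi)\|_\infty$ by suprema over $\text{rect}(\compac)$, justified by the path argument, and the proof is complete.
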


\begin{proof}
First, for $x\in \compac$, one has by (\ref{decompo-k}) 
$$|(K\ast \xi)(x)| \leq \sum_{n\geq 0} 2^{-2n} \big| \langle \xi,\cs_{\scal,x}^{2^{-n}}K_0 \rangle \big| \lesssim \|\xi\|_{\al;\compac}\cdot \sum_{n\geq 0} 2^{-n(\al+2)} \ ,$$
and hence $\sup_{x\in \compac} |(K\ast \xi)(x)|  \lesssim \|\xi\|_{\al;\compac}$. Then fix $x\neq y \in \compac$ such that $\|x-y\|_\scal \leq 1$ and let $i\geq 0$ be such that $2^{-(i+1)} \leq \|x-y\|_\scal \leq 2^{-i}$. Decompose the difference $(K\ast \xi)(x)-(K\ast \xi)(y)$ as
$$\sum_{0\leq n <i} \big[(K_n\ast \xi)(x)-(K_n\ast \xi)(y) \big]+\sum_{n\geq i} \big[(K_n\ast \xi)(x)-(K_n\ast \xi)(y) \big]=: I_{xy}+II_{xy} \ .$$
As above, we can rely on the bound $|(K_n \ast \xi)(x)| \lesssim \|\xi\|_{\al;\compac}\cdot 2^{-n(\al+2)}$ to derive that
$$|II_{xy}| \lesssim \|\xi\|_{\al;\compac}\cdot 2^{-i(\al+2)} \lesssim \|\xi\|_{\al;\compac}\cdot \|x-y\|_\scal^{\al+2} \ .$$
In order to deal with $I_{xy}$, we decompose each summand as
\begin{eqnarray*}
\lefteqn{(K_n \ast \xi)(x)-(K_n \ast \xi)(y)}\\
&=&\big[ (K_n \ast \xi)(x_1,x_2)- (K_n \ast \xi)(y_1,x_2)\big]+\big[ (K_n \ast \xi)(y_1,x_2)- (K_n \ast \xi)(y_1,y_2)\big]\\
&=& \int_0^1 dr \, \big\{(x_1-y_1) \cdot (D^{(1,0)}K_n \ast \xi)(y_1+r(x_1-y_1),x_2)\\
& &\hspace{3cm}+(x_2-y_2)\cdot (D^{(0,1)}K_n \ast \xi)(y_1,y_2+r(x_2-y_2)) \big\} \ .
\end{eqnarray*}
Therefore, by (\ref{decompo-k}), 
$$|(K_n \ast \xi)(x)-(K_n \ast \xi)(y) | \lesssim \|\xi\|_{\al;\text{rect}(\compac)}\cdot \big\{ |x_1-y_1| \cdot 2^{-n\al}+|x_2-y_2| \cdot 2^{-n(\al+1)} \big\} \ , $$
and since both $\al$ and $\al+1$ are negative, we can conclude that
$$|I_{xy}| \lesssim \|\xi\|_{\al;\text{rect}(\compac)}\cdot \big\{ |x_1-y_1| \cdot 2^{-i\al}+|x_2-y_2| \cdot 2^{-i(\al+1)} \big\} \lesssim \|\xi\|_{\al;\text{rect}(\compac)}  \cdot \|x-y\|_\scal^{\al+2} \ .$$
\end{proof}

With this property in hand, we can define the central element (on top of $\xi$ itself) at the core of the forthcoming model.

\begin{definition}
Let $K$ be defined as in Lemma \ref{lem:decompo-noyau} and fix $\xi\in \cac^{\al}_c(\R^2)$. We call a \emph{$K$-L{\'e}vy area above $\xi$} any map $\mathcal{A} : \R^2 \to \cd'(\R^2)$ satisfying the two following conditions.

\smallskip

\noindent
(i) \emph{$K$-Chen relation:} For all $x,y\in \R^2$, $\mathcal{A}_x-\mathcal{A}_y=[(K\ast \xi)(y)-(K\ast \xi)(x)] \cdot \xi$ \ ;

\smallskip

\noindent
(ii) \emph{Besov regularity:} $\mathcal{A}$ belongs to $\pmb{\cac}^{2\al+2}_c(\R^2)$ (see Definition \ref{def:besov-space-2})\ .

\smallskip

\begin{remark}\label{rk:levy-area-smooth}
In the benchmark situation where $\xi$ actually defines a function, there exists a "canonical" $K$-Lévy area above it given by the formula 
\begin{equation}\label{levy-area-smoo}
\mathcal{A}_x(z):=[(K\ast \xi)(z)-(K\ast \xi)(x)]\cdot \xi(z) \ .
\end{equation}
However, just as with the classical Lévy area of rough-path theory, there is of course no systematic way to extend (\ref{levy-area-smoo}) to any distribution $\xi$. Section \ref{sec:construction} will actually be devoted to the construction of such a process above the fractional noise involved in Theorem \ref{main-theo}. Note also that, like its one-parameter counterpart, a $K$-Lévy area is not unique: for instance, any constant $C$ gives rise to another $K$-Lévy area by setting $\hat{\mathcal{A}}_x(z):=\mathcal{A}_x(z)+C$.
\end{remark}

We call an \emph{$(\alpha,K)$-rough path} any pair $\uxi=(\xi,\bxi^{\mathbf{2}})$ where $\xi \in \cac^{\al}_c(\R^2)$ and $\bxi^{\mathbf{2}}$ is a $K$-Lévy area above $\xi$, and we denote, for every compact set $\compac \subset \R^2$,
\begin{equation}
\|\uxi\|_{\al;\compac}=\|\xi\|_{\al;\compac}+\|\xi^{ \mathbf{2}}\|_{2\al+2;\compac} \ .
\end{equation}
Also, if $\uxi=(\xi,\bxi^{\mathbf{2}})$ and $\pmb{\zeta}=(\zeta,\zeta^{\mathbf{2}})$ are two $(\al,K)$-rough paths, we denote, for every compact set $\compac \subset \R^2$,
\begin{equation}\label{norm-rp}
\|\uxi;\pmb{\zeta}\|_{\al;\compac}=\|\xi-\zeta\|_{\al;\compac}+\|\xi^{ \mathbf{2}}-\zeta^{ \mathbf{2}}\|_{2\al+2;\compac} \ .
\end{equation}
\end{definition}

\

Now, given an $(\al,K)$-rough path $\uxi$, define 
$$\gga^{\uxi}: \R^2 \times \R^2 \to \cl(T) \quad , \quad \Pi^{\uxi}: \R^2 \to \cl(T,\cs'(\R^2)) \ ,$$
along the following formulas: for all $x,y\in \R^2$,
\begin{equation}\label{def:gga-1}
\gga^{\uxi}_{xy}(\Xi)=\Xi \quad , \quad \gga^{\uxi}_{xy}(\1)=\1 \quad , \quad  \gga^{\uxi}_{xy}(\ci(\Xi))=\ci(\Xi)+\{(K\ast \xi)(x)-(K\ast \xi)(y)\} \, \1 \ ,
\end{equation}
\begin{equation}\label{def:gga-2}
\gga^{\uxi}_{xy}(X_2)=X_2+(x_2-y_2) \, \1 \ ,
\end{equation}
\begin{equation}\label{def:gga-3}
\gga^{\uxi}_{xy}(\Xi \ci(\Xi))=\gga^{\uxi}_{xy}(\Xi)\star \gga^{\uxi}_{xy}(\ci(\Xi)) \quad , \quad  \gga^{\uxi}_{xy}(\Xi X_2)=\gga^{\uxi}_{xy}(\Xi)\star \gga^{\uxi}_{xy}(X_2) \ , 
\end{equation}
and
\begin{equation}\label{def:pi-1}
\Pi^{\uxi}_x(\Xi)=\xi \ , \ \Pi^{\uxi}_x(\Xi \ci(\Xi))=\xi^{\mathbf{2}}_x \ , \ \Pi^{\uxi}_x(\1)=1 \ , \ \Pi^{\uxi}_x(\ci(\Xi))(y)=[K\ast \xi](y)-[K\ast \xi](x) \ ,
\end{equation}
\begin{equation}\label{def:pi-2}
\Pi^{\uxi}_x(X_2)(y)=y_2-x_2 \ , \ \Pi^{\uxi}_x(\Xi X_2)=\Pi^{\uxi}_x(X_2)\cdot \Pi^{\uxi}_x(\Xi) \ .
\end{equation}

By combining Lemma \ref{lem:convo-c-al} with the definition of an $(\al,K)$-rough path, the following property is readily checked.
\begin{proposition}\label{prop:model}
For every $(\al,K)$-rough path $\uxi$, the pair $Z(\uxi)=(\Pi^{\uxi},\gga^{\uxi})$ defined along Formulas (\ref{def:gga-1})-(\ref{def:pi-2}) is a model for the regularity structure $\struc$, in the sense of \cite[Definition 2.17]{hai-14}. In particular, it satisfies the relation: for all $x,y\in \R^2$
\begin{equation}\label{key-relation}
\Pi^{\uxi}_y=\Pi^{\uxi}_x \circ \gga^{\uxi}_{xy} \ .
\end{equation}
Also, for every compact set $\compac\subset \R^2$, and with the notation of \cite[Section 2.3]{hai-14}, it holds that
$$\|(\Pi^{\uxi},\gga^{\uxi})\|_{\al;\compac}\lesssim \|\uxi\|_{\al;\compac_0} \quad , \quad \|(\Pi^{\uxi},\gga^{\uxi});(\Pi^{\uxi'},\gga^{\uxi'})\|_{\al;\compac}\lesssim \|\uxi;\uxi'\|_{\al;\compac_0} \ ,$$
for some larger compact set $\compac_0$. 

\end{proposition}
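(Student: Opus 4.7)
The plan is to verify, basis vector by basis vector, the three defining properties of a model in the sense of \cite[Definition 2.17]{hai-14}: the group-like identities $\gga^{\uxi}_{xx}=\id$ and $\gga^{\uxi}_{xy}\gga^{\uxi}_{yz}=\gga^{\uxi}_{xz}$; the algebraic compatibility $\Pi^{\uxi}_y=\Pi^{\uxi}_x\circ\gga^{\uxi}_{xy}$; and the analytic bounds that associate to each basis vector of degree $\lambda\in A$ a factor $\delta^{\lambda}$ when $\Pi^{\uxi}_x$ is tested against $\cs^{\delta}_{\scal,x}\vp$, together with a factor $\|x-y\|_{\scal}^{\lambda-\beta}$ for the $\struc_{\beta}$-component of $\gga^{\uxi}_{xy}\tau$. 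Since $\struc$ has only six basis vectors $\1,\Xi,X_2,\ci(\Xi),\Xi X_2,\Xi\ci(\Xi)$, this reduces to a finite and essentially mechanical check.

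The algebraic part is almost automatic. Multiplicativity of $\gga^{\uxi}_{xy}$ telescopes in $x,y,z$ on $\ci(\Xi)$ and $X_2$, and the multiplicative rule (\ref{def:gga-3}) extends it to the remaining products. For the compatibility $\Pi^{\uxi}_y=\Pi^{\uxi}_x\circ\gga^{\uxi}_{xy}$, the cases $\1,\Xi,X_2,\ci(\Xi),\Xi X_2$ reduce to direct cancellations from the definitions (\ref{def:pi-1})-(\ref{def:pi-2}). The substantive case is $\Xi\ci(\Xi)$: applying $\Pi^{\uxi}_x$ to $\gga^{\uxi}_{xy}(\Xi\ci(\Xi))=\Xi\ci(\Xi)+\{(K\ast\xi)(x)-(K\ast\xi)(y)\}\,\Xi$ produces $\xi^{\mathbf{2}}_x+\{(K\ast\xi)(x)-(K\ast\xi)(y)\}\,\xi$, which matches $\xi^{\mathbf{2}}_y=\Pi^{\uxi}_y(\Xi\ci(\Xi))$ exactly by the $K$-Chen relation. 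In other words, the whole algebraic layout is engineered so that the $K$-Chen relation is the single nontrivial constraint.

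The analytic estimates draw on a short list of ingredients: the Besov regularity $\xi\in\cac^{\al}_{c}(\R^{2})$ and $\xi^{\mathbf{2}}\in\pmb{\cac}^{2\al+2}_{c}(\R^{2})$ bundled in $\|\uxi\|_{\al;\compac_{0}}$ directly yield the scalings $\delta^{\al}$ and $\delta^{2\al+2}$ for $\Xi$ and $\Xi\ci(\Xi)$; Lemma~\ref{lem:convo-c-al} supplies the H\"older estimate on $K\ast\xi$ needed both for $\Pi^{\uxi}_{x}\ci(\Xi)(y)=(K\ast\xi)(y)-(K\ast\xi)(x)$ (from which $\delta^{\al+2}$ follows after testing against $\cs^{\delta}_{\scal,x}\vp$) and for the coefficient of $\1$ in $\gga^{\uxi}_{xy}\ci(\Xi)$. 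For $X_{2}$ and $\Xi X_{2}$ the missing power of $\delta$ is produced through the elementary rescaling identity $(y_{2}-x_{2})(\cs^{\delta}_{\scal,x}\vp)(y)=\delta\,(\cs^{\delta}_{\scal,x}\tilde{\vp})(y)$, with $\tilde{\vp}(u_{1},u_{2}):=u_{2}\vp(u_{1},u_{2})$ still a valid element of $\cac^{\ell}(\cb_{\scal}(0,1))$. The continuity comparison between two rough paths $\uxi,\uxi'$ follows at once by bilinearity of all the above expressions in the data $(\xi,\xi^{\mathbf{2}})$.

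The only genuinely careful point, and essentially the sole obstacle to watch, is tracking the enlargement $\compac\to\compac_{0}$: the compact $\compac_{0}$ must simultaneously absorb the unit enlargement coming from the support of $\cs^{\delta}_{\scal,x}\vp$ for $x\in\compac$, the rectangular hull $\text{rect}(\cdot)$ appearing in Lemma~\ref{lem:convo-c-al}, and the support of the kernel $K$ entering through $K\ast\xi$. Taking $\compac_{0}:=\text{rect}(\compac+\cb_{\scal}(0,1))+\cb_{\scal}(0,R)$ with $R$ larger than the diameter of $\supp K$ is sufficient.
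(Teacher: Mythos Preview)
Your proof is correct and follows the same approach as the paper: the paper's own proof merely displays the single nontrivial case $\Pi^{\uxi}_x\big(\gga^{\uxi}_{xy}(\Xi\ci(\Xi))\big)=\xi^{\mathbf{2}}_y$ via the $K$-Chen relation and declares the remaining verifications ``readily checked'' from Lemma~\ref{lem:convo-c-al} and the definition of an $(\al,K)$-rough path, which is exactly the finite case-by-case check you have spelled out in full. Your explicit handling of the $\delta$-scaling for $X_2$ and $\Xi X_2$ and your tracking of the enlargement $\compac\to\compac_0$ are useful details that the paper leaves implicit.
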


\begin{proof}
For instance, due to the $K$-Chen relation,
\begin{eqnarray*}
\Pi^{\uxi}_x\big( \Gamma^{\uxi}_{xy}\big( \Xi \ci(\Xi)\big)\big) &=& \Pi^{\uxi}_x\big( \Xi \ci(\Xi)\big) +\big\{(K\ast \xi)(x)-(K\ast \xi)(y)\big\}\cdot \Pi^{\uxi}_x(\Xi)\\
&=&\xi^{\mathbf{2}}_x+\big\{(K\ast \xi)(x)-(K\ast \xi)(y)\big\}\cdot \xi \ = \ \xi^{\mathbf{2}}_y \ = \ \Pi^{\uxi}_y\big( \Xi \ci(\Xi)\big) \ .
\end{eqnarray*}

\end{proof}


We thus have the following picture: in the situation we are interested in, providing an $(\al,K)$-rough path is enough to construct (and control) a natural model above $\xi$. To this extent, the present introduction of the setting offers some compromise between a rough-path formulation ("process + L{\'e}vy area") and a regularity-structure formulation ("regularity structure + model") of the assumptions required by the procedure.

\smallskip

At this point, and as we mentionned it in the introduction, another important ingredient consists in "lifting" the classical Hölder topology to $\struc$-valued functions. Along this idea, and given an $(\al,K)$-rough path $\uxi$, we define the spaces $\cd^{\ga,\eta}(\uxi)$ ($\ga,\eta \in \R$) of \emph{singular modelled distributions} as follows. Introduce
$$P:=\{(s,x)\in \R^2: \ s=0\} \ ,$$
and then for every set $\compac\subset \R^2$, define (following \cite[Definition 6.2]{hai-14})
\begin{equation}\label{def:norm-ga-eta}
\|\bu\|_{\ga,\eta;\compac}:=\sup_{\beta<\ga}\sup_{x\in \compac\backslash P} \frac{|\struc_\be(\bu(x))|}{\|x\|^{(\eta-\beta)\wedge 0}_P}+ \sup_{\beta<\ga}\sup_{(x,y)\in \compac_P} \frac{\big| \struc_\be\big(\bu(x)-\gga^{\uxi}_{xy}(\bu(y))\big)\big|}{\| x-y\|_\scal^{\ga-\beta}\|x;y\|_P^{\eta-\ga}} \ ,
\end{equation}
where $\|x\|_P:=\inf(1,|x_1|)$, $\|x;y\|_P:=\inf(1,|x_1|,|y_1|)$ and
$$\compac_P:=\{(x,y)\in (\compac\backslash P)^2:  \ \|x-y\|_\scal \leq  \|x;y\|_P \}\ . $$
In the sequel, we will denote by $\cd^{\ga,\eta}(\uxi)$ the space of functions $\bu:\R^2 \to T$ for which the global norm $\|\bu \|_{\ga,\eta;\R^2}$ (written $\|\bu \|_{\ga,\eta}$) is finite. Also, we denote by $\cd^{\ga,\eta}_\beta(\uxi)$ the set of elements in $\cd^{\ga,\eta}(\uxi)$ which take values in $\struc_{\beta +}=\oplus_{\la \geq \beta} \struc_{\la}$.

\smallskip

When comparing two elements $\bu\in \cd^{\ga,\eta}(\uxi)$ and $\bu'\in \cd^{\ga,\eta}(\uxi')$, we use the natural quantity
\begin{multline}\label{def:norm-ga-eta-2}
\|\bu;\bu'\|_{\ga,\eta;\compac}:=\\
\sup_{\beta<\ga}\sup_{x\in \compac\backslash P} \frac{|\struc_\be(\bu(x)-\bu'(x))|}{\|x\|^{(\eta-\beta)\wedge 0}_P}+ \sup_{\beta<\ga}\sup_{(x,y)\in \compac_P} \frac{\big| \struc_\be\big(\bu(x)-\gga^{\uxi}_{xy}(\bu(y))-\bu'(x)+\gga^{\uxi'}_{xy}(\bu'(y))\big)\big|}{\| x-y\|_\scal^{\ga-\beta}\|x;y\|_P^{\eta-\ga}} \ .
\end{multline}

\begin{remark}
As reported in \cite[Remark 5.1]{hai-little}, the essential component in the two quantities (\ref{def:norm-ga-eta}) and (\ref{def:norm-ga-eta-2}) lies in the Hölder smoothness property of the process, that is in the consideration of the increment $\|x-y\|_\scal^{\ga-\be}$. Controlling the sigularity at time $0$ (through the term $\|x\|^{(\eta-\beta)\wedge 0}_P$ or $\|x;y\|_P^{\eta-\ga}$) is of minor importance as far as the global dynamics of the structures is concerned, although this lever will prove to be necessary in order to settle a fixed-point argument or to lift the initial condition.
\end{remark}

It turns out that under suitable regularity assumptions, we can \emph{reconstruct}, from a given modelled distribution, a real distribution (understood in the classical sense) along a procedure which continuously extends the "smooth" case, that is the situation where $\xi$ is a differentiable noise. This result, which defines the so-called \emph{reconstruction operator}, is one of the cornerstone of the regularity structures theory. The thorough statement that we provide here is obtained by combining Theorem 3.10, Remark 3.15, Lemma 6.7 and Proposition 6.9 of \cite{hai-14}.
\begin{theorem}{(Reconstruction operator)}\label{theo:reconstruction}
Let $\uxi$ be an $(\al,K)$-rough path and $\bu\in \cd^{\ga,\eta}_\beta(\uxi)$, for some parameters $\ga >0$, $\eta \leq \ga$ and $\beta \in [\al,0]$. Then there exists a unique element $\crr_{\uxi}\bu\in \cac_c^{\beta \wedge \eta}(\R^2)$ which satisfies the two following properties:

\smallskip

\noindent
(i) ("Globally") For every compact set $\compac \subset \R^2$,
\begin{equation}\label{recon-global}
\|\crr_{\uxi}(\bu)\|_{\beta \wedge \eta;\compac} \lesssim \|\uxi\|_{\al;\bar{\compac}} \|\bu\|_{\ga,\eta} \ ,
\end{equation}
where $\bar{\compac}$ stands for the 1-fattening of $\compac$.

\smallskip

\noindent
(ii) ("Locally") For every compact set $\compac \subset \R^2$, every $x\in \compac \backslash P$, every $\delta \in (0,\frac14 \|x\|_P^{1/2})$ and every $\vp\in \cac^2(\cb_\scal(0,1))$,
\begin{equation}\label{recon-local}
\big| \big( \crr_{\uxi}\bu-\Pi^{\uxi}_x(\bu(x))\big)\big( \cs^\delta_{\scal,x}\vp\big)\big| \lesssim  \delta^\ga \|\vp\|_{\cac^2}  \|x\|_P^{\eta-\ga}\cdot \|\uxi\|_{\al;\bar{\compac}} \|\bu\|_{\ga,\eta} \ .
\end{equation}

\smallskip

\noindent
Besides, if $\uxi'$ is another $(\al,K)$-rough path and $\bu'\in \cd^{\ga,\eta}_\beta(\uxi')$, one has, with similar notations,
\begin{equation}\label{conti-reco}
\| \crr_{\uxi}\bu-\crr_{\uxi'}\bu'\|_{\beta \wedge \eta;\compac}\leq Q_{\uxi,\uxi',\bu,\bu'} \cdot \big\{ \|\uxi;\uxi'\|_{\al,\bar{\compac}}+\|\bu;\bu'\|_{\ga,\eta;\bar{\compac}}\big\}\ ,
\end{equation}
as well as
\begin{multline}
\big| \big( \crr_{\uxi}\bu-\Pi^{\uxi}_x(\bu(x))-\crr_{\uxi'}\bu'+\Pi'_x(\bu'(x))\big)\big( \cs^\delta_{\scal,x}\vp\big)\big|\\
 \leq Q_{\uxi,\uxi',\bu,\bu'}\cdot \delta^\ga \|\vp\|_{\cac^2}  \|x\|_P^{\eta-\ga} \big\{ \|\uxi;\uxi'\|_{\al,\bar{\compac}}+\|\bu;\bu'\|_{\ga,\eta;\bar{\compac}}\big\}\ ,
\end{multline}
where in both inequalities, $Q_{\uxi,\uxi',\bu,\bu'}$ stands for a polynomial expression in $\|\uxi\|_{\al;\bar{\compac}}$, $\|\uxi'\|_{\al;\bar{\compac}}$, $\|\bu\|_{\ga,\eta;\bar{\compac}}$ and $\|\bu'\|_{\ga,\eta;\bar{\compac}}$. 
\end{theorem}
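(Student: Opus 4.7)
The proof is essentially a compactly-supported, weighted adaptation of Hairer's reconstruction theorem \cite[Theorem 3.10 and Proposition 6.9]{hai-14}; the plan is to reproduce that argument with the weights $\|x\|_P$ and $\|x;y\|_P$ attached to the singular set $P=\{s=0\}$. The core object is a wavelet-based telescoping approximation of $\crr_\uxi \bu$ by patched local expansions $\Pi^\uxi_x(\bu(x))$. More specifically, I would fix a compactly supported orthonormal wavelet basis $\{\psi_{n,k}\}$ adapted to the parabolic scaling $\scal=(2,1)$, with $\psi_{n,k}=\cs^{2^{-n}}_{\scal,x_{n,k}}\psi$ on a dyadic grid $(x_{n,k})$, and define the approximations
\begin{equation*}
\crr^{(n)}_\uxi \bu := \sum_k \big\langle \Pi^\uxi_{x_{n,k}}(\bu(x_{n,k})), \psi_{n,k}\big\rangle \, \psi_{n,k};
\end{equation*}
the sought-after $\crr_\uxi \bu$ would then arise as the $n\to \infty$ limit.

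For grid points $x_{n,k}$ at parabolic distance at least $2^{-n}$ from $P$, the telescoping increment $\crr^{(n+1)}_\uxi \bu - \crr^{(n)}_\uxi \bu$ decomposes into sums of differences $\Pi^\uxi_y(\bu(y)) - \Pi^\uxi_x(\bu(x))$ with $\|x-y\|_\scal \lesssim 2^{-n}$. Using the consistency relation $\Pi^\uxi_y = \Pi^\uxi_x\circ \gga^\uxi_{xy}$ from Proposition \ref{prop:model} together with the modelled-distribution bound in the definition of $\cd^{\ga,\eta}(\uxi)$, each such difference decays geometrically at rate $2^{-n(\ga-\be)}$ for every $\be<\ga$. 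Summing over dyadic scales yields convergence in an appropriate local Besov-type norm and produces both the global estimate (i) and the local estimate (ii) on $\R^2 \setminus P$.

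The genuinely delicate step is the treatment of the singular set $P$. There the factor $\|x;y\|_P^{\eta-\ga}$ obstructs unlimited refinement, so I would truncate each dyadic cascade at the critical scale $2^{-n}\sim \|x_{n,k}\|_P^{1/2}$: for wavelets whose support sits within parabolic distance of order $2^{-n}$ of $P$, the refinement stops and one uses directly the pointwise bound $|\struc_\be(\bu(x))| \lesssim \|x\|_P^{(\eta-\be)\wedge 0}$. The assumptions $\be\in[\al,0]$ and $\eta \leq \ga$ ensure that the truncation errors, when tested against $\cs^\delta_{\scal,x}\varphi$, sum up to produce precisely the $\|x\|_P^{\eta-\ga}$ weight in (ii). Uniqueness is standard: any two candidates for $\crr_\uxi \bu$ differ by a distribution whose pairing with $\cs^\delta_{\scal,x}\varphi$ decays as $\delta^\ga$ with $\ga>0$ away from $P$, forcing it to vanish there, and the $\eta$-weighted bound then excludes any residual contribution carried by $P$.

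The continuity estimates would follow by applying the same scheme to the difference, via the splitting
\begin{equation*}
\Pi^\uxi_x(\bu(x)) - \Pi^{\uxi'}_x(\bu'(x)) = \Pi^\uxi_x\big(\bu(x) - \bu'(x)\big) + \big(\Pi^\uxi_x - \Pi^{\uxi'}_x\big)(\bu'(x)),
\end{equation*}
each term being controlled by the corresponding model/modelled-distribution norms. The main technical obstacle, and the source of the polynomial dependence $Q_{\uxi,\uxi',\bu,\bu'}$, is the book-keeping of exponents in the truncated telescoping sum near $P$: one must verify that all summations over dyadic scales and grid points converge with the prescribed rates $\ga-\be$ and $\eta-\be$ simultaneously in $\be \in [\al,0]$, and check that the non-compactness of our spatial domain does not affect the local arguments. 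The latter is handled by the fact that only a fattening $\bar{\compac}$ of $\compac$ ever enters the final bounds, in the same spirit as Lemma \ref{lem:convo-c-al}.
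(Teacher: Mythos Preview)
Your proposal is correct and follows precisely the route the paper takes: the paper does not give an independent proof of this theorem but simply records that the statement is obtained by combining Theorem 3.10, Remark 3.15, Lemma 6.7 and Proposition 6.9 of \cite{hai-14}. What you have written is a faithful sketch of exactly that argument (wavelet approximation, telescoping via the model consistency relation, truncation near $P$ at scale $2^{-n}\sim \|x\|_P^{1/2}$, and the standard splitting for continuity), so there is nothing to add.
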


\smallskip

\begin{corollary}
If $\uxi$ is smooth, in the sense that $\xi$ (resp. $\xi^{\2}$) defines a smooth function $\xi:\R^2 \to \R$ (resp. $\xi^{\2}:\R^2 \times \R^2 \to  \R$), then, except on $P$, $\crr_{\uxi}\bu$ is a continuous function given by the formula: for all $x\in \R^2\backslash P$,
\begin{equation}\label{reconstruction-smooth}
(\crr_{\uxi}\bu)(x)=\Pi^{\uxi}_x(\bu(x))(x) \ .
\end{equation}
\end{corollary}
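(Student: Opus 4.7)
The natural strategy is to define the candidate function
\[
\tilde{u}(x) := \Pi^{\uxi}_x(\bu(x))(x), \qquad x \in \R^2 \setminus P,
\]
and to show that, viewed as a distribution, $\tilde{u}$ satisfies the characterization of $\crr_{\uxi}\bu$, thereby invoking the uniqueness statement in Theorem \ref{theo:reconstruction}. The pointwise evaluation in the right-hand side is legitimate in the smooth case, since each $\Pi^{\uxi}_x(\tau)$ ($\tau$ a basis vector of $\struc$) defines a smooth function of $z\in\R^2$ by the explicit formulas (\ref{def:pi-1})-(\ref{def:pi-2}).

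First I would verify that $\tilde{u}$ is continuous on $\R^2 \setminus P$. The joint continuity of $(x,z) \mapsto \Pi^{\uxi}_x(\tau)(z)$ follows from the smoothness of $\uxi$ and the continuity in $x$ of the building blocks $(K\ast\xi)(x)$, $\xi^{\mathbf{2}}_x$, $x_2$. Combined with the Hölder continuity of $x \mapsto \struc_\be(\bu(x))$ encoded in $\|\bu\|_{\ga,\eta}$ (together with $\Pi^{\uxi}_x\circ\gga^{\uxi}_{xz} = \Pi^{\uxi}_z$), this yields continuity of $\tilde u$ away from the singular set $P$.

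Next I would verify the local bound (\ref{recon-local}) for $\tilde u$. The key computation exploits the algebraic relation (\ref{key-relation}): for any $x\in\R^2 \setminus P$ and $z$ close to $x$,
\[
\tilde{u}(z) - \Pi^{\uxi}_x(\bu(x))(z) \;=\; \Pi^{\uxi}_z(\bu(z))(z) - \Pi^{\uxi}_x(\bu(x))(z) \;=\; \Pi^{\uxi}_x\!\bigl(\gga^{\uxi}_{xz}\bu(z) - \bu(x)\bigr)(z).
\]
Decomposing $\gga^{\uxi}_{xz}\bu(z) - \bu(x) = \sum_{\be \in A} c^{(\be)}_{xz}\, \tau_\be$, the definition (\ref{def:norm-ga-eta}) of $\cd^{\ga,\eta}(\uxi)$ gives, for $(x,z) \in \compac_P$,
\[
|c^{(\be)}_{xz}| \;\lesssim\; \|x-z\|_\scal^{\ga - \be}\, \|x;z\|_P^{\eta - \ga}\, \|\bu\|_{\ga,\eta}.
\]
Pairing against $\cs^\delta_{\scal,x}\vp$ for $\delta \in (0, \tfrac14 \|x\|_P^{1/2})$, whose support lies in $\cb_\scal(x,\delta)$ and whose $L^1$-norm equals $\|\vp\|_{L^1}$, I would then estimate each component separately.

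For the positive-homogeneity basis elements ($\1$, $\ci(\Xi)$, $X_2$) the explicit formulas (\ref{def:pi-1})-(\ref{def:pi-2}) combined with Lemma \ref{lem:convo-c-al} give the pointwise Hölder-type bound $|\Pi^{\uxi}_x(\tau_\be)(z)| \lesssim \|\uxi\|_{\al;\bar{\compac}}\,\|x-z\|_\scal^\be$, producing a contribution of order $\delta^{\ga-\be}\cdot \delta^\be = \delta^\ga$ after integration. For the negative-homogeneity basis elements ($\Xi$, $\Xi\ci(\Xi)$, $\Xi X_2$) only the uniform bound $\|\Pi^{\uxi}_x(\tau_\be)\|_{L^\infty(\cb_\scal(x,\delta))} \lesssim 1$ (valid in the smooth case) is available; this produces a contribution of order $\delta^{\ga-\be}$, which is bounded by $\delta^\ga$ since $\be < 0$ and $\delta \leq 1$. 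Aggregating the singular factor $\|x;z\|_P^{\eta-\ga} \lesssim \|x\|_P^{\eta-\ga}$ (using that $\|x-z\|_\scal\leq \|x;z\|_P$) then delivers exactly the required bound (\ref{recon-local}). By the uniqueness part of Theorem \ref{theo:reconstruction}, we conclude $\tilde u = \crr_{\uxi}\bu$ off of $P$.

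The main obstacle is the case analysis in the final estimate: while the positive-homogeneity terms are handled using standard Hölder bounds on the model components, the negative-homogeneity terms require us to replace the distributional model bound $\delta^\be$ (which is useless when $\be < 0$) with the uniform pointwise bound $\|\Pi^{\uxi}_x(\tau_\be)\|_{L^\infty}$, and to observe that this loss is compensated by the improved Hölder exponent $\ga - \be > \ga$ of the coefficient $c^{(\be)}_{xz}$. This delicate balance, specific to the smooth case, is what ultimately yields the overall $\delta^\ga$ decay.
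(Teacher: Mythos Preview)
The paper does not supply an explicit proof for this corollary; it is stated as a direct consequence of the results quoted from \cite{hai-14} (specifically Remark~3.15 there). Your plan reproduces precisely the standard argument behind that remark---defining the candidate pointwise, rewriting $\tilde u(z)-\Pi^{\uxi}_x(\bu(x))(z)$ via the relation $\Pi^{\uxi}_z=\Pi^{\uxi}_x\circ\gga^{\uxi}_{xz}$, and bounding each homogeneous component with the modelled-distribution norm together with the (in the smooth case pointwise) model bounds---and is correct.
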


\smallskip

\begin{remark}
Here and in the sequel, the "smoothness" terminology is more of an additional reference to the vocabulary commonly used in rough-path theory, and in this previous statement, such a regularity assumption can of course be alleviated in a drastic way (see \cite[Remark 3.15]{hai-14}). This being said, for fixed $n$, the approximation $\xi^n$ which we shall then apply this particular result to does define a smooth (i.e., infinitely differentiable) function.
\end{remark}

\

We now have all the tools in hand to be more specific about the objective of the next subsections. Namely, we intend to show how the equation can be naturally transposed and solved in the space $\cd_0^{\ga,0}(\uxi)$, where the parameter $\ga$ is henceforth fixed as follows:
\begin{equation}\label{ga-eta}
\boxed{\ga=2\al+4 \ \in (1,2)  } \ .
\end{equation}
To get a clear insight into the topology induced by such parameters, observe that, by definition, the space $\cd_0^{\ga,0}(\uxi)$ corresponds to the set of functions 
\begin{equation}\label{expan-bu}
\bu=\bu^0\, \1+\bu^1 \, \ci(\Xi)+\bu^2\,  X_2
\end{equation}
such that
\begin{equation}\label{condition-sup}
\sup_{x\in \R^2}\big( |\bu^0(x)|,|\bu^1(x)| \cdot \|x\|_P^{\al+2}, |\bu^2(x)| \cdot \|x\|_P \big) \ < \infty \ ,
\end{equation}
and for all $(x,y)\in \R^2$ satisfying $x_1\neq 0$, $y_1\neq 0$ and $\|x-y\|_\scal \leq \|x;y\|_P$,
\begin{equation}\label{condition-holder-1}
|\bu^0(x)-\bu^0(y)-\bu^1(y)\cdot [(K\ast \xi)(x)-(K\ast \xi)(y)]-\bu^2(y)\cdot (x_2-y_2)| \leq C \cdot \|x-y\|_\scal^\ga\cdot \|x;y\|_P^{-\ga} \ ,
\end{equation}
\begin{equation}\label{condition-holder-2}
|\bu^1(x)-\bu^1(y)| \leq C \cdot \|x-y\|_\scal^{\al+2}\cdot \|x;y\|_P^{-\ga} \ , 
\end{equation}
\begin{equation}\label{condition-holder-3}
|\bu^2(x)-\bu^2(y)| \leq C \cdot \|x-y\|_\scal^{\ga-1}\cdot \|x;y\|_P^{-\ga} \ ,
\end{equation}
for some finite constant $C$.

\smallskip

An important remark here is that, due to the uniqueness property contained in Theorem \ref{theo:reconstruction} and given the above regularity conditions, the reconstruction $\mathcal{R}_{\uxi}(\bu)$ of such an element $\bu \in \cd^{\ga,0}_0(\uxi)$ is actually very easy to identify (see \cite[Proposition 3.28]{hai-14} for further details):

\begin{proposition}\label{prop:reconstruction-fonction}
For every $(\al,K)$-rough path $\uxi$ and every $\bu\in \cd^{\ga,0}_0(\uxi)$ with decomposition (\ref{expan-bu}), it holds that $\mathcal{R}_\xi(\bu)(x)=\bu^0(x)$ for every $x\in \R^2$.
\end{proposition}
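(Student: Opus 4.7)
The plan is to exploit the uniqueness statement inside Theorem \ref{theo:reconstruction}: since $\gamma>0$, any distribution satisfying the local bound (\ref{recon-local}) at every base point $x\in \R^2\setminus P$ must coincide with $\crr_{\uxi}\bu$. It is therefore enough to check that $\bu^0$ itself, viewed as a distribution via $\langle \bu^0,\psi\rangle := \int \bu^0(y)\psi(y)\,dy$, fulfils this estimate. The pairing makes sense because $\bu^0$ is bounded by (\ref{condition-sup}) and hence locally integrable on $\R^2$.

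First I would unwind the right-hand side of the local bound. By the very definition (\ref{def:pi-1})--(\ref{def:pi-2}) of $\Pi^{\uxi}_x$ on the three basis vectors $\1,\ci(\Xi),X_2$, the distribution $\Pi^{\uxi}_x(\bu(x))$ coincides on $\R^2$ with the continuous function
\begin{equation*}
y \;\longmapsto\; \bu^0(x) + \bu^1(x)\bigl[(K\ast\xi)(y)-(K\ast\xi)(x)\bigr] + \bu^2(x)(y_2-x_2),
\end{equation*}
so that
\begin{equation*}
\bu^0(y) - \Pi^{\uxi}_x(\bu(x))(y) = \bu^0(y)-\bu^0(x)-\bu^1(x)\bigl[(K\ast\xi)(y)-(K\ast\xi)(x)\bigr]-\bu^2(x)(y_2-x_2),
\end{equation*}
which is precisely the scalar quantity controlled in (\ref{condition-holder-1}) (after swapping the roles of $x$ and $y$ there).

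Next I would verify that this Hölder-type bound can indeed be applied uniformly in the support of the rescaled test function. For $\delta\in (0,\tfrac{1}{4}\|x\|_P^{1/2})$, the support of $\cs^{\delta}_{\scal,x}\varphi$ lies in $\cb_\scal(x,\delta)$; any $y$ in this ball satisfies $|y_1-x_1|\le \delta^2 \le \tfrac{1}{16}\|x\|_P$, so $y_1\ne 0$ and $\|x-y\|_\scal \le \|x;y\|_P$, placing $(x,y)$ in the admissibility range $\compac_P$ required by (\ref{condition-holder-1}). Plugging in this estimate with $\eta=0$ and integrating against $\cs^{\delta}_{\scal,x}\varphi$ (whose $L^1$-norm is bounded by $\|\varphi\|_{L^1} \lesssim \|\varphi\|_{\cac^2}$, independent of $\delta$), I obtain
\begin{equation*}
\bigl|(\bu^0-\Pi^{\uxi}_x(\bu(x)))(\cs^{\delta}_{\scal,x}\varphi)\bigr| \;\lesssim\; \|\bu\|_{\gamma,0}\,\|x\|_P^{-\gamma}\!\!\int_{\cb_\scal(x,\delta)}\|y-x\|_\scal^{\gamma}\bigl|(\cs^{\delta}_{\scal,x}\varphi)(y)\bigr|\,dy \;\lesssim\; \delta^{\gamma}\,\|\varphi\|_{\cac^2}\,\|x\|_P^{-\gamma}\,\|\bu\|_{\gamma,0},
\end{equation*}
which is exactly the local characterisation (\ref{recon-local}) (with $\eta=0$ and up to the factor $\|\uxi\|_{\alpha;\bar\compac}$, harmlessly absorbed since $\bu^1,\bu^2$ are controlled by $\|\bu\|_{\gamma,0}$ only through (\ref{condition-holder-1})).

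By the uniqueness part of Theorem \ref{theo:reconstruction}, this forces $\crr_{\uxi}\bu = \bu^0$ as distributions on $\R^2\setminus P$. Since $\bu^0$ is continuous on $\R^2\setminus P$---a direct consequence of (\ref{condition-holder-1}) combined with the bounds (\ref{condition-sup}) on $\bu^1,\bu^2$ and the continuity of $K\ast\xi$ supplied by Lemma \ref{lem:convo-c-al}---the identity upgrades to a pointwise equality, and extends to all of $\R^2$ since $P$ has zero measure. The only mild obstacle along the way is the geometric check that the admissibility constraint $\|x-y\|_\scal\le \|x;y\|_P$ from (\ref{condition-holder-1}) is met throughout the support of $\cs^{\delta}_{\scal,x}\varphi$; this is precisely the reason the restriction $\delta<\tfrac14\|x\|_P^{1/2}$ appears in Theorem \ref{theo:reconstruction}$(ii)$, and no further difficulty is expected.
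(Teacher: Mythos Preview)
Your proposal is correct and follows precisely the route the paper indicates: the paper does not spell out a proof here but simply points to the uniqueness part of Theorem~\ref{theo:reconstruction} (and to \cite[Proposition~3.28]{hai-14}), and you have unpacked exactly that argument by checking that $\bu^0$ satisfies the local bound (\ref{recon-local}) via (\ref{condition-holder-1}). The only minor wrinkle is the verification that $\|x-y\|_\scal\le \|x;y\|_P$ holds on the support of $\cs^{\delta}_{\scal,x}\varphi$ for \emph{all} $\delta<\tfrac14\|x\|_P^{1/2}$ under the paper's convention $\|x\|_P=\inf(1,|x_1|)$; strictly speaking this may fail for $|x_1|$ very small, but since the uniqueness argument only needs the local bound for arbitrarily small $\delta$, restricting further to $\delta\lesssim \|x\|_P$ closes the gap immediately.
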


\begin{remark}\label{rk:lift}
Let us try to give a better idea about what we mean by "lifting" the equation in $\struc$. In fact, with Proposition \ref{prop:reconstruction-fonction} in mind, it is natural to consider $\bu$ as a "lift" of $\bu^0$ in $\struc$, with $\bu^1$ and $\bu^2$ playing the role of artificial "derivatives" components. The objective can now be stated as follows: we wish to turn the ("ill-posed") equation (\ref{intro:eq-frac}) into a ("well-posed") equation in $\cd^{\ga,0}_0(\uxi)$ (with internal operations in $\struc$) and therein exhibit a solution $\bu$. Also, the procedure must be performed in such a way that, if $\xi$ happens to be smooth and $\uxi$ is the canonical $(\al,K)$-rough path defined by (\ref{levy-area-smoo}), then the reconstructed process $u:=\crr_\xi\bu$ is the solution of the original equation driven by $\xi$ (and understood in the classical sense). Such a consistency will therefore offer a strong evidence in favor of the viability of the modelling, which will be confirmed a posteriori thanks to the continuity properties of the procedure (see Section \ref{subsec:proof-point-ii}). 
\end{remark}

\

To conclude with these preliminaries, note that, since we are only interested in solutions on a small interval $[0,T]$ (with say $0 <T\leq 1$), we will rely on a localization-in-time of the equation based on cut-off functions. To be more specific, we recast the target equation as follows:
\begin{equation}\label{eq-loc}
u(x)=(G_{x_1} \Psi)(x_2)+\rho_T(x_1) \cdot (G\ast [\rho_+ \cdot F(u) \cdot \xi])(x) \ ,
\end{equation}
where:

\smallskip

\noindent
$\bullet$ $\rho_+(x):=\rho(x_1) \cdot \1_{\R_+}(x_1)$ for some smooth function $\rho:\R \to [0,1]$ with support in $[-2;2]$ and such that $\rho \equiv 1$ on $[-1,1]$ ;

\smallskip

\noindent
$\bullet$ $\rho_T:\R \to [0,1]$ is a smooth function with support in $[-3T;3T]$ such that $\rho_T \equiv 1$ on $[-T,T]$  and $\|\rho'_T\|_{L^\infty(\R)} \lesssim T^{-1}$ (see Lemma \ref{lem:rho-t});

\smallskip

\noindent
$\bullet$ $F(u)(x):=F(x_2,u(x))$ for some function $F\in \cac^\infty_{\compac_F}(\R^2)$ (see Definition \ref{def:vector-field})  ;

\smallskip

\noindent 
$\bullet$ we have set $(G_{x_1} \Psi)(x_2)=\int_{\R} dy\,  G(x_1,x_2-y) \Psi(y) $\, .

\smallskip

\begin{lemma}\label{lem:rho-t}
For any $T>0$, there exists a smooth function $\rho_T:\R\to [0,1]$ with support in $[-3T;3T]$ such that $\rho_T \equiv 1$ on $[-T,T]$  and $\|\rho'_T\|_{L^\infty(\R)} \lesssim T^{-1}$.
\end{lemma}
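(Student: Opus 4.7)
The statement is a standard bump-function construction together with a rescaling argument. The plan is to first produce a ``reference'' cut-off at scale $T=1$, and then obtain the general case by a linear change of variables, which will turn the required bound on the derivative into a mere chain-rule computation.

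More precisely, I would first fix once and for all a smooth function $\rho_1:\R\to [0,1]$ with compact support in $[-3,3]$ and satisfying $\rho_1\equiv 1$ on $[-1,1]$. The existence of such a $\rho_1$ is classical: for instance, start from the standard $C^\infty_c$ bump $\psi(x)=\exp(-1/(1-x^2))\cdot \1_{\{|x|<1\}}$, renormalize it into a mollifier $\psi_\eps(x)=\eps^{-1}\psi(x/\eps)$ with $\int\psi_\eps=1$, and convolve the indicator function $\1_{[-2,2]}$ with $\psi_{1/2}$; the resulting smooth function takes values in $[0,1]$, is supported in $[-3,3]$ (since $\supp(\1_{[-2,2]}\ast\psi_{1/2})\subset [-2,2]+[-1/2,1/2]=[-5/2,5/2]\subset[-3,3]$), and equals $1$ on $[-1,1]$ (since for $|x|\leq 1$, $(\1_{[-2,2]}\ast\psi_{1/2})(x)=\int\psi_{1/2}=1$).

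Having fixed such a $\rho_1$, I would then simply set
\begin{equation*}
\rho_T(x):=\rho_1(x/T), \qquad x\in\R.
\end{equation*}
It is immediate that $\rho_T$ is smooth with values in $[0,1]$. For the support, $\rho_T(x)\neq 0$ forces $|x/T|\leq 3$, i.e.\ $x\in [-3T,3T]$. For the plateau condition, if $|x|\leq T$ then $|x/T|\leq 1$, hence $\rho_T(x)=\rho_1(x/T)=1$. Finally, by the chain rule $\rho'_T(x)=T^{-1}\rho'_1(x/T)$, so
\begin{equation*}
\|\rho'_T\|_{L^\infty(\R)}=T^{-1}\|\rho'_1\|_{L^\infty(\R)}\lesssim T^{-1},
\end{equation*}
where the implicit constant depends only on the (once and for all fixed) choice of $\rho_1$.

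Since the whole argument reduces to the existence of one fixed smooth bump plus an elementary rescaling, there is no genuine obstacle: the only ``content'' is the reminder that differentiating $\rho_1(x/T)$ produces the factor $T^{-1}$ needed for the uniform bound.
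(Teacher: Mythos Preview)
Your proof is correct and follows essentially the same approach as the paper: the paper sets $\rho_T=\1_{[-2T,2T]}\ast \vp_T$ with $\vp_T(u)=T^{-1}\vp(u/T)$ for a fixed mollifier $\vp$ supported in $[-1,1]$, which is precisely your $\rho_1(\cdot/T)$ written out directly at scale $T$. The only cosmetic difference is that you first build the reference bump $\rho_1$ and then rescale, whereas the paper rescales the mollifier and convolves; the resulting function and the derivative bound via the chain rule are identical.
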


\begin{proof}
Consider a mollifier $\vp$ on $[-1,1]$, that is a smooth function $\vp:\R\to \R$ with support in $[-1,1]$ and such that $\int_{\R} \vp(u) \, du=1$. Set $\vp_T(u)=\frac{1}{T} \vp\big( \frac{u}{T}\big)$. Then it is easy to check that the function $\rho_T=\1_{[-2T,2T]} \ast \vp_T$ meets the required conditions.
\end{proof}

Given a smooth $\rho:\R \to \R$ and a function $\bu:\R^2 \to \struc$, we will denote by $\rho \cdot \bu:\R^2 \to \struc$ the function whose coordinates in $\struc$ are simply given by $\struc_\be(\rho \cdot \bu)(x)=\rho(x_1) \cdot \struc_\be(\bu)(x)$, $\be \in A$.

\smallskip

For compactness reasons that will prove to be fundamental in Section \ref{sec:construction}, we will also need to control the support of the process at each step of the procedure, both in time and in space.

\begin{definition}
We call the \emph{support} of a modelled distribution $\bg$, and we denote by $\text{supp} \, \bg$, the union of the supports of its components $\struc_\beta( \bg)$, $\beta \in A$.
\end{definition}

\begin{lemma}\label{lem:supp-recon}
For every $\bg\in \cd^{\ga,\eta}_\beta(\uxi)$ with $\ga >0$, $\eta \leq \ga$ and $\beta \in [\al,0]$, it holds that $\text{supp} \, \mathcal{R}_{\uxi}(\mathbf{g}) \subset\,  \text{supp} \, \mathbf{g}$.
\end{lemma}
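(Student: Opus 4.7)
The plan is to exploit the locality encoded in property (ii) of Theorem~\ref{theo:reconstruction}. At any point $x$ where $\bg(x)=0$ one has $\Pi^{\uxi}_x(\bg(x))=0$, so the local bound (\ref{recon-local}) collapses to
\[
\big| \mathcal{R}_{\uxi}(\bg)\big( \cs^\delta_{\scal,x}\vp\big)\big| \lesssim \delta^\ga \|\vp\|_{\cac^2} \|x\|_P^{\eta-\ga} \, .
\]
Since $\ga > 0$, this smallness at every scale should force $\mathcal{R}_{\uxi}(\bg)$ to vanish in a neighborhood of $x$, once arbitrary test functions are recovered from rescaled bumps via a partition-of-unity decomposition.

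Concretely, fix $x_0 \notin \text{supp}\, \bg$ and choose $r>0$ such that $\bg \equiv 0$ on $V:=\cb_\scal(x_0,4r)$; I aim to show $\mathcal{R}_{\uxi}(\bg)(\psi)=0$ for every $\psi \in C_c^\infty(\cb_\scal(x_0,r))$. First assume $\overline{V}\cap P=\emptyset$, so $\|y\|_P$ is bounded away from $0$ uniformly on $V$. For each small scale $\delta$, I cover $\text{supp}\,\psi$ by $N \lesssim \delta^{-3}$ parabolic balls $\cb_\scal(y_j,\delta)\subset V$ and attach a subordinate smooth partition of unity $(\chi_j)$. Inverting (\ref{scaling-operator}) yields $\chi_j \psi = \cs^\delta_{\scal,y_j}\vp_j$, and the standard estimate $\|D^k\chi_j\|_\infty \lesssim \delta^{-|k|_\scal}$ leads to $\|\vp_j\|_{\cac^2} \lesssim \delta^3 \|\psi\|_{\cac^2}$. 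Since $\bg(y_j)=0$, property (ii) of Theorem~\ref{theo:reconstruction} gives $|\mathcal{R}_{\uxi}(\bg)(\chi_j\psi)| \lesssim \delta^{\ga+3}$, and summing the $N \lesssim \delta^{-3}$ contributions produces $|\mathcal{R}_{\uxi}(\bg)(\psi)| \lesssim \delta^\ga$, which vanishes as $\delta \to 0$ and thus forces $\mathcal{R}_{\uxi}(\bg)(\psi)=0$.

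When $x_0 \in P$, this argument fails because the weight $\|y_j\|_P^{\eta-\ga}$ blows up near $P$. One splits $\psi = \psi_\epsilon^c + \psi_\epsilon$, with $\psi_\epsilon$ supported in the thin parabolic strip $\{|x_1|\leq \epsilon\}\cap V$ and $\psi_\epsilon^c$ in its complement within $\cb_\scal(x_0,r)$. The component $\psi_\epsilon^c$ is handled by the previous argument. For $\psi_\epsilon$, one covers the strip by $\sim \epsilon^{-1/2}$ parabolic balls of radius $\sqrt{\epsilon}$ and invokes the global Besov estimate (\ref{recon-global}), which places $\mathcal{R}_{\uxi}(\bg)$ in $\cac_c^{\beta\wedge\eta}$; a scaling computation analogous to the one above yields a residual of order $\epsilon^{(\beta\wedge\eta+2)/2}$, which tends to $0$ as $\epsilon \to 0$ thanks to $\beta \geq \al > -2$. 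The principal technical obstacle is precisely this bookkeeping near the singular set $P$; away from $P$ the result is a routine partition-of-unity localization.
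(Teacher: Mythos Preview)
Your argument is correct and complete in spirit, but it takes a genuinely different route from the paper's proof. The paper does not use the local/global bounds of Theorem~\ref{theo:reconstruction} at all: instead it goes back to the \emph{construction} of $\mathcal{R}_{\uxi}$ in \cite{hai-14} as a limit of wavelet-type approximations
\[
(\mathcal{R}_{\uxi}^n \bg)(y)=\sum_{x\in \Lambda_\scal^n} \langle \Pi_x(\bg(x)),\vp^n_x\rangle \cdot \vp^n_x(y) \, ,
\]
observes that only lattice points $x\in\text{supp}\,\bg$ contribute, and then reads off $\text{supp}\,\mathcal{R}_{\uxi}^n\bg \subset \text{supp}\,\bg + \cb_{\max}(0,C\cdot 2^{-n})$; the conclusion follows by letting $n\to\infty$. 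This is a one-line argument with no case distinction near $P$, but it requires opening the black box of the reconstruction theorem. Your approach, by contrast, uses only the \emph{output} of Theorem~\ref{theo:reconstruction} (the two bounds (\ref{recon-global}) and (\ref{recon-local})), which makes it more intrinsic but forces the separate treatment of a neighbourhood of $P$.

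One small correction: in your last step you write ``thanks to $\beta \geq \al > -2$'', but the exponent you actually obtain is $(\beta\wedge\eta+2)/2$, so what you need is $\beta\wedge\eta>-2$. This is not a real problem, since that condition is implicitly built into the reconstruction theorem for singular modelled distributions (it is the hypothesis $\al\wedge\eta>-\scal_1$ in \cite[Proposition~6.9]{hai-14}, here $\scal_1=2$), and in all applications in the paper one has $\eta\geq\al>-\tfrac{4}{3}$ anyway; but the justification as written is slightly off.
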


\begin{proof}
Recall that by its very construction (see the beginning of the proof of \cite[Theorem 3.10]{hai-14}), $\mathcal{R}_{\uxi} \bg$ is the limit of a sequence of continuous functions $\mathcal{R}_{\uxi}^n \bg$ of the form:
$$(\crr_{\uxi}^n \bg)(y)=\sum_{x\in \Lambda_\scal^n} \langle \Pi_x(\bg(x)),\vp^n_x\rangle \cdot \vp^n_x(y)=\sum_{x\in \Lambda_\scal^n \cap \text{supp} \, \bg} \langle \Pi_x(\bg(x)),\vp^n_x\rangle \cdot \vp^n_x(y) \ ,$$
where $\vp^n_x(y)=2^{\frac{3n}{2}}\cdot \vp(2^{2n}(y_1-x_1))\cdot \vp(2^n(y_2-x_2))$, for some compactly supported function $\vp$. If for instance $\text{supp} \, \vp \subset [-C,C]$, then it is readily checked that
$$\text{supp} \, \crr_{\uxi}^n( \bg) \subset \, \text{supp} \, \bg +\cb_{\max}(0,C\cdot 2^{-n}) \ ,$$
where $\cb_{\max}$ refers to the ball with respect to the supremum norm in $\R^2$. The result is now immediate as we let $n$ tend to infinity. 
\end{proof}

\subsection{Composition and multiplication with the noise} 

The first operation involved in (\ref{eq-loc}) consists in composing $u$ with the vector field $F$. Let us see how the procedure can be lifted in $\cd^{\ga,0}_0(\uxi)$, by following the ideas of \cite[Theorem 4.16]{hai-14}.

\begin{proposition}{(Composition)}\label{prop:compo}
Consider an $(\al,K)$-rough path $\uxi$ (resp. $\uxi'$), and for every compact set $\compac \subset \R$, every $F\in \cac^\infty_{\compac}(\R^2)$ and every $\bu=\bu^0 \1+\bu^1 \ci(\Xi)+\bu^2 X_2  \in \cd_0^{\ga,0}(\uxi)$, define 
$$\bff(\bu)=\mathbf{v}^0\, \1+\mathbf{v}^1 \, \ci(\Xi)+\mathbf{v}^2\, X_2$$
with $\mathbf{v}^0(x)=F(x_2,\bu^0(x))$, $\mathbf{v}^1(x)=(\partial_2F)(x_2,\bu^0(x)) \cdot \bu^1(x)$ and
$$\mathbf{v}^2(x)=(\partial_2F)(x_2,\bu^0(x)) \cdot \bu^2(x)+(\partial_1 F)(x_2,\bu^0(x)) \ .$$
Then $\rho_+ \cdot \bff(\bu) \in \cd_0^{\ga,0}(\uxi)$ and  
\begin{equation}\label{bound-compo-1}
\|\rho_+ \cdot \bff(\bu)\|_{\ga,0}\leq Q_{\uxi}\cdot \{1+\|\bu\|_{\ga,0}^2\} \ .
\end{equation}
Besides, for every $\bu'\in \cd_0^{\ga,0}(\uxi')$, it holds that
\begin{equation}\label{bound-compo-2}
\|\rho_+ \cdot \bff(\bu);\rho_+ \cdot \bff(\bu')\|_{\ga,0} \leq Q_{\uxi,\uxi',\bu,\bu'}\cdot \{ \|\bu;\bu'\|_{\ga,0}+\|\uxi ;\uxi'\|_{\al;\compac_1}\} \ .
\end{equation}
In (\ref{bound-compo-1}), $Q_{\uxi}$ is a polynomial expression in $\|\uxi\|_{\al;\compac_1}$, for an appropriate compact set $\compac_1\subset \R^2$ depending only on $(F,\rho)$. In (\ref{bound-compo-2}), $Q_{\uxi,\uxi',\bu,\bu'}$ is a polynomial expression in $\|\bu\|_{\ga,0}$, $\|\bu'\|_{\ga,0}$, $\|\uxi\|_{\al;\compac_1}$ and $\|\uxi'\|_{\al;\compac_1}$.
\end{proposition}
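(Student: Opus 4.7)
The plan is to verify conditions \eqref{condition-sup}--\eqref{condition-holder-3} for $\rho_+\cdot \bff(\bu)$ by Taylor-expanding $F,\partial_1 F,\partial_2 F$ around the base point $(y_2,\bu^0(y))$, using the hypothesis $F\in \cac_{\compac}^\infty(\R^2)$ to bound all appearing partial derivatives by universal constants. The cutoff $\rho_+$ localizes the time coordinate to $x_1\in [0,2]$ and the support of $F$ localizes the space coordinate to $x_2 \in \compac$, so that $\rho_+\cdot \bff(\bu)$ is supported in a fixed compact set $\compac_1$; on this zone $|\bu^0(x)|\leq \|\bu\|_{\ga,0}$, hence every $\partial^k F(x_2,\bu^0(x))$ is controlled uniformly. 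The pointwise estimates \eqref{condition-sup} for the three components of $\rho_+\cdot \bff(\bu)$ follow immediately from \eqref{condition-sup} applied to $\bu$, combined with the boundedness of $F$ and its derivatives.

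The key preliminary step is to observe that on $\compac_1\setminus P$, the $\1$-component $\bu^0$ is itself $(\al+2)$-H\"older with constant polynomial in $\|\uxi\|_{\al;\compac_1}$ and $\|\bu\|_{\ga,0}$: this follows from \eqref{condition-holder-1}, the uniform bounds on $\bu^1,\bu^2$ granted by \eqref{condition-sup}, the $(\al+2)$-H\"older regularity of $K\ast \xi$ granted by Lemma~\ref{lem:convo-c-al}, and the constraint $\|x-y\|_\scal\leq \|x;y\|_P$ defining $\compac_P$. The H\"older-type estimates on the $\ci(\Xi)$- and $X_2$-components of $\rho_+\cdot\bff(\bu)$ are then obtained via splittings of the type
\[
\mathbf{v}^1(x)-\mathbf{v}^1(y) = [(\partial_2 F)(x_2,\bu^0(x))-(\partial_2 F)(y_2,\bu^0(y))]\bu^1(x) + (\partial_2 F)(y_2,\bu^0(y))[\bu^1(x)-\bu^1(y)],
\]
where the first summand is $O(\|x-y\|_\scal^{\al+2}\|x;y\|_P^{-\ga})$ via the mean-value theorem applied to $\partial_2 F$ and the just-derived H\"older regularity of $\bu^0$, while the second is directly controlled by \eqref{condition-holder-2}; an analogous splitting for $\mathbf{v}^2$ uses \eqref{condition-holder-3}. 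For the $\1$-component, the first-order Taylor expansion
\[
F(x_2,\bu^0(x)) = F(y_2,\bu^0(y)) + (\partial_1 F)(y_2,\bu^0(y))(x_2-y_2) + (\partial_2 F)(y_2,\bu^0(y))(\bu^0(x)-\bu^0(y)) + R_F
\]
has quadratic remainder $|R_F|\lesssim \|x-y\|_\scal^{2(\al+2)}=\|x-y\|_\scal^\ga$. Substituting the expansion of $\bu^0(x)-\bu^0(y)$ granted by \eqref{condition-holder-1} into the third summand makes the prescribed increment $\mathbf{v}^1(y)[(K\ast\xi)(x)-(K\ast\xi)(y)] + \mathbf{v}^2(y)(x_2-y_2)$ emerge exactly (by the very definitions of $\mathbf{v}^1,\mathbf{v}^2$), leaving a total error bounded by $\|x-y\|_\scal^\ga\|x;y\|_P^{-\ga}$; the crucial point is that $\ga=2(\al+2)$, so the Taylor remainder $R_F$ is exactly of the right order.

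The difference estimate \eqref{bound-compo-2} follows by repeating the scheme with systematically inserted cross terms in $(\bu,\bu')$ and in $(\uxi,\uxi')$, then using the smoothness of $F$ to bound differences $\partial^k F(\cdot,\bu^0)-\partial^k F(\cdot,\bu^{0\prime})$ by $|\bu^0-\bu^{0\prime}|$; the dependence on $\|\uxi;\uxi'\|_{\al;\compac_1}$ enters solely through differences $(K\ast \xi)-(K\ast \xi')$, controlled again by Lemma~\ref{lem:convo-c-al}. The main technical obstacle throughout is the careful preservation of the time-boundary weight $\|x;y\|_P^{-\ga}$ through the nonlinear operations: this is handled by systematically working on $\compac_P$, where the constraint $\|x-y\|_\scal\leq \|x;y\|_P$ allows any H\"older exponent exceeding $\ga$ to be traded against a negative power of $\|x;y\|_P$ of matching order, thereby closing the estimate within the norm \eqref{def:norm-ga-eta} defining $\cd_0^{\ga,0}(\uxi)$.
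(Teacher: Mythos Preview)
Your proposal is correct and follows essentially the same approach as the paper's own proof: localization to a fixed compact set via the supports of $\rho_+$ and $F$, then Taylor expansion of $F$ around $(y_2,\bu^0(y))$, with the decisive input being the $(\al+2)$-H\"older bound on $\bu^0$ obtained from Lemma~\ref{lem:convo-c-al} and \eqref{condition-holder-1}. The only cosmetic difference is that the paper expands sequentially (first in $x_2$, then in $\bu^0$, yielding terms $I$, $II$, $III^{(1)}$, $III^{(2)}$) whereas you perform a single joint first-order Taylor expansion with quadratic remainder $R_F$; the two decompositions match term by term once one observes that your $R_F$ collects the paper's $I+II+III^{(1)}$.
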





\begin{proof}
First, let us introduce a compact $\compac_0\subset \R^2$ such that $[-2,2]\times \compac_{\bff} \subset \compac_0$, which implies in particular that $\text{supp}\big( \rho_+\cdot \bff(\bu)\big)\subset \compac_0$. By the very definition of $\|\cdot \|_{\ga,0}$, one has
$$\|\rho_+\cdot \bff(\bu)\|_{\ga,0}=\|\rho_+\cdot \bff(\bu)\|_{\ga,0;\ov{\compac}_0}=\|\rho \cdot \bff(\bu)\|_{\ga,0;\ov{\compac}_0\cap (\R_+\times \R)} \ ,$$
where we have denoted by $\ov{\compac}_0$ the $1$-fattening of $\compac_0$. Set $\compac_1:=\ov{\compac}_0\cap (\R_+\times \R)$. Then it is easy to check that due to the smoothness of $\rho$, 
$$\|\rho \cdot \bff(\bu)\|_{\ga,0;\compac_1}\lesssim \|\bff(\bu)\|_{\ga,0;\compac_1} \ ,$$
and so $\|\rho_+\cdot \bff(\bu)\|_{\ga,0} \lesssim \|\bff(\bu)\|_{\ga,0;\compac_1}$. The rest of the proof of (\ref{bound-compo-1}) now consists in a natural Taylor-expansion procedure. We only focus on the increment term in $\struc_0$, that is the one corresponding to (\ref{condition-holder-1}). Pick $x,y\in (\compac_1)_P$ and set $\mathbf{v}:=\bff(\bu)$, $\theta:=K\ast \xi$. Then decompose the increment
\begin{equation}\label{increment-v}
\mathbf{v}^0(x)-\mathbf{v}^0(y)-\mathbf{v}^1(y) \cdot [\theta(x)-\theta(y)] -\mathbf{v}^2(y) \cdot (x_2-y_2)
\end{equation}
as a sum of three terms $I,II,III$, with
$$I=F(x_2,\bu^0(x))-F(y_2,\bu^0(x))-(x_2-y_2)\cdot (\partial_1 F)(y_2,\bu^0(x)) \ ,$$
$$II=(x_2-y_2) \cdot [(\partial_1 F)(y_2,\bu^0(x))-(\partial_1 F)(y_2,\bu^0(y))] \ ,$$
\begin{multline*}
III=
F(y_2,\bu^0(x))-F(y_2,\bu^0(y))\\-(\partial_2 F)(y_2,\bu^0(y)) \cdot \bu^1(y) \cdot [\theta(x)-\theta(y)]-(\partial_2 F)(y_2,\bu^0(y)) \cdot \bu^2(y) \cdot (x_2-y_2)\ .
\end{multline*}
The estimation of $I$ is immediate:
$$|I|\lesssim \lln x_2-y_2\rrn^2 \lesssim \|x-y\|_\scal^2 \lesssim \|x-y\|_\scal^\ga \cdot \|x;y\|_P^{-\ga} \ .$$
For $II$, one has trivially
\begin{equation}\label{ii}
|II| \lesssim \lln x_2-y_2\rrn \cdot |\bu^0(x)-\bu^0(y)| \lesssim \|x-y\|_\scal \cdot |\bu^0(x)-\bu^0(y)| \ .
\end{equation}
The key observation at this point is that we can combine (\ref{condition-sup})-(\ref{condition-holder-1}) with the result of Lemma \ref{lem:convo-c-al} to derive the bound:
\begin{equation}\label{bound-hold-u-0}
|\bu^0(x)-\bu^0(y)| \lesssim \|\bu\|_{\ga,0}\cdot \|x-y\|_\scal^{\al+2}\cdot  \|x;y\|_P^{-(\al+2)}\cdot \{1+\|\xi\|_{\al;\text{rect}(\compac_1)}\} \ ,
\end{equation}
and hence, going back to (\ref{ii}),
$$|II| \lesssim  Q_\xi \cdot \|\bu\|_{\ga,0}\cdot \|x-y\|_\scal^{\ga}\cdot  \|x;y\|_P^{-\ga} \ ,$$
where we have used the fact that $2\al+4 < \al+3$. Finally, as far as $III$ is concerned, we decompose it in a natural way as a sum of two terms $III^{(1)}$ and $III^{(2)}$, with
$$III^{(1)}=\int_0^1 dr\, \big\{ (\partial_2 F)(y_2,\bu^0(y)+r(\bu^0(x)-\bu^0(y)))-(\partial_2 F)(y_2,\bu^0(y)) \big\}\cdot  \big\{\bu^0(x)-\bu^0(y)\big\} \, $$
$$III^{(2)}=(\partial_2 F)(y_2,\bu^0(y))\cdot \big\{ \mathbf{u}^0(x)-\mathbf{u}^0(y)-\mathbf{u}^1(y) \cdot [\theta(x)-\theta(y)] -\mathbf{u}^2(y) \cdot (x_2-y_2)\big\} \ .$$
The bound for $III^{(2)}$ is immediate. For $III^{(1)}$, we can use (\ref{bound-hold-u-0}) again to assert that
$$|III^{(1)}| \lesssim |\bu^0(x)-\bu^0(y)|^2 \lesssim Q_\xi \cdot \|\bu\|_{\ga,0}^2 \cdot \|x-y\|_\scal^{\ga} \cdot \|x;y\|_P^{-\ga} \ ,$$
which completes the estimation of (\ref{increment-v}).

\smallskip

The argument leading to (\ref{bound-compo-2}) follows the same general scheme (localization plus Taylor expansion), and we therefore omit it for the sake of conciseness.

\end{proof}

We can now turn to the second operation in (\ref{eq-loc}), namely the pointwise multiplication with the noise $\xi$. Observe that the well-posedness of such a product is not clear at all in the real world. In the "modelled" space, the operation becomes an elementary multiplication with the "modelled" noise $\Xi$.

\begin{proposition}{(Multiplication with the noise)}\label{prop:multi}
Consider an $(\al,K)$-rough path $\uxi$ (resp. $\uxi'$). Then, for every $\bu  \in \cd_0^{\ga,0}(\uxi)$, the (pointwise) product $\bu\star \Xi$ belongs to $\cd^{\ga+\al,\al}(\uxi)$ and 
\begin{equation}\label{multi-noise-1}
\|\bu\star \Xi\|_{\ga+\al,\al}=\|\bu\|_{\ga,0}\ .
\end{equation}
Besides, for any $\bu' \in \cd_0^{\ga,0}(\uxi')$, it holds that
\begin{equation}\label{multi-noise-2}
\|\bu \star \Xi;\bu' \star \Xi\|_{\ga+\al,\al} =\|\bu;\bu'\|_{\ga,0} \ .
\end{equation}
\end{proposition}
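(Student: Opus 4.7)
The plan is to unfold the definitions and observe that the product with $\Xi$ acts as a pure shift on both the grading of $\struc$ and on the increment norms, so the claim reduces to a bookkeeping exercise with no estimates actually required.

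First, I would write $\bu \star \Xi = \bu^0\, \Xi + \bu^1\, \Xi\ci(\Xi) + \bu^2\, \Xi X_2$, so that its non-zero components live at levels $\al$, $2\al+2$ and $\al+1$ respectively. Since $\ga + \al = 3\al + 4 \in (0,1)$ under the standing assumption $\al \in (-\tfrac{4}{3}, -1)$, each of these levels is strictly below $\ga + \al$, hence all contribute to $\|\bu \star \Xi\|_{\ga+\al, \al}$.

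Second, I would check the sup-norm part of (\ref{def:norm-ga-eta}) level by level. For $\be = \al$, $(\eta-\be)\wedge 0 = (\al - \al)\wedge 0 = 0$, and this exactly reproduces the $\be=0$ weight $\|x\|_P^{0}$ in $\|\bu\|_{\ga,0}$ applied to $\bu^0$. For $\be = 2\al+2$, $(\al - 2\al - 2)\wedge 0 = -(\al+2)$, so the required weight $\|x\|_P^{\al+2}$ on $\bu^1$ is precisely the one encoded in $\|\bu\|_{\ga,0}$ at level $\al+2$. Similarly, for $\be = \al+1$, the weight $\|x\|_P$ on $\bu^2$ matches the one at level $1$ in $\|\bu\|_{\ga,0}$. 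In each case the supremum transfers identically.

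Third, and this is the key point, I would use the multiplicativity of $\gga^{\uxi}_{xy}$ together with the identity $\gga^{\uxi}_{xy}(\Xi) = \Xi$ (from (\ref{def:gga-1})) to write
\begin{equation*}
\bu\star\Xi(x)-\gga^{\uxi}_{xy}\!\big(\bu\star\Xi(y)\big) = \big(\bu(x)-\gga^{\uxi}_{xy}(\bu(y))\big)\star \Xi.
\end{equation*}
Projecting onto $\struc_{\al}$, $\struc_{2\al+2}$ and $\struc_{\al+1}$ picks out respectively the coefficients $\struc_0\big(\bu(x)-\gga^{\uxi}_{xy}(\bu(y))\big)$, $\bu^1(x)-\bu^1(y)$ and $\bu^2(x)-\bu^2(y)$ (the latter two because multiplication by $\Xi$ kills the $\1$-shift coming from $\gga^{\uxi}_{xy}(\ci(\Xi))$ and from $\gga^{\uxi}_{xy}(X_2)$, since $\Xi\star\1 = \Xi$ only contributes at level $\al$, not at $2\al+2$ or $\al+1$). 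The three required bounds (with exponents $(\ga+\al)-\al = \ga$, $(\ga+\al)-(2\al+2) = \al+2$, $(\ga+\al)-(\al+1) = \ga-1$ on $\|x-y\|_\scal$, and uniform $\|x;y\|_P^{\al-(\ga+\al)} = \|x;y\|_P^{-\ga}$ on the singular side) coincide verbatim with (\ref{condition-holder-1})--(\ref{condition-holder-3}). This yields $\|\bu\star\Xi\|_{\ga+\al,\al} = \|\bu\|_{\ga,0}$.

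The estimate (\ref{multi-noise-2}) follows from the very same computation applied to the difference $\bu - \bu'$, noting that $\gga^{\uxi}_{xy}(\Xi) - \gga^{\uxi'}_{xy}(\Xi) = \Xi - \Xi = 0$, so no term involving $\|\uxi;\uxi'\|_{\al;\cdot}$ appears. I do not anticipate any real obstacle here: the only subtlety is to keep track of the fact that $\Xi$ is a \emph{purely noise} element (no polynomial or integration component), which is what makes $\gga^{\uxi}_{xy}(\Xi) = \Xi$ and hence eliminates any genuine analytic work.
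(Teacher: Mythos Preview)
Your proof is correct and follows exactly the same approach as the paper, which simply states that the result follows from the multiplicativity relations in (\ref{def:gga-3}). You have spelled out the details that the paper leaves implicit: the identity $\gga^{\uxi}_{xy}(\bu(y)\star\Xi)=\gga^{\uxi}_{xy}(\bu(y))\star\Xi$ (a direct consequence of (\ref{def:gga-3}) together with $\gga^{\uxi}_{xy}(\Xi)=\Xi$), and the level-by-level matching of weights and H\"older exponents under the shift $\be\mapsto\be+\al$, $\eta=0\mapsto\eta=\al$.
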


\begin{proof}
The statement immediately follows from the two relations contained in (\ref{def:gga-3}).
\end{proof}

\subsection{Integration} Lifting convolution with the heat kernel $G$ is clearly the most tricky step of the procedure. In fact, as we mentionned it earlier, and with the decomposition (\ref{decompo-g}) of $G$ in mind, convolution with $K$ and convolution with $G^\sharp$ will receive distinct treatments.

\smallskip

First, since $G^\sharp$ is smooth on $\R^2$, convolving with this kernel is an easy-to-handle task in our situation, due to the following elementary property.

\begin{lemma}\label{lem:convo-regu}
Consider a distribution $\zeta \in \cac^\al(\R^2)$ with support included in a ball $\cb_\scal(0,r_0)$, for some $r_0\geq 1$. Then $G^\sharp \ast \zeta$ defines a smooth function and one has
\begin{equation}\label{bound}
\| D^k(G^\sharp \ast \zeta)\|_{L^\infty(\R^2)} \leq C_{k,r_0}\, \|\zeta\|_{\al;\R^2} 
\end{equation}
for every multiindex $k$.
\end{lemma}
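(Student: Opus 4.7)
The plan is to reduce the estimate to a direct application of the Besov seminorm: first I would move all derivatives off of $\zeta$ and onto $G^\sharp$ by using that $\zeta$ has compact support; then I would verify that $D^k G^\sharp\in L^\infty(\R^2)$ for every multiindex $k$; and finally I would cut the resulting smooth test function $D^k G^\sharp(x-\cdot)$ into a uniformly bounded number of pieces, each supported (after translation) in a unit ball, so that Definition \ref{def:besov-space} can be applied with scale $\delta=1$.

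The first key observation is that $G^\sharp$ is smooth on the whole of $\R^2$: indeed, $\rho_0\equiv 1$ near the origin, and $G$ is smooth away from $0$ (the super-polynomial vanishing of the Gaussian as $t\to 0^+$ with $x\neq 0$ extends $G$ and all its derivatives smoothly across $\{t=0\}\setminus\{0\}$). Combined with the standard Gaussian bounds $|D^k G(t,x)|\lesssim t^{-(1+|k|_\scal)/2}$ on $\{t\geq c\}$, this gives $D^k G^\sharp\in L^\infty(\R^2)$ for every multiindex $k$. Since $\text{supp}\,\zeta$ is compact, the pairing $(G^\sharp\ast \zeta)(x)=\langle \zeta,G^\sharp(x-\cdot)\rangle$ is well-defined and smooth in $x$, with $D^k(G^\sharp\ast \zeta)(x)=\langle \zeta,D^kG^\sharp(x-\cdot)\rangle$. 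I would then pick a finite family $(z_i)_{i\in I}\subset \cb_\scal(0,r_0)$ with $|I|\leq N_{r_0}$ such that the half-balls $\cb_\scal(z_i,1/2)$ cover $\cb_\scal(0,r_0)$, together with a subordinate smooth partition of unity $(\psi_i)_{i\in I}$ satisfying $\text{supp}\,\psi_i\subset \cb_\scal(z_i,1)$ and $\sum_i\psi_i\equiv 1$ on a neighborhood of $\text{supp}\,\zeta$. Setting
$$\vp_{x,i}(y):=D^k G^\sharp(x-y-z_i)\,\psi_i(y+z_i),$$
one has $\text{supp}\,\vp_{x,i}\subset \cb_\scal(0,1)$ and, thanks to the previous $L^\infty$ bound, $\|\vp_{x,i}\|_{\cac^\ell}\leq C_{k,\ell,r_0}$ uniformly in $x\in \R^2$, with $\ell:=-\lfloor\al\rfloor$. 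The identity $D^k G^\sharp(x-y)\psi_i(y)=(\cs^1_{\scal,z_i}\vp_{x,i})(y)$ together with Definition \ref{def:besov-space} applied at $\delta=1$ then yields $|\langle \zeta,\cs^1_{\scal,z_i}\vp_{x,i}\rangle|\leq C_{k,\ell,r_0}\|\zeta\|_{\al;\R^2}$, and summing the decomposition over $i\in I$ produces the announced bound with $C_{k,r_0}:=N_{r_0}\cdot C_{k,\ell,r_0}$.

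The only genuine verification here is the global boundedness of $D^k G^\sharp$ on $\R^2$; everything else is a routine partition-of-unity bookkeeping. No further difficulty is anticipated, the localization mechanism being morally the same as the one exploited in the proof of Lemma \ref{lem:topo}.
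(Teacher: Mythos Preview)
Your proof is correct and essentially the same as the paper's: both localize $D^k G^\sharp(x-\cdot)$ with a smooth cutoff on $\cb_\scal(0,r_0)$ and then apply the Besov norm at scale $\delta=1$. The only cosmetic difference is that the paper uses a single cutoff $\vp$ supported in $\cb_\scal(0,r_0+\tfrac12)$ and then invokes Lemma~\ref{lem:topo} to pass from $\|\zeta\|_{\al,r_0+\frac12;\R^2}$ back to $\|\zeta\|_{\al;\R^2}$, whereas you unpack that lemma's partition-of-unity argument directly.
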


\begin{proof}
Consider a smooth function $\vp$ with support in $\cb_\scal(0,r_0+\frac12)$ such that $\vp \equiv 1$ on $\cb_\scal(0,r_0)$, and for every $x\in \R^2$, set $G^\sharp_x(y)=G^\sharp(y-x)$. Then, with Definition \ref{def:besov-space} in mind, and as $G^\sharp$ is a smooth function on $\R^2$,
\begin{eqnarray*}
|[(D^k G^\sharp) \ast \zeta](x)|\ = \ | \langle \zeta,(D^k G^\sharp_x)\cdot \vp \rangle | &=&|\langle \zeta,\cs_{\scal,0}^1((D^k G^\sharp_x)\cdot \vp)\rangle |\\
&\lesssim & \|\zeta\|_{\al,r_0+\frac12,\R^2}\cdot \|(D^k G^\sharp_x)\cdot \vp\|_{\cac^2(\R^2)}\ .
\end{eqnarray*}
From here, we can conclude by using Lemma \ref{lem:topo}, together with the uniform estimate
$$\|(D^k G^\sharp_x)\cdot \vp\|_{\cac^2(\R^2)}\lesssim \|D^k G^\sharp\|_{\cac^2(\R^2)} < \infty$$
for every multiindex $k$, which follows from the classical properties of the heat kernel (away from $0$).

\end{proof}


\begin{proposition}\label{prop:convo-smooth-kernel}
Consider an $(\al,K)$-rough path $\uxi$ (resp. $\uxi'$), and for every $\mathbf{v}\in \cd^{\ga+\al,\al}_\al(\uxi)$ with compact support included in $\R^+\times \R$, define
\begin{equation}\label{lift-conv-smooth}
\mathcal{G}^\sharp_{\uxi} \mathbf{v}:=[G^\sharp \ast \mathcal{R}_{\uxi}(\mathbf{v})] \, \mathbf{1}+[(D^{(0,1)}G^\sharp) \ast \mathcal{R}_{\uxi}(\mathbf{v})] \, X_2 \ .
\end{equation}
Then $\rho_T \cdot \mathcal{G}^\sharp_{\uxi} (\mathbf{v})\in \cd_0^{\ga,0}(\uxi)$ and 
\begin{equation}\label{convo-smooth-kernel-1}
\|\rho_T \cdot \mathcal{G}^\sharp_{\uxi} (\mathbf{v})\|_{\ga,0}\leq Q_{\uxi} \cdot T^\ka \cdot  \|\mathbf{v} \|_{\ga+\al,\al}\ ,
\end{equation}
for some constant $\ka >0$. Besides, for any $\mathbf{v}'\in \cd^{\ga+\al,\al}(\uxi ')$ with compact support included in $\R_+\times \R$,
\begin{equation}\label{convo-smooth-kernel-2}
\|\rho_T \cdot\mathcal{G}^\sharp_{\uxi} (\mathbf{v}) ;\rho_T \cdot\mathcal{G}^\sharp_{\uxi'} (\mathbf{v}')\|_{\ga,0}\leq Q_{\uxi,\uxi',\mathbf{v},\mathbf{v}'} \cdot T^\ka \cdot \lcl  \|\mathbf{v} ;\mathbf{v}'\|_{\ga+\al,\al}+\|\uxi;\uxi'\|_{\al;\compac_0} \rcl \ .
\end{equation}
In (\ref{convo-smooth-kernel-1}), $Q_{\uxi}$ is a polynomial expression in $\|\uxi\|_{\al;\compac_0}$, for an appropriate compact set $\compac_0\subset \R^2$, and in (\ref{convo-smooth-kernel-2}), $Q_{\uxi,\uxi',\mathbf{v},\mathbf{v}'}$ is a polynomial expression in $\|\mathbf{v}\|_{\ga+\al,\al}$, $\|\mathbf{v}'\|_{\ga+\al,\al}$, $\|\uxi\|_{\al;\compac_0}$ and $\|\uxi'\|_{\al;\compac_0}$.
\end{proposition}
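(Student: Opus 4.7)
The plan is to pass through the reconstruction: first build the honest distribution $\crr_{\uxi}\mathbf{v}$, convolve it with the smooth kernel $G^\sharp$ to obtain a smooth function, and then check by hand that $\rho_T$ times this function fits the modelled-distribution framework. The starting point is Theorem \ref{theo:reconstruction}: since $\ga+\al=3\al+4>0$ (because $\al>-\frac43$) and $\mathbf{v}\in \cd^{\ga+\al,\al}_\al(\uxi)$, the reconstruction $\crr_{\uxi}\mathbf{v}$ exists, lies in $\cac^\al_c(\R^2)$ with $\|\crr_{\uxi}\mathbf{v}\|_{\al;\compac_0}\lesssim \|\uxi\|_{\al;\compac_0}\|\mathbf{v}\|_{\ga+\al,\al}$, and by Lemma \ref{lem:supp-recon} its support is contained in that of $\mathbf{v}$, hence in a compact subset of $\R_+\times \R$. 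Lemma \ref{lem:convo-regu} then guarantees that every derivative of $f:=G^\sharp\ast \crr_{\uxi}\mathbf{v}$ is a bounded smooth function whose sup-norm is controlled by the same data; this in turn controls the three components $\bu^0=\rho_T f$, $\bu^1=0$, $\bu^2=\rho_T\partial_2 f$ of $\rho_T\cdot \mathcal{G}^\sharp_{\uxi}\mathbf{v}$.

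The absence of a $\ci(\Xi)$-component makes (\ref{condition-holder-2}) trivial, while (\ref{condition-sup}) is immediate from the boundedness of the smooth components. The key verification is the increment condition (\ref{condition-holder-1}); the natural splitting
\begin{align*}
\bu^0(x)-\bu^0(y)-\bu^2(y)(x_2-y_2)&=\rho_T(x_1)\bigl[f(x)-f(y)-(\partial_2 f)(y)(x_2-y_2)\bigr]\\
&\quad+\bigl[\rho_T(x_1)-\rho_T(y_1)\bigr]\bigl[f(y)+(\partial_2 f)(y)(x_2-y_2)\bigr]
\end{align*}
reduces the estimate to a second-order Taylor expansion of $f$ at $y$ for the first summand and to the Lipschitz bound $|\rho_T(x_1)-\rho_T(y_1)|\leq T^{-1}|x_1-y_1|$ for the second. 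Condition (\ref{condition-holder-3}) is treated analogously via a first-order expansion of $\partial_2 f$ plus the same $\rho_T$-product term.

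To produce a strictly positive power of $T$ in place of a $T^{-1}$ blow-up, the plan is to exploit a \emph{causality} observation: since $G^\sharp$ inherits from $G$ the property of vanishing on $\{t<0\}$ (it is smooth and identically zero there) and $\supp \crr_{\uxi}\mathbf{v}\subset \R_+\times \R$, every partial derivative $D^k f$ vanishes for $x_1\leq 0$. In particular, for $y_1\in [0,3T]=\supp\rho_T\cap \R_+$ we have $|D^k f(y)|\leq 3T\cdot \|D^{(1,0)}D^k f\|_\infty$. This extra factor of $T$ absorbs each of the leading terms $(\partial_1 f)(y)(x_1-y_1)$, $(\partial_2^2 f)(y)(x_2-y_2)^2$ from the Taylor remainder, as well as the $f(y)[\rho_T(x_1)-\rho_T(y_1)]$ contribution arising from $\rho_T'$. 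The final accounting then uses the constraint $\|x-y\|_\scal\leq \|x;y\|_P\leq 3T$ built into the norm (\ref{def:norm-ga-eta}): any residual power $\|x-y\|_\scal^m$ with $m>\ga$ is rewritten as $\|x-y\|_\scal^\ga\|x;y\|_P^{-\ga}\cdot \|x;y\|_P^m$, converting the excess into a positive power of $T$ that outweighs the $T^{-1}$ loss from $\rho_T'$. This should yield (\ref{convo-smooth-kernel-1}) for some $\kappa>0$.

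The continuity estimate (\ref{convo-smooth-kernel-2}) follows from the same decomposition applied to the difference $\rho_T\cdot \mathcal{G}^\sharp_{\uxi}\mathbf{v}-\rho_T\cdot \mathcal{G}^\sharp_{\uxi'}\mathbf{v}'$, the only new ingredient being the continuity clause (\ref{conti-reco}) of Theorem \ref{theo:reconstruction}, which controls $\crr_{\uxi}\mathbf{v}-\crr_{\uxi'}\mathbf{v}'$ in $\cac^\al$ by $\|\uxi;\uxi'\|_{\al;\compac_0}+\|\mathbf{v};\mathbf{v}'\|_{\ga+\al,\al}$. The main obstacle throughout is precisely this $T$-bookkeeping: in every term produced by the Taylor expansion and by the product rule with $\rho_T$, one must simultaneously balance the small-time support of $\rho_T$, the causal vanishing of $f$ at $t=0$, and the parabolic restriction $\|x-y\|_\scal\leq \|x;y\|_P$, so as to close the argument with a strictly positive exponent of $T$.
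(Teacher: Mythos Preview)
Your proposal is correct and uses the same ingredients as the paper: the reconstruction bound (\ref{recon-global}) together with Lemma \ref{lem:supp-recon} for the support, Lemma \ref{lem:convo-regu} for the smoothness of $f=G^\sharp\ast\crr_{\uxi}\bv$, the causality $D^kf(x)=0$ for $x_1\leq 0$, and the Lipschitz bound $\|\rho_T'\|_\infty\lesssim T^{-1}$. The only difference is organizational. The paper first isolates the effect of $\rho_T$ in a separate ``localization'' step, proving once and for all that $\|\rho_T\cdot\bu\|_{\ga,0}\lesssim\|\bu\|_{\ga,0;[-12T,12T]\times\R}$ for any $\bu$ taking values in $\struc_{0+}$ (this is where the constraint $\|x-y\|_\scal\leq\|x;y\|_P$ forces both $x_1,y_1$ into $[-12T,12T]$), and only afterwards carries out the Taylor-plus-causality estimate on the bare $\mathcal{G}^\sharp_{\uxi}\bv$ restricted to $[0,12T]\times\R$. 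Your route keeps $\rho_T$ throughout and handles the product-rule terms directly. The paper's decoupling makes the $T$-bookkeeping somewhat cleaner (no simultaneous tracking of $\rho_T'$ and the Taylor remainder), and the localization lemma is reused verbatim in the proof of Proposition \ref{prop:convo-sing-kernel}; your direct computation is equally valid but slightly more case-heavy, since you must separately argue that whenever $\rho_T(x_1)\neq 0$ the constraint $\|x-y\|_\scal\leq\|x;y\|_P$ forces $|y_1|\lesssim T$, so that the causality gain $|D^kf(y)|\lesssim T$ is indeed available at the expansion point.
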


\begin{proof}
Note first that from the very definition of $\|\cdot \|_{\ga,0}$ (and especially due to the condition $\|x-y\|_\scal \leq \|x;y\|_P$ in the second summand of (\ref{def:norm-ga-eta})), it holds that for any function $\bu:\R^2 \to \struc_{0+}$,
\begin{equation}\label{locali-1}
\|\rho_T \cdot \bu\|_{\ga,0}=\|\rho_T \cdot \bu\|_{\ga,0;[-12T,12T]\times \R} \ .
\end{equation}
Also, a close examination of the conditions (\ref{condition-sup})-(\ref{condition-holder-2}), combined with the bound $\|\rho'_T\|_{L^\infty(\R)} \lesssim T^{-1}$, shows that
\begin{equation}\label{locali-2}
\|\rho_T \cdot \bu\|_{\ga,0;[-12T,12T]\times \R} \lesssim \|\bu\|_{\ga,0;[-12T,12T]\times \R} \ .
\end{equation}
Indeed, observe for instance that for all $x,y$ with $0<|x_1|\leq 12T$, $0<|y_1|\leq 12T$ and $\|x-y\|_\scal \leq \|x;y\|_P$,
\begin{eqnarray*}
\big| [\rho_T(x_1)-\rho_T(y_1)] \cdot \bu^0(x)\big| &\lesssim & T^{-1} \cdot \lln x_1-y_1 \rrn \cdot \|\bu\|_{\ga,0;[-12T,12T]\times \R}\\
&\lesssim & T^{-1} \cdot \|x-y\|_\scal^2 \cdot \|\bu\|_{\ga,0;[-12T,12T]\times \R}\cdot (\|x;y\|^{-\ga}_P \cdot T^{\ga})\\
&\lesssim &\|\bu\|_{\ga,0;[-12T,12T]\times \R} \cdot \|x-y\|_\scal^\ga \cdot \|x;y\|^{-\ga}_P \cdot T \ .
\end{eqnarray*}
As a conclusion of this localization procedure, we can assert that
\begin{equation}\label{localiz-step}
\|\rho_T \cdot \mathcal{G}^\sharp_{\uxi}(\mathbf{v})\|_{\ga,0} \lesssim \| \mathcal{G}^\sharp_{\uxi}(\mathbf{v})\|_{\ga,0;[-12T,12T]\times \R} \ .
\end{equation}
Then another ingredient toward (\ref{convo-smooth-kernel-1}) lies in the fact that as $\bgg$ is compactly supported in $\R_+\times \R$, the same property holds true for $\crr_{\uxi}(\bgg)$ by Lemma \ref{lem:supp-recon}. So, since $G^\sharp(x)=0$ as soon as $x_1\leq 0$, we can assert that, for any multiindex $k$,
\begin{equation}\label{non-anticip}
[(D^k G^\sharp) \ast \crr_{\uxi}(\bgg)](x)=0 \quad\text{if} \ x_1\leq 0 \ .
\end{equation} 
Together with Lemma \ref{lem:convo-regu} and using basic Taylor estimates, it easily entails that
\begin{equation}\label{estim-g-sharp}
\| \mathcal{G}^\sharp_{\uxi}(\mathbf{v})\|_{\ga,0;[-12T,12T]\times \R} =\| \mathcal{G}^\sharp_{\uxi}(\mathbf{v})\|_{\ga,0;[0,12T]\times \R} \lesssim T^\ka\cdot  \|\crr_{\uxi}(\bgg)\|_{\al;\compac} \ ,
\end{equation}
for some parameter $\ka>0$ and some compact set $\compac \subset \R^2$. For instance, if $x\in [0,12T]\times \R$,
\begin{eqnarray*}
|(G^\sharp \ast \crr_{\uxi}(\bgg))(x)| &=&|(G^\sharp \ast \crr_{\uxi}(\bgg))(x_1,x_2)-(G^\sharp \ast \crr_{\uxi}(\bgg))(0,x_2)|\\
&\lesssim &T\cdot \|D^{(1,0)}(G^\sharp\ast \crr_{\uxi}(\bgg))\|_{L^\infty(\R^2)}\ \lesssim\  T\cdot \|\crr_{\uxi}(\bgg)\|_\al \, ,
\end{eqnarray*}
where the last inequality is derived from (\ref{bound}).

\smallskip

\noindent 
By combining (\ref{localiz-step}) and (\ref{estim-g-sharp}) with the property (\ref{recon-global}) of the reconstruction operator, we get the bound (\ref{convo-smooth-kernel-1}).

\smallskip

The proof of (\ref{convo-smooth-kernel-2}) goes along the same lines (localization, non-anticipativity (\ref{non-anticip}) and use of Lemma \ref{lem:convo-regu}), and we leave it to the reader as an exercice.

\end{proof}



Convolving a modelled distribution with $K$ is a much more intricate issue due to the singularity of $G$ at the origin: in \cite{hai-14}, it gives rise to the so-called \emph{multi-level Schauder estimates}, which are more specifically spread out in \cite[Section 5, 6.5 and 7.1]{hai-14}. In our situation, the result can be summed up through the following statement.



\begin{proposition}\label{prop:convo-sing-kernel}
Consider an $(\al,K)$-rough path $\uxi$ (resp. $\uxi'$), and define, for every 
\begin{equation}\label{decomp-g}
\mathbf{v}=\mathbf{v}^0 \, \Xi+\mathbf{v}^1 \, \Xi \ci(\Xi) +\mathbf{v}^2 \, \Xi X_2  \in \cd^{\ga+\al,\al}(\uxi)
\end{equation}
with compact support included in $\R_+ \times \R$, 
\begin{equation}\label{lift-conv}
(\mathcal{K}_{\uxi} \mathbf{v})(x):=[K\ast (\mathcal{R}_{\uxi} \mathbf{v})](x)  \, \1+\mathbf{v}^0(x) \, \ci(\Xi)+[(D^{(0,1)}K)\ast \{(\mathcal{R}_{\uxi} \mathbf{v})-\mathbf{v}^0(x)\cdot \xi\}](x) \, X_2 \ .
\end{equation}
Then $\rho_T\cdot \mathcal{K}_{\uxi}(\mathbf{v})$ is a well-defined element of $\cd_0^{\ga,0}({\uxi})$ and 
\begin{equation}\label{convo-kernel-1}
\|\rho_T \cdot \mathcal{K}_{\uxi} (\mathbf{v})\|_{\ga,0}\leq Q_{\uxi}\cdot  T^{\ka}\cdot \|\mathbf{v} \|_{\ga+\al,\al} \ ,
\end{equation}
for some constant $\ka>0$. Moreover, for any $\mathbf{v}'\in \cd^{\ga+\al,\al}(\uxi')$ with decomposition of the form (\ref{decomp-g}) and compact support included in $\R_+\times \R$, it holds that
\begin{equation}\label{convo-kernel-2}
\|\rho_T \cdot \mathcal{K}_{\uxi} (\mathbf{v}) ;\rho_T \cdot \mathcal{K}_{\uxi'} (\mathbf{v}')\|_{\ga,0}\leq Q_{\uxi,\uxi',\mathbf{v},\mathbf{v}'} \cdot  T^{\ka}\cdot \{ \|\mathbf{v} ;\mathbf{v}'\|_{\ga+\al,\al}+\|\uxi,\uxi'\|_{\al;\compac_1}\} \ .
\end{equation}
In (\ref{convo-kernel-1}), $Q_{\uxi}$ is a polynomial expression in $\|\uxi\|_{\al;\compac_0}$, for an approriate compact set $\compac_0\subset \R^2$, and in (\ref{convo-kernel-2}), $Q_{\uxi,\uxi',\mathbf{g},\mathbf{g}'}$ is a polynomial expression in $\|\mathbf{v}\|_{\ga+\al,\al}$, $\|\mathbf{v}'\|_{\ga+\al,\al}$, $\|\uxi\|_{\al;\compac_0}$ and $\|\uxi'\|_{\al;\compac_0}$.
\end{proposition}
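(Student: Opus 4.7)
My plan is to adapt the multi-level Schauder machinery of \cite[Sections~5 and 7.1]{hai-14} to the present localized setting, paralleling the steps of Proposition \ref{prop:convo-smooth-kernel}. First, a localization step identical to its predecessor: the bound $\|\rho'_T\|_\infty \lesssim T^{-1}$ combined with the built-in constraint $\|x-y\|_\scal \leq \|x;y\|_P$ in (\ref{def:norm-ga-eta}) reduces $\|\rho_T\cdot \mathcal{K}_\uxi(\mathbf{v})\|_{\ga,0}$ to an estimate of $\|\mathcal{K}_\uxi(\mathbf{v})\|_{\ga,0;[-12T,12T]\times\R}$, up to an extra factor $T^\ka$. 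Since $\text{supp}(\mathbf{v})\subset \R_+\times\R$, Lemma \ref{lem:supp-recon} passes this positivity to $\crr_\uxi \mathbf{v}$, so the kernel convolutions vanish for $x_1\leq 0$; this will provide additional $T^\ka$ gains through Taylor estimates near $\{x_1=0\}$.

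Next, I would verify that each of the three coefficients of $\mathcal{K}_\uxi\mathbf{v}$ is well-defined and satisfies the pointwise and H\"older bounds hidden in $\|\cdot\|_{\ga,0}$. The $\ci(\Xi)$-coefficient coincides with $\mathbf{v}^0$, whose $(\al+2)$-H\"older regularity is already contained in the information $\mathbf{v}\in \cd^{\ga+\al,\al}(\uxi)$. For the $X_2$-coefficient, the generic scale $[D^{(0,1)}K_n\ast \crr_\uxi\mathbf{v}](x)$ is only of size $2^{-n(\al+1)}$, hence not summable. The subtraction of $\mathbf{v}^0(x)\cdot \xi$ is precisely tailored to kill this leading contribution: writing $K=\sum_{n\geq 0}K_n$ via Lemma \ref{lem:decompo-noyau}, using (\ref{decompo-k}), and then applying the local reconstruction estimate (\ref{recon-local}) to $\crr_\uxi \mathbf{v} - \Pi^\uxi_x(\mathbf{v}(x))$ together with the direct $2^{-n(2\al+2)}$- and $2^{-n(\al+1)}$-bounds for $\Pi^\uxi_x(\mathbf{v}^1(x)\Xi\ci(\Xi))$ and $\Pi^\uxi_x(\mathbf{v}^2(x)\Xi X_2)$, the series reduces to a geometric one of ratio $2^{-n(2\al+3)}$; convergence is ensured by $\al>-\tfrac43$.

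The heart of the matter lies in the H\"older-type bound for the $\1$-coefficient, which requires $\Delta(x,y)\lesssim \|x-y\|_\scal^\ga \|x;y\|_P^{-\ga}$ with
\begin{multline*}
\Delta(x,y) := [K\ast\crr_\uxi\mathbf{v}](x) - [K\ast\crr_\uxi\mathbf{v}](y) - \mathbf{v}^0(y)\,[\theta(x)-\theta(y)] \\ - \big[(D^{(0,1)}K)\ast\{\crr_\uxi\mathbf{v} - \mathbf{v}^0(y)\xi\}\big](y)\,(x_2-y_2).
\end{multline*}
I would split $K=\sum K_n$ at the dyadic level $n^\ast$ with $2^{-n^\ast}\sim\|x-y\|_\scal$. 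For $n\leq n^\ast$ one treats each $K_n\ast\crr_\uxi\mathbf{v}$ as a smooth function and Taylor-expands its difference to the order dictated by $\ga\in(1,2)$, controlling the remainder with the global bound (\ref{recon-global}) and the local expansion (\ref{recon-local}) of $\crr_\uxi\mathbf{v}-\Pi^\uxi_x(\mathbf{v}(x))$; the resulting sum is finite because $\ga-2<0$. For $n>n^\ast$, a direct application of (\ref{recon-local}) separately at $x$ and $y$ yields a series in $2^{-n(\ga+\al)}$, summable since $\ga+\al=3\al+4>0$. The weights in $\|x;y\|_P$ are inherited throughout from the definition of $\cd^{\ga+\al,\al}(\uxi)$.

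The continuity statement (\ref{convo-kernel-2}) follows by applying the same bookkeeping to $\mathcal{K}_\uxi(\mathbf{v})-\mathcal{K}_{\uxi'}(\mathbf{v}')$, using (\ref{conti-reco}) and its local analogue in place of (\ref{recon-global}) and (\ref{recon-local}). The main obstacle is the combinatorial bookkeeping in the third step, where one must extract an order-$\ga$ Taylor polynomial while simultaneously fighting the singularity of $K$ at the origin and the singularity at $P$: this is exactly the multi-level Schauder estimate of \cite{hai-14}, and given its technicality it seems natural to defer the explicit verification to an appendix.
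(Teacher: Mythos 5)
Your plan follows the same route as the paper: localize via $\rho_T$ exactly as in Proposition~\ref{prop:convo-smooth-kernel} (the combination of (\ref{locali-1})--(\ref{locali-2}) together with the compact support of $K$ and $\mathbf{v}$), then reproduce the multi-level Schauder estimate of \cite[Theorem~7.1]{hai-14} on the dyadic decomposition $K=\sum_n K_n$ by alternating between the global bound (\ref{recon-global}) and the local bound (\ref{recon-local}), with the combinatorics deferred to an appendix (here Lemma~\ref{lem:schauder}). One imprecision worth flagging in your $X_2$-coefficient step: (\ref{recon-local}) is only valid for $\delta<\tfrac14\|x\|_P^{1/2}$, so the sum over $n$ is \emph{not} a plain geometric series; for the low scales $2^{-n}\gtrsim\|x\|_P$ you must revert to (\ref{recon-global}), whose exponent $\al+1<0$ gives a divergent tail that is only rescued because the sum is truncated at $n\sim-\log_2\|x\|_P$ and the resulting factor $\|x\|_P^{1+\al}$ is absorbed by the weight $\|x\|_P$ in (\ref{condition-sup}). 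The paper's appendix implements exactly this $\|x\|_P$-threshold split for each of the three supremum norms and again inside the weighted H\"older bounds, in parallel with the $\|x-y\|_\scal$-threshold split you already describe.
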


\smallskip

\begin{remark}
At first sight, Formula (\ref{lift-conv}) is not very different from (\ref{lift-conv-smooth}): the $\bv^0$-component has just "slipped" from $X_2$ to $\ci(\Xi)$ in order to counterbalance some lack of regularity. And yet, the estimation of $\mathcal{K}_{\uxi} \mathbf{v}$ is much more knotty than the estimation of $\mathcal{G}^\sharp_{\uxi} \mathbf{v}$, not only because of this modification, but mostly due to the singular behaviour of $K$ at the origin.
\end{remark}

\smallskip

\begin{proof}
First, the localization procedure exhibited in the proof of Proposition \ref{prop:convo-smooth-kernel} (that is the combination of (\ref{locali-1})-(\ref{locali-2})) allows us to assert that $\|\rho_T \cdot \mathcal{K}_{\uxi} (\mathbf{v})\|_{\ga,0} \lesssim \|\mathcal{K}_{\uxi} (\mathbf{v})\|_{\ga,0;[-12T,12T] \times \R}$. Besides, since both $K$ and $\bv$ are compactly supported, this holds true for $\mathcal{K}_{\uxi} (\mathbf{v})$ as well, and we can thus conclude that
$$\|\rho_T \cdot \mathcal{K}_{\uxi} (\mathbf{v})\|_{\ga,0} \lesssim \|\mathcal{K}_{\uxi} (\mathbf{v})\|_{\ga,0;[-12T,12T] \times \compac}\ ,$$
for some appropriate compact set $\compac \subset \R$.

\smallskip

At this point, we are essentially in the same setting as in \cite[Theorem 7.1]{hai-14}, which theoretically provides us with the bound (\ref{convo-kernel-1}). However, for the paper to stay relatively self-contained, we have decided to provide details on this estimation in the appendix, and we therefore refer the reader to Lemma \ref{lem:schauder} for further details regarding the arguments of the rest of the proof of (\ref{convo-kernel-1}).

\smallskip

For the sake of conciseness, we do not elaborate on the proof of (\ref{convo-kernel-2}), but the patient reader could check that (as usual) the estimate goes along the very same lines as for (\ref{convo-kernel-1}).

\end{proof}

\subsection{Initial condition} With formulation (\ref{eq-loc}) in mind, it only remains us to deal with the lift of $G\Psi$ in $\cd^{\ga,0}_0(\uxi)$, which can actually be done in the most natural way, as follows.

\begin{proposition}\label{prop:ini-cond}
For every $\Psi\in L^\infty(\R)$, define
\begin{equation}\label{exp:ini-cond}
(\mathbf{G} \Psi)(x):=\int_{\R} dz \, G(x_1,x_2-z) \Psi(z) \, \1+\int_{\R} dz \, (D^{(0,1)}G)(x_1,x_2-z) \Psi(z) \, X_2 \ .
\end{equation}
Then for any $(\al,K)$-rough path $\uxi$, $\mathbf{G} \Psi \in \cd^{\ga,0}_0(\uxi)$ and 
\begin{equation}\label{ini-cond}
\|\mathbf{G} \Psi\|_{\ga,0} \lesssim  \|\Psi\|_{L^\infty(\R)} \ , 
\end{equation}
where the proportionality constant is independent of $\uxi$.
\end{proposition}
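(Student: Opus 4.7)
The plan is to verify directly that the function $\mathbf{G}\Psi$, which has no $\ci(\Xi)$ component, satisfies the estimates (\ref{condition-sup})--(\ref{condition-holder-3}) that characterize $\cd^{\ga,0}_0(\uxi)$. The uniformity in $\uxi$ is automatic from this approach, since the only part of $\Gamma^{\uxi}$ that depends on the rough path acts on $\ci(\Xi)$, while with $\bu^1\equiv 0$ one simply has
\begin{equation*}
\bu(x)-\gga^{\uxi}_{xy}(\bu(y))=\lc \bu^0(x)-\bu^0(y)-\bu^2(y)(x_2-y_2)\rc \1+\lc \bu^2(x)-\bu^2(y)\rc X_2 \ .
\end{equation*}
Thus the entire proof reduces to standard parabolic estimates on the heat semigroup applied to $\bu^0(t,s)=(G_t\Psi)(s)$ and $\bu^2(t,s)=(D^{(0,1)}G_t\Psi)(s)$, only $\Psi\in L^\infty(\R)$ being used.

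First, I would recall the two basic $L^\infty$-bounds: $\|G_t \Psi\|_\infty\leq \|\Psi\|_\infty$ and $\|D^{(k)}_s G_t\Psi\|_\infty\lesssim t^{-k/2}\|\Psi\|_\infty$ for any $k\geq 0$. The value bounds (\ref{condition-sup}) are then immediate, since $\bu^1\equiv 0$, $|\bu^0|\lesssim \|\Psi\|_\infty$, and $|\bu^2(x)|\cdot \|x\|_P\lesssim x_1^{-1/2}\|\Psi\|_\infty\cdot \inf(1,|x_1|)\lesssim \|\Psi\|_\infty$.

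Next, for the increments (\ref{condition-holder-1}) and (\ref{condition-holder-3}), I split each difference between $x=(x_1,x_2)$ and $y=(y_1,y_2)$ (with WLOG $y_1\leq x_1$) into a spatial and a temporal part:
\begin{equation*}
\bu^i(x)-\bu^i(y)=\lc \bu^i(x_1,x_2)-\bu^i(y_1,x_2)\rc+\lc \bu^i(y_1,x_2)-\bu^i(y_1,y_2)\rc \ .
\end{equation*}
For the spatial part, I would use a Taylor expansion in $s$, the remainder being controlled by the derivatives $\partial_s^2\bu^0(y_1,\cdot)$ (of size $y_1^{-1}\|\Psi\|_\infty$) for (\ref{condition-holder-1}) and by $\partial_s\bu^2(y_1,\cdot)=\partial_s^2\bu^0(y_1,\cdot)$ for (\ref{condition-holder-3}). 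For the temporal part, I would invoke the heat equation $\partial_t=\partial_s^2$ in order to rewrite $\bu^0(x_1,\cdot)-\bu^0(y_1,\cdot)=\int_{y_1}^{x_1}\partial_s^2\bu^0(t,\cdot)\, dt$ and similarly $\bu^2(x_1,\cdot)-\bu^2(y_1,\cdot)=\int_{y_1}^{x_1}\partial_s^3\bu^0(t,\cdot)\, dt$, with integrands of size $t^{-1}\|\Psi\|_\infty$ and $t^{-3/2}\|\Psi\|_\infty$ respectively. This produces bounds of the form
\begin{equation*}
|\bu^0(x)-\bu^0(y)-\bu^2(y)(x_2-y_2)|\lesssim \|\Psi\|_\infty\cdot y_1^{-1}\lc |x_1-y_1|+|x_2-y_2|^2\rc\ ,
\end{equation*}
\begin{equation*}
|\bu^2(x)-\bu^2(y)|\lesssim \|\Psi\|_\infty\cdot \lc y_1^{-3/2}|x_1-y_1|+y_1^{-1}|x_2-y_2|\rc\ .
\end{equation*}

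The key final step is to rewrite these in the target form $\|x-y\|_\scal^{\ga}\|x;y\|_P^{-\ga}$ (resp.\ $\|x-y\|_\scal^{\ga-1}\|x;y\|_P^{-\ga}$). This uses only the two parabolic identities $|x_1-y_1|+|x_2-y_2|^2\leq \|x-y\|_\scal^2$, the constraint $\|x-y\|_\scal\leq \|x;y\|_P\leq y_1$, and the trivial observation that $\|x;y\|_P\leq 1$. For instance, the bound on $\struc_0$ becomes
\begin{equation*}
\|\Psi\|_\infty\cdot \|x-y\|_\scal^2\|x;y\|_P^{-1}=\|\Psi\|_\infty\cdot \|x-y\|_\scal^\ga\cdot \|x-y\|_\scal^{2-\ga}\|x;y\|_P^{-1}\leq \|\Psi\|_\infty\cdot \|x-y\|_\scal^\ga\|x;y\|_P^{1-\ga}\ ,
\end{equation*}
and $\|x;y\|_P^{1-\ga}\leq \|x;y\|_P^{-\ga}$ since $1-\ga>-\ga$ and $\|x;y\|_P\leq 1$. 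The analogous computation for the $\struc_1$-component and the required power $\ga-1$ proceeds along the same lines. None of these steps presents a real obstacle; the only point demanding slight care is the coherent choice of smoothing exponents that keeps track simultaneously of the parabolic scaling and of the singular weight $\|x;y\|_P^{-\ga}$ at $P=\{x_1=0\}$.
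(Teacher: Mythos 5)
Your proof is correct and follows essentially the same route as the paper: in both cases one observes that the $\ci(\Xi)$-component of $\mathbf{G}\Psi$ vanishes (so that the rough-path dependence drops out of the $\Gamma$-increments), then one verifies the conditions (\ref{condition-sup})--(\ref{condition-holder-3}) directly using parabolic heat-kernel estimates and the constraints $\|x-y\|_\scal\leq\|x;y\|_P\leq 1$ to recast the resulting bounds in the form $\|x-y\|_\scal^{\ga-\beta}\|x;y\|_P^{-\ga}$. The only cosmetic difference is that you invoke the heat equation $\partial_t G=\partial_s^2 G$ to reduce the temporal increment to a spatial-derivative bound, whereas the paper directly cites the $L^1$-moment identity $\int_\R |D^kG(x_1,\cdot)|=c_k|x_1|^{-|k|_\scal/2}$; both rest on the same parabolic scaling and lead to the same estimates.
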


\begin{proof}
The control relies on the basic formula 
$$\int_\R dx_2 \, |(D^k G)(x_1,x_2)| =c_{k} \cdot |x_1|^{-\frac{|k|_\scal}{2}} $$
for every $\la \geq 0$, $x_1 \neq 0$ and $|k|_\scal \leq 3$. For instance, for all $x,y\in \R^2 \backslash P$ satisfying $\|x-y\|_\scal \leq \|x;y\|_P$, one has
\begin{eqnarray*}\label{lift-ini-2}
\lefteqn{\big| \int_{\R} dz \, \big[ G(x_1,x_2-z)-G(y_1,y_2-z)-(x_2-y_2) \cdot (D^{(0,1)}G)(y_1,y_2-z)\big] \cdot \Psi(z) \big|}\\
&\leq & \|\Psi\|_{L^\infty(\R)}\cdot \bigg\{ \int_{\R} dz \, |G(x_1,z)-G(y_1,z)|\\
& & +|x_2-y_2| \cdot \int_0^1 dr \int_{\R}dz \, |(D^{(0,1)}G)(y_1,y_2-z+r(x_2-y_2))-(D^{(0,1)}G)(y_1,y_2-z)| \bigg\} \\
&\lesssim& \|\Psi\|_{L^\infty(\R)}\cdot \big\{ |x_1-y_1| \cdot \|x;y\|_P^{-1}+|x_2-y_2|^2 \cdot \|x;y\|_P^{-1}\big\}\\
&\lesssim& \|\Psi\|_{L^\infty(\R)}\cdot \|x-y\|_\scal^\ga \cdot \|x;y\|_P^{1-\ga} \ .
\end{eqnarray*}
As for the supremum norms, one has obviously
$$\sup_{x\in \R^2\backslash P}\bigg(  \bigg| \int_{\R} dz \, G(x_1,x_2-z) \cdot \Psi(z) \bigg|,\|x\|_P\cdot \bigg| \int_{\R} dz \, (D^{(0,1)}G)(x_1,x_2-z) \cdot \Psi(z) \bigg|\bigg) \lesssim \|\Psi\|_{L^\infty(\R)}\ .$$

\end{proof}

\subsection{Solving the modelled equation}\label{subsec:solving}
Given an $(\al,K)$-rough path $\uxi$ and an initial condition $\Psi\in L^\infty(\R)$, we can now combine successively Propositions \ref{prop:compo}, \ref{prop:multi}, \ref{prop:convo-smooth-kernel}, \ref{prop:convo-sing-kernel} in order to transpose Equation (\ref{eq-loc}) in the space $\cd^{\ga,0}_0(\uxi)$ as
\begin{equation}\label{modelled-equation}
\bu=\mathbf{G}\Psi+\rho_T \cdot \big[ \mathcal{K}_{\uxi}+\mathcal{G}^\sharp_{\uxi}\big] \big( (\rho_+ \cdot \bff(\bu))\star  \Xi\big)  \ .
\end{equation}

\begin{remark}\label{rk:consistency-check}
Let us go back here on the "consistency" condition raised in Remark \ref{rk:lift}, and which explains why we call the procedure a "lift" of the equation. Assume that $\xi$ is a smooth function and that $\xi^{\2}$ is given by the canonical expression (\ref{levy-area-smoo}). Then if $\bu$ satisfies (\ref{modelled-equation}) and if we set $u(x):=(\crr_{\uxi}(\bu))(x)=(\Pi^{\uxi}_x( \bu(x)))(x)$ (by (\ref{reconstruction-smooth})), we can first conclude from (\ref{lift-conv-smooth}), (\ref{lift-conv}) and (\ref{exp:ini-cond}) that
\begin{eqnarray}
u(x)&=&(G_{x_1} \Psi)(x_2)+\rho_T(x_1) \cdot (K\ast \crr_{\uxi}\bv)(x)+\rho_T(x_1) \cdot (G^\sharp \ast \crr_{\uxi}\bv)(x) \nonumber\\
&=&(G_{x_1} \Psi)(x_2)+\rho_T(x_1) \cdot (G\ast \crr_{\uxi}\bv)(x) \ ,\label{cas-smooth-check}
\end{eqnarray}
where $\bv:=\rho_+\cdot (\bv^0\, \Xi+\bv^1 \, \Xi\ci(\Xi)+\bv^2 \, \Xi X_2)$ with $\bv^0,\bv^1,\bv^2$ defined as in Proposition \ref{prop:compo}. Then
\begin{eqnarray*}
(\crr_{\uxi}\bv)(x)&  =&  (\Pi^{\uxi}_x (\bv(x)))(x)\\
&=& \rho_+(x)\cdot \bv^0(x) \cdot \xi(x)+\rho_+(x)\cdot \bv^1(x) \cdot \xi^{\2}_x(x)+\rho_+(x)\cdot \bv^2(x) \cdot \Pi^{\uxi}_x(\Xi X_2)(x)\\
&=&\rho_+(x)\cdot \bv^0(x) \cdot \xi(x)\ =\ \rho_+(x)\cdot F(x_2,u(x)) \cdot \xi(x) \ .
\end{eqnarray*}
By injecting this expression back into (\ref{cas-smooth-check}), we get that $u$ is a classical solution of (\ref{eq-loc}), and hence the consistency condition is indeed satisfied by the above constructions.
\end{remark}

\begin{proposition}\label{prop:sol}
For every fixed $(\al,K)$-rough path $\uxi$ and initial condition $\Psi\in L^\infty(\R)$, there exists a time $T_0=T_0(\uxi,\Psi) >0$ and a radius $R=R(\uxi,\Psi)>0$ such that for every $0<T\leq T_0$, Equation (\ref{modelled-equation}) admits a unique solution $\mathbf{\Phi}(\uxi,\Psi,T)$ within the ball $\cb_{\uxi}(R):=\{\bu \in \cd_0^{\ga,0}(\uxi): \, \|\bu\|_{\ga,0}\leq R\}$.
\end{proposition}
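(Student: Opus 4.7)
The plan is a standard Banach fixed-point argument applied to the map
$$\Gamma_{\uxi,\Psi,T}(\bu):=\mathbf{G}\Psi+\rho_T \cdot \big[ \mathcal{K}_{\uxi}+\mathcal{G}^\sharp_{\uxi}\big] \big( (\rho_+ \cdot \bff(\bu))\star  \Xi\big)$$
on $\cd_0^{\ga,0}(\uxi)$. The key observation is that Propositions \ref{prop:convo-smooth-kernel} and \ref{prop:convo-sing-kernel} both produce a small factor $T^{\ka}$, which is what allows the contraction mechanism to be activated for sufficiently small time.

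First I would check that $\Gamma_{\uxi,\Psi,T}$ sends $\cb_{\uxi}(R)$ into itself for a suitable choice of $R$. Starting from $\bu\in \cb_{\uxi}(R)$, apply Proposition \ref{prop:compo} to control $\rho_+\cdot \bff(\bu)$ by $Q_{\uxi}(1+R^2)$ in $\cd^{\ga,0}_0(\uxi)$, then Proposition \ref{prop:multi} (which preserves the norm when multiplying by $\Xi$) to move to $\cd^{\ga+\al,\al}(\uxi)$, and finally Propositions \ref{prop:convo-smooth-kernel}--\ref{prop:convo-sing-kernel} and \ref{prop:ini-cond} to conclude
$$\|\Gamma_{\uxi,\Psi,T}(\bu)\|_{\ga,0}\leq C\,\|\Psi\|_{L^\infty(\R)}+Q_{\uxi}\cdot T^{\ka}\cdot \big(1+R^2\big).$$
Setting $R:=2C\|\Psi\|_{L^\infty(\R)}+1$ and choosing $T_1>0$ small enough so that $Q_{\uxi}\cdot T_1^{\ka}(1+R^2)\leq R/2$ gives the stability $\Gamma_{\uxi,\Psi,T}(\cb_{\uxi}(R))\subset \cb_{\uxi}(R)$ for every $0<T\leq T_1$.

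Next, to obtain contractivity, I would apply the same four propositions in their \emph{difference} form, using the crucial fact that when $\uxi'=\uxi$ the rough-path distance $\|\uxi;\uxi\|_{\al;\compac_1}$ simply vanishes, so that the estimates (\ref{bound-compo-2}), (\ref{multi-noise-2}), (\ref{convo-smooth-kernel-2}), (\ref{convo-kernel-2}) collapse to
$$\|\Gamma_{\uxi,\Psi,T}(\bu)-\Gamma_{\uxi,\Psi,T}(\bu')\|_{\ga,0}\leq \tilde Q_{\uxi,\bu,\bu'}\cdot T^{\ka}\cdot \|\bu;\bu'\|_{\ga,0}.$$
For $\bu,\bu'\in \cb_{\uxi}(R)$ the polynomial expression $\tilde Q_{\uxi,\bu,\bu'}$ is uniformly bounded by some constant $M=M(\uxi,R)$, so choosing $T_0\in (0,T_1]$ small enough that $M\cdot T_0^{\ka}\leq 1/2$ makes $\Gamma_{\uxi,\Psi,T}$ a strict contraction on $\cb_{\uxi}(R)$ for every $0<T\leq T_0$.

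The Banach fixed-point theorem, applied in the closed ball $\cb_{\uxi}(R)$ of the complete metric space $\cd^{\ga,0}_0(\uxi)$ (completeness follows from the definition of the norm (\ref{def:norm-ga-eta})), then yields a unique fixed point $\mathbf{\Phi}(\uxi,\Psi,T)\in \cb_{\uxi}(R)$. The only mildly delicate point is ensuring consistency of the localization parameters: since the $T^{\ka}$ factor only arises from convolution with the (time-cut) kernels and not from the composition step, the two estimates must be combined in the right order---first apply $\bff$ and multiplication by $\Xi$, then the two convolution operators carrying the small time factor---otherwise the $T^{\ka}$ gain could be lost. Apart from this bookkeeping, the argument is entirely routine given the machinery developed in the previous subsections.
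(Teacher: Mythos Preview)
Your proposal is correct and follows essentially the same route as the paper: a Banach fixed-point argument on $\cd_0^{\ga,0}(\uxi)$, chaining Propositions \ref{prop:compo}, \ref{prop:multi}, \ref{prop:convo-smooth-kernel}, \ref{prop:convo-sing-kernel}, \ref{prop:ini-cond} first in their direct form for stability and then in their difference form (with $\uxi'=\uxi$) for contractivity, exploiting the $T^{\ka}$ factor from the two convolution estimates. The only cosmetic difference is that the paper absorbs the initial-condition contribution into a single polynomial constant $Q^{(1)}_{\uxi,\Psi}$ and sets $R=1+Q^{(1)}_{\uxi,\Psi}$, whereas you keep the $\|\Psi\|_{L^\infty}$-term separate; the mechanics are otherwise identical.
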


\begin{proof}
As expected, we resort to a fixed-point argument based on the previous estimates. For every $\bu\in \cd^{\ga,0}_0(\uxi)$, denote by $\mathcal{M}_{\uxi,\Psi,T}(\bu)\in \cd^{\ga,0}_0(\uxi)$ the right-hand side of (\ref{modelled-equation}). By using successively (\ref{ini-cond}), (\ref{convo-kernel-1}), (\ref{convo-smooth-kernel-1}), (\ref{multi-noise-1}) and (\ref{bound-compo-1}), we get that
$$\|\mathcal{M}_{\uxi,\Psi,T}(\bu) \|_{\ga,0} \leq Q^{(1)}_{\uxi,\Psi} \cdot \{1+T^{\ka_1}\cdot \|\bu\|_{\ga,0}^2 \} \ ,$$
for some parameter $\ka_1 >0$. Here, $Q^{(1)}_{\uxi,\Psi}$ stands for a polynomial expression in $\|\Psi\|_{L^\infty(\R)}$ and $\|\uxi\|_{\al;\compac_0}$, where $\compac_0$ is a compact set in $\R^2$ depending only on $F$. With this notation, set
$$T_1=T_1(\uxi,\Psi)=\big( Q^{(1)}_{\uxi,\Psi} \cdot \big(1+Q^{(1)}_{\uxi,\Psi}\big)^2\big)^{-\frac{1}{\ka_1}} \ >0 \quad \text{and} \quad R=R(\uxi,\Psi):=1+Q^{(1)}_{\uxi,\Psi} \ .$$
Thus, for any $0<T\leq T_1$ and $\bu\in \cb_{\uxi}(R)$, it holds that $\|\mathcal{M}_{\uxi,\Psi,T}(\bu) \|_{\ga,0} \leq R$, that is, $\cb_{\uxi}(R)$ is invariant through the action of $\mathcal{M}_{\uxi,\Psi,T}$.

\smallskip

\noindent
Then, given two elements $\bu,\bu'\in \cb_{\uxi}(R)$, we can combine, for the fixed model $\uxi$, the bounds (\ref{convo-kernel-2}), (\ref{convo-smooth-kernel-2}), (\ref{multi-noise-2}), (\ref{bound-compo-2}), and assert that for every $0<T\leq T_1$,
$$\|\mathcal{M}_{\uxi,\Psi,T}(\bu);\mathcal{M}_{\uxi,\Psi,T}(\bu')\|_{\ga,0} \leq Q^{(2)}_{\uxi} \cdot T^{\ka_2}\cdot \|\bu;\bu'\|_{\ga,0} \ ,$$
for some parameter $\ka_2>0$ and some polynomial expression $Q^{(2)}_{\uxi}$ in $\|\uxi\|_{\al;\compac_0}$. Finally, set
$$T_0=T_0(\uxi,\Psi)=\inf\big( T_1, \big( 2 \, Q^{(2)}_{\uxi}\big)^{-\frac{1}{\ka_2}}\big) \ >0 \ .$$
In this way, for every $0<T\leq T_0$, the restriction $\mathcal{M}_{\uxi,\Psi,T} : \cb_{\uxi}(R) \to \cb_{\uxi}(R)$ is a contraction map, which ensures the existence of a unique fixed point $\bu$ in $ \cb_{\uxi}(R)$.

\end{proof}

\begin{remark}\label{rk:maximal}
Repeating the above fixed-point argument could actually lead us to the exhibition of a \emph{unique maximal solution for the modelled equation} (given a fixed $(\al,K)$-rough path $\uxi$). In other words, we could show the existence of a time $T>0$, a growing sequence of times $T_n \rightarrow T$ and a sequence $(u^n) \in \cd^{\ga,0}_0(\uxi)$ such that for every $n$, the three following conditions are satisfied: $(i)$ $(u^n)$ satisfies the modelled equation (\ref{modelled-equation}) on $[0,T_n]$; $(ii)$ if $\bu\in \cd^{\ga,0}_0(\uxi)$ satisfies the modelled equation on $[0,T_n]$, then $\bu_{|[0,T_n]}=\bu^n_{|[0,T_n]}$ ; $(iii)$ $\lim_{n\to \infty} \|\mathcal{R}_{\uxi}(\bu^n)(T^n,.)\|_{L^\infty(\R)} =\infty$. Roughly speaking, the strategy of this extension goes as follows: once endowed with a local solution $\bu$ up to some (small) time $T_0$, the equation is reloaded with new starting time $T_0$ and initial condition $\mathcal{R}_{\uxi}(\bu)(T_0,.) \in L^{\infty}(\R)$ (due to Proposition \ref{prop:reconstruction-fonction}), using additionally "time-shifted" topologies (where the hyperplane $\{x\in \R^2: \ x_1=0\}$ is replaced with $\{x\in \R^2: \ x_1=T_0\}$). The success of the procedure relies on the patching result stated in \cite[Proposition 7.11]{hai-14}. 
\end{remark}

We are finally in a position to state the main result of this section.

\begin{proposition}\label{prop:cont-flow}
Consider a sequence of $(\al,K)$-rough paths $\uxi^n$ and initial conditions $\Psi^n \in L^\infty(\R)$ such that, for every compact $\compac \subset \R^2$, 
\begin{equation}\label{converg-assump}
\|\uxi^n;\uxi\|_{\al;\compac} \to 0 \quad \text{and} \quad \|\Psi^n-\Psi\|_{L^\infty(\R)} \to 0 \ ,
\end{equation}
for some $(\al,K)$-rough path $\uxi$ and initial condition $\Psi$. Then, with the notations of Proposition \ref{prop:sol}, there exists a time $T^\ast=T^\ast(\uxi,\Psi) >0$ such that $\mathbf{\Phi}(\uxi^n,\Psi^n,T^\ast)$ is well defined for every $n$ large enough, as well as $\mathbf{\Phi}(\uxi,\Psi,T^\ast)$, and
\begin{equation}\label{conve-1}
\|\mathbf{\Phi}(\uxi^n,\Psi^n,T^\ast);\mathbf{\Phi}(\uxi,\Psi,T^\ast)\|_{\ga,0} \to 0 \ .
\end{equation}
In particular, if we set $\Phi(\uxi^n,\Psi^n,T^\ast)=\mathcal{R}_{\uxi^n}(\mathbf{\Phi}(\uxi^n,\Psi^n,T^\ast))$ and $\Phi(\uxi,\Psi,T^\ast)=\mathcal{R}_{\uxi}(\mathbf{\Phi}(\uxi,\Psi,T^\ast))$, it holds that
\begin{equation}\label{conve-2}
\|\Phi(\uxi^n,\Psi^n,T^\ast)-\Phi(\uxi,\Psi,T^\ast)\|_{L^\infty(\R^2)} \to 0 \ ,
\end{equation}
as well as
\begin{equation}\label{conve-3}
\|\Phi(\uxi^n,\Psi^n,T^\ast)-\Phi(\uxi,\Psi,T^\ast)\|_{\al+2;[s,T^\ast]\times \compac} \to 0
\end{equation}
for every compact set $\compac \subset \R$ and every fixed $s\in (0,T^\ast)$.
\end{proposition}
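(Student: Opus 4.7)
The plan is to exploit the uniform stability of the contraction argument underlying Proposition \ref{prop:sol}. A careful reading of that proof shows that the time $T_0(\uxi,\Psi)$ and radius $R(\uxi,\Psi)$ are determined by polynomial expressions in $\|\uxi\|_{\al;\compac_0}$ and $\|\Psi\|_{L^\infty(\R)}$. Since, by (\ref{converg-assump}), both $\|\uxi^n\|_{\al;\compac_0}$ and $\|\Psi^n\|_{L^\infty(\R)}$ are bounded for $n$ large, I would first pick a single $T^\ast>0$, together with a uniform radius $R$, such that for $n$ large enough the maps $\mathcal{M}_{\uxi^n,\Psi^n,T^\ast}$ and $\mathcal{M}_{\uxi,\Psi,T^\ast}$ are each $\frac{1}{2}$-contractions of their respective balls $\cb_{\uxi^n}(R)$ and $\cb_{\uxi}(R)$. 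This guarantees simultaneous existence of $\bu^n:=\mathbf{\Phi}(\uxi^n,\Psi^n,T^\ast)$ and $\bu:=\mathbf{\Phi}(\uxi,\Psi,T^\ast)$ as unique fixed points.

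Next, for (\ref{conve-1}) I would combine the four continuity estimates (\ref{bound-compo-2}), (\ref{multi-noise-2}), (\ref{convo-smooth-kernel-2}) and (\ref{convo-kernel-2}) with the identity $\bu^n=\mathcal{M}_{\uxi^n,\Psi^n,T^\ast}(\bu^n)$, $\bu=\mathcal{M}_{\uxi,\Psi,T^\ast}(\bu)$. Since both solutions lie in balls of common radius $R$, these inequalities yield
\begin{equation*}
\|\bu^n;\bu\|_{\ga,0}\ \leq\ Q\cdot T^{\ast\ka}\cdot \|\bu^n;\bu\|_{\ga,0}+Q'\cdot\{\|\uxi^n;\uxi\|_{\al;\ov{\compac}_0}+\|\Psi^n-\Psi\|_{L^\infty(\R)}\}\ ,
\end{equation*}
for polynomial expressions $Q,Q'$ in the (uniformly controlled) quantities $\|\uxi^n\|_{\al;\ov{\compac}_0}$, $\|\uxi\|_{\al;\ov{\compac}_0}$ and $R$. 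Shrinking $T^\ast$ further if necessary so that $Q\cdot T^{\ast\ka}\leq \frac{1}{2}$, the first summand can be absorbed on the left, giving $\|\bu^n;\bu\|_{\ga,0}\lesssim \|\uxi^n;\uxi\|_{\al;\ov{\compac}_0}+\|\Psi^n-\Psi\|_{L^\infty(\R)}\to 0$, which is (\ref{conve-1}).

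The $L^\infty$-convergence (\ref{conve-2}) is then immediate: by Proposition \ref{prop:reconstruction-fonction} one has $\Phi(\uxi^n,\Psi^n,T^\ast)(x)=\bu^{n,0}(x)$ and $\Phi(\uxi,\Psi,T^\ast)(x)=\bu^{0}(x)$, and the first supremum in (\ref{def:norm-ga-eta-2}) directly bounds $\sup_x|\bu^{n,0}(x)-\bu^0(x)|$ by $\|\bu^n;\bu\|_{\ga,0}$. The main (though non-severe) obstacle lies in deducing the $\cac^{\al+2}$-convergence (\ref{conve-3}), since a priori the second supremum in (\ref{def:norm-ga-eta-2}) only controls a \emph{modelled} increment involving $\gga^{\uxi^n}$ and $\gga^{\uxi}$, not the pure increment of $\bu^{n,0}-\bu^0$. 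The key point is that on $[s,T^\ast]\times\compac$ with $s>0$ the weight $\|x;y\|_P$ is bounded below by a positive constant, so the singular norm provides genuine H\"older control. Writing $\theta^n:=K\ast\xi^n$, $\theta:=K\ast\xi$ and decomposing
\begin{equation*}
\bu^{n,1}(y)[\theta^n(x)-\theta^n(y)]-\bu^1(y)[\theta(x)-\theta(y)]=(\bu^{n,1}-\bu^1)(y)\cdot[\theta^n(x)-\theta^n(y)]+\bu^1(y)\cdot[(\theta^n-\theta)(x)-(\theta^n-\theta)(y)]\ ,
\end{equation*}
and invoking Lemma \ref{lem:convo-c-al} together with the uniform bound on $\|\bu^{n,1}\|$ and $\|\bu^1\|$ away from $P$, one controls this contribution by $C_s\{\|\bu^n;\bu\|_{\ga,0}+\|\uxi^n;\uxi\|_{\al;\ov{\compac}_0}\}\cdot\|x-y\|_\scal^{\al+2}$. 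The linear $X_2$-contribution is treated analogously. Combined with the raw $\|x-y\|_\scal^{\ga}$-bound from (\ref{condition-holder-1}) (and using $\ga>\al+2$), this yields (\ref{conve-3}).
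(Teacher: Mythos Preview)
Your argument is correct and follows essentially the same route as the paper's proof: pick $T^\ast$ small enough (using the polynomial dependence of the contraction data on $\|\uxi\|_{\al;\compac_0}$ and $\|\Psi\|_{L^\infty}$) so that all the fixed points exist, chain the continuity bounds (\ref{bound-compo-2}), (\ref{multi-noise-2}), (\ref{convo-smooth-kernel-2}), (\ref{convo-kernel-2}) together with (\ref{ini-cond}) to get the absorption inequality, and then read off (\ref{conve-2})--(\ref{conve-3}) from Proposition \ref{prop:reconstruction-fonction} and the structure of (\ref{condition-sup})--(\ref{condition-holder-1}). The paper dispatches (\ref{conve-3}) in one line (``mere consequences''), whereas you spell out the decomposition of the $\ci(\Xi)$- and $X_2$-contributions via Lemma \ref{lem:convo-c-al}; this extra detail is entirely in the spirit of the intended argument.
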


\begin{proof}
A quick examination of the proof of Proposition \ref{prop:sol} shows that we can choose $T_0$ and $R$ in a such a way that, due to (\ref{converg-assump}), one has
$$T_0(\uxi^n,\Psi^n) \to T_0(\uxi,\Psi) \quad \text{and} \quad R(\uxi^n,\Psi^n) \to R(\uxi,\Psi) \ .$$
Then, for $N$ (fixed) large enough, set $T^\ast_1(\uxi,\Psi):=T_0(\uxi,\Psi) \wedge \inf_{n\geq N} T_0(\uxi^n,\Psi^n) >0$, so that for every $0<T\leq T_1^\ast$ and $n\geq N$, $\Phi(\uxi^n,\Psi^n,T)$ and $\Psi(\uxi,\Psi,T)$ are both well defined. Now combine (\ref{ini-cond}), (\ref{convo-kernel-2}), (\ref{convo-smooth-kernel-2}), (\ref{multi-noise-2}) and finally (\ref{bound-compo-2}) to deduce that
\begin{align*}
& \|\Phi(\uxi^n,\Psi^n,T);\Phi(\uxi,\Psi,T)\|_{\ga,0}\\
& \leq 
Q^{(3)}_{\uxi,\Psi}\cdot \big\{T^{\ka_3}\cdot \|\Phi(\uxi^n,\Psi^n,T);\Phi(\uxi,\Psi,T)\|_{\ga,0}+\|\Psi^n-\Psi\|_{L^\infty(\R)}+\|\uxi^n;\uxi\|_{\al;\compac_0}\big\}
\end{align*}
for some parameter $\ka_3 >0$. As in the proof of Proposition \ref{prop:sol}, $Q^{(3)}_{\uxi,\Psi}$ stands for a polynomial expression in $\|\Psi\|_{L^\infty(\R)}$ and $\|\uxi\|_{\al;\compac_0}$, where $\compac_0$ is a compact set in $\R^2$ depending only on $F$. The convergence result (\ref{conve-1}) follows immediately for the value
$$T^\ast(\uxi,\Psi):=\inf\big( T^\ast_1(\uxi,\Psi),\big( 2Q^{(3)}_{\uxi,\Psi}\big)^{-\frac{1}{\ka_3}} \big) \ >0 .$$
As for the convergence results (\ref{conve-2})-(\ref{conve-3}), they are now mere consequences of Proposition \ref{prop:reconstruction-fonction} and the conditions involved in (\ref{condition-sup})-(\ref{condition-holder-1}).

\end{proof}

\section{Construction of an $(\al,K)$-rough path}\label{sec:construction}

With the result of Proposition \ref{prop:cont-flow} in hand, the route toward Theorem \ref{main-theo}, point $(ii)$, is now quite clear: we need to construct a $K$-Lévy area above the fractional noise $\xi=\partial_t \partial_x X$ involved in the equation, which is the purpose of the present section.

\smallskip

Before we go into the details, let us say a few words about our strategy. Considering the approximation $(X^n)$ of the $(H_1,H_2)$-fractional sheet given by (\ref{approx-noise}), we are going to show that there exists a sequence $\uxi^n$ of $(\al,K)$-rough paths above $\xi^n:=\partial_t \partial_x X^n$ such that $\uxi^n$ converges to an element $\uxi$ with respect to the (set of) norms involved in (\ref{converg-assump}). Given the smoothness of $X^n$, and accordingly the smoothness of $\xi^n$, the canonical choice for such an approximating sequence is given (see Remark \ref{rk:levy-area-smooth}) by $\uxi^n=(\xi^n,\xi^{\2,n})$, with
$$\xi^n(x):=(\partial_t \partial_x X^n)(x) \quad , \quad \xi^{\2,n}_x(y):= [(K\ast \xi^n)(y)-(K\ast \xi^n)(x)] \cdot \xi^n(y) \ .$$
It turns out that the sequence $\xi^{\2,n}$ defined in this way fails to converge in the case we focus on, that is when $2\geq 2H_1+H_2 > \frac53$. To this extent, the situation can be compared with the issue raised by the two-dimensional Brownian parabolic Anderson model, as it is presented in \cite[Section 1.5.1]{hai-14}. Just as in the latter example, we are going to show that there exists positive deterministic constants $C^n_{H_1,H_2}$ such that the sequence of \emph{renormalized} $(\al,K)$-rough paths given for all $x,y\in \R^2$ by
\begin{equation}\label{rp-approx}
\xi^n(x):=(\partial_t \partial_x X^n)(x) \quad , \quad \hat{\xi}^{\2,n}_x(y):= [(K\ast \xi^n)(y)-(K\ast \xi^n)(x)] \cdot \xi^n(y)-C^n_{H_1,H_2} \ ,
\end{equation}
does converge to an $(\al,K)$-rough path $\hat{\uxi}$ above $\xi$. We will then see how this renormalization trick reverberates on the equation itself, through the emergence of the correction term in (\ref{eq-base}) (see Section \ref{subsec:proof-point-ii}).

\subsection{Preliminaries and main statements}\label{subsec:main-statements}
Denote by $\mathcal{E}_\al^K$ the set of $(\al,K)$-rough paths, that is
\begin{multline*}
\mathcal{E}_\al^K:=\\
\{\uxi=(\xi,\xi^{\mathbf{2}}) \in \cac_c^\al(\R^2)\times \pmb{\cac}^{2\al+2}_c(\R^2): \, \text{for all} \ x,y\in \R^2 , \  \xi^{\mathbf{2}}_x-\xi^{\mathbf{2}}_y=[(K\ast \xi)(y)-(K\ast \xi)(x)] \cdot \xi \ \} \ ,
\end{multline*}
and define $d_\al : \mathcal{E}_\al^K \times \mathcal{E}_\al^K \to \R_+$ along the classical globalization procedure:
$$d_\al(\uxi,\uxi')=\sum_{k\geq 0} 2^{-k} \frac{\|\uxi;\uxi'\|_{\al;R_k}}{1+\|\uxi;\uxi'\|_{\al;R_k}} \ ,$$
where $\|\uxi;\uxi'\|_{\al;\compac}$ is defined by (\ref{norm-rp}) and we have set $R_k:=[-4k,4k]^2$.

\begin{proposition}\label{complet}
$(\mathcal{E}_\al^K,d_\al)$ is a complete metric space.
\end{proposition}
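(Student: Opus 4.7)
The plan is to proceed in three steps, following the standard scheme used to prove completeness of metric rough-path spaces.

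\textbf{Reduction to local completeness.} Since $d_\al$ is a weighted sum of the bounded metrics $\|\cdot;\cdot\|_{\al;R_k}/(1+\|\cdot;\cdot\|_{\al;R_k})$, a sequence $(\uxi^{(n)})$ is $d_\al$-Cauchy if and only if, for every $k\geq 0$, both sequences $(\xi^{(n)})$ and $(\xi^{\mathbf{2},(n)})$ are Cauchy for the semi-norms $\|\cdot\|_{\al;R_k}$ and $\|\cdot\|_{2\al+2;R_k}$ respectively. So everything reduces to checking that Cauchy sequences converge in $\cac^\al_c(\R^2)$, in $\pmb{\cac}^{2\al+2}_c(\R^2)$, and that the $K$-Chen relation passes to the limit.

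\textbf{Extraction of limits.} Fix a Cauchy sequence $(\xi^{(n)}) \subset \cac^\al_c(\R^2)$. For any fixed $\vp \in \cac^\ell_c(\R^2)$, one can write $\vp$ (up to a global scaling) as a fixed test function of the form $\cs^\delta_{\scal,x}\psi$ with $x$ and $\delta$ chosen so that the semi-norm on a suitable compact $\compac \supset \text{supp}\,\vp$ controls $\langle \xi^{(n)} - \xi^{(m)}, \vp\rangle$, thus making it Cauchy in $\R$. This defines a limit $\xi \in \cd'_\ell(\R^2)$ with $\xi^{(n)} \to \xi$ in $\cd'_\ell$. A Fatou-type argument (fixing $n$ and letting $m\to\infty$ inside $\|\xi^{(n)}-\xi^{(m)}\|_{\al;\compac}<\eps$) then upgrades this to $\xi^{(n)}\to\xi$ in each $\|\cdot\|_{\al;\compac}$, and in particular shows $\xi\in\cac^\al_c(\R^2)$. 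The same approach yields a limit $\xi^{\mathbf{2}}$ with $\xi^{\mathbf{2},(n)}_x \to \xi^{\mathbf{2}}_x$ tested against scaled test functions centered near $x$, with $\xi^{\mathbf{2}} \in \pmb{\cac}^{2\al+2}_c(\R^2)$.

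\textbf{Passing to the limit in the $K$-Chen relation.} By Lemma \ref{lem:convo-c-al} the linear map $\eta\mapsto K\ast\eta$ is continuous from $\cac^\al_c(\R^2)$ to $\cac^{\al+2}_c(\R^2)$; in particular $K\ast \xi^{(n)}\to K\ast\xi$ locally uniformly, so for every $x,y\in\R^2$ the scalar $c^{(n)}_{xy}:=(K\ast\xi^{(n)})(y)-(K\ast\xi^{(n)})(x)$ converges to $c_{xy}:=(K\ast\xi)(y)-(K\ast\xi)(x)$. Testing the identity $\xi^{\mathbf{2},(n)}_x - \xi^{\mathbf{2},(n)}_y = c^{(n)}_{xy}\cdot\xi^{(n)}$ against a fixed $\vp\in\cac^\ell_c(\R^2)$, the right-hand side passes to $c_{xy}\cdot\langle\xi,\vp\rangle$, so the left-hand side must converge to a consistent limit, which must coincide with $\langle\xi^{\mathbf{2}}_x-\xi^{\mathbf{2}}_y,\vp\rangle$. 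This both promotes the distributional convergence of $\xi^{\mathbf{2},(n)}_x$ from a neighborhood of $x$ to all of $\R^2$ and establishes the $K$-Chen relation for the limiting pair $\uxi=(\xi,\xi^{\mathbf{2}})$. Combining with the norm estimates from Step 2 gives $\uxi\in\mathcal{E}_\al^K$ and $\uxi^{(n)}\to\uxi$ in $d_\al$.

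The main obstacle is the second bullet in Step 2 for $\xi^{\mathbf{2}}$: the Besov-Hairer semi-norm $\|\cdot\|_{2\al+2;\compac}$ only controls $\xi^{\mathbf{2}}_x$ through rescaled test functions centered at $x$, so it alone does not identify $\xi^{\mathbf{2}}_x$ as a genuine distribution on all of $\R^2$ with a limit. This is precisely what the $K$-Chen relation is for—one uses $\xi^{\mathbf{2},(n)}_x=\xi^{\mathbf{2},(n)}_y+c^{(n)}_{yx}\cdot\xi^{(n)}$ with $y$ chosen close to the support of the test function, thereby reducing the control of $\xi^{\mathbf{2},(n)}_x$ tested far from $x$ to the control of $\xi^{\mathbf{2},(n)}_y$ tested near $y$ plus a scalar factor; this is where the interplay between the two components of $\uxi$ genuinely enters the completeness proof.
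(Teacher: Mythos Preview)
Your approach is correct and essentially the same as the paper's: extract distributional limits component-wise, upgrade to convergence in the Besov--Hairer semi-norms by letting $m\to\infty$ in the Cauchy estimate, and use the $K$-Chen relation (together with the continuity of $K\ast\cdot$ from Lemma~\ref{lem:convo-c-al}) both to globalize the control of $\xi^{\mathbf{2}}_x$ away from $x$ and to pass the algebraic relation to the limit. The paper implements your ``choose $y$ close to the support'' idea via a finite partition of unity $\vp=\sum_i \cs^1_{\scal,x_i}\vp_i$ with $x_i$ inside the support, which is exactly what Lemma~\ref{lem:topo} already encodes.

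One small omission: the statement also asserts that $d_\al$ is a \emph{metric}, and you do not address separation. The issue is the same one you flag for $\xi^{\mathbf{2}}$: the semi-norm $\|\xi^{\mathbf{2}}-\eta^{\mathbf{2}}\|_{2\al+2;R_k}=0$ only says $\xi^{\mathbf{2}}_x=\eta^{\mathbf{2}}_x$ when tested near $x$, not globally. The paper handles this first, using precisely the $K$-Chen shifting trick from your final paragraph (once $\xi=\eta$ is established, the difference $\xi^{\mathbf{2}}_x-\eta^{\mathbf{2}}_x$ is independent of $x$, so local vanishing propagates).
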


\begin{proof}
Although the reasoning only appeals to elementary arguments, we have found it useful to provide a few details here, insofar as Besov-Hairer (semi-)norms are not exactly standard topologies. Let us first check that $d_\al$ does define a metric on $(\mathcal{E}_\al^K,d_\al)$. To this end, let $\uxi=(\xi,\xi^\2)$ and $\pmb{\eta}=(\eta,\eta^\2)$ represent two $(\al,K)$-rough paths such that $d_\al(\uxi,\ueta)=0$. The identification of the first components is easy, because due to Lemma \ref{lem:topo}, if $\vp\in \cac_c^2(\R^2)$ is a test function with support in a ball $\cb_\scal(0,k)$, one has
\begin{eqnarray}
|\langle \xi-\eta,\vp\rangle | &=& |\langle \xi-\eta,\cs_{\scal,0}^1 \vp\rangle |\nonumber\\
&\leq & \|\vp\|_{\cac^2} \cdot \|\xi-\eta\|_{\al,k;\{0\}}\nonumber\\
&\leq & C_k \cdot \|\vp\|_{\cac^2} \cdot \|\xi-\eta\|_{\al;\cb_\scal(0,k)} \ \leq \ C_k \cdot \|\vp\|_{\cac^2} \cdot \|\xi-\eta\|_{\al;R_{k^2}} \ ,\label{first-component}
\end{eqnarray}
and hence $\xi=\eta$. The identification of the second components now follows from the $K$-Chen relation. Indeed, we can proceed as in the proof of Lemma \ref{lem:topo} and decompose $\vp$ as a finite sum $\vp=\sum_{i\in I_k}\cs_{\scal,x_i}^1 \vp_i$ with $\text{supp} \, \vp_i \subset \cb_\scal(0,1)$, $\|\vp_i\|_{\cac^2}\lesssim \|\vp\|_{\cac^2}$ and $x_i\in \cb_\scal(0,k)$. Then, since $\xi=\eta$, one has, for any $x$,
\begin{equation}\label{second-component}
|\langle \xi^\2_x-\eta^\2_x,\vp\rangle | =\Big| \sum_{i\in I_k} \langle \xi^\2_x-\eta^\2_x,\cs^1_{\scal,x_i} \vp_i \rangle\Big|=\Big| \sum_{i\in I_k} \langle \xi^\2_{x_i}-\eta^\2_{x_i},\cs^1_{\scal,x_i} \vp_i \rangle\Big|\leq C_k \cdot \|\vp\|_{\cac^2}\cdot \|\xi^\2-\eta^\2\|_{2\al+2;\cb_\scal(0,k)} \ ,
\end{equation}
which allows us to conclude that $\uxi=\ueta$. 

\smallskip

Consider now a Cauchy sequence $\uxi^n=(\xi^n,\xi^{\2,n})$ in $(\mathcal{E}_\al^K,d_\al)$. For every test-function $\vp$, it is clear by (\ref{first-component}) that $\langle \xi^n,\vp\rangle$ defines a Cauchy sequence in $\R$, which accordingly converges to some element $\langle \xi,\vp\rangle$. Also, for every $k\geq 0$ and $\ep >0$, there exists an integer $N(\ep,k)$ such that for all $n,m\geq N(k,\ep)$,
$$\big|\langle \xi^n-\xi^m,\cs_{\scal,x}^\delta \vp\rangle \big| \leq \ep \cdot \delta^\al \cdot \|\vp\|_{\cac^2} \ ,$$
where the bound holds uniformly over all $\vp\in \cac^2(\cb_\scal(0,1))$, $\delta \in (0,1]$ and $x\in R_k$. By letting $m$ tend to infinity, we retrieve that $\big|\langle \xi^n-\xi,\cs_{\scal,x}^\delta \vp\rangle \big| \leq \ep \cdot \delta^\al \cdot \|\vp\|_{\cac^2}$, which allows us to conclude that $\xi\in \cac^\al_c(\R^2)$ and $\|\xi^n-\xi\|_{\al;R_k} \to 0$ for every $k\geq 0$. 

\smallskip

Then, with the same notation as above, we can rely on the decomposition
\begin{multline*}
\langle \xi^{\2,n}_x-\xi^{\2,m}_x,\vp\rangle =
\sum_{i\in I_k} \Big\{ \langle \xi^{\2,n}_{x_i}-\xi^{\2,m}_{x_i},\cs^1_{\scal,x_i} \vp_i \rangle\\ +[\theta^n(x)-\theta^m(x)-\theta^n(x_i)+\theta^m(x_i)] \cdot \langle \xi^n, \cs^1_{\scal,x_i} \vp_i \rangle +[\theta^m(x)-\theta^m(x_i)] \cdot \langle \xi^n-\xi^m,\cs_{\scal,x_i}^1\vp_i \rangle\Big\}
\end{multline*}
where we have set $\theta^n:=K\ast \xi^n$, to assert (via Lemma \ref{lem:convo-c-al}) that for every $x\in \R^2$, $\langle \xi^{\2,n}_x,\vp \rangle$ is a Cauchy sequence converging to some element $\langle \xi^{\2}_x,\vp\rangle$. The $K$-Chen relation for $\uxi=(\xi,\xi^\2)$ is immediately derived from the $K$-Chen relation satisfied by $\uxi^n$. Finally, we can use the same limit procedure as with $\xi$ to deduce that $\xi^\2\in \pmb{\cac}_c^{2\al+2}(\R^2)$ and $\|\xi^{\2,n}-\xi^\2\|_{2\al+2;R_k} \to 0$ for every $k\geq 0$, which completes the proof of the lemma.

\end{proof}

The following important property, which somehow will play the role of the Garsia-Rodemich-Rumsey Lemma in this setting, is essentially a reformulation of the results of \cite[Section 3]{hai-14}. We recall that the notation $\cd'_2(\R^2)$ in this statement has been introduced in Section \ref{subsec:spaces}.

\begin{lemma}\label{GRR-gene}
Fix $\al\in (-2,0)$. Then there exists a finite set $\Psi$ of compactly supported functions in $\cac^2(\R^2)$ such that, if $\zeta: \R^2 \to \cd'_2(\R^2)$ is a map with increments of the form
$$\zeta_x-\zeta_y=\sum_{i=1,\ldots,r} [\theta^i(x)-\theta^i(y)] \cdot \zeta^{\sharp,i}_y$$
for some $\theta^i \in \cac_c^\la(\R^2)$, $\la\in [0,-\al)$, and $\zeta^{\sharp,i}\in \pmb{\cac}^\al_c(\R^2)$, one has, for every $k\geq 1$,
\begin{equation}\label{bound-GRR-gene}
\|\zeta\|_{\al+\la;R_k}\lesssim \sup_{\psi\in \Psi} \sup_{n\geq 0} \sup_{x\in \Lambda_\scal^n \cap R_{k+1}} 2^{n(\al+\la)} |\langle \zeta_x,\cs_{\scal,x}^{2^{-n}} \psi \rangle |+\sum_{i=1,\ldots,r} \|\theta^i\|_{\la;R_{k+1}}\|\zeta^{\sharp,i}\|_{\al;R_{k+1}} \ ,
\end{equation}
where we have set $\Lambda_\scal^n:=\{(2^{-2n} k_1,2^{-n}k_2), \ k_1,k_2\in \Z\}$
\end{lemma}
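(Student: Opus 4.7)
The plan is to reduce the estimate of $\|\zeta\|_{\al+\la;R_k}$ to a countable family of test pairings on a dyadic grid by means of a compactly supported anisotropic wavelet basis of $L^2(\R^2)$ adapted to the parabolic scaling $\scal=(2,1)$, following the framework reviewed in \cite[Section 3.3]{hai-14}. Specifically, one fixes a smooth compactly supported father wavelet $\phi$ generating a multiresolution analysis with refinement ratios $(2^2,2)$, together with the finite family $(\psi^{(j)})_{j\in J}$ of companion mother wavelets, all taken of class $\cac^2$, compactly supported and carrying enough vanishing moments. Set $\Psi:=\{\phi\}\cup\{\psi^{(j)}\}_{j\in J}$; this will be the finite set advertised in the statement.

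Fix $x_0\in R_k$, $\delta\in(0,1]$ and $\varphi\in\cac^2(\cb_\scal(0,1))$ with $\|\varphi\|_{\cac^2}\leq 1$. The first step is to expand $\cs^\delta_{\scal,x_0}\varphi$ on the wavelet basis, retaining only the finitely many translates at each scale whose support meets that of $\cs^\delta_{\scal,x_0}\varphi$. Those active $y$'s belong to $\Lambda^n_\scal\cap R_{k+1}$ and satisfy $\|x_0-y\|_\scal\lesssim \delta+2^{-n}$, and standard wavelet-coefficient estimates (vanishing moments of the $\psi^{(j)}$ combined with $\varphi\in \cac^2$) provide sharp bounds on the coefficients, with an extra decay factor of order $(2^{-n}/\delta)^{2}$ in the fine-scale regime $2^{-n}\leq\delta$. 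The second step is to pair $\zeta_{x_0}$ against this expansion and, for each wavelet, invoke the assumed Chen-type relation to ``shift the base point'':
\begin{equation*}
\langle \zeta_{x_0},\cs^{2^{-n}}_{\scal,y}\psi\rangle=\langle \zeta_y,\cs^{2^{-n}}_{\scal,y}\psi\rangle+\sum_{i=1}^r[\theta^i(x_0)-\theta^i(y)]\,\langle\zeta^{\sharp,i}_y,\cs^{2^{-n}}_{\scal,y}\psi\rangle.
\end{equation*}
By definition, the first term is bounded by $M\cdot 2^{-n(\al+\la)}$, where $M$ denotes the supremum forming the first summand on the right-hand side of (\ref{bound-GRR-gene}); the correction is controlled by $\|\theta^i\|_{\la;R_{k+1}}\|\zeta^{\sharp,i}\|_{\al;R_{k+1}}(\delta+2^{-n})^\la\cdot 2^{-n\al}$ via the H\"older estimate on $\theta^i$ and Definition \ref{def:besov-space-2}.

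Assembling these bounds and summing over active wavelets, the combination of $\al\in(-2,0)$, $\al+\la<0$ and the wavelet-coefficient decay make both the fine-scale series ($2^{-n}\leq\delta$) and the coarse-scale series ($2^{-n}\geq\delta$, only involving the father wavelet $\phi$ and finitely many translates inside $R_{k+1}$) geometrically convergent. Dividing by $\delta^{\al+\la}$ and taking the supremum over $x_0,\delta,\varphi$ then produces exactly the inequality (\ref{bound-GRR-gene}). The main obstacle is the sharp wavelet-coefficient bookkeeping and the balancing, at every dyadic scale, of three competing contributions --- the combinatorial growth of the number of active translates $y$, the decay coming from the vanishing moments of $\psi^{(j)}$, and the base-point shift via the Chen relation --- so as to beat the normalization factor $\delta^{\al+\la}$ uniformly. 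Once this balance is made explicit (which is where the hypothesis $\la\in[0,-\al)$ enters crucially), the proof concludes.
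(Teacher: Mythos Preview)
Your proposal is correct and follows essentially the same route as the paper: expand the localized test function $\cs^\delta_{\scal,x_0}\vp$ on a compactly supported anisotropic wavelet basis adapted to $\scal$, use the Chen-type increment relation to recenter each pairing $\langle\zeta_{x_0},\cs^{2^{-n}}_{\scal,y}\psi\rangle$ at the grid point $y$, split the sum at the scale $n_0$ with $2^{-n_0}\sim\delta$, and close the two geometric series using $\al+\la<0$ for the coarse part and $\al+2>0$ (encoded in your vanishing-moment decay) for the fine part. The only cosmetic difference is that the paper packages father and mother wavelets into a single finite set $\Psi$ used at every level, rather than separating out the coarse-scale contribution as coming from the father wavelet alone; this does not affect the argument.
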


\begin{proof}
Since our formulation of the statement slightly differs from the abstract estimates given in \cite[Section 3]{hai-14}, let us briefly emphasize the main ideas leading to the bound (\ref{bound-GRR-gene}). Actually, regarding these particular considerations, the essential contribution of \cite{hai-14} lies in the construction of a basis
$$\{\, \psi^{n,\scal}_y:=2^{-\frac{3n}{2}} \, \cs^{2^{-n}}_{\scal,y}\psi \ : \ n\geq 0 \ , \ \psi\in \Psi \ \text{and} \ y\in \Lambda^n_\scal \, \}$$
of $L^2(\R^2)$ which satisfies the following properties:

\smallskip

\noindent
(i) $\Psi$ is a finite set of functions in $\cac^2(\R^2)$ with support included in $\cb_\scal(0,1)$ \ ;

\smallskip

\noindent
(ii) For every $\vp\in \cac^2(\cb_\scal(0,1))$, every $n\geq 0$ and $\delta \in (0,1]$, it holds that
\begin{equation}\label{wave-1}
\big| \langle \psi^{n,\scal}_y ,\cs_{\scal,x}^\delta \vp\rangle \big| \lesssim \|\vp\|_{L^1(\R^2)} \cdot 2^{\frac{3n}{2}} \ ,
\end{equation}
and
\begin{equation}\label{wave-2}
\big| \langle \psi^{n,\scal}_y ,\cs_{\scal,x}^\delta \vp\rangle \big| \lesssim \|\vp\|_{\cac^2} \cdot \delta^{-5} \cdot 2^{-\frac{7n}{2}} \ ,
\end{equation}
uniformly over all $x,y\in \R^2$.

\smallskip

Once endowed with such a basis, pick $\vp\in \cac^2(\cb_\scal(0,1))$, $x\in R_k$, $\delta\in (0,1]$ and decompose $\langle \zeta_x,\cs^\delta_{\scal,x}\vp\rangle$ as
$$
\langle \zeta_x,\cs^\delta_{\scal,x} \vp\rangle =
\sum_{n\geq 0}\sum_{\psi\in \Psi}\sum_{y\in \Lambda^n_\scal}\big\{ \langle \zeta_y,\psi^{n,\scal}_y \rangle +[\theta^i(x)-\theta^i(y)] \cdot \langle \zeta^{\sharp,i}_y,\psi^{n,\scal}_y \rangle  \big\} \cdot \langle \psi^{n,\scal}_y,\cs_{\scal,x}^\delta \vp\rangle \ .
$$
In particular, if we denote by $M$ the right-hand side of (\ref{bound-GRR-gene}), it holds that
\begin{equation}\label{grr-interm}
|\langle \zeta_x,\cs^\delta_{\scal,x} \vp\rangle| \lesssim M \cdot \sum_{n\geq 0}\sum_{\psi\in \Psi}\sum_{y\in \Lambda^n_\scal} \big\{2^{-n(\al+\la+\frac32)}+\|x-y\|_\scal^\la \cdot 2^{-n(\al+\frac32)} \big\}\cdot |\langle \psi^{n,\scal}_y,\cs_{\scal,x}^\delta \vp\rangle | \ .
\end{equation}
At this point, the key observation is that, for support reasons,
\begin{equation}\label{cond-supps}
\langle \psi^{n,\scal}_y,\cs^\delta_{\scal,x}\vp \rangle =0 \quad \text{if} \quad \|x-y\|_\scal\geq 2^{-n}+\delta \ .
\end{equation}
Now choose $n_0\geq 0$ such that $2^{-(n_0+1)}< \delta \leq 2^{-n_0}$. On the one hand, we can use (\ref{wave-1}) and (\ref{cond-supps}) to assert that
\begin{eqnarray*}
\lefteqn{\sum_{0\leq n\leq n_0}\sum_{\psi\in \Psi}\sum_{y\in \Lambda^n_\scal} \big\{2^{-n(\al+\la+\frac32)}+\|x-y\|_\scal^\la \cdot 2^{-n(\al+\frac32)} \big\}\cdot |\langle \psi^{n,\scal}_y,\cs_{\scal,x}^\delta \vp\rangle |}\\
&\lesssim & \|\vp\|_{L^1(\R^2)} \sum_{0\leq n\leq n_0} 2^{-n(\al+\la)} \cdot \sharp\,\big( \Lambda^n_\scal \cap \cb_\scal(x,2^{-(n-1)}) \big)\\
&\lesssim & \|\vp\|_{L^1(\R^2)} \sum_{0\leq n\leq n_0} 2^{-n(\al+\la)} \ \lesssim \ \|\vp\|_{L^1(\R^2)}\cdot 2^{-n_0(\al+\la)} \ \lesssim \ \|\vp\|_{L^1(\R^2)}\cdot \delta^{\al+\la} \ ,
\end{eqnarray*} 
where we have used the fact that $\al+\la < 0$. On the other hand, by combining (\ref{wave-2}) and (\ref{cond-supps}), we get
\begin{eqnarray*}
\lefteqn{\sum_{ n> n_0}\sum_{\psi\in \Psi}\sum_{y\in \Lambda^n_\scal} \big\{2^{-n(\al+\la+\frac32)}+\|x-y\|_\scal^\la \cdot 2^{-n(\al+\frac32)} \big\}\cdot |\langle \psi^{n,\scal}_y,\cs_{\scal,x}^\delta \vp\rangle |}\\
&\lesssim & \|\vp\|_{\cac^2}\cdot \delta^{-5} \sum_{n>n_0}\big\{ 2^{-n(\al+\la+5)}+2^{-n(\al+5)} \delta^\la \big\}  \cdot \sharp\,\big( \Lambda^n_\scal \cap \cb_\scal(x,2\delta)\big)\\
&\lesssim & \|\vp\|_{\cac^2}\cdot \delta^{-2} \sum_{n>n_0}\big\{ 2^{-n(\al+\la+2)}+2^{-n(\al+2)} \delta^\la \big\}\  \lesssim \ \|\vp\|_{\cac^2}\cdot \delta^{\al+\la} \ ,
\end{eqnarray*} 
where this time we have used the fact that $\al+2>0$. Going back to (\ref{grr-interm}), this achieves the proof of (\ref{bound-GRR-gene}).

\end{proof}

The main result of this section now reads as follows:

\begin{proposition}
Fix $(H_1,H_2) \in (0,1)^2$ such that $\frac53 < 2H_1+H_2 \leq 2$. Then there exists a sequence of positive deterministic reals $C^n_{H_1,H_2}$ such that, if $\hat{\uxi}^n=(\xi^n,\hat{\xi}^{\2,n})$ is defined along (\ref{rp-approx}), one has, for all $\ell\geq 1$, all $m\geq n \geq 1$, all $x\in \R^2$ and all compactly supported $\psi\in \cac^2(\R^2)$ with $\|\psi\|_{\cac^2}\leq 1$,
\begin{equation}\label{estim-1}
\esp\Big[ |\langle \xi^n,\cs_{\scal,x}^{2^{-\ell}}\psi \rangle |^2 \Big] \lesssim   2^{2\ell(3-2H_1-H_2)} \ ,
\end{equation}
\begin{equation}\label{estim-3}
\esp\Big[ |\langle \xi^n-\xi^m,\cs_{\scal,x}^{2^{-\ell}}\psi \rangle |^2 \Big] \lesssim 2^{-n\ep} 2^{2\ell(3-2H_1-H_2+\ep)} \ ,
\end{equation}
\begin{equation}\label{estim-4}
\esp\Big[ |\langle \hat{\xi}^{\2,n}_x-\hat{\xi}^{\2,m}_x,\cs_{\scal,x}^{2^{-\ell}}\psi \rangle |^2 \Big] \lesssim 2^{-n\ep} 2^{2\ell(4-4H_1-2H_2+2\ep)} \ ,
\end{equation}
for some small $\ep >0$, and where the proportional constants are uniform in $\ell,m,n \geq 1$, $x\in \R^2$.
\end{proposition}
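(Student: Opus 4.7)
The plan is to work on the Fourier side throughout. First I would apply $\partial_t\partial_x$ to the integral in \eqref{approx-noise}, yielding the first Wiener--It\^o representation
\[
\xi^n(y)\;=\;-c_{H_1,H_2}\!\int_{D_n}\!\widehat W(d\zeta)\,a(\zeta)\,e^{\imath y\cdot\zeta},\qquad |a(\zeta)|^2\;=\;|\zeta_1|^{1-2H_1}|\zeta_2|^{1-2H_2}.
\]
Setting $\chi_\ell(\zeta):=\hat\psi(2^{-2\ell}\zeta_1,2^{-\ell}\zeta_2)$, a short computation gives $\widehat{\cs^{2^{-\ell}}_{\scal,x}\psi}(\zeta)=e^{-\imath x\cdot\zeta}\chi_\ell(\zeta)$. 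The It\^o isometry then reduces \eqref{estim-1} and \eqref{estim-3} to evaluating $\int|a|^2|\chi_\ell|^2\,d\zeta$ over $D_n$, respectively over $D_m\setminus D_n$. For \eqref{estim-1} I would perform the substitution $\zeta_1=2^{2\ell}u_1$, $\zeta_2=2^\ell u_2$, extracting the factor $2^{2\ell(3-2H_1-H_2)}$ and leaving the dimensionless integral $\int|a(u)|^2|\hat\psi(u)|^2\,du$, which is finite because $H_1,H_2<1$ takes care of the origin singularities and $\hat\psi$ has Schwartz decay. For \eqref{estim-3} the same computation is performed on $D_m\setminus D_n\subset\{|\zeta_1|>2^{2n}\}\cup\{|\zeta_2|>2^n\}$; on each piece I would multiply the integrand by the trivial factor $(|\zeta_j|/2^{s_j n})^{\ep'}\geq 1$ (with $s_j\in\{2,1\}$), producing the gain $2^{-s_j n\ep'}$ at the cost of an extra $2^{s_j\ell\ep'}$ after rescaling, which yields \eqref{estim-3} upon renaming $\ep$.

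The estimate \eqref{estim-4} is much more delicate, as $\hat\xi^{\2,n}_x(y)$ is a product of two dependent Gaussians and lives in the second Wiener chaos. The crucial algebraic observation is that the Fourier representation makes $\xi^n$ stationary in $y$, so that $\esp[(K\ast\xi^n)(y)\xi^n(y)]$ is a real constant independent of $y$; this is exactly the choice of renormalization,
\[
C^n_{H_1,H_2}\;:=\;\esp\bigl[(K\ast\xi^n)(y)\xi^n(y)\bigr]\;=\;\int_{D_n}|a(\zeta)|^2\hat K(\zeta)\,d\zeta \ \in\R.
\]
With this choice, Wick calculus gives
\[
\hat\xi^{\2,n}_x(y)\;=\;\,:\!\bigl[(K\ast\xi^n)(y)-(K\ast\xi^n)(x)\bigr]\xi^n(y)\!:\,-\,\esp\bigl[(K\ast\xi^n)(x)\xi^n(y)\bigr],
\]
the second term being deterministic and therefore irrelevant for the variance. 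Testing the Wick part against $\cs^{2^{-\ell}}_{\scal,x}\psi$ and applying Fubini would yield a second Wiener-chaos element with explicit kernel
\[
F_{x,n,\ell}(\zeta,\zeta')\;=\;a(\zeta)a(\zeta')\hat K(\zeta)\,e^{\imath x\cdot(\zeta+\zeta')}\bigl[\chi_\ell(\zeta+\zeta')-\chi_\ell(\zeta')\bigr]\mathbf{1}_{D_n^2}(\zeta,\zeta').
\]
The Wiener-chaos $L^2$-isometry then reduces the variance to controlling $\|F_{x,n,\ell}\|_{L^2}^2$, together with the companion integral obtained by swapping $\zeta\leftrightarrow\zeta'$.

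The scaling analysis proceeds as in the first-chaos case, now in $\R^2\times\R^2$: substituting $\zeta_1=2^{2\ell}u_1$, $\zeta_2=2^\ell u_2$ (and similarly for $\zeta'$) pulls out the global factor $2^{4\ell(3-2H_1-H_2)}$, leaves $\chi_\ell(\zeta+\zeta')-\chi_\ell(\zeta')$ unchanged as $\hat\psi(u+u')-\hat\psi(u')$, and turns $|\hat K(\zeta)|^2$ into a factor $\lesssim(1+2^\ell\|u\|_\scal)^{-4}$. I would then split the $u$-integral at $\|u\|_\scal\sim 2^{-\ell}$ and use the Schwartz cancellation $|\hat\psi(u+u')-\hat\psi(u')|\lesssim\min(|u|,1)$ together with rapid decay in $u'$ to show that the $u'$-integral is bounded uniformly in $u$ and that the $u$-integral contributes $2^{-4\ell}$, modulo a logarithmic $\ell$ factor appearing at the boundary $2H_1+H_2=2$. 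This yields the target bound $2^{2\ell(4-4H_1-2H_2)}$, with the log absorbed into the $2^{2\ell\ep}$ margin of \eqref{estim-4}. Finally, the $2^{-n\ep}$ gain for the difference $\hat\xi^{\2,n}-\hat\xi^{\2,m}$ is obtained by the same $(|\zeta_j|/2^{s_jn})^{\ep'}\geq 1$ trick as before, applied now to each of the four slices covering $D_m^2\setminus D_n^2$. The main obstacle of the whole argument is precisely this last scaling step: the interplay between the origin singularities of $|a|^2|a'|^2$, the heat-kernel decay of $\hat K$ at infinity, and the Schwartz cancellation of $\hat\psi$ must combine to produce exactly the $2^{-4\ell}$ contribution, and the borderline case $2H_1+H_2=2$ demands careful tracking of logarithmic factors.
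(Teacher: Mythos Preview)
Your Fourier-side treatment of \eqref{estim-1} and \eqref{estim-3} is correct and in fact somewhat cleaner than the paper's route, which integrates by parts back to the rectangular increments $\Delta_{(t,y)}X^n$ and then invokes the moment bounds $\esp[|\Delta_{(t,y)}X^n|^2]\lesssim |t|^{2H_1}|y|^{2H_2}$. Your direct It\^o-isometry computation on $\int_{D_n}|a|^2|\chi_\ell|^2$ is equivalent and avoids that detour.

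For \eqref{estim-4} there is one genuine gap. You write that the deterministic term $-\esp[(K\ast\xi^n)(x)\xi^n(y)]$ is ``irrelevant for the variance''. That is true for the variance, but the statement asks for the \emph{second moment} $\esp[|\langle\cdot,\psi^\ell_x\rangle|^2]$, and since the chaos-two part has mean zero, the second moment splits as the chaos-two $L^2$-norm \emph{plus} the square of this deterministic contribution. In the paper this residual is exactly the quantity $R^{(n,m)}_\ell$ of Lemma~\ref{lem:renorm}, namely
\[
R^{(n,m)}_\ell\;=\;-c_{H_1,H_2}^2\int_{D_m\setminus D_n}\frac{\widehat K(\zeta)}{|\zeta_1|^{2H_1-1}|\zeta_2|^{2H_2-1}}\,\chi_\ell(-\zeta)\,d\zeta,
\]
and one must show $|R^{(n,m)}_\ell|\lesssim 2^{\ell(4-4H_1-2H_2+2\ep)}2^{-n\ep}$ separately. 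This is not hard (same rescaling plus the anisotropic bound $|\widehat K(\xi,\eta)|\lesssim|\xi|^{-a}|\eta|^{-2b}$), but it cannot be dismissed.

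On the chaos-two estimate itself, your outline matches the paper's: both rescale to pull out $2^{2\ell(6-4H_1-2H_2)}$ and then show the remaining integral is $\lesssim 2^{-4\ell(1-\ep)}$. The difference is in execution. Your heuristic $|\widehat K(\zeta)|^2\lesssim(1+2^\ell\|u\|_\scal)^{-4}$ and the uniform $u'$-bound via $|\hat\psi(u+u')-\hat\psi(u')|\lesssim\min(|u|,1)$ are too coarse to close directly: the origin singularities $|u_1|^{1-2H_1}|u_1'|^{1-2H_1}|u_2|^{1-2H_2}|u_2'|^{1-2H_2}$ interact with the $\widehat K$ decay anisotropically, and the paper needs a genuine case split ($H_1>\tfrac12$ versus $H_1<\tfrac12$) together with a careful choice of exponents $(a,b)$ in the $\widehat K$ bound to make all four one-dimensional integrals converge simultaneously (Lemmas~\ref{lem:estim-i-l-n-i}--\ref{lem:q-psi}). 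Your sketch identifies the right mechanism but does not yet supply this balancing act, which is indeed where the work lies.
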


\

For the sake of clarity, we postpone the proof of this statement to Sections \ref{subsec:first-level}-\ref{sec:estim-4}, and we first examine how these estimates entail the desired conclusions.

\begin{corollary}
Fix $(H_1,H_2) \in (0,1)^2$ such that $\frac53 < 2H_1+H_2 \leq 2$ and let $\al\in (-\frac43,-3+2H_1+H_2)$. Then there exists a sequence of positive deterministic reals $C^n_{H_1,H_2}$ such that, for all $p\geq 1$, all $k\geq 1$ and all $m\geq n \geq 1$,
\begin{equation}\label{estim-besov-2}
\esp\Big[ \|\hat{\uxi}^n;\hat{\uxi}^m \|_{\al;R_k}^{2p}\Big] \leq C_p \cdot k^2 \cdot 2^{-n\ep p} \ ,
\end{equation}
for some small $\ep >0$, and where the proportional constant $C_p$ is uniform in $n,m,k \geq 1$.
\end{corollary}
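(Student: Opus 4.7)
My plan is to combine the wavelet-based Garsia--Rodemich--Rumsey estimate of Lemma~\ref{GRR-gene} with the moment bounds (\ref{estim-1})--(\ref{estim-4}), upgraded from $L^2$ to $L^{2p}$ via Nelson's hypercontractivity. The upgrade applies because $\xi^n$ and $\xi^n-\xi^m$ live in the first Wiener chaos of $\widehat{W}$, while the renormalisation constant $C^n_{H_1,H_2}$ in (\ref{rp-approx}) is precisely what places $\hat{\xi}^{\2,n}_x$ in the second chaos. The starting point is the split
$$
\|\hat{\uxi}^n;\hat{\uxi}^m\|_{\al;R_k}=\|\xi^n-\xi^m\|_{\al;R_k}+\|\hat{\xi}^{\2,n}-\hat{\xi}^{\2,m}\|_{2\al+2;R_k}.
$$
To the constant map $\zeta\equiv\xi^n-\xi^m$ I apply Lemma~\ref{GRR-gene} with $\la=0$, $r=0$, which gives a pure wavelet-sup bound. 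For the second summand, observe that $C^n_{H_1,H_2}$ cancels in increments, so $\zeta_x:=\hat{\xi}^{\2,n}_x-\hat{\xi}^{\2,m}_x$ satisfies
\begin{align*}
\zeta_x-\zeta_y
&=\big[(K\ast(\xi^n-\xi^m))(y)-(K\ast(\xi^n-\xi^m))(x)\big]\cdot\xi^n \\
&\quad+\big[(K\ast\xi^m)(y)-(K\ast\xi^m)(x)\big]\cdot(\xi^n-\xi^m).
\end{align*}
I then apply Lemma~\ref{GRR-gene} with $\la=\al+2\in[0,-\al)$ (admissible since $\al<-1$), using Lemma~\ref{lem:convo-c-al} to bound $\|K\ast\eta\|_{\al+2;R_{k+1}}\lesssim\|\eta\|_{\al;R_{k+1}}$, which reduces the residual terms to products of Besov norms of $\xi^n-\xi^m$ and of $\xi^n,\xi^m$.

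Next I dominate $\sup|X|^{2p}\le\sum|X|^{2p}$ and use $|\Lambda^\ell_\scal\cap R_{k+1}|\lesssim(k+1)^2\,2^{3\ell}$. Combining hypercontractivity with (\ref{estim-3}) yields
$$
\esp\Big[\sup_{\psi,\ell,x}2^{\ell\al}\big|\langle\xi^n-\xi^m,\cs^{2^{-\ell}}_{\scal,x}\psi\rangle\big|\Big]^{2p}
\lesssim_p(k+1)^2\,2^{-n\ep p}\sum_{\ell\ge 0}2^{\ell[3+2p(\al+3-2H_1-H_2+\ep)]},
$$
and the analogous computation for the second-chaos wavelet-sup, based on (\ref{estim-4}), produces the bracketed exponent $3+4p(\al+3-2H_1-H_2+\ep)$. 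Since $\al<-3+2H_1+H_2$, choosing $\ep>0$ small makes $\al+3-2H_1-H_2+\ep<0$, so for all $p$ beyond some threshold $p_0$ both geometric series converge to a finite constant. The remainder product terms I treat by Cauchy--Schwarz combined with the same estimate at order $4p$ for $\|\xi^n-\xi^m\|$ (providing the $2^{-n\ep p}$ decay), together with a uniform-in-$n$ bound for $\|\xi^n\|_{\al;R_{k+1}}$ and $\|\xi^m\|_{\al;R_{k+1}}$ deduced from (\ref{estim-1}) in place of (\ref{estim-3}).

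Assembled, these estimates give $\esp[\|\hat{\uxi}^n;\hat{\uxi}^m\|_{\al;R_k}^{2p}]\le C_p\, k^2\, 2^{-n\ep p}$ for every $p\ge p_0$. The remaining range $1\le p<p_0$ is handled by Jensen's inequality $\esp[Y^{2p}]\le\esp[Y^{2p_0}]^{p/p_0}$ together with $k^{2p/p_0}\le k^2$ for $k\ge 1$, after a harmless relabelling of $\ep$. I expect the main technical point to be the careful bookkeeping of the Chen-type decomposition of the renormalised second level: it hinges both on the cancellation of $C^n_{H_1,H_2}$ in increments and on the exchange $\cac^{\al+2}\leftrightarrow\cac^\al$ provided by Lemma~\ref{lem:convo-c-al}, and it is precisely this trade-off that keeps the dyadic series summable as $\al$ approaches the Besov critical exponent $-3+2H_1+H_2$.
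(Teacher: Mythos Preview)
Your proposal is correct and follows essentially the same route as the paper: hypercontractivity to pass from $L^2$ to $L^{2p}$, Lemma~\ref{GRR-gene} applied first to the constant map $\xi^n-\xi^m$ (with $\la=0$) and then to $\hat{\xi}^{\2,(n,m)}$ via the Chen-type increment identity combined with Lemma~\ref{lem:convo-c-al}, the $\sup\le\sum$ trick together with the lattice count $\lesssim (k+1)^2 2^{3\ell}$, and Cauchy--Schwarz for the product remainder terms. Your explicit treatment of the range $1\le p<p_0$ via Jensen is a detail the paper leaves implicit; note also that $\hat{\xi}^{\2,n}_x$ is not purely second-chaos (a deterministic remainder survives after renormalisation), but this is immaterial since hypercontractivity holds for any element of a \emph{finite sum} of Wiener chaoses, which is what the paper invokes.
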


\begin{proof}
Note first that since the processes involved in (\ref{estim-1})-(\ref{estim-3}) all belong to a finite sum of Wiener chaoses, we can immediately turn the latter bounds into $L^{2p}(\Omega)$-estimates.

\smallskip

\noindent
Let us now exhibit a useful intermediate bound on $\|\xi^n\|_{\al;R_k}$. By using Lemma \ref{GRR-gene} with $\zeta_x=\xi^n$ (and so $\theta=\zeta^\sharp=0$), we derive that
\begin{eqnarray*}
\|\xi^n\|_{\al;R_k}^{2p}&\lesssim & \sup_{\psi\in \Psi} \sup_{\ell\geq 0} \sup_{x\in \Lambda_\scal^\ell \cap R_{k+1}} 2^{2p\ell\al} |\langle \xi^n,\cs_{\scal,x}^{2^{-\ell}} \psi \rangle |^{2p}\\
&\lesssim & \sum_{\psi\in \Psi} \sum_{\ell\geq 0} \sum_{x\in \Lambda_\scal^\ell \cap R_{k+1}} 2^{2p\ell\al} |\langle \xi^n,\cs_{\scal,x}^{2^{-\ell}} \psi \rangle |^{2p} \ .
\end{eqnarray*}
Consequently, we can invoke (\ref{estim-1}) and conclude that, for $p$ large enough,
\begin{equation}\label{estim-interm-1}
\esp\Big[ \|\xi^n\|_{\al;R_k}^{2p}\Big] \lesssim k^{2}\cdot \sum_{\psi\in \Psi} \sum_{\ell \geq 0} 2^{3\ell} 2^{2p\ell(\al-(-3+2H_1+H_2))} \leq C_p\cdot k^{2} \ .
\end{equation}
It is readily checked that we can use the same argument to deal with $\xi^{(n,m)}:=\xi^n-\xi^m$ (take $\zeta_x=\xi^{(n,m)}$, $\theta=\zeta^\sharp=0$ and replace (\ref{estim-1}) with (\ref{estim-3})), which yields
\begin{equation}\label{estim-interm-2}
\esp \Big[ \| \xi^{(n,m)}\|_{\al;R_k}^{2p} \Big] \leq C_p \cdot k^2 \cdot 2^{-n\ep p} \ ,
\end{equation} 
for $\ep >0$ small enough. As far as $\hat{\xi}^{\2,(n,m)}:=\hat{\xi}^{\2,n}-\hat{\xi}^{\2,m}$ is concerned, observe that it also fits the pattern of Lemma \ref{GRR-gene}, since
$$\hat{\xi}^{\2,(n,m)}_x-\hat{\xi}^{\2,(n,m)}_y=[(K\ast \xi^{(n,m)})(y)-(K\ast \xi^{(n,m)})(x)] \, \xi^n+[(K\ast \xi^m)(y)-(K\ast \xi^{m})(x)] \, \xi^{(n,m)} \ .$$
Therefore, as above, we deduce that
\begin{eqnarray*}
\lefteqn{\esp\Big[\|\hat{\xi}^{\2,(n,m)}\|_{2\al+2;R_k}^{2p}\Big]}\\
&\lesssim & \sum_{\psi\in \Psi} \sum_{\ell\geq 0} \sum_{x\in \Lambda_\scal^\ell \cap R_{k+1}} 2^{2p\ell(2\al+2)} \esp\big[ |\langle \hat{\xi}^{\2,(n,m)}_x,\cs_{\scal,x}^{2^{-\ell}} \psi \rangle |^{2p}\big]\\
& &+\esp\Big[ \|K\ast \xi^{(n,m)}\|_{\al+2;R_{k+1}}^{2p} \|\xi^n\|_{\al;R_{k+1}}^{2p} \Big]+\esp\Big[ \|K\ast \xi^m\|_{\al+2;R_{k+1}}^{2p} \|\xi^{(n,m)}\|_{\al;R_{k+1}}^{2p} \Big]\\
&\lesssim & \sum_{\psi\in \Psi} \sum_{\ell\geq 0} \sum_{x\in \Lambda_\scal^\ell \cap R_{k+1}} 2^{2p\ell(2\al+2)} \esp\big[ |\langle \hat{\xi}^{\2,(n,m)}_x,\cs_{\scal,x}^{2^{-\ell}} \psi \rangle |^{2p}\big]\\
& &\hspace{2cm}+\esp\big[  \|\xi^{(n,m)}\|_{\al;R_{k+1}}^{4p} \big]^{\frac12}\cdot\Big\{\esp\big[  \|\xi^n\|_{\al;R_{k+1}}^{4p} \big]^{\frac12}+\esp\big[  \|\xi^m\|_{\al;R_{k+1}}^{4p} \big]^{\frac12} \Big\}
\end{eqnarray*}
thanks to Lemma \ref{lem:convo-c-al}. The conclusion easily follows from (\ref{estim-4}), (\ref{estim-interm-1}) and (\ref{estim-interm-2}).

\end{proof}

\begin{corollary}\label{cor:converg}
Fix $(H_1,H_2) \in (0,1)^2$ such that $\frac53 < 2H_1+H_2 \leq 2$ and let $\al\in (-\frac43,-3+2H_1+H_2)$. Then there exists a sequence of positive deterministic reals $C^n_{H_1,H_2}$ such that, almost surely, the sequence $\hat{\uxi}^n$ defined by (\ref{rp-approx}) converges in $(\mathcal{E}^K_\al,d_\al)$ to an $(\al,K)$-rough $\hat{\uxi}=(\xi,\hat{\xi}^{\2})$. Besides, $\xi=\partial_t\partial_x X$ in the sense of distributions.
\end{corollary}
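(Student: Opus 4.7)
The plan is to deduce the almost sure convergence of $\hat{\uxi}^n$ in $(\mathcal{E}^K_\al,d_\al)$ as a direct corollary of the moment estimate (\ref{estim-besov-2}), combined with a Borel–Cantelli argument and the completeness result of Proposition \ref{complet}. The identification $\xi=\partial_t\partial_x X$ will then follow from the very construction (\ref{approx-noise}) of $X^n$ as a Fourier truncation of $X$.

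More precisely, I would first fix $k\geq 1$ and choose $p$ large enough so that $\ep p > 2$. By Markov's inequality applied to (\ref{estim-besov-2}), one has, for any $\delta \in (0,\ep/2)$,
\begin{equation*}
\bp\Big( \|\hat{\uxi}^n;\hat{\uxi}^{n+1}\|_{\al;R_k} > 2^{-n\delta}\Big) \lesssim k^2 \cdot 2^{-n(\ep-2\delta)p} \ .
\end{equation*}
Since the right-hand side is summable in $n$, the Borel–Cantelli lemma ensures that, on an event $\Omega_k$ of full probability, the series $\sum_{n\geq 1}\|\hat{\uxi}^n;\hat{\uxi}^{n+1}\|_{\al;R_k}$ is finite, so that $(\hat{\uxi}^n)$ is a Cauchy sequence with respect to $\|\cdot;\cdot\|_{\al;R_k}$. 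Intersecting the countable family of events $\Omega_k$, $k\geq 1$, we obtain a single full-probability event on which $(\hat{\uxi}^n)$ is Cauchy with respect to every semi-norm $\|\cdot;\cdot\|_{\al;R_k}$, and therefore also with respect to the global distance $d_\al$ (by its very definition as a weighted sum). Since $(\mathcal{E}^K_\al,d_\al)$ is complete by Proposition \ref{complet}, the sequence $\hat{\uxi}^n$ converges almost surely to some limit $\hat{\uxi}=(\xi,\hat{\xi}^{\2})\in \mathcal{E}^K_\al$.

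It remains to identify the first component $\xi$ with $\partial_t\partial_x X$ in the distributional sense. Given any test function $\vp \in \cac_c^\infty(\R^2)$, the Fourier-type representation (\ref{representation-sheet})–(\ref{approx-noise}) together with the Itô isometry yields an explicit expression for $\esp[|\langle \partial_t\partial_x (X-X^n),\vp\rangle|^2]$ as an integral over $\R^2\setminus D_n$ of a square-integrable kernel; this quantity tends to $0$ as $n\to \infty$ by dominated convergence, so that $\xi^n = \partial_t\partial_x X^n \to \partial_t\partial_x X$ in $L^2(\Omega,\cd'(\R^2))$. On the other hand, the convergence $\hat{\uxi}^n \to \hat{\uxi}$ in $d_\al$ implies $\langle \xi^n,\vp\rangle \to \langle \xi,\vp\rangle$ almost surely (by the same argument as in (\ref{first-component})). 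Comparing the two limits yields $\xi=\partial_t\partial_x X$ almost surely.

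The substantive content of the statement really lies in Proposition 3.3 (the moment bounds (\ref{estim-1})–(\ref{estim-4})) and in the completeness result (Proposition \ref{complet}); once these are granted, the corollary reduces to a routine Borel–Cantelli upgrade of $L^{2p}$-estimates to almost sure convergence, and no further obstacle is expected.
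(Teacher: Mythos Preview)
Your proposal is correct and follows essentially the same route as the paper: both arguments combine the moment estimate (\ref{estim-besov-2}), the completeness of $(\mathcal{E}^K_\al,d_\al)$ from Proposition~\ref{complet}, and a Borel--Cantelli step to pass from $L^{2p}$-control to almost sure convergence. Your treatment is in fact slightly more explicit than the paper's (which is very terse), particularly in handling the seminorms $\|\cdot;\cdot\|_{\al;R_k}$ one at a time and in spelling out the identification $\xi=\partial_t\partial_x X$, which the paper's proof leaves implicit.
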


\begin{proof}
As a consequence of (\ref{estim-besov-2}), we can assert that, for every $p\geq 1$, $(\hat{\uxi}^n)$ is a Cauchy sequence in $L^{2p}(\Omega,(\mathcal{E}^K_\al,d_\al))$. Therefore, due to Proposition \ref{complet}, it converges to an element $\hat{\uxi}$ and one has, for every $p\geq 1$,
$$\esp\big[d_\al(\hat{\uxi}^n,\hat{\uxi})^{2p}\big] \leq C_p \cdot 2^{-n\ep p} \ .$$
The conclusion easily follows from Borell-Cantelli Lemma. 
\end{proof}

\subsection{Proof of Theorem \ref{main-theo}, point (ii)}\label{subsec:proof-point-ii}
We are now in a position to prove the main statement of the paper. First, by combining Proposition \ref{prop:cont-flow} with Corollary \ref{cor:converg}, we get that, almost surely, there exists a time $T=T^\ast>0$ such that, if $\hat{\uxi}^n$ is defined by (\ref{rp-approx}), the flows $\bu^n:=\pmb{\Phi}(\hat{\uxi}^n,\Psi^n,T)$ are all well defined through Proposition \ref{prop:sol}, as well as $\bu:=\pmb{\Phi}(\hat{\uxi},\Psi,T)$. Besides, if we set $u^n:=\crr_{\hat{\uxi}^n}(\bu^n)$ and $u:=\crr_{\hat{\uxi}}(\bu)$, it holds that
$$\|u^n-u\|_{L^\infty([0,T]\times \R )} \stackrel{n\to\infty}{\longrightarrow} 0 \quad \text{and} \quad \|u^n-u\|_{\al+2;[s,T]\times \compac} \stackrel{n\to\infty}{\longrightarrow} 0$$
for every compact set $\compac \subset \R$ and every $s\in (0,T)$.

\smallskip

\noindent
It only remains us to identify $u^n$ with the classical solution of Equation (\ref{eq-base}) on $[0,T]$. To do so, recall that since $\hat{\uxi}^n$ is smooth, the reconstruction operator $\crr_{\hat{\uxi}^n}$ is explicitly given by (\ref{reconstruction-smooth}), that is, for all $(s,x)\in \R^2$ with $s\neq 0$,
$$\crr_{\hat{\uxi}^n}(\bu)(s,x)=\Pi^{\hat{\uxi}^n}_{(s,x)}(\bu(s,x))(s,x) \ .$$
Therefore, just as in Remark \ref{rk:consistency-check}, by applying $\crr_{{\hat{\uxi}^n}}$ to the modelled equation (\ref{modelled-equation}), we derive that for all $(s,x)\in \R^2$ with $s\neq 0$,
$$u^n(s,x)=\int_{\R} G(s,x-y)\Psi^n(y) \, dy+\rho_T(s)\cdot \big[G\ast \crr_{{\hat{\uxi}^n}}(\bv)\big](s,x) \ ,$$
where $\bv:=\rho_+\cdot (\bv^0\, \Xi+\bv^1 \, \Xi\ci(\Xi)+\bv^2 \, \Xi X_2)$ with $\bv^0,\bv^1,\bv^2$ defined as in Proposition \ref{prop:compo}. Then
\begin{align}
&\crr_{\hat{\uxi}^n}(\bv)(t,y)\ =\ \Pi^{\hat{\uxi}^n}_{(t,y)}(\bv(t,y))(t,y)\nonumber\\
&=\rho_+(t,y)\cdot  \bv^0(t,y)\cdot \Pi^{\hat{\uxi}^n}_{(t,y)}(\Xi)(t,y) +\rho_+(t,y)\cdot \bv^1(t,y)\cdot\Pi^{\hat{\uxi}^n}_{(t,y)}(\Xi \ci(\Xi))(t,y)\nonumber\\
&= \ \rho_+(t,y)\cdot F(y,u^n(t,y))\cdot \xi^n(t,y)\nonumber\\
& \ \ -C^n_{H_1,H_2}\cdot \rho_+(t,y)^2\cdot \rho_T(t)\cdot F(y,u^n(t,y))\cdot (\partial_2 F)(y,u^n(t,y))\ .\label{reconstr-renorm}
\end{align}
In particular, if $s\in [0,T]$,
\begin{eqnarray*}
u^n(s,x)&=& \int_{\R} G(s,x-y)\Psi^n(y) \, dy+\int_0^s G(s-t,x-y) \\
& &\hspace{1cm} \big[ F(y,u^n(t,y))\cdot \xi^n(t,y)-C^n_{H_1,H_2}\cdot F(y,u^n(t,y))\cdot (\partial_2 F)(y,u^n(t,y))\big] \ ,
\end{eqnarray*}
which is precisely the mild formulation of Equation (\ref{eq-base}).

\smallskip

Finally, and as we announced it in the introduction, the identification of $u$ with the Itô solution of (\ref{ito-equation}) in the case where $H_1=\frac12$ is the purpose of Section \ref{sec:identif}.

\

\subsection{Proof of (\ref{estim-1})-(\ref{estim-3})}\label{subsec:first-level} We will use the notation $\Delta$ to represent the rectangular incremental operator, that is,
\begin{equation}\label{rectangular}
( \Delta_{(t,y)}X)(s,x)=X(s+t,x+y)-X(s,x+y)-X(s+t,x)+X(s,x).
\end{equation}
Let us first focus on the estimates for the approximated sheet itself.

\begin{lemma}
Let $(X^n)$ be the approximation of the $(H_1,H_2)$-fractional sheet given by (\ref{approx-noise}). For every $(H_1,H_2)\in (0,1)^2$, every $(s,x),(t,y) \in \R^2$ and every $m\geq n \geq 1$, one has
\begin{equation}\label{estim-sheet-1}
\esp\big[ |(\Delta_{(t,y)} X^n)(s,x)|^2\big] \lesssim |t|^{2H_1} |y|^{2H_2} \ ,
\end{equation}
\begin{equation}\label{estim-sheet-2}
\esp\big[ |(\Delta_{(t,y)} [X^n-X^m])(s,x)|^2\big] \lesssim 2^{-n\ep} \{|t|^{2H_1-2\ep} |y|^{2H_2}+ |t|^{2H_1} |y|^{2H_2-2\ep}\} \ ,
\end{equation}
for every $\ep\in (0,H_1\wedge H_2)$, and where the proportional constants are uniform in $m,n \geq 1$, $(s,x),(t,y)\in \R^2$. 
\end{lemma}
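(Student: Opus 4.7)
The starting point is a direct computation of the rectangular increment from the Fourier representation (\ref{approx-noise}). Using the elementary identity $e^{i(s+t)\xi}-e^{is\xi}=e^{is\xi}(e^{it\xi}-1)$ and the analogous one in $\eta$, one obtains
\[
(\Delta_{(t,y)}X^n)(s,x)=c_{H_1,H_2}\int_{D_n}\widehat{W}(d\xi,d\eta)\, e^{is\xi}e^{ix\eta}\,\frac{e^{it\xi}-1}{|\xi|^{H_1+\frac12}}\,\frac{e^{iy\eta}-1}{|\eta|^{H_2+\frac12}}\, ,
\]
so that the Wiener isometry immediately yields the factorised expression
\[
\esp\big[|(\Delta_{(t,y)}X^n)(s,x)|^2\big]=c_{H_1,H_2}^2\int_{D_n}\frac{|e^{it\xi}-1|^2}{|\xi|^{2H_1+1}}\,\frac{|e^{iy\eta}-1|^2}{|\eta|^{2H_2+1}}\,d\xi\,d\eta\, .
\]

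To prove (\ref{estim-sheet-1}), I would simply enlarge the domain $D_n$ to $\R^2$ and invoke the standard identity $\int_\R|e^{iu\xi}-1|^2|\xi|^{-(2H+1)}d\xi=c_H|u|^{2H}$ (valid for $H\in(0,1)$) in both variables. This gives $|t|^{2H_1}|y|^{2H_2}$ uniformly in $n$ and in the base point $(s,x)$.

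For (\ref{estim-sheet-2}), the key observation is that the same Wiener-isometry computation now produces an integral over $D_m\setminus D_n\subset\{|\xi|>2^{2n}\}\cup\{|\eta|>2^n\}$. I would split accordingly. The main analytic tool is the interpolation bound $|e^{iu}-1|^2\leq C_\theta|u|^{2\theta}$ for any $\theta\in[0,1]$, obtained from $|e^{iu}-1|^2\leq\min(|u|^2,4)$. On the region $\{|\xi|>2^{2n}\}$, choosing $\theta=H_1-\ep$ in the $\xi$-direction and leaving the $\eta$-direction untouched produces
\[
\int_{|\xi|>2^{2n}}\frac{|t\xi|^{2(H_1-\ep)}}{|\xi|^{2H_1+1}}\,d\xi\,\int_\R\frac{|e^{iy\eta}-1|^2}{|\eta|^{2H_2+1}}\,d\eta\lesssim 2^{-4n\ep}\,|t|^{2H_1-2\ep}|y|^{2H_2},
\]
and a symmetric estimate on $\{|\eta|>2^n\}$ (choosing $\theta=H_2-\ep$ in $\eta$) gives a factor $2^{-2n\ep}|t|^{2H_1}|y|^{2H_2-2\ep}$. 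Both contributions are absorbed into $2^{-n\ep}\{|t|^{2H_1-2\ep}|y|^{2H_2}+|t|^{2H_1}|y|^{2H_2-2\ep}\}$, yielding (\ref{estim-sheet-2}).

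The proof is essentially routine once the Wiener-isometry formula is written down; the only (minor) point requiring care is the trade-off between the gain $2^{-n\ep}$ at the frequency cut-off and the slight loss of regularity in $t$ (or $y$), which is why the interpolation exponent $H_j-\ep$ rather than the borderline value $H_j$ must be used. The constants are manifestly independent of $(s,x)$ by translation invariance of the isometry integral, and independent of $n,m$ since they come from integrals over subsets of $\R^2$ controlled by the full-space integral.
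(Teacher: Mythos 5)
Your proposal is correct and matches the paper's own argument essentially step for step: write the rectangular increment via the Fourier representation, apply the Wiener isometry to get the factorised spectral integral, enlarge $D_n$ to $\R^2$ for (\ref{estim-sheet-1}), and for (\ref{estim-sheet-2}) split $D_m\setminus D_n$ into the high-$|\xi|$ and high-$|\eta|$ regions and use the interpolation bound $|e^{iu}-1|^2\lesssim|u|^{2(H_j-\ep)}$ on the cut-off variable to trade a small loss of Hölder exponent for the decay factor in $n$.
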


\begin{proof}
It is readily checked that
$$(\Delta_{(t,y)} X^n)(s,x)=c_{H_1,H_2}\int_{D_n} \widehat{W}(d\xi,d\eta) \, e^{\imath \xi s} e^{\imath \eta x} \frac{e^{i\xi t} -1}{|\xi|^{H_1+\frac12}} \frac{e^{i\eta y} -1}{|\eta|^{H_2+\frac12}} \ ,$$
and hence
\begin{eqnarray*}
\esp\big[ |(\Delta_{(t,y)} X^n)(s,x)|^2\big]& = &c_{H_1,H_2}^2 \int_{D_n} d\xi d\eta \, \frac{|e^{i\xi t} -1|^2}{|\xi|^{2 H_1+1}} \frac{|e^{i\eta y} -1|^2}{|\eta|^{2 H_2+1}}\\
& \lesssim & \int_{\R^2} d\xi d\eta \, \frac{|e^{i\xi t} -1|^2}{|\xi|^{2 H_1+1}} \frac{|e^{i\eta y} -1|^2}{|\eta|^{2 H_2+1}} \ \lesssim \ |t|^{2H_1} |y|^{2H_2} \ .
\end{eqnarray*}
Similarly,
\begin{eqnarray*}
\lefteqn{\esp\big[ |(\Delta_{(t,y)} [X^n-X^m])(s,x)|^2\big]}\\
& = & c_{H_1,H_2}^2\int_{D_m\backslash D_n} d\xi d\eta \, \frac{|e^{i\xi t} -1|^2}{|\xi|^{2 H_1+1}} \frac{|e^{i\eta y} -1|^2}{|\eta|^{2 H_2+1}}\\
& \lesssim  & |y|^{2H_2} \int_{|\xi| \geq 2^{2n}} d\xi  \, \frac{|e^{i\xi t} -1|^2}{|\xi|^{2 H_1+1}}+|t|^{2H_2}\int_{|\eta|\geq 2^n}d\eta \, \frac{|e^{i\eta y} -1|^2}{|\eta|^{2 H_2+1}} \\
&\lesssim & |t|^{2H_1-2\ep}|y|^{2H_2} \int_{|\xi| \geq 2^{2n}} d\xi  \, \frac{1}{|\xi|^{1+2\ep}}+|t|^{2H_2}|y|^{2H_2-2\ep}\int_{|\eta|\geq 2^n}d\eta \, \frac{1}{|\eta|^{1+2\ep}} \ .
\end{eqnarray*}

\end{proof}

\begin{proof}[Proof of (\ref{estim-1})-(\ref{estim-3})]
By definition, one has
\begin{eqnarray*}
\langle \xi^n,\cs_{\scal,(s,x)}^{2^{-\ell}} \psi \rangle &=& 2^{6\ell} \iint_{\R^2} dt dy \, (D^{(1,1)} \psi)(2^{2\ell} t,2^\ell y) \, X^n(t+s,y+x) \\
&=&  2^{6\ell} \iint_{\R^2} dt dy \, (D^{(1,1)} \psi)(2^{2\ell} t,2^\ell y) \, (\Delta_{(t,y)}X^n)(s,x) \ , 
\end{eqnarray*}
so
$$|\langle \xi^n,\cs_{\scal,(s,x)}^{2^{-\ell}} \psi \rangle|^2 \leq 2^{12\ell} |\cb_\scal(0,C\cdot 2^{-\ell})| \iint_{\R^2}dt dy \, |(D^{(1,1)} \psi)(2^{2\ell} t,2^\ell y)|^2 |(\Delta_{(t,y)}X^n)(s,x)|^2$$
and the conclusion easily follows from (\ref{estim-sheet-1}). Thanks to (\ref{estim-sheet-2}), (\ref{estim-3}) can be derived from the same argument.

\end{proof}

\subsection{Proof of (\ref{estim-4})}\label{sec:estim-4}

\begin{lemma}\label{lem:estim-fourier-k}
For every $a,b\in [0,1]$ such that $a+b<1$, one has, for every $\xi,\eta\in \R$,
$$|\widehat{K}(\xi,\eta)|\lesssim |\xi|^{-a} |\eta|^{-2b} \ .$$
\end{lemma}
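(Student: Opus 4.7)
My plan is to exploit the multiscale decomposition $K=\sum_{n\geq 0}K_n$ from Lemma 2.6 and reduce everything to the Fourier transform of the smooth, compactly supported bump $K_0$.

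First, I would compute $\widehat{K}_n$ explicitly. Since $K_n(y_1,y_2)=2^{-2n}(\mathcal{S}^{2^{-n}}_{\mathfrak{s},0}K_0)(y_1,y_2)=2^{n}K_0(2^{2n}y_1,2^{n}y_2)$, a change of variables gives
\begin{equation*}
\widehat{K}_n(\xi,\eta)=2^{-2n}\,\widehat{K}_0(2^{-2n}\xi,\,2^{-n}\eta).
\end{equation*}
Because $K_0$ lies in $\cac_c^\infty(\R^2)$, its Fourier transform is Schwartz, so for any fixed $a,b\in[0,1]$ I have the two-parameter decay
\begin{equation*}
|\widehat{K}_0(\xi',\eta')|\;\lesssim\;(1+|\xi'|)^{-a}(1+|\eta'|)^{-2b}\;\leq\;\min\!\big(1,|\xi'|^{-a}\big)\cdot\min\!\big(1,|\eta'|^{-2b}\big).
\end{equation*}
Summing the termwise bound yields
\begin{equation*}
|\widehat{K}(\xi,\eta)|\;\lesssim\;\sum_{n\geq 0}2^{-2n}\min\!\big(1,(2^{-2n}|\xi|)^{-a}\big)\cdot \min\!\big(1,(2^{-n}|\eta|)^{-2b}\big).
\end{equation*}

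Next, I would split the sum into three dyadic blocks, assuming without loss of generality that $|\xi|,|\eta|\geq 1$ and $|\xi|^{1/2}\leq|\eta|$ (the other ordering is symmetric; the range $|\xi|<1$ or $|\eta|<1$ makes the right-hand side $\geq 1$, and $\widehat{K}$ is bounded because $K\in L^1(\R^2)$). Set $n_\xi$ and $n_\eta$ so that $2^{n_\xi}\approx|\xi|^{1/2}$ and $2^{n_\eta}\approx|\eta|$. In the low range $n\leq n_\xi$, both minima are attained by the negative power, producing the geometric sum
\begin{equation*}
|\xi|^{-a}|\eta|^{-2b}\sum_{n\leq n_\xi}2^{2n(a+b-1)},
\end{equation*}
which converges precisely under the hypothesis $a+b<1$. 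In the intermediate range $n_\xi<n\leq n_\eta$ only the $|\eta|$-factor is dominated, giving $|\eta|^{-2b}\sum 2^{2n(b-1)}\lesssim|\eta|^{-2b}|\xi|^{b-1}$; this matches the target bound because $a+b\leq 1$ forces $|\xi|^{b-1}\leq|\xi|^{-a}$. In the tail $n>n_\eta$, both minima equal $1$ and the sum contributes $\lesssim |\eta|^{-2}$; using $|\xi|^{1/2}\leq|\eta|$ and $a+b\leq 1$ one checks $|\eta|^{-2}\leq|\eta|^{-2a-2b}\leq |\xi|^{-a}|\eta|^{-2b}$.

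The only real subtlety is the careful case distinction in the intermediate/tail regimes; each individual estimate is a one-line geometric sum, and the hypothesis $a+b<1$ (strict) is exactly what prevents a logarithmic loss at the threshold $b=1$. I do not anticipate a serious technical obstacle—the main task is bookkeeping the three dyadic regimes and verifying that $a+b<1$ suffices in each of them.
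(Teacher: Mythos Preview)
Your argument is correct and rests on the same multiscale decomposition $K=\sum_{n\geq 0}K_n$ as the paper. The paper's execution is more compact, though: instead of splitting the sum into dyadic regimes, it records the three termwise bounds
\[
|\widehat{K}_n|\leq \|K_n\|_{L^1}\lesssim 2^{-2n},\qquad
|\xi|\,|\widehat{K}_n|\leq \|D^{(1,0)}K_n\|_{L^1}\lesssim 1,\qquad
|\eta|^2\,|\widehat{K}_n|\leq \|D^{(0,2)}K_n\|_{L^1}\lesssim 1,
\]
interpolates them with exponents $1-a-b,\,a,\,b$ to get $|\widehat{K}_n(\xi,\eta)|\lesssim 2^{-2n(1-a-b)}|\xi|^{-a}|\eta|^{-2b}$ uniformly in $(\xi,\eta)$, and sums the resulting geometric series. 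This bypasses all the case analysis; your three-regime bookkeeping is precisely what the interpolation encodes in one line.

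One small slip: the sentence ``the range $|\xi|<1$ or $|\eta|<1$ makes the right-hand side $\geq 1$'' is not quite right, since for instance $|\xi|<1$ with $|\eta|$ large does not force $|\xi|^{-a}|\eta|^{-2b}\geq 1$. The fix is trivial---take $n_\xi=0$ in that situation and run your intermediate and tail estimates as written---but you should state it correctly.
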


\begin{proof}
It follows immediately from the decomposition $K=\sum_{\ell \geq 0} K_\ell$ introduced in Lemma \ref{lem:decompo-noyau} and the three bounds
$$\|K_\ell\|_{L^1(\R^2)} \lesssim 2^{-2\ell} \quad , \quad \|D^{(1,0)}K_\ell\|_{L^1(\R^2)} \lesssim 1 \quad , \quad  \|D^{(0,2)}K_\ell\|_{L^1(\R^2)} \lesssim 1\ ,$$
for every $\ell \geq 0$.
\end{proof}

We set from now on $\psi^\ell_{x}:=\cs_{\scal,x}^{2^{-\ell}}\psi$ for every $x\in \R^2$.

\smallskip

By definition, one has, for every $(s,x)\in \R^2$,
\begin{equation}\label{first-expression}
\langle  \hat{\xi}^{\2,(n,m)}_{(s,x)}, \psi_{(s,x)}^\ell \rangle=\int_{\R^2} dt dy \, (D^{(1,1)} \psi^{\ell}_{(s,x)})(t,y) \, \Big\{ \mathbf{X}^{\mathbf{2},(n,m)}_{(s,x)}(t,y)-C^{(n,m)}_{H_1,H_2} \cdot t\cdot y\Big\} \ ,
\end{equation}
where we have set $C^{(n,m)}_{H_1,H_2}:=C^{n}_{H_1,H_2}-C^{m}_{H_1,H_2}$,  $\mathbf{X}^{\mathbf{2},(n,m)}:=\mathbf{X}^{\mathbf{2},n}-\mathbf{X}^{\mathbf{2},m}$ and 
\begin{multline*}
\mathbf{X}^{\mathbf{2},n}_{(s,x)}(t,y):=\int_s^t du \int_x^y dz \, (D^{(1,1)} X^n)(u,z) \cdot\\
\int_{\R^2} dr dw \, K(r,w)\cdot \big[ (D^{(1,1)}X^n)(u-r,z-w)-(D^{(1,1)}X^n)(s-r,x-w) \big].
\end{multline*}
Using the representation (\ref{approx-noise}) of $X^n$, we can also write $\mathbf{X}^{\mathbf{2},(n,m)}$ as
\begin{equation}\label{levy-area-complex}
\mathbf{X}^{\mathbf{2},(n,m)}_{(s,x)}(t,y)=-\int_{D_m^2 \backslash D_n^2} d\xi d\eta d\xi_2 d\eta_2 \, \widehat{W}(d\xi,d\eta) \widehat{W}(d\xi_2,d\eta_2) \, A_{(s,x),(t,y)}((\xi_1,\eta_1),(\xi_2,\eta_2)),
\end{equation}
where
\begin{align*}
&A_{(s,x),(t,y)}((\xi,\eta),(\xi_2,\eta_2))\\
&=c_{H_1,H_2}^2\, \frac{\xi_1 \cdot \eta_1}{|\xi|^{H_1+\frac12}|\eta|^{H_2+\frac12}} \frac{\xi_2\cdot \eta_2}{|\xi_2|^{H_1+\frac12}|\eta_2|^{H_2+\frac12}} \widehat{K}(\xi_1,\eta_1) P_{(s,x),(t,y)}((\xi_1,\eta_1),(\xi_2,\eta_2))
\end{align*}
and
$$P_{(s,x),(t,y)}((\xi_1,\eta_1),(\xi_2,\eta_2))=\int_s^t du \int_x^y dz \, e^{\imath \xi_2 u} e^{\imath \eta_2 z} \lcl e^{\imath \xi_1 u} e^{\imath \eta_1 z}-e^{\imath \xi_1 s}e^{\imath \eta_1 x} \rcl.$$
Going back to (\ref{first-expression}), it holds that
\begin{multline}\label{decomp-proof}
\esp\Big[ \big|\langle  \hat{\xi}^{\2,(n,m)}_{(s,x)}, \psi_{(s,x)}^\ell \rangle \big|^2\Big]=\int_{\R^4} dt_1 dy_1 dt_2 dy_2 \, (D^{(1,1)} \psi^\ell_{(s,x)})(t_1,y_1)\cdot (D^{(1,1)} \psi^\ell_{(s,x)})(t_2,y_2)\cdot\\
\bigg\{ \esp \Big[\mathbf{X}^{\mathbf{2},(n,m)}_{(s,x)}(t_1,y_1)\overline{\mathbf{X}^{\mathbf{2},(n,m)}_{(s,x)}(t_2,y_2)}\Big]-C^{(n,m)}_{H_1,H_2} \cdot t_2\cdot y_2 \cdot \esp\big[\mathbf{X}^{\mathbf{2},(n,m)}_{(s,x)}(t_1,y_1)\big]\\
-C^{(n,m)}_{H_1,H_2} \cdot t_1 \cdot y_1 \cdot \esp\Big[ \overline{\mathbf{X}^{\mathbf{2},(n,m)}_{(s,x)}(t_2,y_2)}\Big]+\big( C^{(n,m)}_{H_1,H_2}\big)^2 \cdot t_1 \cdot y_1 \cdot t_2 \cdot y_2 \bigg\}.
\end{multline}
At this point, let us apply Wick's formula to decompose $\esp \Big[\mathbf{X}^{\mathbf{2},(n,m)}_{(s,x)}(t_1,y_1)\overline{\mathbf{X}^{\mathbf{2},(n,m)}_{(s,x)}(t_2,y_2)}\Big]$ as
\begin{align*}
&\esp\Big[\mathbf{X}^{\mathbf{2},(n,m)}_{(s,x)}(t_1,y_1)\overline{\mathbf{X}^{\mathbf{2},(n,m)}_{(s,x)}(t_2,y_2)}\Big]\\
&=\esp\Big[\mathbf{X}^{\mathbf{2},(n,m)}_{(s,x)}(t_1,y_1)\Big]\esp\Big[ \overline{\mathbf{X}^{\mathbf{2},(n,m)}_{(s,x)}(t_2,y_2)}\Big]+S^{(n,m)}_{(s,x)}((t_1,y_1),(t_2,y_2)) \ ,
\end{align*}
with
\begin{align}
& S^{(n,m)}_{(s,x)}((t_1,y_1),(t_2,y_2)) := \nonumber\\
&- \int_{D_m^2\backslash D_n^2} d\xi_1 d\eta_1 d\xi_2 d\eta_2 \, A_{(s,x),(t_1,y_1)}((\xi_1,\eta_1),(\xi_2,\eta_2))\overline{A_{(s,x),(t_2,y_2)}((\xi_1,\eta_1),(\xi_2,\eta_2))}\nonumber\\
&- \int_{D_m^2\backslash D_n^2} d\xi d\eta d\xi_2 d\eta_2 \, A_{(s,x),(t_1,y_1)}((\xi_1,\eta_1),(\xi_2,\eta_2))\overline{A_{(s,x),(t_2,y_2)}((\xi_2,\eta_2),(\xi_1,\eta_1))}.\label{decompo-wick}
\end{align}
Therefore, by choosing $C^n_{H_1,H_2}$ as in the subsequent Lemma \ref{lem:renorm}, we can rewrite (\ref{decomp-proof}) as
$$
\esp\Big[ \big|\langle  \hat{\xi}^{\2,(n,m)}_{(s,x)}, \psi_{(s,x)}^\ell \rangle \big|^2\Big]=\big|R^{(n,m)}_\ell\big|^2+S^{(n,m)}_\ell(s,x)
$$
with
$$S^{(n,m)}_\ell(s,x):= \int_{\R^4} dt_1 dy_1 dt_2 dy_2 \, (D^{(1,1)} \psi_{(s,x)}^\ell)(t_1,y_1) (D^{(1,1)} \psi_{(s,x)}^\ell)(t_2,y_2)\, S^{(n,m)}_{(s,x)}((t_1,y_1),(t_2,y_2)) \ .$$
Using Lemma \ref{lem:renorm} again, the proof of our assertion reduces to showing that
\begin{equation}\label{estim-s-n-0}
|S^{(n,m)}_\ell(s,x)| \lesssim 2^{2\ell (4-4H_1-2H_2+\ep)}2^{-n\ep}
\end{equation}
for some small $\ep >0$. To this end, observe first that by using Cauchy-Schwarz inequality and a basic change of variables, we get that
$$
|S^{(n,m)}_\ell(s,x)|\lesssim \int_{D_m^2 \backslash D_n^2}d\xi_1 d\eta_1 d\xi_2 d\eta_2 \,
\frac{|\hat{K}(\xi_1,\eta_1)|^2}{|\xi_1|^{2H_1-1}|\xi_2|^{2H_1-1}|\eta_1|^{2H_2-1}|\eta_2|^{2H_2-1}} |P^\ell_\psi((\xi_1,\eta_1),(\xi_2,\eta_2))|^2 \ ,
$$
where we have set
$$P^\ell_\psi((\xi_1,\eta_1),(\xi_2,\eta_2))=\int_{\R^2} dt dy \, (D^{(1,1)}\psi^\ell_{(0,0)})(t,y) \int_0^t du \int_0^y dz \, e^{\imath \xi_2 u}e^{\imath \eta_2 z} \{ e^{\imath \xi_1 u} e^{\imath \eta_1 z}-1 \} \ .$$
Let us introduce the following natural domains:
$$F_1^n:=\{(\xi_1,\eta_1,\xi_2,\eta_ 2)\in \R^4: \ |\xi_1|\geq 2^{2n}\} \quad , \quad F_2^n:=\{(\xi_1,\eta_1,\xi_2,\eta_ 2)\in \R^4: \ |\eta_1|\geq 2^{n}\} \ ,$$
$$F_3^n:=\{(\xi_1,\eta_1,\xi_2,\eta_ 2)\in \R^4: \ |\xi_3|\geq 2^{2n}\} \quad , \quad F_4^n:=\{(\xi_1,\eta_1,\xi_2,\eta_ 2)\in \R^4: \ |\eta_2|\geq 2^{n}\} \ .$$
With this notation, it is readily checked that
\begin{eqnarray}
\lefteqn{|S^{(n,m)}_\ell(s,x)|}\nonumber\\
&\lesssim & \sum_{i=1,\ldots,4} \int_{F_i^n}d\xi_1 d\eta_1 d\xi_2 d\eta_2 \,
\frac{|\hat{K}(\xi_1,\eta_1)|^2}{|\xi_1|^{2H_1-1}|\xi_2|^{2H_1-1}|\eta_1|^{2H_2-1}|\eta_2|^{2H_2-1}} |P^\ell_\psi((\xi_1,\eta_1),(\xi_2,\eta_2))|^2 \nonumber\\
&\lesssim & 2^{2\ell(6-4H_1-2H_2)} \sum_{i=1,\ldots,4} \cj^{\ell,n}_i \ ,\label{estim-s-n}
\end{eqnarray}
with
\begin{equation}\label{i-l-n-i}
\cj^{\ell,n}_i=\int_{F_i^{n-\ell}}d\xi_1 d\eta_1 d\xi_2 d\eta_2 \,
\frac{|\hat{K}(2^{2\ell}\xi_1,2^\ell\eta_1)|^2}{|\xi_1|^{2H_1-1}|\xi_2|^{2H_1-1}|\eta_1|^{2H_2-1}|\eta_2|^{2H_2-1}} |P^0_\psi((\xi_1,\eta_1),(\xi_2,\eta_2))|^2 \ .
\end{equation}
For the sake of clarity, we have postponed the estimation of the latter integrals to Section \ref{subsec:tech-estim} (see Lemma \ref{lem:estim-i-l-n-i}) and we only report the result here: there exists a small $\ep >0$ such that for all $i\in \{1,\ldots,4\}$, 
$$| \cj^{\ell,n}_i | \lesssim 2^{-4\ell (1-\ep)} \quad \text{and if} \ n\geq \ell \ , \quad | \cj^{\ell,n}_i | \lesssim 2^{-4\ell (1-\ep)}2^{-\ep (n-\ell)} \ .$$
Going back to (\ref{estim-s-n}), we have thus shown (\ref{estim-s-n-0}), which achieves the proof of (\ref{estim-4}).

\subsection{A few technical estimates}\label{subsec:tech-estim}

\begin{lemma}\label{lem:renorm}
With the notations of the proof of (\ref{estim-4}), it holds that
\begin{equation}\label{decomp-trace}
\int_{\R^2} dt dy \, (D^{(1,1)}\psi^\ell_{(s,x)})(t,y) \, \esp\big[\mathbf{X}^{\mathbf{2},(n,m)}_{(s,x)}(t,y) \big]=C^{(n,m)}_{H_1,H_2}\cdot \bigg( \int_{\R^2}dt dy\, \psi(t,y) \bigg)+R^{(n,m)}_\ell \ ,
\end{equation}
where 
\begin{equation}\label{cstt}
C^n_{H_1,H_2}:=C_{H_1,H_2}\cdot \int_{D_n} d\xi d\eta \, \frac{1}{|\xi|^{2H_1-1}|\eta|^{2H_2-1}} \widehat{K}(\xi,\eta)
\end{equation}
for some constant $C_{H_1,H_2} > 0$, and $R^{(n,m)}_\ell$ is such that
$$|R^{(n,m)}_\ell| \lesssim 2^{\ell( 4+4H_1+2H_2+2\ep)}2^{-n\ep} \ ,$$
for some small $\ep >0$, and where the proportional constant is uniform in $\ell,m,n$.
\end{lemma}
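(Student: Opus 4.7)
The plan is to compute $\esp[\mathbf{X}^{\mathbf{2},(n,m)}_{(s,x)}(t,y)]$ explicitly via the Fourier representation~(\ref{levy-area-complex}) and then integrate the resulting expression against $D^{(1,1)}\psi^\ell_{(s,x)}$. Taking expectation collapses the double stochastic integral onto the diagonal $(\xi_2,\eta_2)=-(\xi_1,\eta_1)$ via the Hermitian covariance
$$\esp\big[\widehat{W}(d\xi_1,d\eta_1)\widehat{W}(d\xi_2,d\eta_2)\big]=\delta(\xi_1+\xi_2)\delta(\eta_1+\eta_2)\,d\xi_1 d\eta_1 d\xi_2 d\eta_2,$$
and, since $D_n$ is symmetric under $(\xi,\eta)\mapsto(-\xi,-\eta)$, the trace integral runs over $D_m\setminus D_n$. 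The factor $\xi_1\eta_1\xi_2\eta_2$ simplifies to $\xi^2\eta^2$, absorbing the Hurst prefactors and producing an integrand already in the form of~(\ref{cstt}):
$$\esp\big[\mathbf{X}^{\mathbf{2},(n,m)}_{(s,x)}(t,y)\big]=-c_{H_1,H_2}^2\int_{D_m\setminus D_n}\frac{\widehat{K}(\xi,\eta)\,P_{(s,x),(t,y)}((\xi,\eta),(-\xi,-\eta))}{|\xi|^{2H_1-1}|\eta|^{2H_2-1}}\,d\xi\,d\eta.$$

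The crucial next step is the splitting
$$P_{(s,x),(t,y)}((\xi,\eta),(-\xi,-\eta))=(t-s)(y-x)-\int_s^t\!\!\int_x^y e^{\imath\xi(s-u)+\imath\eta(x-z)}\,du\,dz,$$
obtained by expanding the bracket in the definition of $P$; this separates the ``resonant'' polynomial part, which will drive the renormalization, from the purely oscillatory remainder. After Fubini, I would handle each piece by two successive integrations by parts in $(t,y)$ against $D^{(1,1)}\psi^\ell_{(s,x)}$ (legal by compact support of $\psi^\ell_{(s,x)}$). For the polynomial piece, $D^{(1,1)}\big((t-s)(y-x)\big)=1$, so the $(t,y)$-integral collapses to $\int\psi^\ell_{(s,x)}=\int\psi$ by the $L^1$-scale-invariance of $\cs^{2^{-\ell}}_{\scal,(s,x)}$; the remaining $(\xi,\eta)$-integral matches the definition~(\ref{cstt}) of $C^{(n,m)}_{H_1,H_2}$ (up to the global sign, which is absorbed into the constant $C_{H_1,H_2}$ of the lemma), producing exactly the main term. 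For the oscillatory piece, the same double integration by parts transforms the $(u,z)$-integral into $e^{\imath\xi(s-t)+\imath\eta(x-y)}$ multiplying $\psi^\ell_{(s,x)}(t,y)$; integrating in $(t,y)$ and rescaling identifies the output with the scaled Fourier transform $\widehat{\psi}(2^{-2\ell}\xi,2^{-\ell}\eta)$.

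The final task is to control the oscillatory remainder
$$|R^{(n,m)}_\ell|\ \lesssim\ \int_{D_m\setminus D_n}\frac{|\widehat K(\xi,\eta)|\cdot|\widehat\psi(2^{-2\ell}\xi,2^{-\ell}\eta)|}{|\xi|^{2H_1-1}|\eta|^{2H_2-1}}\,d\xi\,d\eta,$$
which is the main technical obstacle. The delicate point is that Lemma~\ref{lem:estim-fourier-k} saturates at $a+b=1$, while the exponents $2H_1-1$ and $2H_2-1$ are borderline integrable at infinity in the regime $2H_1+H_2\leq 2$: neither $\widehat K$-decay nor the Hurst-prefactor decay alone is enough. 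My plan is to rescale $(\xi',\eta')=(2^{-2\ell}\xi,2^{-\ell}\eta)$, producing a prefactor of the form $2^{\ell(6-4H_1-2H_2-2(a+b))}$ and shifting the cutoff to $\{|\xi'|\geq 2^{2(n-\ell)}\}\cup\{|\eta'|\geq 2^{n-\ell}\}$. The decay $|\widehat\psi(\xi',\eta')|\lesssim(1+|\xi'|+|\eta'|^2)^{-1}$, which follows from $\|\psi\|_{\cac^2}\leq 1$, makes the rescaled integral convergent over all of $\R^2$. I would then split the cutoff region into the two zones $\{|\xi'|\geq 2^{2(n-\ell)}\}$ and $\{|\eta'|\geq 2^{n-\ell}\}$ and pick $(a,b)$ just below $a+b=1$ (possibly differently in each zone) to extract a tail factor $2^{-\ep (n-\ell)}$ when $n\geq\ell$. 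Combined with the polynomial $\ell$-prefactor, this yields the claimed bound $2^{\ell(4+4H_1+2H_2+2\ep)}\cdot 2^{-n\ep}$, the very generous $\ell$-exponent in the target absorbing all losses (including the case $n<\ell$, where the cutoff is ineffective but the full integral is still finite).
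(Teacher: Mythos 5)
Your proposal matches the paper's proof essentially step for step: the trace computation over $D_m\setminus D_n$, the split $P=(t-s)(y-x)+D_{(t,y)}(\xi,\eta)$ isolating the polynomial part, the rescaling $B_\ell(\xi,\eta)=B_0(2^{-2\ell}\xi,2^{-\ell}\eta)$, the choice of $a,b$ with $a+b<1$ in order to invoke Lemma \ref{lem:estim-fourier-k}, and the two-zone tail estimate extracting $2^{-\ep(n-\ell)}$ when $n\geq\ell$ are all the moves the paper makes. The only small variant is that you recognize the oscillatory kernel as $B_0=-\widehat\psi$ via two integrations by parts and then use the $\cac^2$ Fourier decay $|\widehat\psi(\xi,\eta)|\lesssim(1+|\xi|+|\eta|^2)^{-1}$, whereas the paper bounds $|B_0(\xi,\eta)|\lesssim\inf(|\xi|^{-1}|\eta|^{-1},|\xi|^{-1},|\eta|^{-1},1)$ directly from the pointwise size of $|D_{(t,y)}(\xi,\eta)|$ and $D^{(1,1)}\psi\in L^1$; the two bounds are incomparable but both suffice for the convergence and tail estimate in the relevant region $2H_1+H_2>5/3$. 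One caution: the printed exponent $4+4H_1+2H_2+2\ep$ in the lemma is a sign misprint for $4-4H_1-2H_2+2\ep$ (which is what your rescaling with $a+b=1-\ep$ actually produces, and what the proof of (\ref{estim-4}) needs), so the target is \emph{not} ``very generous''---the case $n<\ell$ is absorbed, as in the paper, simply by paying an extra $2^{\ell\ep}$ to convert $2^{-\ell\ep}$ into $2^{-n\ep}$, not because the $\ell$-exponent has slack.
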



\begin{proof}
First, observe that
$$\int_{\R^2} dt dy \, (D^{(1,1)} \psi^\ell_{(s,x)})(t,y) \, \esp\big[\mathbf{X}^{\mathbf{2},(n,m)}_{(s,x)}(t,y) \big]=\int_{\R^2} dt dy \, (D^{(1,1)} \psi^\ell_{(0,0)})(t,y) \, \esp\big[\mathbf{X}^{\mathbf{2},(n,m)}_{(s,x)}(s+t,x+y) \big]$$
and with the above notations,
\begin{eqnarray*}
\lefteqn{\esp\big[\mathbf{X}^{\mathbf{2},(n,m)}_{(s,x)}(s+t,x+y) \big]}\\
&=& -c_{H_1,H_2}^2 \cdot \int_{D_m\backslash D_n}d\xi d\eta \, \frac{1}{|\xi|^{2H_1-1}|\eta|^{2H_2-1}} \widehat{K}(\xi,\eta) P_{(s,x),(s+t,x+y)}((\xi,\eta),(-\xi,-\eta))\ .
\end{eqnarray*}
Then
$$
P_{(s,x),(s+t,x+y)}((\xi,\eta),(-\xi,-\eta)) = \int_s^{s+t} du \int_x^{x+y} dz \, \big\{1-e^{-\imath \xi (u-s)} e^{-\imath \eta (z-x)} \big\}= t\cdot y +D_{(t,y)}(\xi,\eta),
$$
where we have set $D_{(t,y)}(\xi,\eta):=-\big( \int_0^t du \, e^{-\imath \xi u} \big)\cdot\big( \int_0^y dz \, e^{-\imath \eta z} \big)$, so that
\begin{equation}\label{dec-proof}
\int_{\R^2} dt dy \, (D^{(1,1)} \psi^\ell_{(s,x)})(t,y) \, \esp\big[\mathbf{X}^{\mathbf{2},(n,m)}_{(s,x)}(t,y) \big] 
=C^{(n,m)}_{H_1,H_2}\cdot \bigg( \int_{\R^2}dt dy\,  \psi(t,y) \bigg)+R^{(n,m)}_\ell \ ,
\end{equation}
with
$$R^{(n,m)}_\ell :=-c^2_{H_1,H_2}\cdot \int_{D_m\backslash D_n} d\xi d\eta \, \frac{1}{|\xi|^{2H_1-1}|\eta|^{2H_2-1}} \widehat{K}(\xi,\eta) B_\ell(\xi,\eta)$$
and
$$B_\ell(\xi,\eta):=\int_{\R^2} dt dy \, (D^{(1,1)}\psi^\ell_{(0,0)})(t,y) \, D_{(t,y)}(\xi,\eta).$$

\smallskip

\noindent
It is readily checked that $B_\ell(\xi,\eta)=B_0(2^{-2\ell} \xi,2^{-\ell} \eta)$ and hence
\begin{eqnarray*}
\big| R^{(n,m)}_\ell \big|&=&\bigg| c^2_{H_1,H_2}\cdot \int_{D_m \backslash D_n} d\xi d\eta \, \frac{1}{|\xi|^{2H_1-1}|\eta|^{2H_2-1}} \widehat{K}(\xi,\eta) B_0(2^{-2\ell}\xi,2^{-\ell} \eta)\bigg|\\
&\lesssim &  2^{\ell(6-4H_1-2H_2)}\bigg\{ \int_{|\xi|\geq 2^{2(n-\ell)}} d\xi \int_{\R} d\eta \, \frac{1}{|\xi|^{2H_1-1}|\eta|^{2H_2-1}} |\widehat{K}(2^{2\ell}\xi,2^\ell \eta) B_0(\xi, \eta)| \\
& &\hspace{3cm}+\int_{\R} d\xi \int_{|\eta|\geq 2^{n-\ell} } d\eta \, \frac{1}{|\xi|^{2H_1-1}|\eta|^{2H_2-1}} |\widehat{K}(2^{2\ell}\xi,2^\ell \eta) B_0(\xi, \eta)| \bigg\}.\\
\end{eqnarray*}
Due to the assumption $\frac53 <2H_1+H_2 <2$, we can pick, for any $\ep >0$ small enough, two parameters $a,b\in [0,1]$ such that one has simultaneously $a+b=1-\frac{\ep}{2}$ and
$$\ep <2H_1-1+a <1  \quad , \quad \ep <2H_2-1+2b <1 \ .$$
Therefore, by applying Lemma \ref{lem:estim-fourier-k} to such a pair $(a,b)$, we get that
\begin{multline}\label{r-n-m}
|R^{(n,m)}_\ell | \lesssim 2^{\ell(4-4H_1-2H_2+\ep)}
\bigg\{ \int_{|\xi|\geq 2^{2(n-\ell)}} d\xi \int_{\R} d\eta \, \frac{ |B_0(\xi, \eta)|}{|\xi|^{2H_1+a-1}|\eta|^{2H_2+2b-1}}\\+\int_{\R} d\xi \int_{|\eta|\geq 2^{n-\ell}} d\eta \, \frac{ |B_0(\xi, \eta)|}{|\xi|^{2H_1+a-1}|\eta|^{2H_2+2b-1}}  \bigg\}\ .
\end{multline}
At this point, observe that we can rely on the estimate
$$|D_{(t,y)}(\xi,\eta)| \lesssim \inf \bigg( \frac{1}{|\xi|\,|\eta|}, \frac{|t|}{|\eta|},\frac{|y|}{|\xi|}, |t| |y| \bigg) \ ,$$
which immediately yields
$$| B_0(\xi, \eta)| \lesssim \inf \bigg( \frac{1}{|\xi|\,|\eta|}, \frac{1}{|\eta|},\frac{1}{|\xi|}, 1 \bigg)\ .$$
In particular, the integral
$\int_{\R^2} d\xi d\eta \, \frac{ |B_0(\xi, \eta)|}{|\xi|^{2H_1+a-1}|\eta|^{2H_2+2b-1}} $
is finite, so, if $\ell \geq n$, (\ref{r-n-m}) entails that
$$|R^{(n,m)}_\ell | \lesssim 2^{\ell(4-4H_1-2H_2+\ep)} \lesssim 2^{\ell(4-4H_1-2H_2+2\ep)}2^{-n\ep} \ .$$
On the other hand, if $\ell \leq n$, one has
$$\int_{|\xi|\geq 2^{2(n-\ell)}} d\xi \int_{\R} d\eta \, \frac{|B_0(\xi, \eta)|}{|\xi|^{2H_1+a-1}|\eta|^{2H_2+2b-1}}\lesssim 2^{-2\ep (n-\ell)} \ ,$$
and similarly
$$\int_{\R} d\xi \int_{|\eta|\geq 2^{n-\ell}} d\eta \, \frac{ |B_0(\xi, \eta)|}{|\xi|^{2H_1+a-1}|\eta|^{2H_2+2b-1}}\lesssim 2^{-\ep (n-\ell)}\ , $$
which still allows us to conclude by (\ref{r-n-m}).
\end{proof}

\

\begin{lemma}\label{lem:estim-i-l-n-i}
For all $n\geq 0$, $\ell \geq 0$ and $i\in \{1,\ldots,4\}$, consider the quantity $\cj^{n,\ell}_i$ defined by (\ref{i-l-n-i}). Then for every $\ep >0$ small enough, it holds that
$$| \cj^{\ell,n}_i | \lesssim 2^{-4\ell (1-\ep)} \quad \text{and if} \ n\geq \ell \ , \quad | \cj^{\ell,n}_i | \lesssim 2^{-4\ell (1-\ep)}2^{-\ep (n-\ell)} \ ,$$
where the proportional constants are uniform in $\ell,n$.
\end{lemma}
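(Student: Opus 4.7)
My approach starts with an explicit simplification of the phase factor $P^0_\psi$. Computing the inner double integral over $(u,z)$ in closed form and then integrating by parts twice in $(t,y)$ against $(D^{(1,1)}\psi)$ produces the compact identity
$$P^0_\psi((\xi_1,\eta_1),(\xi_2,\eta_2)) = \widehat\psi(-(\xi_1+\xi_2),-(\eta_1+\eta_2)) - \widehat\psi(-\xi_2,-\eta_2).$$
From this I would extract two complementary bounds: the crude bound $|P^0_\psi|\leq|\widehat\psi(\xi_1+\xi_2,\eta_1+\eta_2)|+|\widehat\psi(\xi_2,\eta_2)|$ with $\widehat\psi$ rapidly decreasing (thanks to the smoothness and compact support of $\psi$), and --- via the fundamental theorem of calculus applied to the difference --- the low-frequency vanishing bound
$$|P^0_\psi|\lesssim|\xi_1|\cdot\int_0^1|\partial_1\widehat\psi(\xi_2+\tau\xi_1,\eta_2+\tau\eta_1)|\,d\tau + |\eta_1|\cdot\int_0^1|\partial_2\widehat\psi(\xi_2+\tau\xi_1,\eta_2+\tau\eta_1)|\,d\tau.$$
The first bound will control the integrand at infinity in $(\xi_1,\eta_1)$, while the second compensates the low-frequency singularity arising from $\widehat K$.

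Next I would apply Lemma \ref{lem:estim-fourier-k} with parameters $(a,b)\in[0,1]^2$ satisfying $a+b=1-\ep/2$, which is permitted since the lemma only requires $a+b<1$. This produces the prefactor $2^{-4\ell(a+b)}=2^{-4\ell(1-\ep/2)}$ and leaves an integral over $(\xi_1,\eta_1,\xi_2,\eta_2)$ with weights $|\xi_1|^{-(2H_1-1+2a)}|\eta_1|^{-(2H_2-1+4b)}|\xi_2|^{-(2H_1-1)}|\eta_2|^{-(2H_2-1)}$ paired against $|P^0_\psi|^2$. I would split the $(\xi_1,\eta_1)$-domain according to whether $\|(\xi_1,\eta_1)\|_\scal\leq 1$ or $>1$, using the cancellation bound on the first region (to reduce the effective exponents on $\xi_1$ or $\eta_1$ by $2$) and the crude bound on the second. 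Integrating out $(\xi_2,\eta_2)$ against the rapid decay of $\widehat\psi$ --- after the translation $(\xi_2,\eta_2)\to(\xi_1+\xi_2,\eta_1+\eta_2)$ for the first summand --- reduces everything to simple one-dimensional weight integrals near the origin, whose convergence amounts to a linear-programming exercise in $(a,b,\ep)$. As in the proof of Lemma \ref{lem:renorm}, feasibility is guaranteed by $\tfrac53<2H_1+H_2\leq2$, giving the baseline estimate $|\cj^{\ell,n}_i|\lesssim 2^{-4\ell(1-\ep)}$.

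For the sharpened estimate when $n\geq\ell$, I would exploit the size restriction encoded by $F_i^{n-\ell}$: one variable $\xi_\ast$ (either some $\xi_j$ or some $\eta_j$) is constrained to satisfy $|\xi_\ast|\geq 2^{2(n-\ell)}$ or $|\xi_\ast|\geq 2^{n-\ell}$. The trivial pointwise trade $\mathbf{1}_{\{|\xi_\ast|\geq M\}}\leq M^{-\delta}|\xi_\ast|^\delta$ with a small $\delta>0$ extracts the factor $2^{-\ep(n-\ell)}$, at the cost of shifting a single weight exponent by $\delta$; choosing $\delta$ (hence $\ep$) small enough preserves all integrability thresholds from the previous step. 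The main obstacle throughout is purely bookkeeping: calibrating $(a,b,\ep)$ so that every weight exponent simultaneously stays inside its window of integrability. Without the low-frequency vanishing of $P^0_\psi$ at $(\xi_1,\eta_1)=0$ one could not push $a+b$ close to $1$ while keeping the singular weights locally integrable, so this vanishing --- combined with the parameter range $2H_1+H_2>\tfrac53$ --- is the essential ingredient behind both estimates.
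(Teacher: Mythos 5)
Your starting identity $P^0_\psi=\widehat\psi(-(\xi_1+\xi_2),-(\eta_1+\eta_2))-\widehat\psi(-\xi_2,-\eta_2)$ (obtained by two integrations by parts, with no boundary terms since $\psi$ is compactly supported) is correct and a clean observation, and so is the application of Lemma \ref{lem:estim-fourier-k} with $a+b=1-\ep/2$. The gap is in the subsequent ``bookkeeping'' step: neither of your two bounds on $P^0_\psi$ supplies enough \emph{per-variable} decay in $(\xi_1,\eta_1)$, and no choice of a single pair $(a,b)$ can be calibrated for the unbounded region.

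Concretely, in the crude bound the summand $|\widehat\psi(\xi_2,\eta_2)|^2$ has no $(\xi_1,\eta_1)$-dependence whatsoever. After integrating out $(\xi_2,\eta_2)$ it contributes a constant times $\int_{\|(\xi_1,\eta_1)\|_\scal>1}|\xi_1|^{-(2H_1-1+2a)}|\eta_1|^{-(2H_2-1+4b)}\,d\xi_1\,d\eta_1$, and this region still contains the strips $\{|\xi_1|\to 0,\ |\eta_1|>1\}$ and $\{|\xi_1|>1,\ |\eta_1|\to 0\}$. So one is forced to have $2H_1-1+2a$ simultaneously $<1$ (near $\xi_1=0$) and $>1$ (at $\xi_1=\infty$), and similarly for $b$, which is impossible. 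Your cancellation bound has the analogous defect on the mixed regions: the factors $\int_0^1|\partial_j\widehat\psi(\xi_2+\tau\xi_1,\eta_2+\tau\eta_1)|\,d\tau$ are merely bounded uniformly in $(\xi_1,\eta_1)$, so the $|\xi_1|^2$ gain in the first summand is not accompanied by any $\eta_1$-decay and the $|\eta_1|^2$ gain in the second is not accompanied by any $\xi_1$-decay.

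What is required, and what the paper's $T_\psi$/$Q_\psi$ factorization delivers, is a decomposition of $P^0_\psi$ in which \emph{each} summand couples a small-frequency gain in one variable with genuine decay in the complementary one. For instance $|P^{(1)}_\psi|\leq T_\psi(\xi_1+\xi_2)\cdot|\eta_1|\cdot Q_\psi(\eta_1,\eta_2)$ gains $|\eta_1|$ near $\eta_1=0$ while the factor $T_\psi(\xi_1+\xi_2)\lesssim\inf(1,|\xi_1+\xi_2|^{-1})$, once integrated against $|\xi_2|^{-(2H_1-1)}$, yields polynomial decay in $\xi_1$. Even with this structure, $P^{(2)}_\psi$ has no $\eta_1$-decay and the paper therefore splits $\{|\eta_1|\leq1\}$ from $\{|\eta_1|\geq1\}$ with \emph{two different} pairs $(a_1,b_1)$ and $(a_2,b_2)$; the two cases $H_1>\tfrac12$ and $\tfrac13<H_1<\tfrac12$ also require distinct assignments. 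Your proposal, with a single $(a,b)$ and a two-region parabolic split, cannot accommodate these conflicting constraints. To salvage the Fourier-side approach you would need to further decompose $P^0_\psi$ into differences in each variable separately (so that each piece carries decay in one of $\xi_1$ or $\eta_1$ via the translated $\widehat\psi$), run the estimate piece by piece with $(a,b)$ re-optimized for each, and still carry out a finer four-region split --- at which point the argument is essentially as involved as the paper's.
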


\begin{proof}
Let us first introduce the two quantities at the core of our argument: for all $\xi_1,\xi_2\in \R$, define
\begin{equation}\label{def-t-psi}
T_\psi(\xi_1)=\bigg( \int_{\R^2} dt dy \, |(D^{(1,1)}\psi)(t,y)|^2 \cdot | \int_0^t du \, e^{\imath \xi_1 u}|^2 \bigg)^{1/2}
\end{equation}
and
\begin{equation}\label{def-q-psi}
Q_\psi(\xi_1,\xi_2)=\bigg( \int_{\R^2} dt dy \, |(D^{(1,1)}\psi)(t,y)|^2 \cdot | \int_0^y dz\int_0^z dw \, e^{\imath \eta_1 w} e^{\imath \eta_2 z} |^2 \bigg)^{1/2} \ .
\end{equation}
It turns out that we have to treat the cases $H_1 >\frac12$ and $\frac13 <H_1 <\frac12$ separately.

\smallskip

\noindent
\textbf{Case 1: $H_1 >\frac12$ .} In this situation, we start from the estimate
\begin{equation}\label{starting}
|P_\psi^0((\xi_1,\eta_1),(\xi_2,\eta_2))|^2 \lesssim |P_\psi^{(1)}((\xi_1,\eta_1),(\xi_2,\eta_2))|^2+|P_\psi^{(2)}((\xi_1,\eta_1),(\xi_2,\eta_2))|^2
\end{equation}
where we have set
$$P_\psi^{(1)}((\xi_1,\eta_1),(\xi_2,\eta_2)):=T_\psi(\xi_1+\xi_2) \cdot \eta_1 \cdot Q_\psi(\eta_1,\eta_2) \quad , $$
$$P_\psi^{(2)}((\xi_1,\eta_1),(\xi_2,\eta_2)):=T_\psi(\eta_2) \cdot \xi_1 \cdot Q_\psi(\xi_1,\xi_2)\ .$$
Then, by Lemma \ref{lem:estim-fourier-k}, we know that for all $a,b\in [0,1]$ such that $a+b=1-\ep$, one has
\begin{multline}\label{boun-1}
\cj^{\ell,n,(1)}_i:=\int_{F_i^{n-\ell}}d\xi_1 d\eta_1 d\xi_2 d\eta_2 \,
\frac{|\hat{K}(2^{2\ell}\xi_1,2^\ell\eta_1)|^2}{|\xi_1|^{2H_1-1}|\xi_2|^{2H_1-1}|\eta_1|^{2H_2-1}|\eta_2|^{2H_2-1}} |P^{(1)}_\psi((\xi_1,\eta_1),(\xi_2,\eta_2))|^2\\
\lesssim 2^{-4\ell (1-\ep)} \int_{F_i^{n-\ell}}d\xi_1 d\eta_1 d\xi_2 d\eta_2 \,
\frac{|T_\psi(\xi_1+\xi_2)|^2\cdot |Q_\psi(\eta_1,\eta_2)|^2}{|\xi_1|^{(2H_1+2a)-1}|\xi_2|^{2H_1-1}|\eta_1|^{(2H_2+4b-2)-1}|\eta_2|^{2H_2-1}} \ .
\end{multline}
Due to the assumptions on the pair $(H_1,H_2)$, we can actually pick $a\in (0,1)$ such that
$$\max (6-8H_1,2H_2) < 4a < \min(2H_2+2,4H_2+1,4-4H_1) \ ,$$
and set $b=1-\ep-a \in (0,1)$, for $\ep$ small enough. For such a choice of $(a,b)$, it is readily checked that the conditions in Lemmas \ref{lem:t-psi} and \ref{lem:q-psi} are all satisfied by the bound (\ref{boun-1}), which leads us to both
$$| \cj^{\ell,n,(1)}_i | \lesssim 2^{-4\ell (1-\ep)} \quad \text{and if} \ n\geq \ell \ , \quad | \cj^{\ell,n,(1)}_i | \lesssim 2^{-4\ell (1-\ep)}2^{-\ep (n-\ell)} \ .$$
The treatment of
$$\cj^{\ell,n,(2)}_i:=\int_{F_i^{n-\ell}}d\xi_1 d\eta_1 d\xi_2 d\eta_2 \,
\frac{|\hat{K}(2^{2\ell}\xi_1,2^\ell\eta_1)|^2}{|\xi_1|^{2H_1-1}|\xi_2|^{2H_1-1}|\eta_1|^{2H_2-1}|\eta_2|^{2H_2-1}} |P^{(2)}_\psi((\xi_1,\eta_1),(\xi_2,\eta_2))|^2$$
is slightly different. Note that $\cj^{\ell,n,(2)}_i \lesssim \cj^{\ell,n,(2,1)}_i +\cj^{\ell,n,(2,2)}_i$, with
\begin{multline}\label{est-1}
\cj^{\ell,n,(2,1)}_i :=\\
2^{-4\ell (a_1+b_1)}\int_{F_i^{n-\ell} \cap \{|\eta_1|\leq 1\}}d\xi_1 d\eta_1 d\xi_2 d\eta_2 \, \frac{|Q_\psi(\xi_1,\xi_2)|^2\cdot |T_\psi(\eta_2)|^2}{|\xi_1|^{(2H_1-2+2a_1)-1}|\xi_2|^{2H_1-1}|\eta_1|^{(2H_2+4b_1)-1}|\eta_2|^{2H_2-1}}
\end{multline}
and
\begin{multline}\label{est-2}
\cj^{\ell,n,(2,2)}_i :=\\
2^{-4\ell (a_2+b_2)} \int_{F_i^{n-\ell} \cap \{|\eta_1|\geq 1\}}d\xi_1 d\eta_1 d\xi_2 d\eta_2 \, \frac{|Q_\psi(\xi_1,\xi_2)|^2\cdot |T_\psi(\eta_2)|^2}{|\xi_1|^{(2H_1-2+2a_2)-1}|\xi_2|^{2H_1-1}|\eta_1|^{(2H_2+4b_2)-1}|\eta_2|^{2H_2-1}} \ .
\end{multline}
Observe also that $|T_\psi(\eta_2)|^2\lesssim \inf(1,\frac{1}{|\eta_2|^2})$. So, in order to apply Lemma \ref{lem:q-psi} to (\ref{est-1}), it suffices to choose $a_1,b_1\in [0,1]$ such that $a_1+b_1=1-\ep$ and
$$\max(2+2H_2,4-4H_1,6-8H_1) < 4a_1 < 4 \ .$$
Similarly, in order to apply Lemma \ref{lem:q-psi} to (\ref{est-2}), it suffices to pick $a_2,b_2\in [0,1]$ such that $a_2+b_2=1-\ep$ and
$$\max(4-4H_1,6-8H_1) < 4a_2 < \min(2+2H_2,4) \ .$$
We can therefore conclude that
$$| \cj^{\ell,n,(2)}_i | \lesssim 2^{-4\ell (1-\ep)} \quad \text{and if} \ n\geq \ell \ , \quad | \cj^{\ell,n,(2)}_i | \lesssim 2^{-4\ell (1-\ep)}2^{-\ep (n-\ell)} \ ,$$
which completes the proof of the lemma in the situation where $H_1 >\frac12$.

\smallskip

\noindent
\textbf{Case 2: $\frac13 < H_1 <\frac12$ .} In particular, it holds that $H_2>\frac23$. Here, we reverse the roles of the variables by replacing (\ref{starting}) with
\begin{equation}
|P_\psi^0((\xi_1,\eta_1),(\xi_2,\eta_2))|^2 \lesssim |P_{\bar{\psi}}^{(1)}((\xi_1,\eta_1),(\xi_2,\eta_2))|^2+|P_{\bar{\psi}}^{(2)}((\xi_1,\eta_1),(\xi_2,\eta_2))|^2 \ ,
\end{equation}
where we have set this time $\bar{\psi}(t,y):=\psi(y,t)$,
$$P^{(1)}_{\bar{\psi}}((\xi_1,\eta_1),(\xi_2,\eta_2)):=T_{\bar{\psi}}(\eta_1+\eta_2) \cdot \xi_1 \cdot Q_{\bar{\psi}}(\xi_1,\xi_2) \ , $$
$$P^{(2)}_{\bar{\psi}}((\xi_1,\eta_1),(\xi_2,\eta_2)):=T_{\bar{\psi}}(\xi_2) \cdot \eta_1 \cdot Q_{\bar{\psi}}(\eta_1,\eta_2) \ .$$
As above, we first deal with
\begin{multline}\label{boun-2}
\cj^{\ell,n,(1)}_i:=\int_{F_i^{n-\ell}}d\xi_1 d\eta_1 d\xi_2 d\eta_2 \,
\frac{|\hat{K}(2^{2\ell}\xi_1,2^\ell\eta_1)|^2}{|\xi_1|^{2H_1-1}|\xi_2|^{2H_1-1}|\eta_1|^{2H_2-1}|\eta_2|^{2H_2-1}} |P^{(1)}_{\bar{\psi}}((\xi_1,\eta_1),(\xi_2,\eta_2))|^2\\
\lesssim 2^{-4\ell (a+b)} \int_{F_i^{n-\ell}}d\xi_1 d\eta_1 d\xi_2 d\eta_2 \,
\frac{|T_{\bar{\psi}}(\eta_1+\eta_2)|^2\cdot |Q_{\bar{\psi}}(\xi_1,\xi_2)|^2}{|\xi_1|^{(2H_1+2a-2)-1}|\xi_2|^{2H_1-1}|\eta_1|^{(2H_2+4b)-1}|\eta_2|^{2H_2-1}} \ ,
\end{multline}
and we pick $a,b\in [0,1]$ satisfying $a+b=1-\ep$ and
$$\max(4-4H_1,6-8H_1,2H_2+2) < 4 a < \min (4,4H_2+1) \ ,$$
so that the conditions in Lemmas \ref{lem:t-psi} and \ref{lem:q-psi} are all met by the bound (\ref{boun-2}), as it can be easily verified.

\smallskip

Also, if we set
$$\cj^{\ell,n,(2)}_i:=\int_{F_i^{n-\ell}}d\xi_1 d\eta_1 d\xi_2 d\eta_2 \,
\frac{|\hat{K}(2^{2\ell}\xi_1,2^\ell\eta_1)|^2}{|\xi_1|^{2H_1-1}|\xi_2|^{2H_1-1}|\eta_1|^{2H_2-1}|\eta_2|^{2H_2-1}} |P^{(2)}_{\bar{\psi}}((\xi_1,\eta_1),(\xi_2,\eta_2))|^2 \ ,$$
we have $\cj^{\ell,n,(2)}_i \lesssim \cj^{\ell,n,(2,1)}_i +\cj^{\ell,n,(2,2)}_i$, with
\begin{multline}\label{est-3}
\cj^{\ell,n,(2,1)}_i :=\\
2^{-4\ell (a_1+b_1)}\int_{F_i^{n-\ell} \cap \{|\xi_1|\leq 1\}}d\xi_1 d\eta_1 d\xi_2 d\eta_2 \, \frac{|Q_\psi(\eta_1,\eta_2)|^2\cdot |T_\psi(\xi_2)|^2}{|\xi_1|^{(2H_1+2a_1)-1}|\xi_2|^{2H_1-1}|\eta_1|^{(2H_2+4b_1-2)-1}|\eta_2|^{2H_2-1}}
\end{multline}
and
\begin{multline}\label{est-4}
\cj^{\ell,n,(2,2)}_i :=\\
2^{-4\ell (a_2+b_2)} \int_{F_i^{n-\ell} \cap \{|\xi_1|\geq 1\}}d\xi_1 d\eta_1 d\xi_2 d\eta_2 \, \frac{|Q_\psi(\eta_1,\eta_2)|^2\cdot |T_\psi(\xi_2)|^2}{|\xi_1|^{(2H_1+2a_2)-1}|\xi_2|^{2H_1-1}|\eta_1|^{(2H_2+4b_2-2)-1}|\eta_2|^{2H_2-1}} \ .
\end{multline}
In (\ref{est-3}), we fix $a_1,b_1\in [0,1]$ such that $a_1+b_1=1-\ep$ and
$$2H_2 < 4a_2 < \min(2H_2+2,4-4H_1,4H_2+1) \ ,$$
while in (\ref{est-4}), we fix $a_2,b_2\in [0,1]$ such that $a_2+b_2=1-\ep$ and
$$\max(2H_2,4-4H_2) < 4a_2 < \min(2H_2+2,4H_2+1) \ .$$
In this way, we are again in positions to appeal to Lemmas \ref{lem:t-psi}-\ref{lem:q-psi} and therefore conclude, which achieves the proof of our statement.
\end{proof}

\

\begin{lemma}\label{lem:t-psi}
Fix $\psi\in \cac^2(\R^2;\R)$ with compact support and let $T_\psi$ be the quantity defined by (\ref{def-t-psi}). Then, for all $\la_1,\la_2 \in (0,2)$ such that $\la_1+\la_2 >3$, the integral
\begin{equation}
\int_{\R^2} dx_1dx_2 \, \frac{|T_\psi(x_1+x_2)|^2}{|x_1|^{\la_1-1}|x_2|^{\la_2-1}}
\end{equation}
is finite. Besides, under the same conditions, for every $c\geq 1$, $i\in \{1,2\}$ and every $\ep>0$ small enough, one has
\begin{equation}\label{estim-t-psi}
\int_{|x_i|\geq c} dx_1dx_2 \, \frac{|T_\psi(x_1+x_2)|^2}{|x_1|^{\la_1-1}|x_2|^{\la_2-1}}\lesssim c^{-\ep} \ .
\end{equation}
\end{lemma}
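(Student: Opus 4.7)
The plan is to reduce the two-dimensional integral to a one-dimensional Riesz-type integral via Fubini and a simple scaling, and then apply the pointwise bound
\[
|T_\psi(\xi)|^2 \lesssim \min(1,|\xi|^{-2}),
\]
which follows immediately from $\big|\int_0^t e^{\imath\xi u}\,du\big|\leq \min(|t|,2/|\xi|)$ combined with the fact that $(D^{(1,1)}\psi)$ is bounded and compactly supported.

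For the first statement of the lemma, I would apply Fubini and perform the change of variable $(x_1,x_2)\mapsto (v,u):=(x_1,x_1+x_2)$ to write
\[
\int_{\R^2}\frac{|T_\psi(x_1+x_2)|^2}{|x_1|^{\lambda_1-1}|x_2|^{\lambda_2-1}}\,dx_1\,dx_2 \;=\; \int_\R |T_\psi(u)|^2\, I(u)\,du, \qquad I(u):=\int_\R \frac{dv}{|v|^{\lambda_1-1}|u-v|^{\lambda_2-1}}.
\]
The substitution $v=uw$ then yields $I(u)=C_{\lambda_1,\lambda_2}\cdot |u|^{3-\lambda_1-\lambda_2}$ with $C_{\lambda_1,\lambda_2}:=\int_\R |w|^{1-\lambda_1}|1-w|^{1-\lambda_2}\,dw$: integrability of this constant at $w=0$ and $w=1$ uses $\lambda_1,\lambda_2<2$, whereas integrability at $|w|=\infty$ uses precisely $\lambda_1+\lambda_2>3$. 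It then remains to control $\int_\R |T_\psi(u)|^2\,|u|^{3-\lambda_1-\lambda_2}\,du$, which I would split at $|u|=1$: on $|u|\leq 1$ the integrand is dominated by $|u|^{3-\lambda_1-\lambda_2}$, integrable since $\lambda_1+\lambda_2<4$; on $|u|\geq 1$ it is dominated by $|u|^{1-\lambda_1-\lambda_2}$, integrable since $\lambda_1+\lambda_2>3$.

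For the decay estimate (\ref{estim-t-psi}), I would extract a small power of $c$ out of the weight: on $\{|x_i|\geq c\}$ with $c\geq 1$ and $\ep>0$,
\[
\frac{1}{|x_i|^{\lambda_i-1}} \;=\; \frac{1}{|x_i|^\ep}\cdot\frac{1}{|x_i|^{\lambda_i-1-\ep}} \;\leq\; \frac{c^{-\ep}}{|x_i|^{(\lambda_i-\ep)-1}},
\]
so the restricted integral is bounded by $c^{-\ep}$ times the unrestricted integral with $\lambda_i$ replaced by $\lambda_i-\ep$. For $\ep>0$ small enough the conditions $\lambda_i-\ep\in(0,2)$ and $(\lambda_i-\ep)+\lambda_{3-i}>3$ still hold, so the first part of the lemma applies and yields (\ref{estim-t-psi}). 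The only mildly delicate point in the whole argument is the bookkeeping of the three competing ranges $\lambda_1,\lambda_2\in(0,2)$ and $\lambda_1+\lambda_2>3$: each is needed to control one of the divergences (at $w=0,1$, at $|w|=\infty$, and at $|u|=0,\infty$ in the final one-dimensional integral).
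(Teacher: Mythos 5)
Your proof is correct, and it takes a genuinely different route from the paper's. Both arguments hinge on the same pointwise bound $|T_\psi(\xi)|^2\lesssim\min(1,|\xi|^{-2})$, but where the paper works directly in $\R^2$ — splitting the region $\{|x_1|\geq c\}$ into the two sub-regions $\{|x_2|\leq c/2\}$ and $\{|x_2|\geq c/2\}$, choosing a different power of $|T_\psi|$ in each, and then rescaling each piece by a power of $c$ (with a polar-coordinate estimate for the second piece) — you instead apply Tonelli and the affine change of variables $(x_1,x_2)\mapsto(v,u)=(x_1,x_1+x_2)$ to reduce everything to the one-dimensional Riesz convolution $I(u)=\int_\R|v|^{1-\lambda_1}|u-v|^{1-\lambda_2}\,dv=C_{\lambda_1,\lambda_2}|u|^{3-\lambda_1-\lambda_2}$, after which the remaining integral $\int_\R|T_\psi(u)|^2|u|^{3-\lambda_1-\lambda_2}\,du$ is controlled by a clean split at $|u|=1$. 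Your route has the advantage of making the roles of the three hypotheses ($\lambda_1<2$, $\lambda_2<2$, $\lambda_1+\lambda_2>3$) completely transparent — they correspond exactly to the three singularities at $w=0$, $w=1$, $|w|=\infty$ in $C_{\lambda_1,\lambda_2}$ plus the endpoint behaviour at $|u|=0,\infty$ — and of proving the first assertion directly rather than leaving it implicit as the paper does. Your tail estimate is also more economical: absorbing $c^{-\epsilon}$ by lowering $\lambda_i$ to $\lambda_i-\epsilon$ and quoting the first part is simpler than the paper's case-by-case dilation, and it covers $i\in\{1,2\}$ symmetrically at no extra cost. The one small thing worth flagging (though it does not affect correctness, since the set $\{u=0\}$ is null) is that the scaling identity for $I(u)$ is only literally valid for $u\neq 0$; this is harmless under Tonelli but is worth a word in a written-up version.
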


\begin{proof}
It is a matter of elementary estimates based on the fact that
$$|T_\psi(x_1+x_2)| \lesssim \inf\bigg(1,\frac{1}{|x_1+x_2|}\bigg) \ .$$
We only give details for (\ref{estim-t-psi}). One has, for any small $\ep >0$,
\begin{eqnarray*}
\lefteqn{\int_{|x_1|\geq c} dx_1 \int_{\R} dx_2 \, \frac{|T_\psi(x_1+x_2)|^2}{|x_1|^{\la_1-1}|x_2|^{\la_2-1}}}\\
&\lesssim & \int_{|x_1|\geq c} dx_1 \int_{|x_2|\leq \frac{c}{2}} dx_2 \, \frac{1}{|x_1|^{\la_1-1}|x_2|^{\la_2-1}}\frac{1}{|x_1+x_2|^2}\\
& & +\int_{|x_1|\geq \frac{c}{2}} dx_1 \int_{|x_2|\geq \frac{c}{2}} dx_2 \, \frac{1}{|x_1|^{\la_1-1}|x_2|^{\la_2-1}}\frac{1}{|x_1+x_2|^{1-\ep}}\\
&\lesssim & c^{2-(\la_1+\la_2)} \int_{|x_1|\geq 1} dx_1 \int_{|x_2|\leq \frac{1}{2}} dx_2 \, \frac{1}{|x_1|^{\la_1-1}|x_2|^{\la_2-1}}\frac{1}{|x_1+x_2|^2}\\
& &+c^{3+\ep-(\la_1+\la_2)}\int_{|x_1|\geq \frac{1}{2}} dx_1 \int_{|x_2|\geq \frac{1}{2}} dx_2 \, \frac{1}{|x_1|^{\la_1-1}|x_2|^{\la_2-1}}\frac{1}{|x_1+x_2|^{1-\ep}} \ .
\end{eqnarray*}
It remains to observe that 
\begin{multline*}
\int_{|x_1|\geq 1} dx_1 \int_{|x_2|\leq \frac{1}{2}} dx_2 \, \frac{1}{|x_1|^{\la_1-1}|x_2|^{\la_2-1}}\frac{1}{|x_1+x_2|^2}\\
\lesssim \bigg( \int_{|x_1|\geq 1} \frac{dx_1}{|x_1|^{1+\la_1}} \bigg) \cdot \bigg(\int_{|x_2|\leq \frac12} \frac{dx_2}{|x_2|^{\la_2-1}} \bigg) \ < \infty \ ,
\end{multline*}
and  
\begin{multline*}
\int_{|x_1|\geq \frac{1}{2}} dx_1 \int_{|x_2|\geq \frac{1}{2}} dx_2 \, \frac{1}{|x_1|^{\la_1-1}|x_2|^{\la_2-1}}\frac{1}{|x_1+x_2|^{1-\ep}}\\
\lesssim \bigg( \int_{\frac12}^\infty \frac{dr}{r^{\la_1+\la_2-2-\ep}} \bigg) \cdot \bigg( \int_0^{2\pi} d\theta \, \frac{1}{\lln \cos \theta \rrn^{\la_1-1}\lln \sin \theta \rrn^{\la_2-1}} \frac{1}{\lln \cos \theta +\sin \theta \rrn^{1-\ep}}\bigg) \ < \ \infty \ ,
\end{multline*}
for any $\ep >0$ small enough.
\end{proof}

\

\begin{lemma}\label{lem:q-psi}
Fix $\psi\in \cac^2(\R^2;\R)$ with compact support and let $Q_\psi$ be the quantity defined by (\ref{def-q-psi}). Then, for all $\la_1,\la_2 \in (0,2)$ such that $\la_1+\la_2 >1$, the integral
\begin{equation}\label{estim-q-psi-1}
\int_{\R^2} dx_1dx_2 \, \frac{|Q_\psi(x_1,x_2)|^2}{|x_1|^{\la_1-1}|x_2|^{\la_2-1}}
\end{equation}
is finite. Besides, under the same conditions, for every $c\geq 1$, $i\in \{1,2\}$ and every $\ep>0$ small enough, one has
\begin{equation}\label{estim-q-psi-2}
\int_{|x_i|\geq c} dx_1dx_2 \, \frac{|Q_\psi(x_1,x_2)|^2}{|x_1|^{\la_1-1}|x_2|^{\la_2-1}}\lesssim c^{-\ep} \ .
\end{equation}
\end{lemma}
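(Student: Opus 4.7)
The approach is to swap the order of integration and reduce the statement to a one-parameter integral estimate. Writing
$$I(a,b,y):=\int_0^y dz\int_0^z dw\, e^{iaw}e^{ibz},$$
Fubini gives
$$\int_{\R^2}\frac{|Q_\psi(x_1,x_2)|^2}{|x_1|^{\la_1-1}|x_2|^{\la_2-1}}\, dx_1\, dx_2=\int_{\R^2}|D^{(1,1)}\psi(t,y)|^2\, M(y)\, dt\, dy,$$
where $M(y):=\int_{\R^2}|I(x_1,x_2,y)|^2/(|x_1|^{\la_1-1}|x_2|^{\la_2-1})\, dx_1\, dx_2$. Since $\psi$ is compactly supported, (\ref{estim-q-psi-1}) reduces to a uniform-in-$y$ bound on $M(y)$ as $y$ ranges over a compact set.

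The workhorse is the pair of explicit representations
$$I(a,b,y)=\frac{g_{a+b}(y)-g_b(y)}{ia}=\frac{g_a(y)e^{iby}-g_{a+b}(y)}{ib},\qquad g_c(y):=\frac{e^{icy}-1}{ic},$$
obtained by performing the inner one-dimensional integral in the two possible orders. Combined with $|g_c(y)|\le \min(|y|,2/|c|)$, these produce on a compact range of $y$ the trivial estimate $|I|\lesssim 1$, the one-sided bounds $|I|\lesssim 1/|a|$ and $|I|\lesssim 1/|b|$, and the sharper bound $|I|^2\lesssim b^{-2}\bigl(\min(1,1/a^2)+\min(1,1/(a+b)^2)\bigr)$ valid for $|b|\ge 1$. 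Splitting $\R^2$ into the four regions defined by whether $|a|$ and $|b|$ are above or below $1$, the three ``non-doubly-large'' regions are immediately controlled by the hypothesis $\la_1,\la_2\in(0,2)$. On $\{|a|,|b|\ge 1\}$ the first term of the refined bound produces the harmless piece $\int 1/(|a|^{\la_1+1}|b|^{\la_2+1})\, da\, db$, so the genuine obstacle is the contribution of the $\min(1,1/(a+b)^2)$ term.

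That obstacle is the main technical hurdle and will be handled via the change of variables $c=a+b$ with $b$ kept, split further according to $|c|\le 1$ or $|c|\ge 1$. In the range $|c|\le 1$ one has $|c-b|\asymp |b|$, and the estimate reduces to $\int_{|b|\ge 1}|b|^{-(\la_1+\la_2)}\, d|b|$, which is exactly where the condition $\la_1+\la_2>1$ first appears. In the range $|c|\ge 1$ the rescaling $u=b/c$ factorises the contribution into a convergent $\int_{|c|\ge 1}|c|^{-(\la_1+\la_2+1)}\, d|c|$ multiplied by a one-dimensional integral $\int|1-u|^{1-\la_1}|u|^{-(\la_2+1)}\, du$ with cut-offs $|u|\ge 1/|c|$ and $|1-u|\ge 1/|c|$. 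The only potentially divergent part of the $u$-integral is at $|u|\to\infty$, where the integrand is $\asymp |u|^{-(\la_1+\la_2)}$, invoking the assumption $\la_1+\la_2>1$ a second time; the singular contributions at $u=0$ and $u=1$ are tamed by the conditions $\la_1,\la_2<2$ and the $|c|$-dependent cut-offs, and in the worst case ($u$ near $0$) produce only an additional factor of $|c|^{\la_2}$, absorbed into the $|c|$-integral thanks to $\la_1>0$.

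Finally, for the refined estimate (\ref{estim-q-psi-2}) the plan is to simply bound $|x_i|^{-(\la_i-1)}\le c^{-\ep}|x_i|^{-(\la_i-1-\ep)}$ on $\{|x_i|\ge c\}$, which reduces the claim to (\ref{estim-q-psi-1}) with $\la_i$ replaced by $\la_i-\ep$. For $\ep>0$ sufficiently small the shifted exponents still lie in $(0,2)$ and still satisfy $\la_1+\la_2>1$ strictly, so the first part of the lemma delivers the desired bound.
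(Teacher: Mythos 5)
Your proposal is correct and rests on the same core ingredient as the paper's proof, namely pointwise bounds on the inner oscillatory integral that are equivalent to the paper's estimate $|Q_\psi(x_1,x_2)|\lesssim \inf\big(1,\tfrac{1}{|x_1|},\tfrac{1}{|x_2|},\tfrac{1}{|x_1||x_2|}+\tfrac{1}{|x_i||x_1+x_2|}\big)$, followed by region splitting. Where you differ: you actually \emph{derive} these bounds via the two explicit representations $I=\frac{g_{a+b}-g_b}{ia}=\frac{g_a e^{iby}-g_{a+b}}{ib}$, whereas the paper just declares the inf-estimate ``readily-checked''; your derivation is a useful elaboration. For the integral containing the $|x_1+x_2|^{-1}$ singularity the paper interpolates $|Q_\psi|\lesssim |x_1|^{-a_2}|x_2|^{-(1-a_2)}$ with a suitably chosen exponent $a_2$ and then passes to polar coordinates, while you substitute $c=a+b$ and then rescale $u=b/c$ to separate variables; both hinge on $\la_1+\la_2>1$ in the same places, so this is a cosmetic rather than structural difference. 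Finally, for (\ref{estim-q-psi-2}) the paper redoes the estimate carrying a $c^{-\ep}$ factor throughout (giving contributions like $c^{1-(\la_1+a_1-\ep)}$ and $c^{-\ep}$), whereas you peel off $|x_i|^{-\ep}\le c^{-\ep}$ on $\{|x_i|\ge c\}$ and invoke part one with the shifted exponent $\la_i-\ep$; your reduction is cleaner and the needed inequalities $\la_i-\ep\in(0,2)$ and $(\la_1-\ep)+\la_2>1$ indeed persist for $\ep$ small since the hypotheses on $\la_1,\la_2$ are strict.
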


\begin{proof}
It leans on the following readily-checked estimate: for $i\in \{1,2\}$,
$$|Q_\psi(x_1,x_2)|\lesssim \inf \bigg( 1,\frac{1}{|x_1|},\frac{1}{|x_2|},\frac{1}{|x_1||x_2|}+\frac{1}{|x_i| |x_1+x_2|} \bigg) \ .$$
Based on this bound, one has, for any small $\ep >0$,
\begin{eqnarray}
\lefteqn{\int_{|x_1|\geq c} dx_1\int_{\R}dx_2 \, \frac{|Q_\psi(x_1,x_2)|^2}{|x_1|^{\la_1-1}|x_2|^{\la_2-1}}}\nonumber\\
&\lesssim & \bigg( \int_{|x_1|\geq c} \frac{dx_1}{|x_1|^{1+\la_1}}\bigg) \cdot \bigg( \int_{|x_2|\leq 1} \frac{dx_2}{|x_2|^{\la_2-1}}\bigg)+\int_{|x_1|\geq c} dx_1\int_{|x_2|\geq 1}dx_2 \, \frac{|Q_\psi(x_1,x_2)|}{|x_1|^{\la_1-\ep}|x_2|^{\la_2-\ep}}\nonumber\\
& &\hspace{3cm}+\int_{|x_1|\geq c} dx_1\int_{|x_2|\geq 1}dx_2 \, \frac{|Q_\psi(x_1,x_2)|}{|x_1|^{\la_1-\ep}|x_2|^{\la_2-1}|x_1+x_2|^{1-\ep}}\ .\label{proof-q}
\end{eqnarray}
To bound the second summand in (\ref{proof-q}), pick $a_1\in [0,1]\cap (1-\la_1,\la_2)$, so that, for any $\ep >0$ small enough,
\begin{multline*}
\int_{|x_1|\geq c} dx_1\int_{|x_2|\geq 1}dx_2 \, \frac{|Q_\psi(x_1,x_2)|}{|x_1|^{\la_1-\ep}|x_2|^{\la_2-\ep}}\\
\lesssim \bigg( \int_{|x_1|\geq c} \frac{dx_1}{|x_1|^{\la_1+a_1-\ep}}\bigg) \cdot \bigg( \int_{|x_2|\geq 1} \frac{dx_2}{|x_2|^{\la_2+1-a_1-\ep}}\bigg)\lesssim c^{1-(\la_1+a_1-\ep)} \ .
\end{multline*}
Then, in order to estimate
$$\cj_c:=\int_{|x_1|\geq c} dx_1\int_{|x_2|\geq 1}dx_2 \, \frac{|Q_\psi(x_1,x_2)|}{|x_1|^{\la_1-\ep}|x_2|^{\la_2-1}|x_1+x_2|^{1-\ep}} \ ,$$
observe first that, without loss of generality, we can assume that $\la_1,\la_2\in (0,1)$. Under this assumption, there exists $a_2\in [0,1] \cap (\la_2-1,1-\la_1)$, and for such a value of $a_2$, one has, for any $\ep >0$ small enough,
\begin{eqnarray*}
\cj_c &\lesssim &\int_{|x_1|\geq c} dx_1\int_{|x_2|\geq 1}dx_2 \, \frac{1}{|x_1|^{\la_1-\ep+a_2}|x_2|^{\la_2-a_2}|x_1+x_2|^{1-\ep}}\\
&\lesssim & c^{-\ep} \cdot \bigg( \int_1^\infty \frac{dr}{r^{\la_1+\la_2-3\ep}}\bigg) \cdot \bigg( \int_0^{2\pi} \frac{d\theta}{\lln \cos \theta\rrn^{\la_1+a_2-2\ep}\lln \sin \theta \rrn^{\la_2-a_2}\lln \cos \theta+\sin \theta \rrn^{1-\ep}}\bigg) \ \lesssim \ c^{-\ep} \ ,
\end{eqnarray*}
which completes the proof of (\ref{estim-q-psi-2}). The estimation of (\ref{estim-q-psi-1}) can clearly be done along the same lines.

\end{proof}

\subsection{Estimation of the renormalization constant}

At this point, we have shown the convergence result of Theorem \ref{main-theo}, point $(ii)$, for the constant $C^n_{H_1,H_2}$ explicitly given by (\ref{cstt}). Let us now complete the proof of the statement with an asymptotic estimate of this constant.

\begin{proposition}
Let $C^n_{H_1,H_2}$ be the sequence defined by (\ref{cstt}). Then, as $n$ tends to infinity, it holds that
$$
C^n_{H_1,H_2}\sim 
\left\lbrace
\begin{array}{ll}
c^1_{H_1,H_2}\cdot  2^{2n(2-2H_1-H_2)}& \quad \text{if} \quad \frac53 < 2H_1+H_2 <2 \ ,\\
c^2_{H_1,H_2}\cdot  n & \quad \text{if} \quad 2H_1+H_2 =2 \ ,
\end{array}
\right.
$$
for some constants $c^1_{H_1,H_2},c^2_{H_1,H_2} >0$.
\end{proposition}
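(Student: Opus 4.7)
The plan is to extract the parabolic scaling hidden inside $\widehat{K}$ and then sum the resulting dyadic contributions.

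\textbf{Step 1 (leading behaviour of $\widehat{K}$).} I would begin by isolating the heat-kernel piece of $\widehat{K}$. Since $K=\rho_0\cdot G$ with $\rho_0$ smooth, compactly supported, and identically $1$ near the origin, and since $\cl G=\delta_0$, a direct computation gives $\cl K=\delta_0+\psi$, where $\psi=(\cl\rho_0) G-2(\partial_x\rho_0)(\partial_x G)$ is smooth and compactly supported. Fourier-transforming (with $\xi$ dual to the time variable and $\eta$ dual to the space variable) yields
$$\widehat{K}(\xi,\eta)\ =\ \frac{1+\widehat{\psi}(\xi,\eta)}{\imath\xi+\eta^2}\ ,$$
with $\widehat{\psi}$ a Schwartz function; the apparent singularity at $(0,0)$ is cancelled by $1+\widehat{\psi}(0,0)=0$ and $\partial_\eta\widehat{\psi}(0,0)=0$, which are forced by the smoothness of $\widehat{K}$. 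Splitting $D_n=D_0\sqcup (D_n\setminus D_0)$ and using the $(\xi,\eta)\mapsto (-\xi,-\eta)$ symmetry to kill the imaginary part of $(\imath\xi+\eta^2)^{-1}$, I would then establish
$$C^n_{H_1,H_2}\ =\ C_{H_1,H_2}\iint_{D_n\setminus D_0}\frac{\eta^2\,d\xi\,d\eta}{(\xi^2+\eta^4)\,|\xi|^{2H_1-1}|\eta|^{2H_2-1}}\ +\ O(1)\ ,$$
the $O(1)$-remainder collecting the integrable contribution $\iint_{D_0}\widehat{K}(\xi,\eta)\,|\xi|^{-(2H_1-1)}|\eta|^{-(2H_2-1)}\,d\xi\,d\eta$ (finite since $\widehat{K}$ is smooth and $H_1,H_2<1$) and the $\widehat{\psi}$-term on $D_n\setminus D_0$ (bounded uniformly in $n$, thanks to $|\imath\xi+\eta^2|\geq 1$ off $D_0$ together with the Schwartz decay of $\widehat{\psi}$).

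\textbf{Step 2 (parabolic dyadic scaling).} Next I would decompose $D_n\setminus D_0=\bigsqcup_{k=0}^{n-1}A_k$ with $A_k:=D_{k+1}\setminus D_k$, and on each $A_k$ apply the parabolic change of variables $\xi=2^{2k}u$, $\eta=2^k v$. This maps $A_k$ onto the fixed annulus $A_0=D_1\setminus D_0$ and extracts the factor $2^{2k(2-2H_1-H_2)}$, so that
$$\iint_{A_k}\frac{\eta^2\,d\xi\,d\eta}{(\xi^2+\eta^4)\,|\xi|^{2H_1-1}|\eta|^{2H_2-1}}\ =\ 2^{2k(2-2H_1-H_2)}\,\kappa_{H_1,H_2}\ ,$$
with
$$\kappa_{H_1,H_2}\ :=\ \iint_{A_0}\frac{v^2\,du\,dv}{(u^2+v^4)\,|u|^{2H_1-1}|v|^{2H_2-1}}\ >\ 0\ .$$
Finiteness of $\kappa_{H_1,H_2}$ follows from a direct check: near $\{u=0\}\cap A_0$ (where $|v|\in[1,2]$) the integrand is $\sim |u|^{1-2H_1}|v|^{-(2H_2+1)}$, and near $\{v=0\}\cap A_0$ (where $|u|\in[1,4]$) it is $\sim |u|^{-(2H_1+1)}|v|^{3-2H_2}$, both locally integrable under the constraints $H_1,H_2\in(0,1)$.

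\textbf{Step 3 (summation).} Combining the two previous steps gives
$$C^n_{H_1,H_2}\ =\ C_{H_1,H_2}\,\kappa_{H_1,H_2}\sum_{k=0}^{n-1}2^{2k(2-2H_1-H_2)}\ +\ O(1)\ .$$
When $\tfrac{5}{3}<2H_1+H_2<2$, the exponent $\beta:=2(2-2H_1-H_2)$ is strictly positive, the geometric sum is equivalent to $2^{n\beta}/(2^\beta-1)$, and we obtain the first asymptotic with $c^1_{H_1,H_2}=C_{H_1,H_2}\,\kappa_{H_1,H_2}/(2^{2(2-2H_1-H_2)}-1)$. When $2H_1+H_2=2$, each summand equals $\kappa_{H_1,H_2}$ and the sum is exactly $n\,\kappa_{H_1,H_2}$, yielding the second asymptotic with $c^2_{H_1,H_2}=C_{H_1,H_2}\,\kappa_{H_1,H_2}$. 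The main subtlety in the proof lies in Step~1, namely in writing $\widehat{K}$ as $(1+\widehat{\psi})/(\imath\xi+\eta^2)$ and in verifying that the $\widehat{\psi}$-contribution is harmless; once this is secured, Steps~2 and~3 reduce to routine parabolic scaling and the summation of a geometric series.
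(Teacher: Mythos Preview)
Your proof is correct and reaches the conclusion by a route genuinely different from the paper's. The paper never writes $\widehat{K}$ explicitly; instead it feeds in the dyadic decomposition $K=\sum_{k\geq 0}K_k$ of Lemma~\ref{lem:decompo-noyau}, which on the Fourier side reads $\widehat{K}(\xi,\eta)=\sum_{k\geq 0}2^{-2k}\widehat{K_0}(2^{-2k}\xi,2^{-k}\eta)$, and then rescales the \emph{domain} $D_n$ back to $[-1,1]^2$. This produces the expression
\[
C^n_{H_1,H_2}=C_{H_1,H_2}\,2^{2n(2-2H_1-H_2)}\sum_{k\geq -n}2^{-2k}\int_{[-1,1]^2}\frac{\widehat{K_0}(2^{-2k}\xi,2^{-k}\eta)}{|\xi|^{2H_1-1}|\eta|^{2H_2-1}}\,d\xi\,d\eta\ ,
\]
after which the case $2H_1+H_2<2$ is handled by dominated convergence (the sum over $k\in\Z$ is shown to be absolutely convergent via Lemma~\ref{lem:estim-fourier-k}), while the borderline case $2H_1+H_2=2$ requires a separate telescoping argument. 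Your approach, by contrast, extracts the heat symbol $1/(\imath\xi+\eta^2)$ directly from $\cl K=\delta_0+\psi$, discards the Schwartz correction $\widehat{\psi}$ as an $O(1)$ contribution, and then exploits the \emph{exact} parabolic homogeneity of $\eta^2/(\xi^2+\eta^4)$ through a dyadic decomposition of the integration domain $D_n\setminus D_0$. The payoff is that both regimes fall out of a single geometric sum $\sum_{k=0}^{n-1}2^{2k(2-2H_1-H_2)}$ with no further case analysis beyond reading off whether the ratio is $>1$ or $=1$; the paper's method stays closer to its ambient toolkit (the $K_n$'s are used throughout Section~\ref{sec:mod-equation}) and, as noted in the Remark following the proposition, also yields an explicit identification of the limiting constant in terms of $\widehat{G}$.
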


\begin{proof}
Recall the decomposition $K(s,x)=\sum_{k\geq 0} 2^k K_0(2^{2k}s,2^k x)$ given by Lemma \ref{lem:decompo-noyau}, which leads us to
$$\widehat{K}(\xi,\eta)=\sum_{k\geq 0} 2^{-2k} \widehat{K_0}\big(2^{-2k} \xi,2^{-k} \eta\big) \quad ,$$
uniformly over $(\xi,\eta)\in \R^2$. Therefore, it is readily checked that
\begin{eqnarray}
C^n_{H_1,H_2}&=& C_{H_1,H_2}\cdot \sum_{k\geq 0} 2^{-2k} \int_{D_n} d\xi d\eta \, \frac{\widehat{K_0}(2^{-2k}\xi,2^{-k}\eta)}{|\xi|^{2H_1-1}|\eta|^{2H_2-1}}\nonumber\\
&=& C_{H_1,H_2}\cdot 2^{2n(2-2H_1-H_2)} \sum_{k\geq 0} 2^{-2(k-n)} \int_{[-1,1]^2} d\xi d\eta \, \frac{\widehat{K_0}(2^{-2(k-n)}\xi,2^{-(k-n)}\eta)}{|\xi|^{2H_1-1}|\eta|^{2H_2-1}}\nonumber\\
&=& C_{H_1,H_2}\cdot 2^{2n(2-2H_1-H_2)} \sum_{k\geq -n} 2^{-2k} \int_{[-1,1]^2} d\xi d\eta \, \frac{\widehat{K_0}(2^{-2k}\xi,2^{-k}\eta)}{|\xi|^{2H_1-1}|\eta|^{2H_2-1}} \ .\label{rk-cstt-explicit}
\end{eqnarray}
In the case where $\frac53 < 2H_1+H_2<2$, the conclusion can now be derived from the dominated convergence theorem, by using the fact that the sum
$$S_{H_1,H_2}:= \sum_{k\in \Z} 2^{-2k} \int_{[-1,1]^2} d\xi d\eta \, \frac{|\widehat{K_0}(2^{-2k}\xi,2^{-k}\eta)|}{|\xi|^{2H_1-1}|\eta|^{2H_2-1}}$$
is finite. Indeed, since $2H_1+H_2 <2$, we can pick $a,b \geq 0$ such that
$$2H_1-1+a <1 \quad , \quad 2H_2-1+b <1 \quad ,\quad a+\frac{b}{2} >1 \ ,$$
and then
\begin{align*}
&S_{H_1,H_2}\\
 &\lesssim \bigg( \sum_{k\geq 0} 2^{-2k}\bigg) \int_{[-1,1]^2} \frac{d\xi d\eta}{|\xi|^{2H_1-1}|\eta|^{2H_2-1}}+\bigg( \sum_{k\geq 0} 2^{k(2-2a-b)}\bigg) \int_{[-1,1]^2} \frac{d\xi d\eta}{|\xi|^{2H_1-1+a}|\eta|^{2H_2-1+b}} \ < \, \infty \, .
\end{align*}
If $2H_1+H_2=2$, then, as $n$ tends to infinity, one has
\begin{eqnarray*}
\lefteqn{\sum_{k\geq -n} 2^{-2k} \int_{[-1,1]^2} d\xi d\eta \, \frac{\widehat{K_0}(2^{-2k}\xi,2^{-k}\eta)}{|\xi|^{2H_1-1}|\eta|^{2H_2-1}}}\\ &=&\sum_{0\leq k\leq n} 2^{2k} \int_{[-1,1]^2} d\xi d\eta \, \frac{\widehat{K_0}(2^{2k}\xi,2^{k}\eta)}{|\xi|^{2H_1-1}|\eta|^{2H_2-1}}+O(1)\\
&=&\sum_{0\leq k\leq n} \int_{D_k} d\xi d\eta \, \frac{\widehat{K_0}(\xi,\eta)}{|\xi|^{2H_1-1}|\eta|^{2H_2-1}}+O(1)\\
&=&n\cdot \int_{\R^2} d\xi d\eta \, \frac{\widehat{K_0}(\xi,\eta)}{|\xi|^{2H_1-1}|\eta|^{2H_2-1}}-\sum_{0\leq k\leq n} \int_{\R^2 \backslash D_k} d\xi d\eta \, \frac{\widehat{K_0}(\xi,\eta)}{|\xi|^{2H_1-1}|\eta|^{2H_2-1}}+O(1)\ ,
\end{eqnarray*}
and it is readily checked that $\sum_{0\leq k\leq n} \int_{\R^2 \backslash D_k} d\xi d\eta \, \frac{\widehat{K_0}(\xi,\eta)}{|\xi|^{2H_1-1}|\eta|^{2H_2-1}}=O(1)$.

\end{proof}

\begin{remark}
If we look closer at the proof of \cite[Lemma 5.5]{hai-14} for the decomposition (\ref{decompo-g}) of the heat kernel $G$, we see that, with the notations of Lemma \ref{lem:decompo-noyau}, we can actually choose $K_0$ in such a way that for every $(t,x)\in \R^2$,
$$G(t,x)=\sum_{n\in \Z} 2^{-2n} (\cs_{\scal,0}^{2^{-n}}K_0)(t,x) \ ,$$
which immediately entails that for every $(\xi,\eta) \in \R^2$,
$$\widehat{G}(\xi,\eta)=\sum_{n\in \Z} 2^{-2n} \widehat{K_0}(2^{-2n}\xi,2^{-n}\eta) \ .$$
In the case where $\frac53 < 2H_1+H_2 < 2$, and thanks to the estimations of the previous proof, we can then assert that
\begin{eqnarray*}
\lim_{n\to \infty} \sum_{k\geq -n} 2^{-2k} \int_{[-1,1]^2} d\xi d\eta \, \frac{\widehat{K_0}(2^{-2k}\xi,2^{-k}\eta)}{|\xi|^{2H_1-1}|\eta|^{2H_2-1}} &=&\int_{[-1,1]^2} d\xi d\eta \, \frac{\widehat{G}(\xi,\eta)}{|\xi|^{2H_1-1}|\eta|^{2H_2-1}}\\
&=& \sqrt{\frac{2}{\pi}}\int_{[-1,1]^2} d\xi d\eta \, \frac{|\xi|^{1-2H_1} |\eta|^{1-2H_2}}{\eta^2-2\imath \xi}\\
&=& \sqrt{\frac{2}{\pi}}\int_{[-1,1]^2} d\xi d\eta \, \frac{|\xi|^{1-2H_1} |\eta|^{3-2H_2}}{\eta^4+4 \xi^2} \ , 
\end{eqnarray*} 
which, going back to (\ref{rk-cstt-explicit}), provides us with an explicit constant $c_{H_1,H_2}^1$ satisfying Theorem \ref{main-theo}, point $(ii)$.
\end{remark}

\section{Identification of the limit}\label{sec:identif}

Let us now turn to the proof of the last assertion in point $(ii)$ of Theorem \ref{main-theo}. Thus, our aim in this section is to identify, in the situation where $H_1=\frac12$ and $H_2>\frac23$, the limit $Y:=\pmb{\Phi}(\hat{\uxi},\Psi,T)$ exhibited in Section \ref{subsec:proof-point-ii} (and based on the constructions of Section \ref{sec:construction}) with the classical Itô solution of (\ref{ito-equation}). Let us indeed recall that if $H_1=\frac12$, then the noise $\xi=\partial_t\partial_x X^{H_1,H_2}$ under consideration defines a cylindrical Wiener process (that we also denote by $dW^{H_2}$) with spatial covariance described by the formula: for every test-functions $\vp,\psi$ on $\R^2$,
$$\esp\big[\langle dW^{H_2},\vp\rangle \langle dW^{H_2},\psi \rangle  \big]=c_{H_2} \int_{\R^3} dt dx dy \, \vp(t,x) \psi(t,y) \, |x-y|^{2H_2-2} \ .$$

\smallskip

Thanks to the results of \cite{hairer-pardoux} (and more specifically by a straightforward adaptation of the arguments in the proof of \cite[Theorem 6.2]{hairer-pardoux}), one can easily check that the identification of $Y$ with the Itô solution of (\ref{ito-equation}) reduces to an identification at the level of the model. Our last assertion in Theorem \ref{main-theo} is therefore a consequence of the following identity:

\begin{theorem}\label{theo:identif}
Fix $H_1=\frac12$, $H_2\in (\frac23,1)$, and consider the $(\al,K)$-rough path $\hat{\uxi}=(\xi,\hat{\xi}^{\2})$ given by Corollary \ref{cor:converg}. Then, for every $(s,x)\in \R^2$ and every smooth test-function $\psi$ with compact support included in the set $\{(t,y): \ t > 0\}$, one has almost surely
\begin{equation}\label{ito-identity}
\langle \hat{\xi}^{\2}_{(s,x)}, \psi_{(s,x)} \rangle =\int_s^\infty \langle \psi_{(s,x)}(t,.) \, [(K\ast \xi)(t,.)-(K\ast \xi)(s,x)] , dW^{H_2}_t \rangle \ ,
\end{equation}
where $\psi_{(s,x)}(t,y):=\psi(t-s,y-x)$ and the integral in the right-hand side is understood in the Itô sense.
\end{theorem}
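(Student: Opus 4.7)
The plan is to verify the target identity (\ref{ito-identity}) first for the renormalized smooth approximations $\hat{\uxi}^n$ given in (\ref{rp-approx}), and then to pass to the limit on both sides. The key structural observation, which explains the very choice of $C^n_{H_1,H_2}$ in (\ref{cstt}), is that this constant coincides, up to a multiplicative factor, with the second-moment integral
\begin{equation*}
\esp[(K\ast \xi^n)(z) \cdot \xi^n(z)] = c^2_{H_1,H_2}\int_{D_n} \frac{\widehat{K}(\xi,\eta)}{|\xi|^{2H_1-1}|\eta|^{2H_2-1}}\,d\xi\,d\eta \ ,
\end{equation*}
which is independent of $z$ by stationarity of $\xi^n$. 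Subtracting $C^n_{H_1,H_2}$ therefore amounts to a Wick-type renormalization, extracting the pure second-chaos component of the pointwise product $(K\ast\xi^n)(z) \cdot \xi^n(z)$.

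Given this, the approximation-level identity
\begin{equation}\label{eq:plan-approx}
\langle \hat{\xi}^{\2,n}_{(s,x)}, \psi_{(s,x)}\rangle = \int_s^\infty \langle \psi_{(s,x)}(t,\cdot)\,[(K\ast \xi^n)(t,\cdot)-(K\ast \xi^n)(s,x)]\, , \, dW^{H_2,n}_t\rangle
\end{equation}
will be derived by representing both members as elements of the second Wiener chaos of $\widehat{W}$ and matching their symmetric $L^2(\R^2\times \R^2)$ kernels. The two ingredients needed are the elementary product formula $I_1(f)\cdot I_1(g) = I_2(f\,\tilde\otimes\, g) + \langle f,g\rangle_{L^2}$ for first-order Wiener chaoses, together with the support property $\text{supp}(K)\subset \{t\geq 0\}$ from Lemma \ref{lem:decompo-noyau}; the latter makes $(K\ast \xi^n)(t,\cdot)$ adapted to the forward filtration of $\widehat{W}$, ensuring that the Skorohod representation of the right-hand side of (\ref{eq:plan-approx}) collapses to a bona fide It\^o integral with respect to the approximated cylindrical noise $W^{H_2,n}=\partial_x X^n$.

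The passage to the limit in (\ref{eq:plan-approx}) is then standard. The left-hand side converges in $L^2(\Omega)$ by Corollary \ref{cor:converg} together with the moment estimate (\ref{estim-4}). For the right-hand side, the It\^o isometry, combined with Lemma \ref{lem:convo-c-al} and the moment bound (\ref{estim-3}) for $\xi^n-\xi$, yields convergence of the random integrand to its limit in $L^2(\Omega; L^2_t(\ch))$, where $\ch$ denotes the Cameron-Martin space associated with the spatial covariance kernel $|y-y'|^{2H_2-2}$; the convergence $W^{H_2,n}\to W^{H_2}$ as cylindrical Wiener processes follows directly from the Fourier representation (\ref{representation-sheet}).

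The main obstacle lies in establishing (\ref{eq:plan-approx}) rigorously at finite $n$: the Fourier truncation in $D_n$ does not preserve the martingale structure of $X^n$ in $t$, so the right-hand side cannot naively be interpreted as an It\^o integral against a genuine cylindrical Brownian motion. This is circumvented by carrying out the identification entirely at the algebraic level of second Wiener-chaos kernels (performing the computation directly in Fourier variables via (\ref{approx-noise})), following the methodology developed in \cite{hairer-pardoux} for the Brownian-sheet case $H_2=\frac12$. In our setting, the same type of kernel comparison applies, simplified by the absence of the higher-order renormalization terms present in \cite{hairer-pardoux}.
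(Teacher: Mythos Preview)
Your overall strategy---establish an approximation-level identity and pass to the limit---matches the paper's, and your observation that $C^n_{H_1,H_2}=\esp\big[(K\ast\xi^n)(z)\cdot\xi^n(z)\big]$ is correct. But the claimed identity (\ref{eq:plan-approx}) is \emph{false} at finite $n$, and the gap is precisely the point where the support hypothesis on $\psi$ enters.

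The issue is this: subtracting $C^n_{H_1,H_2}$ indeed extracts the second-chaos part of the \emph{diagonal} product $(K\ast\xi^n)(t,y)\cdot\xi^n(t,y)$, but the integrand in $\hat{\xi}^{\2,n}_{(s,x)}$ also contains the \emph{off-diagonal} product $(K\ast\xi^n)(s,x)\cdot\xi^n(t,y)$. Its zeroth-chaos component is $\esp\big[(K\ast\xi^n)(s,x)\cdot\xi^n(t,y)\big]$, which depends on $(t-s,y-x)$ and is \emph{not} removed by the constant $C^n_{H_1,H_2}$. After pairing with $\psi_{(s,x)}$, this produces a deterministic residual
\[
r^n_{H_1,H_2}(\psi)=-c_{H_1,H_2}^2\int_{D_n}\frac{\widehat{K}(\xi,\eta)\,\widehat{\psi}(\xi,\eta)}{|\xi|^{2H_1-1}|\eta|^{2H_2-1}}\,d\xi\,d\eta\ ,
\]
so that the correct approximation identity is
\[
\langle \hat{\xi}^{\2,n}_{(s,x)},\psi_{(s,x)}\rangle=\delta^{X^n}(V^n_{(s,x)})+r^n_{H_1,H_2}(\psi)\ ,
\]
with $\delta^{X^n}$ the Skorohod integral. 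The paper derives this via Malliavin calculus (Proposition \ref{1st-prop-ident}) and then shows separately that $r^n\to 0$: one observes that the \emph{untruncated} $\xi$-integral vanishes because $K$ is supported in $\{s\ge 0\}$ while $\psi$ is supported in $\{t>0\}$, so $K(s,\cdot)\psi(-s,\cdot)\equiv 0$; the truncated integral then tends to zero by the rapid decay of $\widehat{\psi}$. Your sketch never produces this term and therefore never uses the support hypothesis on $\psi$---without which the theorem is false.

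A second, smaller issue: your ``circumvention'' of the missing martingale structure at level $n$ is really just a relabelling of the Skorohod integral $\delta^{X^n}(V^n_{(s,x)})$. That is fine, but the identification of $\delta^X(V_{(s,x)})$ with a genuine It\^o integral against $dW^{H_2}$ still has to be justified at the \emph{limit} level (the paper invokes \cite{grorud-pardoux}); the analogy with \cite{hairer-pardoux} is imperfect because their mollified approximation preserves the white-in-time structure, whereas the Fourier truncation here does not.
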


The first step of our strategy towards (\ref{ito-identity}) will consist in an identification for the approximated quantity $\langle \hat{\xi}^{\2,n}_{(s,x)}, \psi_{(s,x)} \rangle$, where $\hat{\xi}^{\2,n}_{(s,x)}$ is defined by (\ref{rp-approx}) (with $C^n_{H_1,H_2}$ given by (\ref{cstt})). As $\xi^n$ is obtained through a space-time regularization of the original Wiener process, it is both natural and convenient to frame this study within a general Gaussian setting, namely the one provided by Malliavin calculus.

\smallskip

Given a centered Gaussian field $\{Z(s,x); \ s,x\in \R\}$ defined on a complete probability space $(\Omega, \cf,\mathbb{P})$, we will denote by $\ch_Z$ the Hilbert space associated with $Z$, that is the closure of the linear space generated by the functions $\{\1_{[s_1,s_2]\times [x_1,x_2]}; s_1,s_2,x_1,x_2 \in \R\}$ with respect to the semi-definite positive form
$$\langle \1_{[s_1,s_2]\times [x_1,x_2]},\1_{[t_1,t_2]\times [y_1,y_2]} \rangle_{\ch_Z}=\esp\big[ (\Delta_{(s_2-s_1,x_2-x_1)}Z)(s_1,x_1)\cdot (\Delta_{(t_2-t_1,y_2-y_1)}Z)(t_1,y_1) \big] \ ,$$
where the notation $\Delta$ has been introduced in (\ref{rectangular}). Besides, we denote by $D^Z$ the Malliavin derivative with respect to $Z$, and by $\delta^Z$ the associated divergence operator (or Skorohod integral). We here refer the reader to \cite{nualart} or \cite[Sections 5 and 6]{chouk-tindel} for an exhaustive presentation of these objects, together with their classical properties.

\smallskip

In the sequel, we will also be led to involve the family of operators $\cq^n_{\al_1,\al_2}$ ($\al_1,\al_2\in (0,1)$), resp. $\cq_{\al_1,\al_2}$, defined for every measurable, compactly-supported function $\vp$ and every $\xi,\eta \in \R$ as
$$\cq^n_{\al_1,\al_2}(\vp)(\xi,\eta):=c_{\al_1,\al_2}\1_{\{(\xi,\eta) \in \cd_n\}} \frac{\xi\cdot \eta}{|\xi|^{\al_1+\frac12} |\eta|^{\al_2+\frac12}}\, \widehat{\vp}(-\xi,-\eta) \ ,$$
resp.
$$\cq_{\al_1,\al_2}(\vp)(\xi,\eta):=c_{\al_1,\al_2} \frac{\xi\cdot \eta}{|\xi|^{\al_1+\frac12} |\eta|^{\al_2+\frac12}}\, \widehat{\vp}(-\xi,-\eta) \ ,$$
where $c_{\al_1,\al_2}$ is the same constant as in the representation (\ref{representation-sheet}).

\smallskip

With these considerations in mind, our first intermediate result reads as follows:

\begin{proposition}\label{1st-prop-ident}
Fix $(H_1,H_2)\in (0,1)^2$ such that $\frac53 < 2H_1+H_2<2 $, and consider the distribution $\hat{\xi}^{\2,n}_{(s,x)}$ defined by (\ref{rp-approx}), with $C^n_{H_1,H_2}$ given by (\ref{cstt}). Then for every $(s,x)\in \R^2$ and every smooth test-function $\psi$, it holds that
\begin{equation}\label{identity-approx}
\langle \hat{\xi}^{\2,n}_{(s,x)}, \psi_{(s,x)} \rangle=\delta^{X^n}(V^n_{(s,x)})+r^n_{H_1,H_2}(\psi) \ ,
\end{equation}
where
\begin{equation}\label{def:v-n}
V^n_{(s,x)}(t,y):=\psi_{(s,x)}(t,y) \cdot \{ (K\ast D^{(1,1)}X^n)(t,y)-(K\ast D^{(1,1)}X^n)(s,x)\}
\end{equation}
and
\begin{equation}\label{r-n-identif-part}
r^n_{H_1,H_2}(\psi):=-c_{H_1,H_2}^2\int_{\cd_n} \frac{d\xi d\eta}{|\xi|^{2H_1-1}|\eta|^{2H_2-1}} \widehat{K}(\xi,\eta)\widehat{\psi}(\xi,\eta) \ . 
\end{equation}
\end{proposition}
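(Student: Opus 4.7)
I plan to reduce the identity to a Wick decomposition of the pointwise product, with the resulting trace term evaluated explicitly on the Fourier side. Substituting (\ref{rp-approx}) and (\ref{def:v-n}) into the left-hand side of (\ref{identity-approx}), one immediately obtains
$$
\langle\hat{\xi}^{\2,n}_{(s,x)},\psi_{(s,x)}\rangle=\int_{\R^2}V^n_{(s,x)}(t,y)\,\xi^n(t,y)\,dt\,dy-C^n_{H_1,H_2}\int_{\R^2}\psi(t,y)\,dt\,dy,
$$
so that everything boils down to analyzing the pointwise product $V^n_{(s,x)}(t,y)\cdot\xi^n(t,y)$.

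For each fixed $(t,y)$, both $V^n_{(s,x)}(t,y)$ and $\xi^n(t,y)$ lie in the first Wiener chaos of $\widehat W$, and the elementary product formula reads
$$
V^n_{(s,x)}(t,y)\cdot\xi^n(t,y)=V^n_{(s,x)}(t,y)\diamond\xi^n(t,y)+\esp\big[V^n_{(s,x)}(t,y)\,\xi^n(t,y)\big].
$$
After integration in $(t,y)$, the Wick component is to be identified with the Skorohod integral,
$$
\int_{\R^2}V^n_{(s,x)}(t,y)\diamond\xi^n(t,y)\,dt\,dy=\delta^{X^n}\big(V^n_{(s,x)}\big),
$$
through the Stratonovich--Skorohod correspondence for smooth Gaussian fields (as presented in standard Malliavin calculus references). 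This application is justified here by the smoothness (in $\omega$) of $X^n$ together with the compact support of $\psi_{(s,x)}$, which place $V^n_{(s,x)}$ comfortably in the domain of $\delta^{X^n}$.

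The contraction term is then computed via Fourier analysis. From the spectral representation (\ref{approx-noise}) one has
$$
\esp\big[\xi^n(t,y)\,\xi^n(t',y')\big]=c_{H_1,H_2}^{2}\int_{D_n}\frac{\xi^{2}\eta^{2}\,e^{i(t-t')\xi+i(y-y')\eta}}{|\xi|^{2H_1+1}|\eta|^{2H_2+1}}\,d\xi\,d\eta,
$$
and convolution with $K$ introduces a factor $\widehat{K}(\xi,\eta)$. Performing the change of variables $(\tau,z)=(t-s,y-x)$ and noting that $\widehat{\psi}(\xi,\eta)=\int\psi(\tau,z)e^{-i(\tau\xi+z\eta)}d\tau\,dz$, one arrives at
$$
\int_{\R^2}\esp\big[V^n_{(s,x)}(t,y)\,\xi^n(t,y)\big]\,dt\,dy=c_{H_1,H_2}^{2}\int_{D_n}\frac{\widehat{K}(\xi,\eta)}{|\xi|^{2H_1-1}|\eta|^{2H_2-1}}\big[\widehat{\psi}(0,0)-\widehat{\psi}(\xi,\eta)\big]d\xi\,d\eta.
$$
The first piece equals $C^n_{H_1,H_2}\cdot\widehat{\psi}(0,0)=C^n_{H_1,H_2}\int\psi(t,y)dt\,dy$ (which amounts to taking $C_{H_1,H_2}=c_{H_1,H_2}^{2}$ in (\ref{cstt})), and it cancels exactly the $-C^n_{H_1,H_2}\int\psi$ coming from the first step; the second piece is, by definition, $r^n_{H_1,H_2}(\psi)$. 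Reassembling yields (\ref{identity-approx}).

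The only real obstacle is the identification of the integrated Wick product with $\delta^{X^n}(V^n_{(s,x)})$: one must check that $V^n_{(s,x)}$ is a legitimate integrand for $\delta^{X^n}$ and that no residual boundary contribution arises when exchanging pointwise product, Wick splitting, and Skorohod integration. Once this is in place, the remainder of the argument is pure Fourier bookkeeping based on the explicit expressions of $X^n$ and $K$.
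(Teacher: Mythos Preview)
Your proposal is correct and follows essentially the same approach as the paper. Both arguments rest on the Stratonovich--Skorohod correction formula: the paper makes it explicit by discretizing via partitions $\pi_k$, applying the rule $\delta^{X^n}(F\cdot\mathbf{1}_{\square})=F\cdot\Delta X^n-\langle D^{X^n}F,\mathbf{1}_{\square}\rangle_{\mathcal{H}_{X^n}}$ (following \cite{chouk-tindel}, Proposition~5.8), computing the Malliavin derivative and the inner product via the transfer $\langle\cdot,\cdot\rangle_{\mathcal{H}_{X^n}}\to\langle\cdot,\cdot\rangle_{L^2}$ through the operator $\mathcal{Q}^n_{H_1,H_2}$, and passing to the limit; you invoke the same identity in its Wick--product form $V^n\cdot\xi^n=V^n\diamond\xi^n+\esp[V^n\xi^n]$ and compute the trace directly on the Fourier side. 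The Fourier bookkeeping leading to the split $C^n_{H_1,H_2}\,\widehat{\psi}(0,0)+r^n_{H_1,H_2}(\psi)$ is identical in both versions.
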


\begin{proof}
We follow the arguments of the proof of \cite[Proposition 5.8]{chouk-tindel}. Consider a sequence of partitions $\pi_k=(\pi_k^1,\pi_k^2)$ of the support of $\psi_{(s,x)}$, with mesh tending to $0$ as $k$ tends to infinity. For every $(t_i,y_j)\in \pi_k$, write $\square_{ij}=\square_{t_i,y_j}=[t_i,t_{i+1}[ \times [x_j,x_{j+1}[$. Using the basic rules of Malliavin calculus, we can first write
\begin{equation}\label{rule}
\delta^{X_n}(V^n_{(s,x)}(t_i,y_j) \1_{\square_{ij}})=V^n_{(s,x)}(t_i,y_j) (\Delta_{(t_{i+1}-t_i,y_{j+1}-y_j)} X^n)(t_i,y_j)-\langle D^{X^n}(V^n_{(s,x)}(t_i,y_j)),\1_{\square_{ij}}\rangle_{\ch_{X^n}} \ .
\end{equation}
Now, by \cite[Lemma 6.1]{chouk-tindel}, we can rely on the identity
\begin{equation}\label{identit-prod}
\langle D^{X^n}(V^n_{(s,x)}(t_i,y_j)),\1_{\square_{ij}}\rangle_{\ch_{X^n}}  =\langle D^{\widehat{W}}(V^n_{(s,x)}(t_i,y_j)), \cq^n_{H_1,H_2}(\1_{\square_{ij}}) \rangle_{L^2(\R^2)} \ .
\end{equation}
It is easy to check that the Malliavin derivative $D^{\widehat{W}}(V^n_{(s,x)}(t_i,y_j))$ is explicitly given for all $\xi,\eta \in \R$ as
$$D^{\widehat{W}}(V^n_{(s,x)}(t_i,y_j))(\xi,\eta)=c_{H_1,H_2}\psi_{(s,x)}(t_i,y_j) \widehat{K}(\xi,\eta) \1_{\{(\xi,\eta)\in \cd_n\}} \frac{\xi \cdot \eta}{|\xi|^{H_1+\frac12}|\eta|^{H_2+\frac12}}\{e^{\imath t_i \xi}e^{\imath y_j \eta}-e^{\imath s \xi} e^{\imath x\eta}\} \ ,$$
and therefore, by (\ref{identit-prod}), it holds that
\begin{multline*}
\langle D^{X^n}(V^n_{(s,x)}(t_i,y_j)),\1_{\square_{ij}}\rangle_{\ch_{X^n}} \\
=c_{H_1,H_2}^2\int_{\cd_n} \frac{d\xi d\eta}{|\xi|^{2H_1-1}|\eta|^{2H_2-1}} \widehat{K}(\xi,\eta) \bigg[\psi_{(s,x)}(t_i,y_j)\{e^{\imath t_i \xi}e^{\imath y_j \eta}-e^{\imath s \xi} e^{\imath x\eta}\}\int_{t_i}^{t_{i+1}} dt \int_{y_j}^{y_{j+1}} dy \, e^{-\imath t\xi}e^{-\imath y \eta}  \bigg] \ .
\end{multline*}
At this point, observe that for all $\xi,\eta \in \R$, 
\begin{align*}
&\sum_{(t_i,y_j)\in \pi_k}\bigg[\psi_{(s,x)}(t_i,y_j)\{e^{\imath t_i \xi}e^{\imath y_j \eta}-e^{\imath s \xi} e^{\imath x\eta}\}\int_{t_i}^{t_{i+1}} dt \int_{y_j}^{y_{j+1}} dy \, e^{-\imath t\xi}e^{-\imath y \eta}  \bigg]\\
& \stackrel{k\to \infty}{\longrightarrow}\int_{\R^2} dt dy \, \psi_{(s,x)}(t,y) \{1-e^{\imath (s-t)\xi} e^{\imath (x-y)\eta}\} \ ,
\end{align*}
which readily leads us to
$$\sum_{(t_i,y_j)\in \pi_k} \langle D^{X^n}(V^n_{(s,x)}(t_i,y_j)),\1_{\square_{ij}}\rangle_{\ch_{X^n}}
\stackrel{k\to \infty}{\longrightarrow} C^n_{H_1,H_2}\int_{\R^2} dt dy \, \psi(t,y)+r^n_{H_1,H_2}(\psi) \ ,
$$
where $C^n_{H_1,H_2}$ is defined by (\ref{cstt}) and $r^n_{H_1,H_2}(\psi)$ by (\ref{r-n-identif-part}).

\smallskip

\noindent
We can finally take the sum over the points in $\pi_k$ in (\ref{rule}) and let $k$ tend to infinity to derive the conclusion. Indeed, just as in \cite[Proposition 5.7]{chouk-tindel}, it is easy to check that
$$\sum_{(t_i,y_j)\in \pi_k} V^n_{(s,x)}(t_i,y_j) \stackrel{k\to \infty}{\longrightarrow} V^n_{(s,x)} \quad \text{in} \ L^2(\Omega;\ch_{X^n}) \ ,$$
which is sufficient to assert that $\sum_{(t_i,y_j)\in \pi_k} \delta^{X^n}(V^n_{(s,x)}(t_i,y_j))  \stackrel{k\to \infty}{\longrightarrow} \delta^{X^n}(V^n_{(s,x)})$ in $L^2(\Omega)$. As for the fact that $r_n(\psi)$ tends to $0$ as $n$ tends to infinity, it follows from the smoothness of $\psi$, along similar estimates as those in the proof of Lemma \ref{lem:renorm}.

\end{proof}

The whole point now is to pass to the limit in the right-hand side of (\ref{identity-approx}). This is the purpose of our next result, which completes the proof of Theorem \ref{theo:identif}.  

\begin{proposition}
Fix $H_1=\frac12$ and $H_2\in (\frac23,1)$. Then, with the notations of Proposition \ref{1st-prop-ident}, the following assertions hold true:

\smallskip

\noindent
$(i)$ For every smooth test-function $\psi$ with support in the set $\{(t,y): \ t>0\}$, one has 
$$r^n_{H_1,H_2}(\psi)\stackrel{n\to \infty}{\longrightarrow}0 \ .$$

\smallskip

\noindent
$(ii)$ For every $(s,x)\in \R^2$ and every smooth test-function $\psi$, one has
\begin{equation}\label{second-step}
\delta^{X^n}(V^n_{(s,x)}) \stackrel{n\to \infty}{\longrightarrow} \delta^X(V_{(s,x)}) \quad \text{in} \ L^2(\Omega) \ ,
\end{equation}
where $V^n_{(s,x)}$ is defined by (\ref{def:v-n}) and $V_{(s,x)}(t,y):=\psi_{(s,x)}(t,y) \cdot \{ (K\ast \xi)(t,y)-(K\ast \xi)(s,x)\}$.
Besides, $\delta^X(V_{(s,x)})$ coincides with the right-hand side of (\ref{ito-identity}).
\end{proposition}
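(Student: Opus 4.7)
I would split the argument along the two assertions, which are essentially independent and both rely on the algebraic simplification induced by $H_1=\tfrac12$.

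For (i), the plan is to first show by dominated convergence that $r^n_{H_1,H_2}(\psi)$ converges as $n\to\infty$ to
$$I \ := \ -c_{H_1,H_2}^2 \int_{\R^2}\frac{\widehat{K}(\xi,\eta)\,\widehat{\psi}(\xi,\eta)}{|\eta|^{2H_2-1}}\,d\xi\,d\eta,$$
where the weight $|\xi|^{2H_1-1}$ has disappeared because $H_1=\tfrac12$. Domination is provided by Lemma \ref{lem:estim-fourier-k} applied with some pair $(a,b)$ satisfying $a+b<1$ and chosen so that $|\widehat\psi|\,|\xi|^{-a}|\eta|^{-2b-(2H_2-1)}$ is integrable on $\R^2$, which is possible since $\widehat\psi$ is Schwartz and $H_2<1$. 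Then I would show $I=0$ via Parseval: noting that (up to an explicit constant) the spatial Fourier transform of $|y|^{2H_2-2}$ is $|\eta|^{1-2H_2}$ --- the spatial covariance kernel of $W^{H_2}$ --- this yields
$$I \ = \ -c\int_{\R^3} K(t,y)\,\psi(-t,-y')\,|y-y'|^{2H_2-2}\,dt\,dy\,dy'.$$
Now $K=\rho_0\cdot G$ inherits the causality of the heat kernel, so $K(t,\cdot)=0$ for $t<0$; while $\psi(-t,\cdot)=0$ for $t\geq 0$ under the support assumption on $\psi$. The $t$-integration thus runs over disjoint sets and $I=0$.

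For (ii), the $L^2(\Omega)$-convergence will follow from the standard isometry-type bound for the Skorohod integral,
$$\|\delta^Z(U)\|_{L^2(\Omega)}^2 \ \leq \ \|U\|_{L^2(\Omega;\,\ch_Z)}^2+\|D^Z U\|_{L^2(\Omega;\,\ch_Z^{\otimes 2})}^2,$$
applied to $U=V^n_{(s,x)}-V_{(s,x)}$ within the common Malliavin framework of the white noise $\widehat W$ underlying (\ref{representation-sheet}). The required Hilbert-space convergence of $V^n\to V$ (together with its Malliavin derivative) reduces to the convergence of $K\ast \xi^n\to K\ast \xi$, which is given by Corollary \ref{cor:converg} combined with Lemma \ref{lem:convo-c-al} and is upgraded to $L^2(\Omega)$ via the moment estimates of Section \ref{sec:construction} (all quantities lying in a fixed Wiener chaos, pathwise convergence in Besov norm directly transfers). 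To identify $\delta^X(V_{(s,x)})$ with the right-hand side of (\ref{ito-identity}), I would then invoke the fact that, when $H_1=\tfrac12$, $\xi=dW^{H_2}$ is a cylindrical Wiener process in the time variable: thanks to the causality of $K$, $(K\ast\xi)(t,y)$ depends only on $\xi|_{(-\infty,t]\times\R}$, so $V_{(s,x)}$ is adapted to the natural filtration of $W^{H_2}$. The standard equivalence between Skorohod and It\^o integrals for adapted integrands against a cylindrical Wiener process then delivers the desired identity.

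The main obstacle I foresee lies in the Hilbert-space bookkeeping of (ii): one must realize both $X^n$ and $X$ on the same Wiener space via (\ref{representation-sheet}), identify $\ch_{X^n}$ with the appropriately weighted subspace of $L^2(\R^2)$ restricted to $D_n$, and convert the Besov-type convergence of Section \ref{sec:construction} into convergence in the relevant Malliavin Sobolev norm. All analytical ingredients are in place, but this comparison of Gaussian Hilbert structures is where the proof will require the most careful writing.
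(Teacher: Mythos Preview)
Your argument for (i) is essentially the paper's: both pass to the limit integral (the $|\xi|^{2H_1-1}$ weight disappearing when $H_1=\tfrac12$) and then kill it by Parseval together with the disjoint time-supports of $K$ and $\psi(-\cdot)$. The only cosmetic difference is that the paper applies Parseval in the $\xi$-variable alone, for each fixed $\eta$, whereas you do it in both variables and pick up the spatial kernel $|y-y'|^{2H_2-2}$; either way the $t$-integral vanishes.

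For (ii) your outline is correct and you have pinpointed exactly the place where work is needed. The paper resolves your ``Hilbert-space bookkeeping'' obstacle not through the abstract isometry bound you quote, but via the operators $\cq^n_{\frac12,H_2}$ and $\cq_{\frac12,H_2}$ together with the identity $\delta^{X^n}(V^n)=\delta^{\widehat W}(\cq^n_{\frac12,H_2}(V^n))$ and $\delta^{X}(V)=\delta^{\widehat W}(\cq_{\frac12,H_2}(V))$ from \cite[Lemma 6.1]{chouk-tindel}. This puts everything inside the single Skorohod integral $\delta^{\widehat W}$, so the task becomes showing $\cq^n_{\frac12,H_2}(V^n)-\cq_{\frac12,H_2}(V)\to 0$ in $L^2(\Omega;L^2(\R^2))$. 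That is then split as $\cq(V^n-V)+(\cq^n-\cq)(V^n)$ and controlled through a dedicated Sobolev-type embedding (Lemma \ref{lem:embed-1}), which converts the $L^2$ norm of $\cq_{\al_1,\al_2}(\vp)$ into a H\"older/sup norm of $\vp$; combined with Lemma \ref{lem:convo-c-al} and the Besov estimate (\ref{estim-besov-2}), this gives the required convergence. Your plan of reducing to $K\ast\xi^n\to K\ast\xi$ is thus the right intuition, but the passage from that Besov convergence to convergence in the relevant Gaussian Hilbert space is precisely what Lemma \ref{lem:embed-1} supplies and what your sketch leaves implicit. Your identification of $\delta^X(V_{(s,x)})$ with the It\^o integral via adaptedness matches the paper, which simply cites \cite[Lemma 2.10]{grorud-pardoux} for that step.
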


\begin{proof}
$(i)$ Using the isometry property of the Fourier transform, we easily obtain, for every fixed $\eta \in \R$,
$$\int_{\R} d\xi \, \widehat{K}(\xi,\eta) \widehat{\psi}(\xi,\eta)=-c\int_{\R^2} dx dy \, e^{-\imath x\eta}e^{-\imath y \eta} \int_{\R} ds \, K(s,x) \psi(-s,y) \ ,$$
and the latter integral vanishes for support reasons.

\smallskip

\noindent
$(ii)$ We first use \cite[Lemma 6.1]{chouk-tindel} to write the difference $\delta^{X^n}(V^n_{(s,x)})-\delta^X(V_{(s,x)})$ as
$$\delta^{X^n}(V^n_{(s,x)})-\delta^X(V_{(s,x)})=\delta^{\widehat{W}}(\cq^n_{\frac12,H_2}(V^n_{(s,x)})-\cq_{\frac12,H_2}(V_{(s,x)})) \ ,$$
and from this, the proof of (\ref{second-step}) reduces to showing that
$$\cq^n_{\frac12,H_2}(V^n_{(s,x)})-\cq_{\frac12,H_2}(V_{(s,x)})\stackrel{n\to \infty}{\longrightarrow} 0 \quad \text{in} \ L^2(\Omega;L^2(\R^2)) \ .$$
To do so, write
\begin{align*}
&\| \cq^n_{\frac12,H_2}(V^n_{(s,x)})-\cq_{\frac12,H_2}(V_{(s,x)}) \|_{L^2(\R^2)}\\
& \leq \| \cq_{\frac12,H_2}(V^n_{(s,x)}-V_{(s,x)}) \|_{L^2(\R^2)}+ \| [\cq^n_{\frac12,H_2}-\cq_{\frac12,H_2}](V^n_{(s,x)}) \|_{L^2(\R^2)} \ =: \ I_n+II_n \ .
\end{align*}
In order to bound $I_n$, we first combine the subsequent Lemma \ref{lem:embed-1} with Lemma \ref{lem:convo-c-al} to get that
$$|I_n| \lesssim \|V^n_{(s,x)}-V_{(s,x)}\|_{L^\infty(\R^2)} \lesssim \|\xi^n-\xi\|_{\al;\compac} \ ,$$
for some appropriate compact set $\compac$ and $\al\in (-\frac43,-2+H_2)$. Thanks to (\ref{estim-besov-2}), we can immediately conclude that $I_n \stackrel{n\to \infty}{\longrightarrow} 0$ in $L^2(\Omega)$.

\smallskip

\noindent
As for $II_n$, observe first that for $\ep>0$ small enough, it holds that
$$|II_n|^2 \lesssim 2^{-2n\ep} \{ \| \cq_{\frac12,H_2-\ep}(V^n_{(s,x)})\|_{L^2(\R^2)}^2+\| \cq_{\frac12-\ep,H_2}(V^n_{(s,x)})\|_{L^2(\R^2)}^2 \} \ ,$$
and so, by Lemma \ref{lem:embed-1}, $|II_n| \lesssim 2^{-n\ep} \|V^n_{(s,x)}\|_{\ep,\compac}$, for some appropriate compact set $\compac$. As above, we can now combine Lemma \ref{lem:convo-c-al} with (\ref{estim-besov-2}) to derive that $II_n \stackrel{n\to \infty}{\longrightarrow} 0$ in $L^2(\Omega)$.

\smallskip

Finally, the identification of $\delta^X(V_{(s,x)})$ as an Itô integral with respect to $dW^{H_2}$ is the result of \cite[Lemma 2.10]{grorud-pardoux}. 
\end{proof}

It only remains us to prove the following Sobolev-embedding-type property:

\begin{lemma}\label{lem:embed-1}
Let $\al_1\in (0,\frac12)$, $\al_2 \in (\frac12,1)$ and $\ep >0$ such that $\frac14-\frac{\al_1}{2}+\ep <1$. Then for any $a,b>0$ and every function $\vp\in \cac^{\frac14-\frac{\al_1}{2}+\ep}_c(\R^2)$ with support contained in the rectangle $[-a,a]\times [-b,b]$, it holds that
\begin{equation}
\| \cq_{\al_1,\al_2}(\vp)\|_{L^2(\R^2)} \leq c_{a,b} \, \|\vp\|_{\frac14-\frac{\al_1}{2}+\ep;[-a,a]\times [-b,b]} \ ,
\end{equation}
as well as
\begin{equation}\label{sobo-2}
\| \cq_{\frac12,\al_2}(\vp)\|_{L^2(\R^2)} \leq c_{a,b} \, \|\vp\|_{L^\infty(\R^2)} \ .
\end{equation}
\end{lemma}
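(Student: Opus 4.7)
The plan is to apply Plancherel to rewrite
\[
\|\cq_{\al_1,\al_2}(\vp)\|_{L^2(\R^2)}^2 = c^2 \int_{\R^2} |\hat\vp(\xi,\eta)|^2 \, |\xi|^{1-2\al_1}|\eta|^{1-2\al_2}\,d\xi\,d\eta,
\]
and to bound the resulting weighted Fourier integral by the parabolic H\"older norm of $\vp$ via a splitting-and-reduction argument, exploiting the compact support of $\vp$ in both variables.

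I would split the domain along $\{|\eta|\leq 1\}$ and $\{|\eta|\geq 1\}$. On $\{|\eta|\geq 1\}$, since $\al_2>\frac{1}{2}$ the weight $|\eta|^{1-2\al_2}$ is bounded by $1$, so this part is controlled by $\int|\hat\vp|^2|\xi|^{1-2\al_1}\,d\xi\,d\eta$; the $\eta\leftrightarrow x_2$ Plancherel identity converts this into $2\pi\int_{[-b,b]}\|\vp(\cdot,x_2)\|_{H^{(1-2\al_1)/2}(\R)}^2\,dx_2$. For each fixed $x_2$ the section $\vp(\cdot,x_2)$ is supported in $[-a,a]$ and is $(\gamma/2)$-H\"older in $t$ by the parabolic hypothesis (with $\gamma:=\frac{1}{4}-\frac{\al_1}{2}+\ep$), so the classical Gagliardo embedding of compactly supported H\"older functions into fractional Sobolev space, valid provided $\gamma/2>(1-2\al_1)/2$, bounds each section integral, and integration over $x_2\in[-b,b]$ yields the claimed $c_{a,b}\|\vp\|_{\gamma;[-a,a]\times[-b,b]}^2$. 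On $\{|\eta|\leq 1\}$ the singular weight $|\eta|^{1-2\al_2}$ has an integrable singularity at $\eta=0$; writing $W(\xi,x_2):=\int_\R e^{-it\xi}\vp(t,x_2)\,dt$, the compact support of $W(\xi,\cdot)$ in $[-b,b]$ combined with a Riesz-potential-in-$x_2$ estimate gives $\int_{|\eta|\leq 1}|\hat\vp(\xi,\eta)|^2|\eta|^{1-2\al_2}\,d\eta\leq C_b\int|W(\xi,x_2)|^2\,dx_2$, after which the $\xi$-integral is handled by the same $t$-Sobolev embedding as in the first region.

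For the second bound (\ref{sobo-2}), the case $\al_1=\frac{1}{2}$ trivializes the $\xi$-weight, reducing matters to $\int|\hat\vp|^2|\eta|^{1-2\al_2}\,d\xi\,d\eta$. The same Riesz-in-$x_2$ estimate combined with a plain Plancherel identity in $\xi\leftrightarrow t$ reduces this to $\|\vp\|_{L^2(\R^2)}^2\leq 4ab\,\|\vp\|_{L^\infty(\R^2)}^2$, yielding the claim directly without any H\"older control. The main obstacle will be verifying that the choice $\gamma=\frac{1}{4}-\frac{\al_1}{2}+\ep$ actually enforces $\gamma>1-2\al_1$, i.e.\ $\ep>\frac{3}{4}-\frac{3\al_1}{2}$: this is the nontrivial lower bound on $\ep$ required for the $t$-direction Sobolev embedding on sections (separable test functions $\vp(t,x)=\phi(t)\psi(x)$ based on Weierstrass-type $\phi$ show it is also sharp), and it is compatible with the upper bound $\gamma<1$ built into the statement; in the applications, where $\al_1$ is close to $\frac{1}{2}$, this lower bound collapses and only the hypothesis $\gamma<1$ is active.
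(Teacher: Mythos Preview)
Your approach is correct and essentially the same as the paper's: both rewrite $\|\cq_{\al_1,\al_2}(\vp)\|_{L^2}^2$ as a weighted Fourier integral and then separate the two frequency variables, using a fractional Sobolev/Gagliardo embedding for the time direction and a Riesz-potential/compact-support argument for the space direction (the paper handles $\xi$ first via $(1+|\xi|^2)^{\ka}$ with $\ka=\tfrac12-\al_1$ and then $\eta$ via the Riesz kernel $|x-y|^{-(3/2-\al_2)}$, whereas you reverse the order and split $\{|\eta|\lessgtr 1\}$, but this is cosmetic). Your observation that the argument implicitly forces $\ep>\tfrac34-\tfrac{3\al_1}{2}$ is correct and worth keeping: the paper's final Gagliardo step needs exactly this for convergence, and while the lemma statement does not record this lower bound, it is indeed harmless in the application where $\al_1$ is taken close to~$\tfrac12$.
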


\begin{proof}
Set $\ka:=\frac12-\al_1$ and for $s,x,\eta \in \R$, set $\psi_\eta(s):=\int_{\R} dx \, e^{-\imath x \eta} \vp(s,x)$, $\vp_s(x):=\vp(s,x)$. Then, using standard Sobolev inequalities, we successively obtain that
\begin{eqnarray*}
\lefteqn{\| \cq_{\al_1,\al_2}(\vp)\|_{L^2(\R^2)}^2\ \lesssim \  \int_{\R} \frac{d\eta}{|\eta|^{2\al_2-1}} \int_{\R} d\xi \, (1+|\xi|^2)^\ka |\widehat{\psi_\eta}(\xi)|^2}\\
&\lesssim &\int_{\R} \frac{d\eta}{|\eta|^{2\al_2-1}} \bigg[ \int_{\R} ds \, \psi_\eta(s)^2 +\iint_{\R^2} \frac{dsdt}{|s-t|^{1+2\ka}} |\psi_\eta(s)-\psi_\eta(t)|^2 \bigg]\\
&\lesssim& \int_{\R} ds \int_{\R} \frac{d\eta}{|\eta|^{2\al_2-1}} |\widehat{\vp_s}(\eta)|^2+\iint_{\R^2} \frac{dsdt}{|s-t|^{1+2\ka}} \int_{\R} \frac{d\eta}{|\eta|^{2\al_2-1}} |(\widehat{\vp_s-\vp_t})(\eta)|^2\\
&\lesssim& \int_{\R} ds \int_{\R} dx \bigg( \int_{\R} dy \, \frac{\vp(s,y)}{|x-y|^{\frac32-\al_2}} \bigg)^2+\iint_{\R^2} \frac{ds dt}{|s-t|^{1+2\ka}}\int_{\R}dx \bigg( \int_{\R} dy \, \frac{\vp(s,y)-\vp(t,y)}{|x-y|^{\frac32 -\al_2}} \bigg)^2\\
&\leq& c_b \, \sup_{y\in [-b,b]} \bigg[\int_{\R} ds \, |\vp(s,y)|^2+\iint_{\R^2} \frac{ds dt}{|s-t|^{1+2\ka}} \, |\vp(s,y)-\vp(t,y)|^2  \bigg] \\
&\leq&  c_{a,b} \, \|\vp\|_{\frac{\ka}{2}+\ep;[-a,a]\times [-b,b]} \ .
\end{eqnarray*} 
The bound (\ref{sobo-2}) follows from similar estimates.

\end{proof}

\section{The Young case}\label{sec:young}

We conclude the paper with the proof of Theorem \ref{main-theo}, point $(i)$, and see how the condition $2H_1+H_2>2$ allows for a drastic simplification of the modelling procedure exhibited in Section \ref{sec:mod-equation}. With the result of Corollary \ref{cor:converg} in mind, we consider from now on that the noise $\xi$ involved in the equation belongs to $\cac_c^\al(\R^2)$ for some fixed
\begin{equation}
\boxed{\al\in (-1,0)}\ .
\end{equation}
Under this assumption, the argument will importantly rely on the following property.

\begin{lemma}\label{lem:young}
Let $K$ be defined as in Lemma \ref{lem:decompo-noyau}. Then for every $\zeta\in \cac_c^\al(\R^2)$, every compact set $\compac \subset \R^2$ and every $x,y\in \compac$ such that $\|x-y\|_\scal \leq 1$, it holds that
$$|[K\ast \zeta](x)-[K\ast \zeta](y)| \lesssim \|x-y\|_\scal \cdot \|\zeta\|_{\al;\text{rect}(\compac)} \ ,$$
where $\text{rect}(\compac)$ stands for the smallest rectangle $[x_1,x_2]\times [y_1,y_2]$ that contains $\compac$.
\end{lemma}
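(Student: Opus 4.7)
The plan is to mimic the proof of Lemma \ref{lem:convo-c-al} with one crucial change: the argument there used $\al+1<0$ to sum a certain geometric series, whereas here $\al\in(-1,0)$ gives $\al+1>0$. The computation redone under this new sign condition yields a Lipschitz-type estimate, because the Hölder exponent $\al+2$ exceeds $1$ and $\|x-y\|_\scal\leq 1$ lets us exchange the worse rate for the better one. No new technical machinery is needed; the order of the test function required by Definition \ref{def:besov-space} is still $\ell=-\lfloor\al\rfloor=1$, so the identity (\ref{decompo-k}) applied to $D^{(1,0)}K_0$ and $D^{(0,1)}K_0$ (which all lie in $\cac^1$) remains available.

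Concretely, I would invoke the dyadic decomposition $K=\sum_{n\geq 0}K_n$ from Lemma \ref{lem:decompo-noyau}, pick $i\geq 0$ with $2^{-(i+1)}\leq\|x-y\|_\scal\leq 2^{-i}$, and split
$$[K\ast\zeta](x)-[K\ast\zeta](y)=\sum_{n<i}[(K_n\ast\zeta)(x)-(K_n\ast\zeta)(y)]+\sum_{n\geq i}[(K_n\ast\zeta)(x)-(K_n\ast\zeta)(y)]=:I_{xy}+II_{xy}.$$
For the tail $II_{xy}$, the direct bound $|(K_n\ast\zeta)(z)|\lesssim 2^{-n(\al+2)}\|\zeta\|_{\al;\compac}$ used in the proof of Lemma \ref{lem:convo-c-al} still applies (since $\al+2>0$ the series converges) and yields $|II_{xy}|\lesssim 2^{-i(\al+2)}\|\zeta\|_{\al;\compac}\lesssim \|x-y\|_\scal^{\al+2}\|\zeta\|_{\al;\compac}$.

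For the main part $I_{xy}$, I would re-use the rectangle-wise Taylor expansion of Lemma \ref{lem:convo-c-al}, writing each summand as a line integral involving $D^{(1,0)}K_n\ast\zeta$ and $D^{(0,1)}K_n\ast\zeta$, to get
$$|(K_n\ast\zeta)(x)-(K_n\ast\zeta)(y)|\lesssim\|\zeta\|_{\al;\text{rect}(\compac)}\cdot\big\{|x_1-y_1|\cdot 2^{-n\al}+|x_2-y_2|\cdot 2^{-n(\al+1)}\big\}.$$
Here the point is that, in contrast with Lemma \ref{lem:convo-c-al}, the factor $2^{-n(\al+1)}$ is now decreasing in $n$, so that $\sum_{n<i}2^{-n(\al+1)}\lesssim 1$, while the factor $2^{-n\al}$ is still increasing, with $\sum_{n<i}2^{-n\al}\lesssim 2^{-i\al}$. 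Combining with the scaled-metric bounds $|x_1-y_1|\leq\|x-y\|_\scal^2$ and $|x_2-y_2|\leq\|x-y\|_\scal$ gives
$$|I_{xy}|\lesssim\|\zeta\|_{\al;\text{rect}(\compac)}\cdot\big\{\|x-y\|_\scal^{\al+2}+\|x-y\|_\scal\big\}.$$

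Gathering these estimates yields $|[K\ast\zeta](x)-[K\ast\zeta](y)|\lesssim\|\zeta\|_{\al;\text{rect}(\compac)}\cdot\{\|x-y\|_\scal+\|x-y\|_\scal^{\al+2}\}$; the conclusion follows since $\al+2>1$ and $\|x-y\|_\scal\leq 1$ force $\|x-y\|_\scal^{\al+2}\leq\|x-y\|_\scal$. There is no genuine obstacle in this plan: the only subtle point is the switch of sign of $\al+1$, and it affects only the direction in which the relevant geometric series is summed.
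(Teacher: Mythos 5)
Your proposal is correct, and it does reach the conclusion by a route that differs from the paper's in a small but real way; in both arguments the crucial point is indeed the change of sign of $\al+1$, as you identified. The paper does not simply re-sum the first-order Taylor bound of Lemma \ref{lem:convo-c-al}: instead it first subtracts the full first-order spatial correction, writing $[K\ast\zeta](x)-[K\ast\zeta](y)=A_{xy}+B_{xy}$ with $B_{xy}=(x_2-y_2)\,[(D^{(0,1)}K)\ast\zeta](y)$, so that $B_{xy}$ alone accounts for the Lipschitz modulus ($\sum_n 2^{-n(\al+1)}<\infty$ now that $\al+1>0$), and then estimates the remainder $A_{xy}$ via the dyadic split plus a \emph{second-order} Taylor expansion in the spatial variable, involving $D^{(0,2)}K_n$, to obtain the stronger bound $|A_{xy}|\lesssim\|x-y\|_\scal^{\al+2}$. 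Your variant keeps the first-order expansion throughout and only splits the sum at $n=i$; the $(x_2-y_2)\cdot 2^{-n(\al+1)}$-term is then summable over the front range precisely because $\al+1>0$, while the $(x_1-y_1)\cdot 2^{-n\al}$-term regroups into $\|x-y\|_\scal^{\al+2}$ as in Lemma \ref{lem:convo-c-al}. Each approach has a small advantage: yours is more economical (no second derivatives of $K_0$ are needed, and you literally re-use the computation of Lemma \ref{lem:convo-c-al}), whereas the paper's makes the structure more transparent — it exhibits $B_{xy}$ as the exact source of the Lipschitz rate and shows that the corrected increment $A_{xy}$ is in fact H{\"o}lder of order $\al+2>1$, a slightly sharper intermediate fact. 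In either case the final assembly step is the same: $\|x-y\|_\scal\leq 1$ and $\al+2>1$ let you absorb $\|x-y\|_\scal^{\al+2}$ into $\|x-y\|_\scal$.
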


\begin{proof}
Decompose the increment $[K\ast \zeta](x)-[K\ast \zeta](y)$ as the sum of two terms $A_{xy}$ and $B_{xy}$, with
$$A_{xy}=  [K\ast \zeta](x)-[K\ast \zeta](y)-(x_2-y_2) \cdot [(D^{(0,1)}K)\ast \zeta](y) \ ,$$
$$B_{xy}=(x_2-y_2) \cdot [(D^{(0,1)}K)\ast \zeta](y) \ .$$
As far as $B_{xy}$ is concerned, we immediately get that
$$|B_{xy}|\lesssim \|\xi\|_{\al;\compac} \cdot \|x-y\|_\scal \cdot \sum_{n\geq 0} 2^{-n(\al+1)}\lesssim \|\xi\|_{\al;\compac} \cdot \|x-y\|_\scal \ .$$
In order to estimate $A_{xy}$, pick $i\geq 0$ such that $2^{-(i+1)} \leq \|x-y\|_\scal \leq 2^{-i}$ and denote by $A^n_{xy}$ the expression derived from $A_{xy}$ by replacing each occurence of $K$ with $K_n$. On the one hand, we have
\begin{eqnarray*}
\sum_{n>i}|A^n_{xy}| &\leq & \sum_{n>i} \{|[K_n\ast \zeta](x)|+|[K_n\ast \zeta](y)|+|x_2-y_2| \cdot | [(D^{(0,1)}K_n)\ast \zeta](y)| \}\\
&\lesssim & \|\zeta\|_{\al;\compac} \cdot \sum_{n>i} \{2^{-n(\al+2)}+\|x-y\|_\scal\cdot 2^{-n(\al+1)}\} \ \lesssim \ \|\zeta\|_{\al;\compac}  \cdot \|x-y\|_\scal^{\al+2} \ .
\end{eqnarray*} 
For $n\leq i$, expand $A^n_{xy}$ as
\begin{multline*}
A^n_{xy}=(x_1-y_1) \cdot \int_0^1 dr \, [(D^{(1,0)}K_n)\ast \zeta](y_1+r(x_1-y_1),x_2)\\
+\frac12 (x_2-y_2)^2 \cdot \int_0^1 dr_1\int_0^{r_1} dr_2 \, [(D^{(0,2)}K_n)\ast \zeta](y_1,y_2+r_2(x_2-y_2)) \ .
\end{multline*}
With this decomposition in hand, we readily deduce that
$$\sum_{0\leq n\leq i} |A^n_{xy}| \lesssim \|\zeta\|_{\al;\text{rect}(\compac)} \cdot \|x-y\|_\scal^2 \cdot \sum_{0\leq n\leq i} 2^{-n\al}\lesssim \|\zeta\|_{\al;\text{rect}(\compac)} \cdot \|x-y\|_\scal^{\al+2} \ .$$

\end{proof}

As we announced it, the framework of the lift procedure reduces to a minimum here. Namely, 
$$\struc_{\al}:=\text{Span}(\Xi) \quad , \quad \struc_0:=\text{Span}(\1) \quad , \quad \struc:=\struc_\al \oplus \struc_0 \ ,$$
with commutative product $\star$ in $\struc$ given by
$$\1 \star \1=\1 \quad , \quad \1 \star \Xi=\Xi \quad , \quad \Xi \star \Xi=0 \ .$$
Then define the model $(\Pi^\xi,\gga^\xi)$ for $\struc$ by the formulas:
$$\Pi^\xi(\1)=1 \ , \ \Pi^\xi_x(\Xi)=\xi \ , \ \gga^\xi_{xy}(\1)=\1 \ , \ \gga^\xi_{xy}(\Xi)=\Xi \ .$$
In particular, the "L{\'e}vy area" term no longer comes into the picture here, and one has, along the same lines as in Proposition \ref{prop:model},
$$\|(\Pi^\xi,\gga^\xi)\|_{\al;\compac} \lesssim \|\xi\|_{\al;\compac} \quad , \quad \|(\Pi^\xi,\gga^\xi);(\Pi^{\xi_2},\gga^{\xi_2})\|_{\al;\compac} \lesssim \|\xi-\xi_2\|_{\al;\compac} \ . $$
Next, just as in Section \ref{sec:mod-equation}, we denote by $\crr_{\xi}$ the reconstruction operator associated with this model and characterized by the two relations (\ref{recon-global})-(\ref{recon-local}) (where $\xi$ must be substituted for $\uxi$). Also, we denote by $\cd^{\ga,\eta}_\be(\xi)$ the corresponding spaces of modelled distributions, defined along Formula (\ref{def:norm-ga-eta}).

\smallskip

In fact, in this situation, it turns out that we can fix the background space as $\cd^{\ga,0}_0(\xi)$ with
$$\boxed{\ga\in (-\al,1) } \ .$$
Otherwise stated, any element $\bu\in \cd^{\ga,0}_0(\xi)$ is reduced to a single component $\bu^0\, \1$ in $\struc_0$ which satisfies
$$\|\bu\|_{\ga,0}=\sup_{x\in \R^2} |\bu^0(x)| +\sup_{x,y\in \R^2: \, \|x-y\|_\scal \leq \|x;y\|_P}  \frac{|\bu^0(x)-\bu^0(y)|}{\|x-y\|_\scal^\ga\cdot \|x;y\|_P^{-\ga}} \ < \ \infty \ .$$
Thus, the "lift" operation clearly loses all its relevancy in this setting (dealing with $\bu \in \cd^{\ga,0}_0(\xi)$ or $\bu^0\in \cac^\ga(\R^2)$ is just equivalent, at least away from $P$), and we only keep to this formalism for a direct comparison with the situation described in Section \ref{sec:mod-equation}.

\smallskip

So recall that we want to transpose in $\cd^{\ga,0}_0(\xi)$, along the same steps as before, the localized equation
\begin{equation}\label{eq-loc-young}
u(x)=(G_{x_1} \Psi)(x_2)+\rho_T(x_1) \cdot (G\ast [\rho_+ \cdot F(u) \cdot \xi])(x) \ ,
\end{equation}
where $\rho_+$ and $\rho_T$ stand for the two cut-off functions introduced in Section \ref{subsec:prelim}. 

\smallskip

First, it is readily checked that if $\bu=\bu^0 \, \1 \in \cd^{\ga,0}_0(\xi)$ and $F\in \cac^\infty_{\compac}(\R^2)$, then the element $\rho_+ \cdot \bff(\bu)$ trivially defined by
$$[\rho_+\cdot \bff(\bu)](x)=(\rho_+(x_1)\cdot F(x_2,\bu^0(x)))\, \1 \ ,$$
belongs to $\cd^{\ga,0}_0(\xi)$, and the two bounds (\ref{bound-compo-1})-(\ref{bound-compo-2}) remain valid in this setting. Also, the product
$$\bv(x)=[(\rho_+\cdot \bff(\bu))\star \Xi](x):=(\rho_+(x_1)\cdot F(x_2,\bu^0(x)))\, \Xi \ ,$$
clearly defines an element of $\cd^{\ga+\al,\al}_{\al}(\xi)$ and (\ref{multi-noise-1})-(\ref{multi-noise-2}) still prevail.

\smallskip

At this point, note that, since $\ga+\al>0$, the reconstruction $\crr_\xi\bv$ of $\bv$ does define an element of $\cac_c^\al(\R^2)$ that is compactly supported in $\R_+\times \R$. Thanks to Lemma \ref{lem:convo-regu}, the convolution with $G^\sharp$ can therefore be (locally) lifted as $[\rho_T\cdot \mathcal{G}^\sharp_\xi(\bv)](x)=\rho_T(x_1) [G^\sharp\ast \crr_\xi(\bv)](x) \, \1$, and the two bounds (\ref{convo-smooth-kernel-1})-(\ref{convo-smooth-kernel-2}) remain valid. Using Lemma \ref{lem:young}, the same conclusions actually hold true for the convolution with $K$, lifted as $[\rho_T\cdot \mathcal{K}_\xi(\bv)](x)=\rho_T(x_1) [K\ast \crr_\xi(\bv)](x) \, \1$.

\smallskip

Finally, lifting the initial-condition term $G\Psi$ can be readily done for any $\Psi\in L^\infty(\R)$, since, with the arguments and notation of the proof of Proposition \ref{prop:ini-cond}, 
\begin{multline*}
\big| \int_{\R} dz \, G(x_1,x_2-z) \cdot \Psi(z)-\int_{\R} dz \, G(y_1,y_2-z)\cdot \Psi(z) \big|\\
\lesssim \|\Psi\|_{L^\infty(\R)} \cdot \{|x_1-y_1| \cdot \|x;y\|_P^{-1}+|x_2-y_2| \cdot \|x;y\|_P^{-\frac12}\}\lesssim \|\Psi\|_{L^\infty(\R)} \cdot \|x-y\|_\scal^\ga \cdot \|x;y\|_P^{-\ga} \ .
\end{multline*}

\smallskip

By putting these successive observations together, we find ourselves in the very same situation as in Section \ref{subsec:solving} for the equation
\begin{equation}\label{modelled-equation-young}
\bu=\mathbf{G}\Psi+\rho_T \cdot \big[ \mathcal{K}_{\xi}+\mathcal{G}^\sharp_{\xi}\big] \big( (\rho_+ \cdot \bff(\bu))\star  \Xi\big)  \ ,
\end{equation}
interpreted in $\cd^{\ga,0}_0(\xi)$. It is therefore clear that the arguments of the proofs of Propositions \ref{prop:sol} and \ref{prop:cont-flow} can be transposed at once in this setting, which leads us to the following statement:

\begin{proposition}\label{prop:sol-young}
$(i)$ For every path $\xi \in \cac^\al_c(\R^2)$ and initial condition $\Psi\in L^\infty(\R)$, there exists a time $T_0=T_0(\xi,\Psi) >0$ and a radius $R=R(\xi,\psi)>0$ such that for every $0<T\leq T_0$, Equation (\ref{modelled-equation-young}) admits a unique solution $\mathbf{\Phi}(\xi,\Psi,T)$ within the ball $\cb_{\xi}(R):=\{\bu \in \cd_0^{\ga,0}(\xi): \, \|\bu\|_{\ga,0}\leq R\}$.

\smallskip

\noindent
$(ii)$ Consider a sequence of paths $\xi^n\in \cac^\al_c(\R^2)$ and initial conditions $\Psi^n \in L^\infty(\R)$ such that, for every compact $\compac \subset \R^2$, 
\begin{equation}\label{converg-assump-young}
\|\xi^n-\xi\|_{\al;\compac} \to 0 \quad \text{and} \quad \|\Psi^n-\Psi\|_{L^\infty(\R)} \to 0 \ ,
\end{equation}
for some path $\xi\in \cac_c^\al(\R^2)$ and initial condition $\Psi\in L^\infty(\R)$. Then there exists a time $T^\ast=T^\ast(\xi,\Psi) >0$ such that $\mathbf{\Phi}(\xi^n,\Psi^n,T^\ast)$ is well defined for every $n$ large enough, as well as $\mathbf{\Phi}(\xi,\Psi,T^\ast)$, and
\begin{equation}\label{conve-1-young}
\|\mathbf{\Phi}(\xi^n,\Psi^n,T^\ast);\mathbf{\Phi}(\xi,\Psi,T^\ast)\|_{\ga,0} \to 0 \ .
\end{equation}
In particular, if we set $\Phi(\xi^n,\Psi^n,T^\ast)=\mathcal{R}_{\xi^n}(\mathbf{\Phi}(\xi^n,\Psi^n,T^\ast))$ and $\Phi(\xi,\Psi,T^\ast)=\mathcal{R}_{\xi}(\mathbf{\Phi}(\uxi,\Psi,T^\ast))$, it holds that
\begin{equation}\label{conve-2-young}
\|\Phi(\uxi^n,\Psi^n,T^\ast)-\Phi(\uxi,\Psi,T^\ast)\|_{L^\infty(\R^2)} \to 0 \ ,
\end{equation}
as well as
\begin{equation}\label{conve-3-young}
\|\Phi(\uxi^n,\Psi^n,T^\ast)-\Phi(\uxi,\Psi,T^\ast)\|_{\ga;[s,T^\ast]\times \compac} \to 0
\end{equation}
for every compact set $\compac \subset \R$ and every fixed $s\in (0,T^\ast)$.
\end{proposition}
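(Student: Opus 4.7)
The plan is to mimic verbatim the fixed-point argument of Proposition \ref{prop:sol} and the continuity argument of Proposition \ref{prop:cont-flow}, once the Young-case analogues of the composition, multiplication, convolution, and initial-condition estimates have been collected. Since the author has just established (in the paragraph immediately preceding the statement) that the bounds (\ref{bound-compo-1})-(\ref{bound-compo-2}), (\ref{multi-noise-1})-(\ref{multi-noise-2}), (\ref{convo-kernel-1})-(\ref{convo-kernel-2}), (\ref{convo-smooth-kernel-1})-(\ref{convo-smooth-kernel-2}) and (\ref{ini-cond}) all persist in this much simpler setting (where $\bu=\bu^0\,\1$ and the Lévy area no longer comes into play), there is essentially no new obstacle: the proof reduces to routine bookkeeping of the constants.

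For part (i), denote by $\mathcal{M}_{\xi,\Psi,T}(\bu)$ the right-hand side of (\ref{modelled-equation-young}). Concatenating the Young analogues of (\ref{ini-cond}), (\ref{convo-kernel-1}), (\ref{convo-smooth-kernel-1}), (\ref{multi-noise-1}) and (\ref{bound-compo-1}) yields
$$\|\mathcal{M}_{\xi,\Psi,T}(\bu)\|_{\ga,0}\leq Q^{(1)}_{\xi,\Psi}\cdot\{1+T^{\ka_1}\|\bu\|_{\ga,0}^2\}\ ,$$
for some $\ka_1>0$ and a polynomial expression $Q^{(1)}_{\xi,\Psi}$ in $\|\Psi\|_{L^\infty(\R)}$ and $\|\xi\|_{\al;\compac_0}$ (with $\compac_0\subset \R^2$ depending only on $F$). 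Setting $R:=1+Q^{(1)}_{\xi,\Psi}$ and choosing $T_1>0$ small enough that $Q^{(1)}_{\xi,\Psi}\cdot T_1^{\ka_1}R^2\leq Q^{(1)}_{\xi,\Psi}$ makes the ball $\mathcal{B}_\xi(R)$ invariant. Then the analogues of (\ref{convo-kernel-2}), (\ref{convo-smooth-kernel-2}), (\ref{multi-noise-2}) and (\ref{bound-compo-2}), applied with the fixed model $\xi$, yield
$$\|\mathcal{M}_{\xi,\Psi,T}(\bu);\mathcal{M}_{\xi,\Psi,T}(\bu')\|_{\ga,0}\leq Q^{(2)}_\xi\cdot T^{\ka_2}\cdot \|\bu;\bu'\|_{\ga,0}$$
for $\bu,\bu'\in\mathcal{B}_\xi(R)$, so that for $T_0:=\inf(T_1,(2Q^{(2)}_\xi)^{-1/\ka_2})$ we obtain a contraction on $\mathcal{B}_\xi(R)$ and Banach's theorem produces the unique fixed point $\mathbf{\Phi}(\xi,\Psi,T)$.

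For part (ii), by continuity of $T_0$ and $R$ with respect to $(\xi,\Psi)$ (readable off $Q^{(1)}$ and $Q^{(2)}$) together with (\ref{converg-assump-young}), one may choose $N$ large so that for all $n\geq N$ both $\mathbf{\Phi}(\xi^n,\Psi^n,T)$ and $\mathbf{\Phi}(\xi,\Psi,T)$ are well defined on a common interval $[0,T_1^\ast]$. Comparing the two fixed-point equations and using once more the Lipschitz bounds in the noise and in the modelled argument, one gets
$$\|\mathbf{\Phi}(\xi^n,\Psi^n,T);\mathbf{\Phi}(\xi,\Psi,T)\|_{\ga,0}\leq Q^{(3)}_{\xi,\Psi}\cdot\{T^{\ka_3}\|\mathbf{\Phi}(\xi^n,\Psi^n,T);\mathbf{\Phi}(\xi,\Psi,T)\|_{\ga,0}+\|\Psi^n-\Psi\|_{L^\infty(\R)}+\|\xi^n-\xi\|_{\al;\compac_0}\}\ .$$
Picking $T^\ast:=\inf(T_1^\ast,(2Q^{(3)}_{\xi,\Psi})^{-1/\ka_3})$ absorbs the first term on the right, which gives (\ref{conve-1-young}). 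The convergences (\ref{conve-2-young}) and (\ref{conve-3-young}) are then immediate consequences: since $\bu\in\cd^{\ga,0}_0(\xi)$ is reduced to its single $\struc_0$-component, Proposition \ref{prop:reconstruction-fonction} identifies the reconstruction with $\bu^0$, and the supremum and Hölder norms on $[s,T^\ast]\times\compac$ are controlled termwise by the defining quantities in $\|\cdot;\cdot\|_{\ga,0}$ thanks to $\|x\|_P\geq s>0$ away from the initial hyperplane $P$.

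The only point that could look delicate is the propagation of compact supports needed for the Young analogues of Propositions \ref{prop:convo-smooth-kernel}-\ref{prop:convo-sing-kernel}, but this is enforced automatically by the factor $\rho_+$ in the composition step (which confines the support of $\rho_+\cdot\bff(\bu)\star\Xi$ to $[0,2]\times\compac_F$) combined with Lemma \ref{lem:supp-recon}, and by the factor $\rho_T$ which localizes in time. In short, every obstacle of Section \ref{sec:mod-equation} disappears here: the relevant convolution-with-$K$ estimate is the elementary Lemma \ref{lem:young} (in place of the multi-level Schauder machinery used in the proof of Proposition \ref{prop:convo-sing-kernel}), and no Lévy-area-like enhancement of the noise is needed, so the whole proof of Proposition \ref{prop:sol-young} amounts to running the Section~\ref{sec:mod-equation} scheme with $\struc$ replaced by $\struc_\al\oplus\struc_0$.
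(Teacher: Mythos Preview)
Your proposal is correct and takes essentially the same approach as the paper: the paper does not give an independent proof of Proposition~\ref{prop:sol-young} at all, but simply remarks (just before the statement) that ``the arguments of the proofs of Propositions~\ref{prop:sol} and~\ref{prop:cont-flow} can be transposed at once in this setting.'' Your write-up is precisely that transposition, carried out explicitly with the Young-case ingredients (Lemma~\ref{lem:young} in place of the Schauder machinery, single-component modelled distributions, no L\'evy area), so there is nothing to add.
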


Let us finally turn to our approximated noise $\xi^n=\partial_t\partial_x X^n$, where $X^n$ is given by (\ref{approx-noise}). A quick survey of the arguments in Sections \ref{subsec:main-statements} and \ref{subsec:first-level} regarding the first-level path $\xi$ shows that condition (\ref{converg-assump-young}) is indeed satisfied in this situation, that is, there exists a $\cac^\al_c(\R^2)$-valued process $\xi$ such that almost surely, for every compact set $\compac \subset \R^2$, $\|\xi^n-\xi\|_{\al;\compac} \to 0$. Endowed with this result, the end of the proof of Theorem \ref{main-theo}, point $(i)$, follows the lines of Section \ref{subsec:proof-point-ii}. The only difference lies in the fact that instead of (\ref{reconstr-renorm}), one has here
\begin{eqnarray*}
\crr_{\uxi^n}((\rho_+ \cdot \bff(\bu^n))\star \Xi)(t,y)&=&  \rho_+(t,y)\cdot F(y,u^n(t,y))\cdot \Pi^{\xi^n}_{(t,y)}(\Xi)(t,y)\\
&=&\rho_+(t,y)\cdot F(y,u^n(t,y))\cdot \xi^n(t,y) \ ,
\end{eqnarray*}
which accounts for the absence of a renormalization term in (\ref{eq-base-young}) (in comparison with (\ref{eq-base})).

\

\section{Appendix: multi-level Schauder estimate}

\begin{lemma}\label{lem:schauder}
With the notation of Proposition \ref{prop:convo-sing-kernel}, it holds that
\begin{equation}\label{bound-appendix}
\|\mathcal{K}_{\uxi} (\mathbf{v})\|_{\ga,0;[-12T,12T] \times \compac}\leq Q_{\uxi}\cdot  T^{\frac12 (\al+2)}\cdot \|\mathbf{v} \|_{\ga+\al,\al} \ ,
\end{equation}
for some parameter $\ka >0$.
\end{lemma}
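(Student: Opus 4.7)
The proof is a direct instance of Hairer's multi-level Schauder estimate (\cite[Theorem 5.12]{hai-14}) adapted to our three-component structure. Using Lemma \ref{lem:decompo-noyau}, write $K=\sum_{n\geq 0}K_n$ and accordingly decompose $A(x):=[K\ast\mathcal{R}_\uxi\bv](x)=\sum_n A_n(x)$ and $B(x):=[(D^{(0,1)}K)\ast\{\mathcal{R}_\uxi\bv-\bv^0(x)\cdot\xi\}](x)=\sum_n B_n(x)$, so that $\mathcal{K}_\uxi\bv=A\cdot\1+\bv^0\cdot\ci(\Xi)+B\cdot X_2$. Four conditions must be verified for the three components of $\mathcal{K}_\uxi\bv$. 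The Hölder bound (\ref{condition-holder-2}) on the $\ci(\Xi)$-component reduces at once to the abstract Hölder bound on $\bv\in\cd^{\ga+\al,\al}$ at abstract level $\be=\al$, combined with $|\theta(x)-\theta(y)|\lesssim\|x-y\|_\scal^{\al+2}$ (Lemma \ref{lem:convo-c-al}) and the weight comparison $\|x;y\|_P\leq\|y\|_P$. The sup bounds on $A$ and $B$ follow by writing $A_n(x)=2^{-2n}\langle\mathcal{R}_\uxi\bv,\cs^{2^{-n}}_{\scal,x}K_0\rangle$ (cf. (\ref{decompo-k})), inserting $\pm\Pi^\uxi_x(\bv(x))$, and combining (\ref{recon-local}) applied to the remainder with the $\cac^\al$, $\pmb{\cac}^{2\al+2}$ bounds on $\xi$, $\xi^{\2}$ applied to the three atoms of $\Pi^\uxi_x(\bv(x))$; each scale contributes a geometric factor, summable in $n$. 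The subtraction $\bv^0(x)\cdot\xi$ built into $B$ is precisely what cancels the otherwise non-summable leading scale in the $X_2$-component.

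The heart of the matter is the increment bound (\ref{condition-holder-1}) on the $\1$-component. Fix $x,y\in\compac_P$ with $\|x-y\|_\scal\leq\|x;y\|_P$ and set
\begin{equation*}
E(x,y):=A(x)-A(y)-\bv^0(y)[\theta(x)-\theta(y)]-B(y)\cdot(x_2-y_2),
\end{equation*}
decompose $E=\sum_n E_n$, and let $i\geq 0$ be determined by $2^{-(i+1)}\leq\|x-y\|_\scal\leq 2^{-i}$. For the small scales $n\geq i$, the four terms of $E_n$ are estimated individually: $A_n(x)$ and $A_n(y)$ through (\ref{recon-local}) at base points $x$ and $y$ respectively, and the correction terms through the sup bounds on $\bv^0,\bv^1,\bv^2,\xi,\xi^{\2}$. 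A routine comparison of the resulting exponents of $\|x-y\|_\scal$ and $\|x;y\|_P$ (using in particular $\al+2>0$ and the range $\al\in(-\tfrac{4}{3},-1)$) absorbs this contribution into the target order $\|x-y\|_\scal^\ga\|x;y\|_P^{-\ga}\|\bv\|_{\ga+\al,\al}$. For the large scales $n<i$, individual-term estimates are too weak; instead, invoking the identity $\Pi^\uxi_y=\Pi^\uxi_x\circ\gga_{xy}$ together with the $K$-Chen relation, one recasts $E_n$ (up to manifestly admissible remainders controlled through $\xi$ and $\xi^{\2}$) in the form
\begin{equation*}
E_n\;=\;\bigl\langle\mathcal{R}_\uxi\bv-\Pi^\uxi_y(\bv(y)),\;K_n(x-\cdot)-K_n(y-\cdot)-(x_2-y_2)D^{(0,1)}K_n(y-\cdot)\bigr\rangle,
\end{equation*}
so that the kernel tested against the remainder is now a Taylor remainder of $K_n$ in the base-point variable. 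By expanding and rescaling, this test function has a $\cac^2$-type seminorm that gains a factor $\|x-y\|_\scal^{\al+2}$ over that of $K_n$ itself; applying (\ref{recon-local}) at base point $y$ and summing over $n<i$ yields a contribution of order $\|x-y\|_\scal^\ga\|x;y\|_P^{-\ga}\|\bv\|_{\ga+\al,\al}$, exactly as required. The Hölder bound (\ref{condition-holder-3}) on $B$ is handled along the same two-scale template with one extra spatial derivative on every $K_n$, producing the exponent $\ga-1$ in place of $\ga$.

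Finally, the gain $T^{(\al+2)/2}$ arises from a non-anticipative vanishing argument analogous to the one already used in the proof of Proposition \ref{prop:convo-smooth-kernel}: since $\mathcal{R}_\uxi\bv$ is supported in $\R_+\times\R$ (Lemma \ref{lem:supp-recon}, combined with the support of $\bv$) and $\supp K\subset\cb_\scal(0,1)$, both $A$ and $B$ vanish on $\{x_1\leq 0\}$; combined with their $(\al+2)$-Hölder regularity in the time variable just established, this gives $|A(x)|\lesssim |x_1|^{(\al+2)/2}\lesssim T^{(\al+2)/2}$ for $|x_1|\leq 12T$, and similarly for the $X_2$-component and for all relevant increments, restricted to $[-12T,12T]\times\compac$. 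The main obstacle of the argument is the precise algebraic identification of the cancellation at large scales, i.e.\ the rewriting of $E_n$ above, which depends delicately on the specific form (\ref{lift-conv}) chosen for $\mathcal{K}_\uxi\bv$ and on the $\gga^\uxi$-actions (\ref{def:gga-1})-(\ref{def:gga-3}) defining the model; once this identity is in place, all remaining estimates reduce to geometric summations that exploit the strict positivity of $\al+2$ and $\ga+\al$.
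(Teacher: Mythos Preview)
Your outline captures the two-scale template for the H\"older pieces, but the extraction of the factor $T^{(\al+2)/2}$ is a genuine gap. The non-anticipative vanishing argument you invoke at the end only delivers a $T$-power for the supremum of $A$ (via Lemma~\ref{lem:convo-c-al} and $A(0,x_2)=0$). For the $X_2$-component $B$, although $B(0,x_2)=0$ by non-anticipativity, the only H\"older control you establish on $B$ is the \emph{weighted} bound (\ref{condition-holder-3}), whose weight $\|x;y\|_P^{-\ga}$ blows up as $y\to P$ and therefore cannot be used to compare $B(x)$ with $B(0,x_2)$. And for the increment pieces (\ref{condition-holder-1})--(\ref{condition-holder-3}) there is simply no ``vanishing at $x_1=0$'' analogue that would convert a size estimate into a power of $T$. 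In the paper's proof the $T$-factor is instead extracted \emph{during} each multi-scale estimate: careful bookkeeping shows that every bound on a sup or increment piece produces an extra power of $\|x\|_P$ (resp.\ $\|x;y\|_P$) beyond what the $\cd^{\ga,0}$-norm requires, and this excess power is then bounded by $T$ on $[-12T,12T]\times\compac$. That gain cannot be tacked on afterward.

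A secondary issue concerns your large-scale treatment of $E_n$. The kernel $K_n(x-\cdot)-K_n(y-\cdot)-(x_2-y_2)D^{(0,1)}K_n(y-\cdot)$ is not localised around a single base point, so (\ref{recon-local}) ``at base point $y$'' does not apply to it directly; moreover, (\ref{recon-local}) carries the restriction $\delta<\tfrac14\|x\|_P^{1/2}$, which fails at the very largest scales. The paper handles this by (a) a further Taylor expansion producing kernels centred at intermediate points $w$, (b) a shift of the model from $y$ to $w$ via $\Pi^{\uxi}_y=\Pi^{\uxi}_w\circ\gga^{\uxi}_{wy}$ (this is where the $K$-Chen relation actually enters), and (c) a \emph{second} threshold at scale $\sim\|x;y\|_P$, below which one reverts from the local bound (\ref{recon-local}) to the global bound (\ref{recon-global}). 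Without this three-regime split the summation over the large scales does not close, and the claimed gain ``$\|x-y\|_\scal^{\al+2}$'' on the test function is not the correct exponent (the Taylor remainder gives $\|x-y\|_\scal^{2}$ at the kernel level).
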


\smallskip

The proof of (\ref{bound-appendix}) relies on a very subtle juggling between the "global" and "local" properties of $\crr_{\uxi}$, that is between the respective bounds (\ref{recon-global}) and (\ref{recon-local}), together with suitable Taylor expansions of the components $K_n$ introduced in Lemma \ref{lem:decompo-noyau}. 


\smallskip

Throughout the proof, we use the notation $Q_{\bv,\uxi}:=\|v\|_{\ga+\al,\al}\cdot Q_{\uxi}$, where $Q_{\uxi}$ represents a generic polynomial expression in $\|\uxi\|_{\al;\compac_0}$, for some suitable compact set $\compac_0\subset \R^2$.

\subsection{Supremum norms} Let us start with the consideration of the three "supremum norms" associated with $\mathcal{K}_{\uxi} (\mathbf{v})$ along (\ref{condition-sup}), which, by the way, will ensure that the components in (\ref{lift-conv}) are indeed well-defined functions. It turns out that each of these three estimates relies on a distinct argument.

\smallskip

We first have to deal with $\big|[K\ast \crr_{\uxi}\bv](x)\big|$ for $x=(x_1,x_2)\in [-12T,12T]\times \compac$. To do so, we appeal to the very same "non-anticipativity" argument as in the proof of Proposition \ref{prop:convo-smooth-kernel}, which here allows us to write
$$(K\ast \crr_{\uxi}\bv)(x)=(K\ast \crr_{\uxi}\bv)(x)-(K\ast \crr_{\uxi}\bv)(0,x_2) \ .$$
Now, with the decomposition $K=\sum_{n\geq 0} K_n$ of Lemma \ref{lem:decompo-noyau} in mind, pick $i\geq 0$ such that $2^{-(i+1)}\leq \|x\|_P^{1/2}\leq 2^{-i}$. On the one hand, using the representation (\ref{decompo-k}) and the "global" regularity (\ref{recon-global}), we get that
\begin{eqnarray}
\big| \sum_{n> i} \big[ (K_n \ast \crr_{\xi} \bv)(x)-(K_n \ast \crr_{\uxi}\bv)(0,x_2)\big] \big|
 &\leq & \sum_{n> i}\big\{ \big| (K_n \ast \crr_{\xi} \bv)(x)\big|+\big| ( K_n \ast \crr_{\uxi}\bv)(0,x_2)\big|\big\}\nonumber\\
&\leq & Q_{\bv,\uxi} \cdot \sum_{n> i} 2^{-n(\al+2)}\nonumber\\
&\leq & Q_{\bv,\uxi}\cdot \|x\|_P^{\frac12 (\al+2)}\ \leq \ Q_{\bv,\uxi}\cdot T^{\frac12 (\al+2)}\ .\label{schau-sup-1}
\end{eqnarray}
On the other hand, we can of course write
$$(K_n \ast \crr_{\xi} \bv)(x)-(K_n \ast \crr_{\uxi}\bv)(0,x_2)=x_1 \cdot \int_0^1 dr \, [(D^{(1,0)}K_n) \ast \crr_{\uxi }\bv ](rx_1,x_2) \ ,$$
and so, still with the help of (\ref{decompo-k}) and (\ref{recon-global}),
\begin{eqnarray}\label{schau-sup-2}
\big| \sum_{0\leq n\leq i} \big[ (K_n \ast \crr_{\xi} \bv)(x)-(K_n \ast \crr_{\uxi}\bv)(0,x_2)\big] \big| &\leq &  Q_{\bv,\uxi} \cdot |x_1| \cdot \sum_{0\leq n\leq i} 2^{-n\al} \nonumber\\
&\leq & Q_{\bv,\uxi} \cdot |x_1| \cdot \|x\|_P^{\frac{\al}{2}} \ \leq \ Q_{\bv,\uxi} \cdot T^{\frac12(\al+2)} \ .
\end{eqnarray}
Combining (\ref{schau-sup-1}) and (\ref{schau-sup-2}) gives the desired bound.

\smallskip

The bound for the second term of the supremum in (\ref{condition-sup}) follows immediately from the definition of the space $\cd^{\ga+\al,\al}(\uxi)$. Indeed, for any $x\in [-12T,12T]\times \compac$,
$$|\bv^0(x)| \cdot \|x\|_P^{\al+2}=|\struc_\al(\bv(x))| \cdot \|x\|_P^{\al+2} \lesssim \|\bv\|_{\ga+\al,\al} \cdot T^{\al+2} \ .$$

\smallskip

Finally, regarding the third term of the supremum in (\ref{condition-sup}), pick $i\geq 0$ such that $2^{-(i+1)}\leq \frac14 \|x\|_P \leq 2^{-i}$ (for some fixed $x\in [-12T,12T]\times \compac$) and decompose, along the same pattern as above,
\begin{eqnarray}
\lefteqn{[(D^{(0,1)} K)\ast \{\crr_{\uxi}\bv -\bv^0(x)\cdot \xi\}](x)}\nonumber\\
&=&\sum_{0\leq n\leq i} [(D^{(0,1)} K_n)\ast \{\crr_{\uxi}\bv -\bv^0(x)\cdot \xi\}](x)+\sum_{n>i} [(D^{(0,1)} K_n)\ast \{\crr_{\uxi}\bv -\bv^0(x)\cdot \xi\}](x)\nonumber\\
&=:&I_x+II_x \ .\label{thir-ter-sup}
\end{eqnarray}
For $n\leq i$, we can invoke (\ref{recon-global}) and use the regularity of $\xi$ to derive that
\begin{eqnarray*}
\|x\|_P \cdot |I_x| &\leq & \|x\|_P \cdot \sum_{0\leq n\leq i} \big\{ |[(D^{(0,1)}K_n) \ast \crr_{\uxi} \bv](x)| +|\bv^0(x)| \cdot |[(D^{(0,1)}K_n) \ast \xi](x)|\big\}\\
&\leq & Q_{\bv,\uxi} \cdot \|x\|_P \cdot \sum_{0\leq n\leq i} 2^{-n(1+\al)}\ \leq \  Q_{\bv,\uxi} \cdot \|x\|_P^{1+ (1+\al)} \ \leq \  Q_{\bv,\uxi} \cdot T^{\al+2} \ .
\end{eqnarray*}
For $n>i$, the estimate appeals this time to the "local" property (\ref{recon-local}) of $\crr_{\uxi}$. Write first $II_x$ as
\begin{multline*}
II_x=\\
\sum_{n>i}[(D^{(0,1)} K_n)\ast \{\crr_{\uxi}\bv -\Pi^{\uxi}_x(\bv(x))\}](x)+\sum_{n>i}[(D^{(0,1)} K_n)\ast \{\Pi^{\uxi}_x(\bv(x)) -\bv^0(x)\cdot \xi\}](x)=:II^1_x+II^2_x \ .
\end{multline*}
Then, due to (\ref{recon-local}), it holds that
$$
\|x\|_P \cdot |II^1_x| \leq  Q_{\bv,\uxi} \cdot \|x\|_P \cdot \|x\|_P^{-\ga} \cdot \sum_{n>i} 2^{-n(1+\al+\ga)} \leq Q_{\bv,\uxi} \cdot \|x\|_P^{1-\ga+(1+\al+\ga)} \leq Q_{\bv,\uxi} \cdot T^{\al+2} \ . 
$$
As for $II^2_x$, observe that the difference $\Pi^{\uxi}_x(\bv(x)) -\bv^0(x)\cdot \xi$ reduces to
\begin{equation}\label{decompo:ii-2}
\Pi^{\uxi}_x(\bv(x)) -\bv^0(x)\cdot \xi=\bv^1(x) \cdot \xi^{\2}_x+\bv^2(x) \cdot \Pi_x^{\uxi}(\Xi X_2) \ .
\end{equation}
Then for instance, since $\xi^{\2}\in \pmb{\cac}^{2\al+2}_c(\R^2)$, we get that
\begin{align*}
&\|x\|_P \cdot |\bv^1(x)| \cdot \sum_{n>i}|[(D^{(0,1)}K_n)\ast \xi^{\2}_x](x)|\\
&\leq Q_{\bv,\uxi} \cdot \|x\|_P^{1-(\al+2)} \cdot \sum_{n>i} 2^{-n(2\al+3)}\leq Q_{\bv,\uxi} \cdot \|x\|_P^{1-(\al+2)+ (2\al+3)}\leq Q_{\bv,\uxi}\cdot T^{\al+2} \ .
\end{align*}
The second term derived from (\ref{decompo:ii-2}) can be readily treated along the same lines, which finally completes the estimation of the supremum norms associated with $\mathcal{K}_{\uxi} (\mathbf{v})$.

\subsection{Projection in $\struc_0$} Let us now turn to the bound for the projection 
\begin{equation}\label{projec}
\struc_0\big( (\mathcal{K}_{\uxi} \mathbf{v})(x)-\gga^{\uxi}_{xy}((\mathcal{K}_{\uxi} \mathbf{v})(y)) \big) \ ,
\end{equation}
where, from now on, $x$ and $y$ are fixed elements in $[-12T,12T]\times \compac$ such that $\|x-y\|_\scal\leq \|x;y\|_P$. In fact, easy computations based on the sole definition of the model shows that the quantity (\ref{projec}) can be expanded as
$$\int_{\R^2} [K(x-z)-K(y-z)-(x_2-y_2) \cdot (D^{(0,1)}K)(y-z)] \cdot [\crr_{\uxi}\bv-\bv^0(y)\cdot \xi](dz)$$
In turn, it will appear convenient to decompose the latter integral as the sum of three terms $A,B,C$ of growing complexity:
\begin{equation}
A_{xy}=\int_{\R^2} [(x_1-y_1) \cdot (D^{(1,0)}K)(y-z)+\frac12 (x_2-y_2)^2  \cdot (D^{(0,2)}K)(y-z)] \cdot [\crr_{\uxi}\bv-\Pi^{\uxi}_y(\bv(y))](dz) \ ,
\end{equation}
\begin{equation}
B_{xy}=\int_{\R^2} [K(x-z)-K(y-z)-(x_2-y_2) \cdot (D^{(0,1)}K)(y-z)] \cdot [\Pi^{\uxi}_y(\bv(y))-\bv^0(y)\cdot \xi](dz) \ ,
\end{equation}
and
\begin{multline}\label{def:c}
C_{xy}=\int_{\R^2} [K(x-z)-K(y-z)-(x_2-y_2) \cdot (D^{(0,1)}K)(y-z)-(x_1-y_1) \cdot (D^{(1,0)}K)(y-z)\\
-\frac12 (x_2-y_2)^2  \cdot (D^{(0,2)}K)(y-z)]  \cdot [\crr_{\uxi}\bv-\Pi^{\uxi}_y(\bv(y))](dz) \ .
\end{multline}

\

\noindent
$(i)$ \underline{Estimation of $A_{xy}$} . It is relatively straightforward. Observe that
$$|x_1-y_1|+|x_2-y_2|^2 \lesssim \|x-y\|_{\scal}^2 \lesssim \|x-y\|_{\scal}^\ga \cdot \|x;y\|_P^{2-\ga}\lesssim (\|x-y\|_{\scal}^\ga \cdot \|x;y\|_P^{-\ga}) \cdot \|y\|^{2}_P \ .$$
Then the estimation of
$$\|y\|_P^{2} \cdot\big|\int_{\R^2} (D^{k}K)(y-z) \cdot [\crr_{\uxi}\bv-\Pi^{\uxi}_y(\bv(y))](dz)\big| \ ,$$
for $k\in \{(1,0),(0,2)\}$, can be done with the very same arguments as those we used in order to deal with (\ref{thir-ter-sup}). The procedure leads us to the conclusion that
$$|A_{xy}| \leq  Q_{\bv,\uxi} \cdot (\|x-y\|_{\scal}^\ga \cdot \|x;y\|_P^{-\ga})\cdot T^{\al+2}\ .$$

\
 
\noindent
$(ii)$ \underline{Estimation of $B_{xy}$} . As above, we rely on the expansion
\begin{equation}\label{decompo:b-xy}
\Pi^{\uxi}_y(\bv(y)) -\bv^0(y)\cdot \xi=\bv^1(y) \cdot \xi^{\2}_y+\bv^2(y) \cdot \Pi_y^{\uxi}(\Xi X_2) \ .
\end{equation}
\indent Let us focus on the term
$$\bar{B}_{xy}=\bv^1(y)\cdot \int_{\R^2} [K(x-z)-K(y-z)-(x_2-y_2) \cdot (D^{(0,1)}K)(y-z)] \cdot \xi^{\2}_y(dz) \ ,$$
keeping in mind that the subsequent arguments could similarly apply to the second summand in decomposition (\ref{decompo:b-xy}). Also, denote by $\bar{B}^n_{xy}$ the expression derived from $\bar{B}_{xy}$ by replacing each occurrence of $K$ with $K_n$. Pick now $j\geq 0$ such that $2^{-(j+1)}\leq\frac14  \|x-y\|_\scal \leq 2^{-j}$. 

\smallskip

\underline{$(ii$-$a)$: $n>j$}. We simply bound $\bar{B}^n_{xy}$ with a triangle inequality
\begin{equation}\label{boun-b}
|\bar{B}^n_{xy}| \leq |\bv^1(y)| \cdot \big\{ |[\xi^{\2}_y \ast K_n](x)|+|[\xi^{\2}_y \ast K_n](y)|+|x_2-y_2| \cdot | [\xi^{\2}_y \ast (D^{(0,1)}K_n)](y)|\big\} \ .
\end{equation}
The estimation of the last two terms follows immediately from the regularity of $\xi^{\2}$ and the representation (\ref{decompo-k}). As for $|[\xi^{\2}_y \ast K_n](x)|$, we must let the $K$-Chen relation come into the picture. Write indeed
\begin{eqnarray*}
| [\xi^{\2}_y \ast K_n](x)|&=& | [\xi^{\2}_x \ast K_n](x)+[(K\ast \xi)(x)-(K\ast \xi)(y)] \cdot [\xi\ast K_n](x)| \\
&\leq &| [\xi^{\2}_x \ast K_n](x)|+|[(K\ast \xi)(y)-(K\ast \xi)(x)]| \cdot | [\xi\ast K_n](x)| \ ,
\end{eqnarray*}
so that, going back to (\ref{boun-b}),
\begin{eqnarray*}
\sum_{n>j} |\bar{B}^n_{xy}| &\leq & Q_{\bv,\uxi} \cdot \|y\|_P^{-(\al+2)} \cdot \sum_{n>j} \{2^{-n(2\al+4)}+\|x-y\|_\scal^{\al+2} \cdot 2^{-n(\al+2)}+\|x-y\|_\scal \cdot 2^{-n(2\al+3)} \}\\
&\leq & Q_{\bv,\uxi}\cdot \|y\|_P^{-(\al+2)} \cdot \|x-y\|_\scal^{\ga} \ \leq \  Q_{\bv,\uxi}\cdot ( \|x-y\|_\scal^\ga\cdot \|x;y\|_P^{-\ga}) \cdot T^{\al+2} \ .
\end{eqnarray*}

\smallskip

\underline{$(ii$-$b)$: $n\leq j$}. We expand $\bar{B}^n_{xy}$ as $\bar{B}^n_{xy}=\bar{B}^{n,1}_{xy}+\bar{B}^{n,2}_{xy}$, with
$$\bar{B}^{n,1}_{xy}=\bv^1(y) \cdot (x_1-y_1) \cdot \int_0^1 dr \, [\xi^{\2}_y \ast (D^{(1,0)}K_n)](y_1+(x_1-y_1),x_2) \ ,$$
$$\bar{B}^{n,2}_{xy}=\frac12 \bv^1(y) \cdot (x_2-y_2)^2 \cdot \int_0^1 dr \, [\xi^{\2}_y \ast (D^{(0,2)}K_n)](y_1,y_2+r(x_2-y_2)) \ .$$
Let us focus on $\bar{B}^{n,1}_{xy}$. Fix $w=(y_1+r(x_1-y_1),x_2)$ ($r\in [0,1]$) and note that $\|w-y\|_\scal \leq \|x-y\|_\scal$. Then, using the $K$-Chen relation, we get that
\begin{eqnarray*}
\lefteqn{| [\xi^{\2}_y \ast (D^{(1,0)}K_n)](w)\big|}\\
 &\leq &| [\xi^{\2}_w \ast (D^{(1,0)}K_n)](w)|+|[(K\ast \xi)(w)-(K\ast \xi)(y)]| \cdot | [\xi\ast (D^{(1,0)}K_n)](w)| \\
&\leq& Q_{\uxi} \cdot \{2^{-n(2\al+2)}+\|x-y\|_\scal \cdot 2^{-n\al}\} \ ,
\end{eqnarray*}
and hence
\begin{eqnarray*}
\sum_{0\leq n\leq j} |\bar{B}^{n,1}_{xy} | &\leq & Q_{\bv,\uxi} \cdot \|y\|_P^{-(\al+2)} \cdot \|x-y\|_\scal^2 \cdot \sum_{n\leq j}  \{2^{-n(2\al+2)}+\|x-y\|_\scal \cdot 2^{-n\al}\}  \\
&\leq & Q_{\bv,\uxi}\cdot ( \|x-y\|_\scal^\ga\cdot \|x;y\|_P^{-\ga}) \cdot T^{\al+2} \ .
\end{eqnarray*}
The very same arguments clearly hold for $\bar{B}^{n,2}_{xy}$, which completes the estimation of $\bar{B}_{xy}$.

\

\noindent
$(iii)$ \underline{Estimation of $C_{xy}$} . It is even more tricky, and we are led to introduce two integers $i\leq j$ such that $2^{-(i+1)} \leq \frac14 \|x;y\|_P\leq 2^{-i}$ and $2^{-(j+1)}\leq \frac14 \|x-y\|_\scal \leq 2^{-j}$. As before, denote by $C^n_{xy}$ the expression derived from $C_{xy}$ by replacing each occurrence of $K$ with $K_n$.

\smallskip

\underline{$(iii$-$a)$: $n>j$}. We first estimate $C^n_{xy}$ with a basic triangle inequality
\begin{multline}\label{c-n-gd}
|C^n_{xy}|\leq 
\big| \int_{\R^2} K_n(x-z)  \cdot [\crr_{\uxi}\bv-\Pi^{\uxi}_y(\bv(y))](dz)\big|\\+\big| \int_{\R^2} K_n(y-z)  \cdot [\crr_{\uxi}\bv-\Pi^{\uxi}_y(\bv(y))](dz)\big|+|x_2-y_2| \cdot \big| \int_{\R^2} (D^{(0,1)}K_n(y-z)  \cdot [\crr_{\uxi}\bv-\Pi^{\uxi}_y(\bv(y))](dz)\big|+\ldots
\end{multline}
Using only the local property (\ref{recon-global}) and the representation (\ref{decompo-k}), we immediately deduce a sharp bound on each of these terms except on the first one, for which a slight refinement is needed. Write indeed
\begin{align*}
&\int_{\R^2} K_n(x-z)  \cdot [\crr_{\uxi}\bv-\Pi^{\uxi}_y(\bv(y))](dz)\\
&=\int_{\R^2} K_n(x-z)  \cdot [\crr_{\uxi}\bv-\Pi^{\uxi}_x(\bv(x))](dz)+\int_{\R^2} K_n(x-z)  \cdot [\Pi^{\uxi}_x(\bv(x))-\Pi^{\uxi}_y(\bv(y))](dz)
\end{align*}
and then invoke the relation (\ref{key-relation}) to get that
\begin{eqnarray}
\lefteqn{\big| \int_{\R^2} K_n(x-z)  \cdot [\Pi^{\uxi}_x(\bv(x))-\Pi^{\uxi}_y(\bv(y))](dz) \big|}\nonumber\\
& = & \big| \int_{\R^2} K_n(x-z)  \cdot \big[\Pi^{\uxi}_x\big( \bv(x)-\gga_{xy}^{\uxi}(\bv(y))\big)\big](dz) \big|\nonumber\\
 &\leq & Q_{\uxi} \cdot \sum_{\la \in \{\al,2\al+2,\al+1\}} 2^{-n(\la+2)} \cdot \big|\struc_\la \big( \bv(x)-\gga^{\uxi}_{xy}(\bv(y))\big) \big|\nonumber\\
&\leq & Q_{\uxi,\bv} \cdot \|x;y\|_P^{-\ga} \cdot \big\{ 2^{-n(\al+2)} \|x-y\|_{\scal}^\ga + 2^{-n\ga} \|x-y\|_{\scal}^{\ga-\al-2} + 2^{-n(\al+3)}  \|x-y\|_{\scal}^{\ga-1} \big\}\ .\label{c-n-rule}
\end{eqnarray}
Going back to (\ref{c-n-gd}), we can now assert that
\begin{eqnarray*}
\sum_{n>j} |C^n_{xy}|& \leq & Q_{\bv,\uxi} \cdot \|x;y\|_P^{-\ga} \cdot \sum_{n>j}\Big\{2^{-n(\ga+\al+2)}+ 2^{-n(\al+2)}  \|x-y\|_{\scal}^\ga+ 2^{-n\ga}  \|x-y\|_{\scal}^{\ga-\al-2} \\
& & + 2^{-n(\al+3)}  \|x-y\|_{\scal}^{\ga-1}+2^{-n(\ga+\al+1)} \|x-y\|_\scal +2^{-n(\ga+\al)}\|x-y\|_\scal^2\Big\}\\
&\leq &Q_{\bv,\uxi} \cdot \|x;y\|_P^{-\ga} \cdot \|x-y\|_\scal^{\ga+\al+2}\ \leq \ Q_{\bv,\uxi} \cdot ( \|x-y\|_\scal^{\ga}\cdot \|x;y\|_P^{-\ga} ) \cdot T^{\al+2} \ .
\end{eqnarray*}

\

\underline{$(iii$-$b)$: $n\leq j $}. We need to turn to a sharper control on the expansion of $K_n$ involved in $C^n_{xy}$, that is on the first bracket in (\ref{def:c}) (with $K_n$ instead of $K$). In fact, using basic differential calculus, this expansion can be easily written as the sum of the following three expressions:
\begin{equation}\label{decom-c-n}
(x_1-y_1)\cdot (x_2-y_2)\cdot \int_0^1 dr_1 \, (D^{(1,1)}K_n)(y_1-z_1,y_2+r_1(x_2-y_2)-z_2)\ ,
\end{equation}
\begin{equation}\label{decom-c-n-2}
\frac12\cdot (x_1-y_1)^2 \cdot \int_0^1 dr_1 \int_0^{r_1} dr_2 \, (D^{(2,0)}K_n)(y_1+r_2(x_1-y_1)-z_1,x_2-z_2) \ ,
\end{equation}
\begin{equation}\label{decom-c-n-3}
\frac16\cdot (x_2-y_2)^3\cdot \int_0^1 dr_1\int_0^{r_1} dr_2 \int_0^{r_2}dr_3 \, (D^{(0,3)}K_n)(y_1-z_1,y_2+r_3(x_2-y_2)-z_2)\ .
\end{equation}
We will only focus on the term inherited from (\ref{decom-c-n}), that is on
\begin{equation}\label{c-bar-n}
\bar{C}^{n}_{xy}:=(x_1-y_1)\cdot (x_2-y_2)\cdot \int_0^1 dr \int_{\R^2} \,  (D^{(1,1)}K_n)(y_1-z_1,y_2+r(x_2-y_2)-z_2)\cdot [\crr_{\uxi}\bv-\Pi^{\uxi}_y(\bv(y))](dz) \ ,
\end{equation}
but it should be clear that the subsequent arguments also hold for the terms derived from (\ref{decom-c-n-2})-(\ref{decom-c-n-3}).

\smallskip

So, with the above expression of $\bar{C}^n_{xy}$ in mind, set $w=(y_1,y_2+r(x_2-y_2))$ (with $r\in [0,1]$) and note that $\|w-y\|_\scal \leq \|x-y\|_\scal$, while $\|w\|_P=\|w;y\|_P \geq \|x;y\|_P$. Then write
\begin{multline}\label{c-n-2}
\int_{\R^2} \,  (D^{(1,1)}K_n)(w-z)\cdot [\crr_{\uxi}\bv-\Pi^{\uxi}_y(\bv(y))](dz)\\
=\int_{\R^2} \,  (D^{(1,1)}K_n)(w-z)\cdot [\crr_{\uxi}\bv-\Pi^{\uxi}_w(\bv(w))](dz)+\int_{\R^2} \,  (D^{(1,1)}K_n)(w-z)\cdot [\Pi^{\uxi}_w(\bv(w))-\Pi^{\uxi}_y(\bv(y))](dz)\ ,
\end{multline}
which, injected into (\ref{c-bar-n}), gives a decomposition of $\bar{C}^n_{xy}$ as a sum of two terms $\bar{C}^{n,1}_{xy}$ and $\bar{C}^{n,2}_{xy}$.

\smallskip

With the same argument as in (\ref{c-n-rule}), we obtain that
\begin{multline*}
\big| \int_{\R^2} \,  (D^{(1,1)}K_n)(w-z)\cdot [\Pi^{\uxi}_w(\bv(w))-\Pi^{\uxi}_y(\bv(y))](dz)\big|\\
\leq Q_{\bv,\uxi} \cdot \|x;y\|_P^{-\ga}\cdot \big\{ 2^{-n(\al-1)} \|x-y\|_\scal^\ga+2^{-n(2\al+1)} \|x-y\|_\scal^{\ga-\al-2}+2^{-n\al} \|x-y\|_\scal^{\ga-1}\big\}\ ,
\end{multline*}
which leads us to the conclusion that
\begin{equation*}
\sum_{0\leq n\leq j} |\bar{C}^{n,2}_{xy}|\leq Q_{\bv,\uxi} \cdot \|x-y\|_\scal^3 \cdot \|x;y\|_P^{-\ga}\cdot \|x-y\|_\scal^{\al+\ga-1} \leq  Q_{\bv,\uxi}  \cdot (\|x;y\|_P^{-\ga}\cdot \|x-y\|_\scal^{\ga}) \cdot T^{\al+2} \ .   
\end{equation*}

\smallskip

As far as $\bar{C}^{n,1}_{xy}$ is concerned, consider first the subcase where $i<n\leq j$. Then by (\ref{recon-local}),
$$\big| \int_{\R^2} \,  (D^{(1,1)}K_n)(w-z)\cdot [\crr_{\uxi}\bv-\Pi^{\uxi}_w(\bv(w))](dz) \big|\leq  Q_{\bv,\uxi} \cdot \|x;y\|_P^{-\ga}\cdot 2^{-n(\al+\ga-1)} \ ,$$
and so
\begin{equation*}
\sum_{i< n\leq j} |\bar{C}^{n,1}_{xy}|\leq Q_{\bv,\uxi} \cdot \|x-y\|_\scal^3 \cdot \|x;y\|_P^{-\ga}\cdot \|x-y\|_\scal^{\al+\ga-1} \leq  Q_{\bv,\uxi}  \cdot (\|x;y\|_P^{-\ga}\cdot \|x-y\|_\scal^{\ga}) \cdot T^{\al+2} \ .   
\end{equation*}
For $n\leq i$, using the global property (\ref{recon-global}) and the regularity of $\uxi$, we get that
\begin{align*}
&\big| \int_{\R^2} \,  (D^{(1,1)}K_n)(w-z)\cdot [\crr_{\uxi}\bv-\Pi^{\uxi}_w(\bv(w))](dz) \big|\\
&\leq \big| \int_{\R^2} \,  (D^{(1,1)}K_n)(w-z)\cdot [\crr_{\uxi}\bv](dz) \big|+\big| \int_{\R^2} \,  (D^{(1,1)}K_n)(w-z)\cdot [\Pi^{\uxi}_w(\bv(w))](dz) \big| \ \\
&\leq Q_{\bv,\uxi} \cdot \{2^{-n(\al-1)}+2^{-n(2\al+1)} \|w\|_P^{-(\al+2)}+2^{-n\al}\|w\|_P^{-1}\} \ ,
\end{align*}
and hence
$$\sum_{0\leq n\leq i} |\bar{C}^{n,1}_{xy}|\leq Q_{\bv,\uxi} \cdot \|x-y\|_\scal^3 \cdot \|x;y\|_P^{\al-1}\leq  Q_{\bv,\uxi}  \cdot (\|x;y\|_P^{-\ga}\cdot \|x-y\|_\scal^{\ga}) \cdot T^{\al+2} \ .$$

\subsection{Projection in $\struc_{\al+2}$} The increment reduces to
$$\struc_{\al+2}\big( (\mathcal{K}_{\uxi} \mathbf{v})(x)-\gga^{\uxi}_{xy}((\mathcal{K}_{\uxi} \mathbf{v})(y)) \big)=\bv^0(x)-\bv^0(y) \ ,$$
and we know that by definition of the space $\cd^{\ga+\al,\al}(\uxi)$
\begin{eqnarray}
\lefteqn{|\bv^0(x)-\bv^0(y)|}\nonumber\\
 &\leq & |\bv^0(x)-\bv^0(y)-\bv^1(y) \cdot [(K\ast \xi)(x)-(K\ast \xi)(y)]-\bv^2(y) \cdot (x_2-y_2)|\nonumber\\
& &+|\bv^1(y) \cdot [(K\ast \xi)(x)-(K\ast \xi)(y)]|+|\bv^2(y) \cdot (x_2-y_2)|\nonumber\\
&\leq & Q_{\bv,\uxi} \cdot \{ \|x-y\|_\scal^\ga \cdot \|x;y\|_P^{-\ga}+\|y\|_P^{-(\al+2)}\cdot \|x-y\|_\scal^{\al+2}+\|y\|_P^{-1}\cdot \|x-y\|_\scal \}\label{holder-v-0}\\
&\leq & Q_{\bv,\uxi}\cdot (\|x-y\|_\scal^{\al+2}\cdot \|x;y\|_P^{-\ga}) \cdot T^{\al+2} \ .\nonumber
\end{eqnarray}

\subsection{Projection in $\struc_1$} We decompose the increment as a sum of two terms:
$$\struc_{1}\big( (\mathcal{K}_{\uxi} \mathbf{v})(x)-\gga^{\uxi}_{xy}((\mathcal{K}_{\uxi} \mathbf{v})(y)) \big)=D_{xy}+E_{xy} \ ,$$
with
$$D_{xy}=(x_2-y_2) \cdot \int_{\R^2} (D^{(0,2)}K)(y-z) \cdot \{ \crr_{\uxi}\bv-\Pi^{\uxi}_y(\bv(y))\}(dz) \ ,$$
\begin{multline*}
E_{xy}=\\
\int_{\R^2} (D^{(0,1)}K)(x-z) \cdot \{ \crr_{\uxi}\bv-\bv^0(x) \cdot \xi\}(dz)-\int_{\R^2} (D^{(0,1)}K)(y-z) \cdot \{ \crr_{\uxi}\bv-\bv^0(y) \cdot \xi \}(dz)\\-(x_2-y_2) \cdot \int_{\R^2} (D^{(0,2)}K)(y-z) \cdot \{ \crr_{\uxi}\bv-\Pi^{\uxi}_y(\bv(y))\}(dz) \ .
\end{multline*}

$(iv)$ \underline{Estimation of $D_{xy}$}. Note that
$$|x_2-y_2| \leq \|x-y\|_\scal \leq (\|x-y\|_\scal^{\ga-1} \cdot \|x;y\|_P^{-\ga}) \cdot \|x;y\|_P^{2} \ .$$
Thus, we are exactly in the same position as with the above term $A_{xy}$, which allows us to conclude that
$$|D_{xy}| \leq Q_{\bv,\uxi} \cdot (\|x-y\|_\scal^{\ga-1} \cdot \|x;y\|_P^{-\ga}) \cdot T^{\al+2} \ .$$

$(v)$ \underline{Estimation of $E_{xy}$}. Let us introduce $j\geq 0$ such that $2^{-(j+1)} \leq \frac14 \|x-y\|_\scal \leq 2^{-j}$, and define the notation $E^n_{xy}$ along the same pattern as before.

\smallskip

\underline{$(v$-$a)$: $n>j$}. We start with the basic inequality
\begin{multline*}
|E^n_{xy}|\leq 
\big| \int_{\R^2} (D^{(0,1)}K_n)(x-z) \cdot \{ \crr_{\uxi}\bv-\bv^0(x) \cdot \xi\}(dz)\big|\\ + \big| \int_{\R^2} (D^{(0,1)}K_n)(y-z) \cdot \{ \crr_{\uxi}\bv-\bv^0(y) \cdot \xi \}(dz)\big|\\
+|x_2-y_2| \cdot \big| \int_{\R^2} (D^{(0,2)}K_n)(y-z) \cdot \{ \crr_{\uxi}\bv-\Pi^{\uxi}_y(\bv(y))\}(dz)\big| \ .
\end{multline*}
Then, on the one hand, we have
\begin{eqnarray*}
\lefteqn{\big| \int_{\R^2} (D^{(0,1)}K_n)(x-z) \cdot \{ \crr_{\uxi}\bv-\bv^0(x) \cdot \xi\}(dz)\big|}\\
&\leq & \big| \int_{\R^2} (D^{(0,1)}K_n)(x-z) \cdot \{ \crr_{\uxi}\bv-\Pi^{\uxi}_x(\bv(x))\}(dz)\big|\\
& &+\big| \int_{\R^2} (D^{(0,1)}K_n)(x-z) \cdot \{ \Pi^{\uxi}_x(\bv(x))-\bv^0(x) \cdot \xi\}(dz)\big|\\
&\leq & Q_{\bv,\uxi} \cdot \{2^{-n(\al+\ga+1)}\|x\|_P^{-\ga}+2^{-n(2\al+3)} \|x\|_P^{-(\al+2)}+2^{-n(\al+2)}\|x\|_P^{-1}\} \ ,
\end{eqnarray*}
and on the other hand,
$$|x_2-y_2| \cdot \big| \int_{\R^2} (D^{(0,2)}K_n)(y-z) \cdot \{ \crr_{\uxi}\bv-\Pi^{\uxi}_y(\bv(y))\}(dz)\big| \leq Q_{\bv,\uxi} \cdot \|x-y\|_\scal \cdot 2^{-n(\al+\ga)} \|y\|_P^{-\ga} \ .$$
As a result,
\begin{eqnarray*}
\lefteqn{\sum_{n>j} |E^n_{xy}|}\\
 &\leq & Q_{\bv,\uxi} \cdot \{\|x-y\|_\scal^{\al+\ga+1} \cdot \|x;y\|_P^{-\ga}+\|x-y\|_\scal^{2\al+3} \cdot \|x;y\|_P^{-(\al+2)}+\|x-y\|_\scal^{\al+2} \cdot \|x;y\|_P^{-1} \}\\
&\leq & Q_{\bv,\uxi} \cdot (\|x-y\|_\scal^{\ga-1} \cdot \|x;y\|_P^{-\ga}) \cdot T^{\al+2} \ .
\end{eqnarray*}

\smallskip

\underline{$(v$-$b)$: $n\leq j$}. We decompose $E^n_{xy}$ as the sum of the following three terms:
\begin{multline*}
E^{n,1}_{xy}=\\
\int_{\R^2} [(D^{(0,1)}K_n)(x-z)-(D^{(0,1)}K_n)(y-z)-(x_2-y_2)\cdot (D^{(0,2)}K_n)(y-z)]\cdot [\crr_{\uxi}\bv-\Pi^{\uxi}_y(\bv(y))](dz)\ ,
\end{multline*}
$$E^{n,2}_{xy}=\int_{\R^2} [(D^{(0,1)}K_n)(x-z)-(D^{(0,1)}K_n)(y-z)]\cdot [\Pi^{\uxi}_y(\bv(y))-\bv^0(y) \cdot \xi](dz)\ ,$$
$$E^{n,3}_{xy}=[\bv^0(x)-\bv^0(y)] \cdot \int_{\R^2} (D^{(0,1)}K_n)(x-z) \cdot \xi(dz) \ .$$
In order to estimate $E^{n,1}$, it suffices to follow the lines of the above-described situation $(iii$-$b)$, while for $E^{n,2}$, we just have to copy the arguments of the case $(ii$-$b)$. Putting these strategies together, we easily deduce that
$$\sum_{0\leq n\leq j} \{ |E^{n,1}_{xy}|+|E^{n,2}_{xy}|\} \leq Q_{\bv,\uxi} \cdot (\|x-y\|_\scal^{\ga-1} \cdot \|x;y\|_P^{-\ga}) \cdot  T^{\al+2} \ .$$
As far as $E^{n,3}_{xy}$, we can invoke (\ref{holder-v-0}) to get that
$$|E^{n,3}_{xy}| \leq Q_{\bv,\uxi} \cdot 2^{-n(\al+1)} \cdot \{\|x-y\|_\scal^\ga \cdot \|x;y\|_P^{-\ga}+\|y\|_P^{-(\al+2)}\cdot \|x-y\|_\scal^{\al+2}+\|y\|_P^{-1}\cdot \|x-y\|_\scal\} \ ,$$
and accordingly
$$\sum_{0\leq n\leq j} |E^{n,3}_{xy}| \leq Q_{\bv,\uxi} \cdot (\|x-y\|_\scal^{\ga-1} \cdot \|x;y\|_P^{-\ga}) \cdot T^{\al+2} \ .$$

\end{document}